    \theoremstyle{plain}
    \newtheorem{theorem}{Theorem}[section]
    \newtheorem{lemma}[theorem]{Lemma}
    \newtheorem{proposition}[theorem]{Proposition}
    \newtheorem{conjecture}{Conjecture}
    \newtheorem{corollary}[theorem]{Corollary}
    \newtheorem{example}[theorem]{Example}
    \theoremstyle{definition}
    \newcommand{\ee}{\mathcal{E}}
    \newcommand{\RR}{\mathbb{R}}
    \newcommand{\ii}{\mathcal{I}}
    \newcommand{\nn}{\mathcal{N}}
    \newcommand{\pp}{\mathcal{P}}
    \newcommand{\qq}{\mathcal{Q}}
    \newcommand{\uu}{\mathcal{U}}
    \newcommand{\vv}{\mathcal{V}}
    \newcommand{\ww}{\mathcal{W}}
    \newcommand{\wwhat}{\hat{\mathcal{W}}}
    \newcommand{\elltwo}{\mathscr{L}^2[0,1]}
    \newcommand{\sset}{\mathcal{S}_{17}}
    \newcommand{\SPR}{\text{SPR}}
    \newcommand{\SPRS}{\SPR_S}
    \newcommand{\tf}{\tilde{f}}
    \newcommand{\tg}{\tilde{g}}
    \newcommand{\tilh}{\tilde{h}}
    \newcommand{\tW}{\tilde{W}}
    \newcommand{\hatW}{\hat{\mathcal{W}}}
    \newcommand\restr[2]{{
    \left.\kern-\nulldelimiterspace 
    #1 
    \vphantom{\big|} 
    \right|_{#2} 
    }}
    \newcommand{\av}{\text{av}}
    \newcommand{\diam}{\text{diam}}
    \newcommand{\spr}{\text{spr}}
    \newcommand{\black}{\cellcolor{black}}
    \newcommand{\redcell}{\cellcolor{red}}
    \newcommand{\x}{\mathbf{x}}
    \newcommand{\z}{\mathbf{z}}
    \newcommand{\cutdist}{\delta_\square}
    \newcommand\norm[1]{\left\lVert#1\right\rVert}
    \newcommand{\blue}[1]{\textcolor{blue}{#1}}
\title{Maximum spread of graphs and bipartite graphs}
\author{Jane Breen} 
\address{Ontario Tech University, Oshawa, ON, Canada.}
\email{jane.breen@ontariotechu.ca}
\author{Alex W.~N.~Riasanovksy} 
\address{Karlsruhe Institute of Technology, Karlsruhe, Germany.}
\email{alexander.riasanovsky@kit.edu}
\author{Michael Tait} 
\address{Department of Mathematics \& Statistics, Villanova University, Villanova, PA, USA.}
\email{michael.tait@villanova.edu}
\author{John Urschel} 
\address{Department of Mathematics, Massachusetts Institute of Technology, Cambridge, MA, USA. }
\email{urschel@mit.edu}
\address{School of Mathematics, Institute for Advanced Study, Princeton, NJ, USA.}
\email{jcurschel@ias.edu}
\date{September 2021}
\begin{document}

\maketitle

\begin{abstract}
    
    Given any graph $G$, the (adjacency) spread of $G$ is the maximum absolute difference between any two eigenvalues of the adjacency matrix of $G$. In this paper, we resolve a pair of 20-year-old conjectures of Gregory, Hershkowitz, and Kirkland regarding the spread of graphs. The first states that for all positive integers $n$, the $n$-vertex graph $G$ that maximizes spread is the join of a clique and an independent set, with $\lfloor 2n/3 \rfloor$ and $\lceil n/3 \rceil$ vertices, respectively.  
    Using techniques from the theory of graph limits and numerical analysis, we prove this claim for all $n$ sufficiently large.  
    As an intermediate step, we prove an analogous result for a family of operators in the Hilbert space over $\elltwo$.  
    The second conjecture claims that for any fixed $e\leq n^2/4$, if $G$ maximizes spread over all $n$-vertex graphs with $e$ edges, then $G$ is bipartite. We prove an asymptotic version of this conjecture. Furthermore, we exhibit an infinite family of counterexamples, which shows that our asymptotic solution is tight up to lower order error terms.

\end{abstract}

\section{Introduction}

The spread $s(M)$ of an arbitrary $n\times n$ complex matrix $M$ is the diameter of its spectrum; that is,
\[s(M):= \max_{i,j} |\lambda_i-\lambda_j|,\]
where the maximum is taken over all pairs of eigenvalues of $M$. This quantity has been well studied in general, see \cite{deutsch1978spread,johnson1985lower,mirsky1956spread,wu2012upper} for details and additional references. Most notably, Johnson, Kumar, and Wolkowitz produced the lower bound
$$ s(M) \ge \textstyle{ \big| \sum_{i \ne j} m_{i,j} \big|/(n-1)}$$
for normal matrices $M = (m_{i,j})$ \cite[Theorem 2.1]{johnson1985lower}, and Mirsky produced the upper bound
$$ s(M) \le \sqrt{\textstyle{2 \sum_{i,j} |m_{i,j}|^2 - (2/n)\big| \sum_{i} m_{i,i} \big|^2}}$$
for any $n$ by $n$ matrix $M$, which is tight for normal matrices with $n-2$ of its eigenvalues all equal and equal to the arithmetic mean of the other two \cite[Theorem 2]{mirsky1956spread}.

The spread of a matrix has also received interest in certain particular cases. Consider a simple undirected graph $G = (V(G),E(G))$ of order $n$. The adjacency matrix $A$ of a graph $G$ is the $n \times n$ matrix whose rows and columns are indexed by the vertices of $G$, with entries satisfying $A_{u,v} = 1$ if $\{u,v\} \in E(G)$ and $A_{u,v} = 0$ otherwise. This matrix is real and symmetric, and so its eigenvalues are real, and can be ordered $\lambda_1(G) \geq \lambda_2(G)\geq \cdots \geq \lambda_n(G)$. When considering the spread of the adjacency matrix $A$ of some graph $G$, the spread is simply the distance between $\lambda_1(G)$ and $\lambda_n(G)$, denoted by
$$s(G) := \lambda_1(G) - \lambda_n(G).$$
In this instance, $s(G)$ is referred to as the \emph{spread of the graph}.



In \cite{gregory2001spread}, the authors investigated a number of properties regarding the spread of a graph, determining upper and lower bounds on $s(G)$. Furthermore, they made two key conjectures. Let us denote the maximum spread over all $n$ vertex graphs by $s(n)$, the maximum spread over all $n$ vertex graphs of size $e$ by $s(n,e)$, and the maximum spread over all $n$ vertex bipartite graphs of size $e$ by $s_b(n,e)$. Let $K_k$ be the clique of order $k$ and $G(n,k) := K_k \vee \overline{K_{n-k}}$ be the join of the clique $K_k$ and the independent set $\overline{K_{n-k}}$. We say a graph is \emph{spread-extremal} if it has spread $s(n)$. The conjectures addressed in this article are as follows.

\begin{conjecture}[\cite{gregory2001spread}, Conjecture 1.3]\label{conj:spread}
For any positive integer $n$, the graph of order $n$ with maximum spread is $G(n,\lfloor 2n/3 \rfloor)$; that is, $s(n)$ is attained only by $G(n,\lfloor 2n/3 \rfloor)$.
\end{conjecture}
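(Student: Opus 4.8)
The plan is to replace this discrete extremal problem by a continuous one in the language of graph limits, to solve the continuous problem exactly, and then to transfer its unique optimizer back to finite graphs via a stability argument. First I would set up the limiting problem: given a sequence of spread-extremal graphs $G_n$, the associated graphons $W_{G_n}$ satisfy $s(G_n)/n = \lambda_1\big(T_{W_{G_n}}\big) - \lambda_{\min}\big(T_{W_{G_n}}\big)$, where $T_W f = \int_0^1 W(\,\cdot\,,y)\,f(y)\,dy$ is the integral operator on $\elltwo$. Passing to a $\cutdist$-convergent subsequence $W_{G_n}\to W$ and using that the extreme eigenvalues of these compact self-adjoint operators are continuous in the cut metric, $W$ maximizes the spread functional over all graphons; since one checks directly that $s(G(n,\lfloor 2n/3\rfloor))/n \to 2/\sqrt3$, it follows that $\sup_W\big(\lambda_1(T_W)-\lambda_{\min}(T_W)\big)\ge 2/\sqrt3$, and it remains to (i) identify all extremal graphons, (ii) prove uniqueness, and (iii) quantify the lower-order gap between $s(n)$ and $s(G(n,\lfloor 2n/3\rfloor))$.

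Next I would pin down the structure of an extremal graphon $W$. Let $\phi\ge 0$ be a Perron eigenfunction of $T_W$ and $\psi$ a unit eigenfunction for $\lambda_{\min}$. A first-variation (``edge-switching'') argument — raising $W$ on a positive-measure set where $\phi(x)\phi(y) > \psi(x)\psi(y)$ increases $\lambda_1$ strictly more than it can increase $\lambda_{\min}$, and symmetrically for lowering $W$ — forces $W(x,y) = \mathbf 1\big[\phi(x)\phi(y)\ge\psi(x)\psi(y)\big]$ almost everywhere (handling a possibly non-simple $\lambda_{\min}$ separately). Setting $\tau := \psi/\phi$, this reads $W(x,y) = \mathbf 1[\tau(x)\tau(y)\le 1]$, so after a measure-preserving rearrangement $W$ is a symmetric ``staircase'' step function determined by the distribution of $\tau$: a ``core'' block joined to everything, together with a decreasing cascade of blocks of large $|\tau|$. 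Substituting this form back into $T_W\phi = \lambda_1\phi$ and $T_W\psi = \lambda_{\min}\psi$ shows $\phi$ and $\psi$ are themselves step functions with the same breakpoints, so $W$ has only finitely many parts and the optimization collapses to a finite-dimensional one.

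It then remains to solve the finite problem: maximize the spread of the weighted matrix $D^{1/2}CD^{1/2}$ over all part counts $m$, all admissible staircase patterns $C\in\{0,1\}^{m\times m}$ arising as $\mathbf 1[t_it_j\le 1]$, and all part-size vectors $(c_1,\dots,c_m)$ in the simplex, where $D = \mathrm{diag}(c_1,\dots,c_m)$. I would argue, by monotonicity and convexity, that merging parts and deleting every edge outside a single core block never decreases the spread, collapsing every configuration to the two-part join pattern, whose spread as a function of the core size $c$ equals $\sqrt{4c-3c^2}$ and is uniquely maximized at $c = 2/3$; this yields the value $2/\sqrt3$ and identifies $W$ as the $2/3$–$1/3$ step graphon, the limit of the graphons of $G(n,\lfloor 2n/3\rfloor)$. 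I expect this collapse to be the main obstacle: bounding the spread uniformly over the whole family of staircase patterns and part sizes is a genuinely multivariate eigenvalue-optimization problem, and eliminating all competitors rigorously is where delicate estimates together with rigorous numerical analysis are required.

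Finally I would transfer the conclusion back to finite graphs. The discrete analogue of the dichotomy above holds by the identical switching argument: if $x$ and $z$ are $\lambda_1$- and $\lambda_n$-eigenvectors of a spread-extremal graph $G$, then $uv\in E(G)$ whenever $x_ux_v > z_uz_v$ and $uv\notin E(G)$ whenever $x_ux_v < z_uz_v$. Combined with the stability output of the preceding steps — that $G_n$ is $o(n^2)$-close in edit distance to $G(n,\lfloor 2n/3\rfloor)$ — together with eigenvector-perturbation bounds, this forces $G_n$ to be exactly a join $K_k\vee\overline{K_{n-k}}$ with $k = (2/3 + o(1))n$ once $n$ is large. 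An elementary optimization of $s\big(K_k\vee\overline{K_{n-k}}\big) = \sqrt{(k-1)^2 + 4k(n-k)}$ over integers $k$ then isolates $k = \lfloor 2n/3\rfloor$, establishing Conjecture~\ref{conj:spread} for all sufficiently large $n$.
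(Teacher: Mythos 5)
Your overall architecture coincides with the paper's: pass to graphons, use compactness and continuity of the extreme eigenvalues in the cut metric, derive the indicator structure $W=\mathbf 1[f(x)f(y)>g(x)g(y)]$ by a first-variation argument (this is Lemma \ref{lem: K = indicator function}), reduce to a finite-dimensional eigenvalue optimization settled with rigorous numerics, and transfer back via stability, eigenvector perturbation, and a discrete optimization over joins. You also correctly flag that eliminating all competing staircase patterns is the hard computational core (the paper needs $17$ cases of interval arithmetic plus a cubic-polynomial analysis), and your final paragraph matches the paper's endgame.

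The genuine gap is the sentence ``Substituting this form back into $T_W\phi=\lambda_1\phi$ and $T_W\psi=\lambda_{\min}\psi$ shows $\phi$ and $\psi$ are themselves step functions with the same breakpoints, so $W$ has only finitely many parts.'' This does not follow from the eigenfunction equations alone. Writing $\tau=\psi/\phi$, those equations only say that $\phi(x)$ and $\psi(x)$ are determined by the level set $\{y:\tau(x)\tau(y)\le 1\}$; if $\tau$ has a continuous distribution, $W$ is a staircase with a continuum of levels and nothing yet forces finitely many values. The paper needs two further variational inputs to obtain finiteness: (i) a measure-rearrangement perturbation (stretching one small interval while shrinking another) yielding the pointwise identity $\mu f^2-\nu g^2=\mu-\nu$ a.e.\ (Lemma \ref{lem: local eigenfunction equation}); and (ii) an averaging/cloning perturbation (Lemma \ref{lem: eigenfunction averaging} and Corollary \ref{cor: averaging reduction}) showing that if $f$ took three or more distinct values on the ``independent'' part then the spread could be strictly increased. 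Only the combination of these yields Theorem \ref{thm: reduction to stepgraphon} (at most $2$ values of $f$ on $P\cap S$, at most $3$ on $P\cap C$, and symmetrically on $N$, hence a $7\times 7$ stepgraphon). The identity in (i), in its discrete form (Lemma \ref{discrete ellipse equation}), is also what your last paragraph silently relies on: it is the tool that pins down the eigenvector entries of the $o(n)$ exceptional vertices and upgrades ``close to a join in edit distance'' to ``exactly a join.'' Without deriving it, neither the reduction to a finite problem nor the endgame closes.
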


\begin{conjecture}[\cite{gregory2001spread}, Conjecture 1.4]\label{conj:bispread}
If $G$ is a graph with $n$ vertices and $e$ edges attaining the maximum spread $s(n,e)$, and if $e\leq \lfloor n^2/4\rfloor$, then $G$ must be bipartite. That is, $s_b(n,e) = s(n,e)$ for all $e \le \lfloor n^2/4\rfloor$.
\end{conjecture}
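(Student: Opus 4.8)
The plan is to run a sequence of edge-swaps on a spread-extremal graph, guided by its two extreme eigenvectors, until every odd cycle has been destroyed. Fix a graph $G$ attaining $s(n,e)$ with $e \le \lfloor n^2/4\rfloor$; let $A$ be its adjacency matrix, let $\lambda_1 = \lambda_1(G)$ have unit Perron eigenvector $x \ge 0$, and let $\lambda_n = \lambda_n(G)$ have unit eigenvector $z$. Since
\[
s(G) = x^\top A x - z^\top A z = 2\sum_{\{u,v\}\in E(G)}(x_ux_v - z_uz_v),
\]
and since exchanging one edge $\{u,v\}$ for one non-edge $\{u',v'\}$ preserves $e$, applying the Rayleigh principle to the new graph and using extremality of $G$ yields the separation inequality
\[
x_ux_v - z_uz_v \;\ge\; x_{u'}x_{v'} - z_{u'}z_{v'}\qquad\text{for all }\{u,v\}\in E(G),\ \{u',v'\}\notin E(G).
\]
Thus there is a threshold $\tau$ with $E(G) = \{\{u,v\} : x_ux_v - z_uz_v \ge \tau\}$ up to pairs attaining equality, and the whole proof consists of exploiting this description together with the constraint $e \le \lfloor n^2/4\rfloor$.

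First I would localize the spectrum. Mirsky's inequality applied to $A$, for which $\sum_{i,j}A_{i,j}^2 = 2e$ and $\sum_i A_{i,i} = 0$, gives $s(n,e) \le 2\sqrt{e}$. When $e = ab$ for some $a+b\le n$ this is attained by $K_{a,b}$ (whose nonzero eigenvalues are $\pm\sqrt{ab}$), and equality in Mirsky forces $G$ to be a complete bipartite graph plus isolated vertices, so the conjecture is immediate. For general $e$, comparing $G$ with a near-optimal bipartite graph on at most $n$ vertices with $e$ edges shows $s(G) \ge 2\sqrt{e} - o(\sqrt e)$, and then $\sum_i \lambda_i(G)^2 = 2e$ forces $\lambda_1, -\lambda_n = (1+o(1))\sqrt e$ and $\sum_{2\le i\le n-1}\lambda_i(G)^2 = o(e)$. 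Feeding $\lambda_1 + \lambda_n = o(\sqrt e)$ into the identity $\lambda_1 \ge |z|^\top A|z| = -\lambda_n + 4\sum_{\{u,v\}\in E(G),\,\sign z_u = \sign z_v} z_uz_v$ bounds the total ``same-sign'' weight on edges by $o(\sqrt e)$; so the partition $V^+ = \{v: z_v\ge 0\}$, $V^- = \{v: z_v<0\}$ is close to a bipartition.

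Next I would clean up the partition. If $\{u,v\}\in E(G)$ with $u,v\in V^+$ then $z_uz_v>0$, so $x_ux_v - z_uz_v$ is small on the heavy part of the point cloud $v\mapsto(x_v,z_v)$; on the other hand the bound $e\le\lfloor n^2/4\rfloor$ leaves cross pairs that are non-edges, and on a heavy one $x_{u'}x_{v'} - z_{u'}z_{v'}\approx 2x_{u'}x_{v'}$ is comparatively large, contradicting the separation inequality. Iterating, every surviving within-part edge is incident only to vertices of vanishing $x$-weight (including the vertices outside the Perron support). It then remains to eliminate these: the goal is a final sequence of surgeries, each deleting a light within-part edge $\{p,q\}$ and inserting a cross edge incident to a light (ideally isolated) vertex, under which $-z^\top A z$ rises by $2z_pz_q\ge0$, $x^\top A x$ falls only negligibly, and the inserted edge hurts neither quantity — so $s(G)$ does not decrease and $G$ may be taken bipartite. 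This would give $s(n,e) = s_b(n,e)$ for all $e\le\lfloor n^2/4\rfloor$, i.e.\ Conjecture~\ref{conj:bispread}.

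The hard part is this last step when $e$ is close to $\lfloor n^2/4\rfloor$, where $G$ is within $O(1)$ edges of a complete bipartite graph. There $z_pz_q$ on the residual within-part edges has the same negligible order as the loss in $x^\top A x$ and as the shift of $\lambda_n$ caused by the inserted edge, so whether a surgery raises or lowers $s(G) = \lambda_1 - \lambda_n$ is decided by second-order terms in the eigenvalue perturbation expansions and by the fine behaviour of $x$ and $z$ on the light vertices; asymptotic estimates do not settle the sign, and one genuinely needs sharp rank-one/rank-two perturbation bounds around nearly complete bipartite graphs. Equivalently, the crux is to separate the extremal bipartite graph from every non-bipartite graph that agrees with it on all but boundedly many pairs — which is exactly where the full strength of Conjecture~\ref{conj:bispread} is concentrated.
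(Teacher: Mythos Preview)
The conjecture you are trying to prove is \emph{false}, and the paper demonstrates this by constructing an infinite family of counterexamples (Theorem~\ref{thm: bipartite spread theorem}, lower bound). Specifically, for suitably chosen $k$ the graph $K_{n/2+k,\,n/2-k}^{+}$ obtained from a complete bipartite graph by adding a single edge inside the smaller part has strictly larger spread than every bipartite graph on $n$ vertices with the same number of edges; the gap is of order $m^{-3/4}\, s(n,m)$. So no argument can conclude $s(n,e)=s_b(n,e)$ for all $e\le\lfloor n^2/4\rfloor$.

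You have in fact put your finger on exactly where your argument breaks. Your final surgery step proposes to delete a within-part edge $\{p,q\}$ and insert a cross edge, claiming the net change in $\lambda_1-\lambda_n$ is nonnegative. In the regime you flag as ``hard'' --- $e$ within $O(1)$ of a complete bipartite count --- the second-order perturbation terms you say must be controlled actually have the wrong sign: the paper computes both spreads via explicit quotient matrices and shows the non-bipartite graph wins. Concretely, for $K_{n/2+k,\,n/2-k}^{+}$ one has $s = n - (1-\hat\varepsilon) - (1-\hat\varepsilon)^2/(2n) + o(1/n)$, while the optimal bipartite competitor $K_{n/2+k-1,\,n/2-k+1}^{m}$ has $s_b = n - (1-\hat\varepsilon) - 2\sqrt{2(1-\hat\varepsilon)/n} + O(1/n)$, and the $n^{-1/2}$ term makes $s_b$ strictly smaller. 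Your heuristic that ``$-z^\top A z$ rises by $2z_pz_q\ge 0$'' while ``$x^\top A x$ falls only negligibly'' is not sharp enough: after the swap the vectors $x,z$ are no longer eigenvectors, and the true eigenvalue shifts of $\lambda_1$ and $\lambda_n$ both involve contributions of the same order which do not cancel favorably.

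What the paper does prove is the asymptotic statement your first two paragraphs essentially sketch: $s(n,e)-s_b(n,e)=O(e^{-3/4})\,s(n,e)$, obtained by comparing $s(n,e)\le 2\sqrt{e}$ against an explicit bipartite lower bound coming from $K_{p,q}^{m}$ with $pq-m=O(e^{1/4})$. That part of your outline is sound and aligns with the paper's upper bound; it is only the claimed exact equality that fails.
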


Conjecture \ref{conj:spread} is referred to as the Spread Conjecture, and Conjecture \ref{conj:bispread} is referred to as the Bipartite Spread Conjecture. Much of what is known about Conjecture \ref{conj:spread} is contained in \cite{gregory2001spread}, but the reader may also see \cite{StanicBook} for a description of the problem and references to other work on it. In this paper, we resolve both conjectures. We prove the Spread Conjecture for all $n$ sufficiently large, prove an asymptotic version of the Bipartite Spread Conjecture, and provide an infinite family of counterexamples to illustrate that our asymptotic version is as tight as possible, up to lower order error terms. These results are given by Theorems \ref{thm: spread maximum graphs} and \ref{thm: bipartite spread theorem}.

\begin{theorem}\label{thm: spread maximum graphs}
    There exists a constant $N$ so that the following holds:  
    Suppose $G$ is a graph on $n\geq N$ vertices with maximum spread;
    then $G$ is the join of a clique on $\lfloor 2n/3\rfloor$ vertices and an independent set on $\lceil n/3\rceil$ vertices.   
\end{theorem}

\begin{theorem}\label{thm: bipartite spread theorem}
$$s(n,e) - s_b(n,e) \le \frac{1+16 e^{-3/4}}{e^{3/4}} s(n,e)$$
for all $n,e \in \mathbb{N}$ satisfying $e \le \lfloor n^2/4\rfloor$. In addition, for any $\varepsilon>0$, there exists some $n_\varepsilon$ such that
$$s(n,e) - s_b(n,e) \ge  \frac{1-\varepsilon}{e^{3/4}} s(n,e)$$
for all $n\ge n_\varepsilon$ and some $e \le \lfloor n^2/4\rfloor$ depending on $n$.
\end{theorem}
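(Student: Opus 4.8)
The plan is to split the statement into an upper bound on $s(n,e)-s_b(n,e)$ (valid for all admissible $n,e$) and a matching lower bound (for infinitely many $n$ and a suitable $e=e(n)$). For the upper bound, I would first recall the structure of near-extremal graphs for $s(n,e)$: writing $A$ for the adjacency matrix and $x,z$ for unit eigenvectors of $\lambda_1(G)$ and $\lambda_n(G)$, the spread is $s(G)=\lambda_1-\lambda_n = x^\top A x - z^\top A z \le (x-z)^\top A (x+z)$ up to normalization, so the spread "sees" the graph through the vectors $x$ and $z$. The key point is that $\lambda_n(G)$ is large in absolute value only if $G$ has a large nearly-bipartite induced piece; quantitatively, one expects that if $G$ attains $s(n,e)$ then $G$ is within edit-distance $O(e^{1/4})$ (i.e. $o(e)$) of a bipartite graph $G'$ on the same vertex set with the same number of edges. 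Given such a $G'$, one shows $s_b(n,e) \ge s(G') \ge s(G) - (\text{perturbation})$, and bounds the perturbation in $\lambda_1$ and $\lambda_n$ by the number of edited edges times the relevant entries of the eigenvectors, using $\|x\|_\infty,\|z\|_\infty$ bounds and eigenvalue interlacing / Weyl-type inequalities. Collecting terms should yield the stated factor $(1+16e^{-3/4})e^{-3/4}$; the constant $16$ and the exponent $3/4$ come from balancing the two competing error terms (the number of edges one must delete to bipartite-ize, roughly $e^{1/4}\lambda_1$, against the gain in $\lambda_n$), so the bookkeeping here must be done carefully rather than with soft $O(\cdot)$ notation.

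For the lower bound, I would exhibit an explicit family of graphs demonstrating that deleting edges to reach a bipartite graph genuinely costs a $\Theta(e^{-3/4})$ fraction of the spread. A natural candidate is a graph that is bipartite-plus-a-clique: take a complete bipartite graph $K_{a,b}$ and plant a small clique $K_t$ inside one side (or take $G(m,k)$-type joins and tune parameters), choosing $a,b,t$ so that the total number of edges is the prescribed $e \le \lfloor n^2/4 \rfloor$ and $t = \Theta(e^{1/4})$. For such a graph, $\lambda_1$ is close to $\sqrt{e}$ (driven by the bipartite part) while the planted clique pushes $\lambda_n$ below $-\sqrt{e}$ by an extra additive amount of order $t = \Theta(e^{1/4})$ relative to what any bipartite graph with $e$ edges can achieve; since $s(n,e) = \Theta(\sqrt{e})$, this extra $\Theta(e^{1/4})$ is exactly a $\Theta(e^{-3/4})s(n,e)$ gap. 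Making this precise requires estimating $\lambda_1$ and $\lambda_n$ of the candidate graph to additive error $o(e^{1/4})$ — e.g. via quotient matrices for the equitable-ish partition into clique/non-clique parts on each side, plus interlacing to control the remaining eigenvalues — and separately proving the upper bound $s_b(n,e) \le \sqrt{e} \cdot (\dots)$ that any bipartite graph must obey (this is where one uses that a bipartite graph's spectrum is symmetric, so $s_b(n,e) = 2\lambda_1$ over bipartite graphs, and $\lambda_1 \le \sqrt{e}$ for bipartite graphs with $e$ edges by the standard bound).

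The main obstacle I anticipate is the quantitative near-bipartiteness structure result underlying the upper bound: one needs to show that a spread-extremal graph with $e$ edges is $O(e^{1/4})$ edge-edits from bipartite, with an explicit constant, and not merely $o(e)$. This likely requires a stability/compactness argument — passing to graph limits (graphons), showing the limiting object is a "bipartite-type" kernel because otherwise one could strictly increase the spread, and then transferring back with explicit error rates — which is technically the heaviest part and is presumably where the $e^{3/4}$ scaling is pinned down. A secondary difficulty is uniformity in $e$: the bound must hold for all $e \le \lfloor n^2/4\rfloor$, including sparse regimes where $e$ is small compared to $n$, so the eigenvector $\ell_\infty$-bounds and perturbation estimates need to be stated in a scale-invariant way (in terms of $e$, not $n$). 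Once the structure theorem and the explicit perturbation bookkeeping are in place, the lower-bound construction is comparatively routine, requiring only careful eigenvalue estimates for an explicit small-complexity graph.
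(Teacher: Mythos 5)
Both halves of your proposal diverge from what is actually needed, and each contains a genuine gap. For the upper bound, you propose a structural stability theorem — that any $s(n,e)$-extremal graph is within $O(e^{1/4})$ edge-edits of a bipartite graph — to be proven via graphon limits, and you correctly flag this as the main obstacle. But no such theorem is proven in the paper, and none is needed: the paper's argument never looks at the extremal graph at all. It simply bounds $s(n,e)\le 2\sqrt{e}$ (the Gregory--Hershkowitz--Kirkland inequality, with equality characterizing complete bipartite graphs), writes $e=pq-r$ with $p+q\le n$ and $r$ minimal, and exhibits the explicit bipartite competitor $K_{p,q}^{e}$ (remove $r$ edges at one vertex of $K_{p,q}$), whose spread is computed exactly from a $4\times 4$ quotient matrix. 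The exponent $3/4$ comes from a purely number-theoretic fact: the gap between consecutive integers realizable as $pq$ with $p+q\le n$ forces $r\le 2e^{1/4}$, and plugging $r/e = O(e^{-3/4})$ into the quotient-matrix formula gives the stated constant. Your route replaces an elementary counting lemma with an unproven (and unnecessary) structure theorem, so as written the upper bound does not go through.

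For the lower bound, your quantitative reasoning is inconsistent: since $s(n,e)=\Theta(\sqrt{e})$, a relative gap of $e^{-3/4}$ corresponds to an \emph{additive} gap of $\Theta(e^{-1/4})$, which tends to zero — not $\Theta(e^{1/4})$ as you claim (note also that $e^{1/4}/e^{1/2}=e^{-1/4}\neq e^{-3/4}$, so your own bookkeeping does not close). Consequently a planted clique of size $\Theta(e^{1/4})$ is far too large a perturbation; the paper's counterexample is $K_{n/2+k,n/2-k}$ plus a \emph{single} edge inside the smaller part, at the carefully chosen edge count $e=(n/2+k)(n/2-k)+1$, i.e.\ one more than a realizable product. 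Relatedly, your proposed bound $s_b(n,e)\le 2\lambda_1\le 2\sqrt{e}$ for bipartite graphs is useless for the separation, since $s(n,e)\le 2\sqrt{e}$ as well: one must pin down $s_b(n,e)$ to additive precision $o(n^{-1/2})$, which the paper does by invoking the known result that $K^e_{p,q}$ maximizes $\lambda_1$ among $e$-edge subgraphs of $K_{p,q}$, optimizing over the bipartition sizes, and extracting the extreme roots of the resulting quartic to $o(1/n)$. Your stated target precision of "additive error $o(e^{1/4})$" is off by the same reciprocal; $o(e^{-1/4})$ is what is required.
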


The proof of Theorem \ref{thm: spread maximum graphs} is quite involved, and constitutes the main subject of this work. The general technique consists of showing that a spread-extremal graph has certain desirable properties, considering and solving an analogous problem for graph limits, and then using this result to say something about the Spread Conjecture for sufficiently large $n$. For the interested reader, we state the analogous graph limit result in the language of functional analysis.

\begin{theorem}\label{thm: functional analysis spread}
    Let $W:[0,1]^2\to [0,1]$ be a Lebesgue-measurable function such that $W(x,y) = W(y,x)$ for a.e. $(x,y)\in [0,1]^2$ and let $A = A_W$ be the kernel operator on $\elltwo$ associated to $W$. For all unit functions $f,g\in\elltwo$, 
    \begin{align*}
        \langle f, Af\rangle - 
        \langle g, Ag\rangle
        &\leq 
            \dfrac{2}{\sqrt{3}}.  
    \end{align*}
    Moreover, equality holds if and only if there exists a measure-preserving transformation $\sigma$ on $[0,1]$ such that for a.e. $(x,y)\in [0,1]^2$,
    \begin{align*}
        W(\sigma(x),\sigma(y))
        &=
        \left\{\begin{array}{rl}
            0, & (x,y)\in [2/3, 1]\times [2/3, 1]\\
            1, &\text{otherwise}
        \end{array}\right.
        .  
    \end{align*}
\end{theorem}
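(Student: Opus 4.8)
The plan is to set up a continuous optimization problem over symmetric measurable functions (graphons) $W\colon[0,1]^2\to[0,1]$ and unit functions $f,g\in\elltwo$, and to show that the maximum of $\langle f,Af\rangle - \langle g,Ag\rangle$ equals $2/\sqrt 3$, with the stated structure at the optimizer. First I would argue that the supremum is actually attained: the space of graphons is compact under the cut metric, the functions $f,g$ may be taken in the unit ball of $\elltwo$ (replacing ``unit'' by ``norm $\le 1$'' only enlarges the feasible set, and one checks a posteriori the extremizers have norm exactly $1$), and the functional $(W,f,g)\mapsto \langle f,Af\rangle - \langle g,Ag\rangle$ is continuous in the appropriate weak-$*$/cut topology. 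So fix an extremal triple $(W,f,g)$ and analyze it via first-order (variational) conditions.

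The key steps are as follows. (1) \emph{Sign structure of $f$ and $g$.} By replacing $f$ with $|f|$ we only increase $\langle f, Af\rangle$ since $W\ge 0$; similarly replacing $g$ by a function that makes $\langle g,Ag\rangle$ as negative as possible forces $g$ to change sign, and one shows $g$ takes exactly two values up to the level sets, i.e.\ $g = a\cdot\mathbf 1_{S} - b\cdot\mathbf 1_{S^c}$ for some measurable $S$ and $a,b\ge 0$ — this is the analogue of the finite-dimensional fact that the bottom eigenvector of the spread-extremal graph $G(n,k)$ has a two-block structure. (2) \emph{Optimizing $W$ for fixed $f,g$.} Since $\langle f,Af\rangle - \langle g,Ag\rangle = \int\int W(x,y)\,(f(x)f(y) - g(x)g(y))\,dx\,dy$ is \emph{linear} in $W$ and $W$ ranges over $[0,1]$-valued functions, the optimal $W$ is $\{0,1\}$-valued: $W(x,y)=1$ exactly where $f(x)f(y) > g(x)g(y)$. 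Combined with the two-block form of $g$ and the nonnegativity of $f$, this pins down $W$ to be the complement of a single square block $[c,1]^2$ (after a measure-preserving rearrangement sorting $f$ to be monotone and aligning it with $S$). (3) \emph{Reduction to a scalar problem.} With $W = 1 - \mathbf 1_{[c,1]^2}$, writing $p = \int_0^c f$, $q=\int_c^1 f$, $p' = \int_0^c g$, $q'=\int_c^1 g$ and using $\|f\|=\|g\|=1$, the objective becomes an explicit function of a few real parameters $(c,p,q,p',q')$ subject to $p^2$-type normalization constraints; here one uses that for a fixed support pattern the extremal $f$ restricted to each block is constant (Cauchy–Schwarz / Lagrange multipliers: $Af$ must be constant on each block where $f>0$). (4) \emph{Finite-dimensional calculus.} Maximize the resulting expression; the optimum occurs at $c = 2/3$ and yields the value $2/\sqrt 3$. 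This mirrors the computation that $s(G(n,\lfloor 2n/3\rfloor)) \sim \tfrac{2}{\sqrt3}n$. (5) \emph{Uniqueness.} Track the equality cases through steps (1)–(4): each reduction (passing to $|f|$, making $W$ $\{0,1\}$-valued, constancy on blocks, and the final scalar maximization) has a characterized equality case, and chaining them shows $W$ must be, up to a measure-preserving transformation $\sigma$, the indicated complement-of-$[2/3,1]^2$ graphon.

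The main obstacle I expect is step (2)–(3): justifying that the extremal $f$ and $g$ are genuinely \emph{piecewise constant} on the finitely many regions cut out by the optimal $\{0,1\}$-valued $W$, and that no ``fractional'' or oscillatory behavior can do better. The clean way is a bootstrapping argument: given optimal $(W,f,g)$ with $W$ already $\{0,1\}$-valued on blocks determined by $\sign(f(x)f(y)-g(x)g(y))$, the eigenvalue-type equations $Af = \lambda_1 f$ on $\{f\neq 0\}$ and $Ag=\lambda_n g$ on $\{g\neq 0\}$ (which hold at the optimum by the variational conditions on $f,g$ for fixed $W$) force $f,g$ to be constant on each atom of the partition generated by the blocks, since $A$ maps any function to something constant on those atoms. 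One must also handle the interplay carefully — changing $W$ changes the partition, which changes $f,g$ — so the argument is really a simultaneous fixed-point characterization rather than a sequence of independent optimizations; making this rigorous (e.g.\ via a careful compactness-plus-regularity argument, or by directly verifying the candidate is a strict local max and then ruling out other critical points) is where the real work lies. The remaining steps (1), (4), (5) are, by comparison, finite-dimensional calculus and bookkeeping.
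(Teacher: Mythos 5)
Your step (2) contains the gap that swallows essentially all of the real work. The variational argument that the optimal $W$ is $\{0,1\}$-valued with $W(x,y)=1$ exactly where $f(x)f(y)>g(x)g(y)$ is correct and is the paper's Lemma \ref{lem: K = indicator function}. But this structure, combined with $f>0$ and $g$ changing sign, does \emph{not} pin $W$ down to the complement of a single square $[c,1]^2$: it only says $W$ is the join of two threshold graphons (one on $\{g\ge 0\}$, one on $\{g<0\}$), which is a vastly richer family. Likewise, your assertion that the extremal $g$ ``takes exactly two values'' is not a consequence of the first-order conditions; it is precisely the conclusion that has to be fought for. In the paper, even after the full averaging machinery (Lemma \ref{lem: eigenfunction averaging}, Corollary \ref{cor: averaging reduction}, and Claims A--D in the proof of Theorem \ref{thm: reduction to stepgraphon}, which crucially use the ellipse identity $\mu f^2-\nu g^2=\mu-\nu$ of Lemma \ref{lem: local eigenfunction equation} --- an ingredient absent from your outline), one only gets down to a $7\times 7$ stepgraphon, so $f$ and $g$ may a priori take up to seven values each. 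Collapsing seven blocks to two is the content of Lemmas \ref{lem: 2 feasible sets} and \ref{lem: SPR457}: a $17$-case constrained optimization, $15$ cases eliminated by computer-assisted interval arithmetic and the last nontrivial case ($4|57$) by a Vi\`ete's-formula analysis of a cubic characteristic polynomial. Your step (4) ``finite-dimensional calculus'' is only the trivial $2\times 2$ endgame.

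Your closing paragraph correctly flags that piecewise constancy of $f,g$ on the blocks is delicate, and your proposed fix (the eigenfunction equations force constancy on atoms of the partition) is in the right spirit --- but it is circular as stated, since the atoms are defined by $W$, which is defined by $f,g$. The paper breaks this circularity with the averaging/cloning perturbation: one replaces $W$ and $f$ on a set $U$ by their averages and shows optimality forces $W=0$ on $U\times U$ and $f$ constant on $U$, which combined with Darboux--Froda and a mean-value argument bounds the number of values $f$ can take on each of the four regions $P\cap S$, $P\cap C$, $N\cap S$, $N\cap C$. Even once that is granted, no purely variational or ``rule out other critical points by hand'' argument is known to finish from there; if you intend to avoid the interval-arithmetic step you would need a genuinely new idea for the seven-block optimization.
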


The proof of Theorem \ref{thm: spread maximum graphs} can be found in Sections 2-6, with certain technical details reserved for the Appendix. We provide an in-depth overview of the proof of Theorem \ref{thm: spread maximum graphs} in Subsection \ref{sub: outline}. In comparison, the proof of Theorem \ref{thm: bipartite spread theorem} is surprisingly short, making use of the theory of equitable decompositions and a well-chosen class of counter-examples. The proof of Theorem \ref{thm: bipartite spread theorem} can be found in Section \ref{sec:bispread}. Finally, in Section \ref{sec: conclusion}, we discuss further questions and possible future avenues of research.

\subsection{High-Level Outline of Spread Proof}\label{sub: outline}

Here, we provide a concise, high-level description of our asymptotic proof of the Spread Conjecture. The proof itself is quite involved, making use of interval arithmetic and a number of fairly complicated symbolic calculations, but conceptually, is quite intuitive. Our proof consists of four main steps. \\

\noindent{\bf Step 1: } Graph-Theoretic Results \\
\begin{adjustwidth}{1.5em}{0pt}
In Section \ref{sec:graphs}, we observe a number of important structural properties of any graph that maximizes the spread for a given order $n$. In particular, we show that\\
\begin{itemize}
    \item any graph that maximizes spread must be the join of two threshold graphs (Lemma \ref{lem: graph join}),
    \item both graphs in this join have order linear in $n$ (Lemma \ref{linear size parts}),
    \item the unit eigenvectors $\mathbf{x}$ and $\mathbf{z}$ corresponding to $\lambda_1(A)$ and $\lambda_n(A)$ have infinity norms of order $n^{-1/2}$ (Lemma \ref{upper bound on eigenvector entries}),
    \item the quantities $\lambda_1 \mathbf{x}_u^2 - \lambda_n \mathbf{z}_u^2$, $u \in V$, are all nearly equal, up to a term of order $n^{-1}$ (Lemma \ref{discrete ellipse equation}).\\
\end{itemize}
This last structural property serves as the backbone of our proof. In addition, we note that, by a tensor argument, an asymptotic upper bound for $s(n)$ implies a bound for all $n$. \\

\end{adjustwidth}

\noindent{\bf Step 2: } Graphons and a Finite-Dimensional Eigenvalue Problem \\

\begin{adjustwidth}{1.5em}{0pt}
In Sections \ref{sec: graphon background} and \ref{sec: graphon spread reduction}, we make use of graphons to understand how spread-extremal graphs behave as $n$ tends to infinity. Section \ref{sec: graphon background} consists of a basic introduction to graphons, and a translation of the graph results of Step 1 to the graphon setting. In particular, we prove the graphon analogue of the graph properties that \\
\begin{itemize}
\item vertices $u$ and $v$ are adjacent if and only if $\mathbf{x}_u \mathbf{x}_v - \mathbf{z}_u \mathbf{z}_v >0$ (Lemma \ref{lem: K = indicator function}),
\item the quantities $\lambda_1 \mathbf{x}_u^2 - \lambda_n \mathbf{z}_u^2$, $u \in V$, are all nearly equal (Lemma \ref{lem: local eigenfunction equation}). \\
\end{itemize}
Next, in Section \ref{sec: graphon spread reduction}, we show that the spread-extremal graphon for our problem takes the form of a particular stepgraphon with a finite number of blocks (Theorem \ref{thm: reduction to stepgraphon}). In particular, through an averaging argument, we note that the spread-extremal graphon takes the form of a stepgraphon with a fixed structure of symmetric seven by seven blocks, illustrated below.
    \begin{align*}
    	\begin{tabular}{||cccc||ccc||}\hline\hline
            \black & \black & \black & \black & \black & \black & \black \\
            \black & \black & \black &  & \black & \black & \black  \\
            \black & \black &  &  & \black & \black & \black  \\
            \black &  &  &  & \black & \black & \black  \\\hline\hline
            \black & \black & \black & \black & \black & \black &  \\
            \black & \black & \black & \black & \black &  &  \\
            \black & \black & \black & \black &  &  &  \\\hline\hline
\end{tabular}
    \end{align*}
The lengths $\alpha = (\alpha_1,...,\alpha_7)$, $\alpha^T {\bf 1} = 1$, of each row and column in the spread-extremal stepgraphon is unknown. For any choice of lengths $\alpha$, we can associate a $7\times7$ matrix whose spread is identical to that of the associated stepgraphon pictured above. Let $B$ be the $7\times7$ matrix with $B_{i,j}$ equal to the value of the above stepgraphon on block $i,j$, and $D = \text{diag}(\alpha_1,...,\alpha_7)$ be a diagonal matrix with $\alpha$ on the diagonal. Then the matrix $D^{1/2} B D^{1/2}$ has spread equal to the spread of the associated stepgraphon. \\ 
\end{adjustwidth}

\noindent{\bf Step 3: } Computer-Assisted Proof of a Finite-Dimensional Eigenvalue Problem \\

\begin{adjustwidth}{1.5em}{0pt}
In Section \ref{sec:spread_graphon}, we show that the optimizing choice of $\alpha$ is, without loss of generality, given by $\alpha_1 = 2/3$, $\alpha_6 =1/3$, and all other $\alpha_i =0$ (Theorem \ref{thm: spread maximum graphon}). This is exactly the limit of the conjectured spread-extremal graph as $n$ tends to infinity. The proof of this fact is extremely technical, and relies on a computer-assisted proof using both interval arithmetic and symbolic computations. This is the only portion of the proof that requires the use of interval arithmetic. Though not a proof, in Figure 1 we provide intuitive visual justification that this result is true. In this figure, we provide contour plots resulting from numerical computations of the spread of the above matrix for various values of $\alpha$. The numerical results suggest that the $2/3-1/3$ two by two block stepgraphon is indeed optimal. See Figure 1 and the associated caption for details. The actual proof of this fact consists of the following steps: \\
\begin{itemize}
\item we reduce the possible choices of non-zero $\alpha_i$ from $2^7$ to $17$ different cases (Lemma \ref{lem: 19 cases}),
\item using eigenvalue equations, the graphon version of $\lambda_1 \mathbf{x}_u^2 - \lambda_n \mathbf{z}_u^2$ all nearly equal, and interval arithmetic, we prove that, of the $17$ cases, only the cases
\begin{itemize}
    \item $\alpha_1,\alpha_7 \ne 0$
    \item $\alpha_4,\alpha_5,\alpha_7 \ne 0$
\end{itemize}
can produce a spread-extremal stepgraphon (Lemma \ref{lem: 2 feasible sets}),
\item prove that the three by three case cannot be spread-extremal, using basic results from the theory of cubic polynomials and computer-assisted symbolic calculations (Lemma \ref{lem: SPR457}). \\
\end{itemize}
This proves the the spread-extremal graphon is a two by two stepgraphon that, without loss of generality, takes value zero on the block $[2/3,1]^2$ and one elsewhere (Theorem \ref{thm: spread maximum graphon}). \\
\end{adjustwidth}

\begin{figure}
    \centering
    \subfigure[$\alpha_i \ne 0$ for all $i$]{\includegraphics[width=2.9in,height = 2.5in]{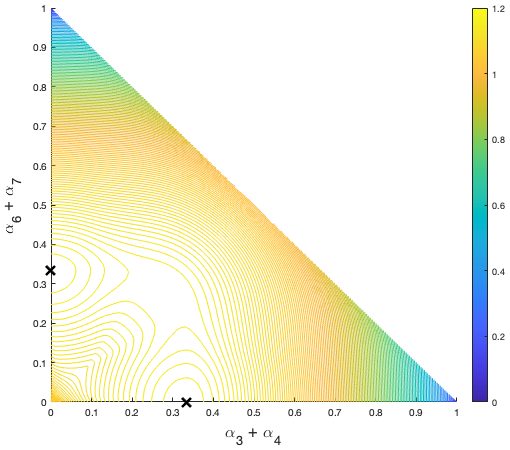}} \quad
    \subfigure[$\alpha_2=\alpha_3=\alpha_4 = 0$]{\includegraphics[width=2.9in,height = 2.5in]{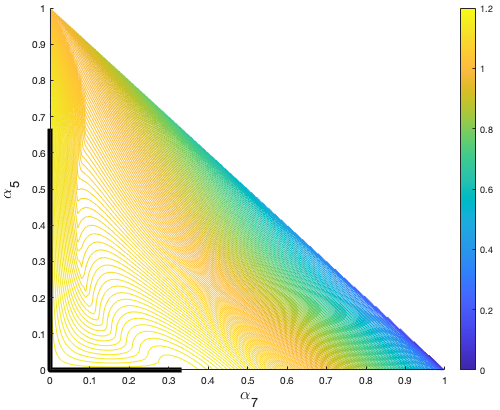}} 
    \caption{Contour plots of the spread for some choices of $\alpha$. Each point $(x,y)$ of Plot (a) illustrates the maximum spread over all choices of $\alpha$ satisfying $\alpha_3 + \alpha_4 = x$ and $\alpha_6 + \alpha_7 = y$ (and therefore, $\alpha_1 + \alpha_2 + \alpha_ 5 = 1 - x - y$) on a grid of step size $1/100$. Each point $(x,y)$ of Plot (b) illustrates the maximum spread over all choices of $\alpha$ satisfying $\alpha_2=\alpha_3=\alpha_4=0$, $\alpha_5 = y$, and $\alpha_7 = x$ on a grid of step size $1/100$. The maximum spread of Plot (a) is achieved at the black x, and implies that, without loss of generality, $\alpha_3 + \alpha_4=0$, and therefore $\alpha_2 = 0$ (indices $\alpha_1$ and $\alpha_2$ can be combined when $\alpha_3 + \alpha_4=0$). Plot (b) treats this case when $\alpha_2 = \alpha_3 = \alpha_4 = 0$, and the maximum spread is achieved on the black line. This implies that either $\alpha_5 =0$ or $\alpha_7 = 0$. In both cases, this reduces to the block two by two case $\alpha_1,\alpha_7 \ne 0$ (or, if $\alpha_7 = 0$, then $\alpha_1,\alpha_6 \ne 0$).}
     \label{fig:contour}
\end{figure}

\noindent{\bf Step 4: } From Graphons to an Asymptotic Proof of the Spread Conjecture \\

\begin{adjustwidth}{1.5em}{0pt}
Finally, in Section \ref{sub-sec: graphons to graphs}, we convert our result for the spread-extremal graphon to a statement for graphs. This process consists of two main parts:\\ 
\begin{itemize}
    \item using our graphon theorem, we show that any spread-extremal graph takes the form $(K_{n_1}\dot{\cup} \overline{K_{n_2}})\vee \overline{K_{n_3}}$ for $n_1 = (2/3+o(1))n$, $n_2 = o(n)$, and $n_3 = (1/3+o(1))n$ (Lemma \ref{lem: few exceptional vertices}), i.e. any spread-extremal graph is equal up to a set of $o(n)$ vertices to the conjectured optimal graph $K_{\lfloor 2n/3\rfloor} \vee \overline{K_{\lceil n/3 \rceil}}$,
    \item we show that, for $n$ sufficiently large, the spread of $(K_{n_1}\dot{\cup} \overline{K_{n_2}})\vee \overline{K_{n_3}}$, $n_1 + n_2 + n_3 = n$, is maximized when $n_2 = 0$ (Lemma \ref{lem: no exceptional vertices}).\\
\end{itemize}
Together, these two results complete our proof of the spread conjecture for sufficiently large $n$ (Theorem \ref{thm: spread maximum graphs}).
\end{adjustwidth}

\section{Properties of spread-extremal graphs} \label{sec:graphs}

In this section, we review what has already been proven about spread-extremal graphs ($n$ vertex graphs with spread $s(n)$) in \cite{gregory2001spread}, where the original conjectures were made. We then prove a number of properties of spread-extremal graphs and properties of the eigenvectors associated with the maximum and minimum eigenvalues of a spread-extremal graph. 

Let $G$ be a graph, and let $A$ be the adjacency matrix of $G$, with eigenvalues $\lambda_1 \geq \cdots \geq \lambda_n$. For unit vectors $\mathbf{x}$, $\mathbf{y} \in \mathbb{R}^n$, we have 
\[\lambda_1 \geq \mathbf{x}^T A \mathbf{x} \quad \mbox{and} \quad \lambda_n \leq \mathbf{y}^TA\mathbf{y}.\]
Hence (as it is observed in \cite{gregory2001spread}), the spread of a graph can be expressed
\begin{equation}\label{eq: gregory min max}
s(G) = \max_{\mathbf{x}, \mathbf{z}}\sum_{u\sim v} (\mathbf{x}_u\mathbf{x}_v-\mathbf{z}_u\mathbf{z}_v)\end{equation}
where the maximum is taken over all unit vectors $\mathbf{x}, \mathbf{z}$. Furthermore, this maximum is attained only for $\mathbf{x}, \mathbf{z}$ orthonormal eigenvectors corresponding to the eigenvalues $\lambda_1, \lambda_n$, respectively. We refer to such a pair of vectors $\mathbf{x}, \mathbf{z}$ as \emph{extremal eigenvectors} of $G$. For any two vectors $\mathbf{x}$, $\mathbf{z}$ in $\mathbb{R}^n$, let $G(\mathbf{x}, \mathbf{z})$ denote the graph for which distinct vertices $u, v$ are adjacent if and only if $\mathbf{x}_u\mathbf{x}_v-\mathbf{z}_u\mathbf{z}_v\geq 0$. Then from the above, there is some graph $G(\mathbf{x}, \mathbf{z})$ which is a spread-extremal graph,  with $\mathbf{x}$, $\mathbf{z}$ orthonormal and $\mathbf{x}$ positive (\cite[Lemma 3.5]{gregory2001spread}). 

In addition, we enhance \cite[Lemmas 3.4 and 3.5]{gregory2001spread} using some helpful definitions and the language of threshold graphs.  
Whenever $G = G(\x, \z)$ is understood, let $P = P(\x, \z) := \{u \in V(G) : \z_u \geq 0\}$ and $N = N(\x, \z) := V(G)\setminus P$.  
%
%
For our purposes, we say that $G$ is a \emph{threshold graph} if and only if there exists a function $\varphi:V(G)\to (-\infty,\infty]$ such that for all distinct $u,v\in V(G)$, $uv\in E(G)$ if and only if $\varphi(u)+\varphi(v) \geq 0$
    \footnote{
        Here, we take the usual convention that for all $x\in (-\infty, \infty]$, $\infty + x = x + \infty = \infty$}.  
Here, $\varphi$ is a {\it threshold function} for $G$ (with $0$ as its {\it threshold}). 
The following detailed lemma shows that any spread-extremal graph is the join of two threshold graphs with threshold functions which can be made explicit.  

\begin{lemma}\label{lem: graph join}
    Let $n> 2$ and suppose $G$ is a $n$-vertex graph such that $s(G) = s(n)$.  
    Denote by $\mathbf{x}$ and $\mathbf{z}$ the extremal unit eigenvectors for $G$.  
    Then
    \begin{enumerate}[(i)]
        \item\label{item: matrix 0 or 1} 
        For any two vertices $u,v$ of $G$, $u$ and $v$ are adjacent whenever $\mathbf{x}_u\mathbf{x}_v-\mathbf{z}_u\mathbf{z}_v>0$ and $u$ and $v$ are nonadjacent whenever $\mathbf{x}_u\mathbf{x}_v-\mathbf{z}_u\mathbf{z}_v<0$.
    
        \item\label{item: xx-zz nonzero} 
            For any distinct $u,v\in V(G)$,  $\mathbf{x}_u\mathbf{x}_v-\mathbf{z}_u\mathbf{z}_v\not=0$.
        \item\label{item: graph join}
            Let $P := P(\x, \z)$, $N := N(\x, \z)$ and let $G_1 := G[P]$ and $G_2 := G[N]$.  
            Then $G = G(\x, \z) = G_1\vee G_2$.  
        \item\label{item: G1 G2 thresholds}
            For each $i\in \{1,2\}$, $G_i$ is a threshold graph with threshold function defined on all $u\in V(G_i)$ by 
            \begin{align*}
                \varphi(u) 
                := 
                    \log\left|
                        \dfrac{\x_u}
                        {\z_u}
                    \right|
                .  
            \end{align*}
    \end{enumerate}
\end{lemma}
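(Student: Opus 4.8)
The plan is to deduce all four parts from two ingredients already in hand: the extremal characterization $\lambda_1(H)\ge \mathbf{y}^TA_H\mathbf{y}\ge\lambda_n(H)$, valid for every graph $H$ and unit vector $\mathbf{y}$, and an edge-toggling move. Fix a spread-extremal $G$ with extremal unit eigenvectors $\mathbf{x},\mathbf{z}$, where we take $\mathbf{x}$ to be entrywise positive (Perron--Frobenius, as recalled before the lemma). For distinct $u,v$, let $G'$ be $G$ with the pair $uv$ \emph{toggled}: add $uv$ if $uv\notin E(G)$, delete it if $uv\in E(G)$. Its adjacency matrix is $A'=A+\epsilon C$, where $\epsilon\in\{+1,-1\}$ and $C$ has $1$'s in positions $(u,v),(v,u)$ and zeros elsewhere; consequently $\mathbf{y}^TA'\mathbf{y}=\mathbf{y}^TA\mathbf{y}+2\epsilon\,\mathbf{y}_u\mathbf{y}_v$ for any $\mathbf{y}$, and $A'\mathbf{y}$ differs from $A\mathbf{y}$ only in coordinates $u,v$, where it changes by $\epsilon\,\mathbf{y}_v$ and $\epsilon\,\mathbf{y}_u$. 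With this, part (\ref{item: matrix 0 or 1}) is immediate: if $\mathbf{x}_u\mathbf{x}_v-\mathbf{z}_u\mathbf{z}_v>0$ but $uv\notin E(G)$, then toggling gives
\[
  s(G')\ \ge\ \mathbf{x}^TA'\mathbf{x}-\mathbf{z}^TA'\mathbf{z}\ =\ s(G)+2(\mathbf{x}_u\mathbf{x}_v-\mathbf{z}_u\mathbf{z}_v)\ >\ s(n),
\]
contradicting maximality of $s(n)$; the opposite inequality with $uv\in E(G)$ is handled symmetrically by deletion.

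Part (\ref{item: xx-zz nonzero}) is the heart of the lemma and the step I expect to demand the most care. Suppose $\mathbf{x}_u\mathbf{x}_v=\mathbf{z}_u\mathbf{z}_v$ for distinct $u,v$ and toggle $uv$. The crucial observation is that the first-order changes to the two Rayleigh quotients are \emph{equal} — both equal $2\epsilon\,\mathbf{x}_u\mathbf{x}_v=2\epsilon\,\mathbf{z}_u\mathbf{z}_v$ — so they cancel and $\mathbf{x}^TA'\mathbf{x}-\mathbf{z}^TA'\mathbf{z}=s(n)$. Hence $s(G')=s(n)$, and since $\lambda_1(G')\ge\mathbf{x}^TA'\mathbf{x}$, $\lambda_n(G')\le\mathbf{z}^TA'\mathbf{z}$, and $\lambda_1(G')-\lambda_n(G')=s(n)=\mathbf{x}^TA'\mathbf{x}-\mathbf{z}^TA'\mathbf{z}$, both inequalities must be equalities. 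In particular $\mathbf{x}$ is a unit vector attaining the maximal Rayleigh quotient of the symmetric matrix $A'$, so $A'\mathbf{x}=\lambda_1(G')\mathbf{x}=(\lambda_1+2\epsilon\,\mathbf{x}_u\mathbf{x}_v)\mathbf{x}$. Comparing this with $A'\mathbf{x}=A\mathbf{x}+\epsilon(\mathbf{x}_v\mathbf{e}_u+\mathbf{x}_u\mathbf{e}_v)=\lambda_1\mathbf{x}+\epsilon(\mathbf{x}_v\mathbf{e}_u+\mathbf{x}_u\mathbf{e}_v)$ (here $\mathbf{e}_w$ is the $w$-th standard basis vector) and dividing by $\epsilon$ yields the rigid identity $\mathbf{x}_v\mathbf{e}_u+\mathbf{x}_u\mathbf{e}_v=2\mathbf{x}_u\mathbf{x}_v\,\mathbf{x}$. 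Since $n>2$ there is a coordinate $w\notin\{u,v\}$, where this reads $0=2\mathbf{x}_u\mathbf{x}_v\,\mathbf{x}_w$; as $\mathbf{x}_u,\mathbf{x}_v>0$ we conclude $\mathbf{x}_w=0$, contradicting positivity of $\mathbf{x}$. (If one prefers not to invoke positivity, the same identity forces the top eigenvector of $G'$ to be supported on $\{u,v\}$, hence $\lambda_1\le 1$, which is impossible since $s(n)\ge n$ and $|\lambda_n|\le\lambda_1$ give $\lambda_1\ge n/2$.)

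Parts (\ref{item: graph join}) and (\ref{item: G1 G2 thresholds}) are then short consequences. For (\ref{item: graph join}): if $u\in P$ and $v\in N$ then $\mathbf{z}_u\ge 0>\mathbf{z}_v$, so $\mathbf{x}_u\mathbf{x}_v-\mathbf{z}_u\mathbf{z}_v\ge\mathbf{x}_u\mathbf{x}_v>0$ and part (\ref{item: matrix 0 or 1}) puts $uv\in E(G)$; since this holds for every cross pair, $G=G[P]\vee G[N]=G_1\vee G_2$. For (\ref{item: G1 G2 thresholds}): fix $i\in\{1,2\}$ and distinct $u,v\in V(G_i)$. By parts (\ref{item: matrix 0 or 1}) and (\ref{item: xx-zz nonzero}), $uv\in E(G_i)$ iff $\mathbf{x}_u\mathbf{x}_v>\mathbf{z}_u\mathbf{z}_v$, and on $V(G_i)$ the entries of $\mathbf{z}$ share a sign, so this is $\mathbf{x}_u\mathbf{x}_v>|\mathbf{z}_u|\,|\mathbf{z}_v|$. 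When $\mathbf{z}_u,\mathbf{z}_v\neq 0$ this says $\log|\mathbf{x}_u/\mathbf{z}_u|+\log|\mathbf{x}_v/\mathbf{z}_v|>0$, i.e. $\varphi(u)+\varphi(v)>0$; when $\mathbf{z}_u=0$ (only possible for $i=1$) we have $\varphi(u)=\infty$, the inequality $\mathbf{x}_u\mathbf{x}_v>0=\mathbf{z}_u\mathbf{z}_v$ holds automatically, and $\varphi(u)+\varphi(v)=\infty\ge 0$. Since part (\ref{item: xx-zz nonzero}) rules out $\varphi(u)+\varphi(v)=0$, in every case $uv\in E(G_i)\iff\varphi(u)+\varphi(v)\ge 0$, so $\varphi$ is a threshold function for $G_i$ with threshold $0$. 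The only genuinely delicate point in the whole argument is part (\ref{item: xx-zz nonzero}) — specifically the cancellation that keeps $\mathbf{x}$ extremal after toggling — together with the standing reduction to $\mathbf{x}>0$ (equivalently, $G$ connected); everything else is bookkeeping.
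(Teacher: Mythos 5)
Your proof is correct and follows essentially the same route as the paper's: part (i) by edge-toggling against the Rayleigh-quotient characterization of the spread, part (ii) by toggling a neutral pair, observing the perturbations cancel so that $\mathbf{x}$ stays extremal for $A'$, and reading off a contradiction from the eigenvector equation at a third coordinate via positivity of $\mathbf{x}$, with (iii) and (iv) as direct consequences. Your version of (ii) is, if anything, slightly cleaner in that it pins down $\lambda_1(G')=\lambda_1+2\epsilon\,\mathbf{x}_u\mathbf{x}_v$ explicitly rather than introducing an unknown eigenvalue $\lambda'$.
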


\begin{proof}

Suppose $G$ is a $n$-vertex graph such that $s(G) = s(n)$ and write $A = (a_{uv})_{u,v\in V(G)}$ for its adjacency matrix.  
Item \eqref{item: matrix 0 or 1} is equivalent to Lemma 3.4 from \cite{gregory2001spread}.  
For completeness, we include a proof.  
By Equation \eqref{eq: gregory min max} we have that 
\begin{align*}
    s(G) &= \max_{ x,z }
        \x^T A\x
        -\z^TA\z
        = 
        \sum_{u,v\in V(G)}
            a_{uv}\cdot 
            \left(\x_u\x_v - \z_u\z_v\right), 
\end{align*}
where the maximum is taken over all unit vectors of length $|V(G)|$.  
If $\x_u\x_v - \z_u\z_v > 0$ and $a_{uv} = 0$, then $s(G+uv) > s(G)$, a contradiction.  
And if $\x_u\x_v - \z_u\z_v < 0$ and $a_{uv} = 1$, then $s(G-uv) > s(G)$, a contradiction.  
So Item \eqref{item: matrix 0 or 1} holds.  
\\

For a proof of Item \eqref{item: xx-zz nonzero} suppose $\x_u\x_v - \z_u\z_v = 0$ and denote by $G'$ the graph formed by adding or deleting the edge $uv$ from $G$.  
With $A' = (a_{uv}')_{u,v\in V(G')}$ denoting the adjacency matrix of $G'$, note that 
\begin{align*}
    s(G')
    \geq 
        \x^T A'\x 
        -\z^TA'\z
    =
        \x^T A\x
        -\z^TA\z
    = 
        s(G)
    &\geq 
        s(G), 
\end{align*}
so each inequality is an equality.  
It follows that $\x, \z$ are eigenvectors for $A'$.  
Furthermore, without loss of generality, we may assume that $uv\in E(G)$.  
In particular, there exists some $\lambda'$ such that 
\begin{align*}
    A\x &= \lambda \x\\
    (A
        -{\bf e}_u{\bf e}_v^T
        -{\bf e}_v{\bf e}_u^T
    )\x
    &= 
        \lambda'\x 
    .  
\end{align*}
So $(
    {\bf e}_u{\bf e}_v^T
    +{\bf e}_v{\bf e}_u^T
)\x = (\lambda - \lambda')\x$.  
Let $w\in V(G)\setminus \{u,v\}$.  
By the above equation, $(\lambda-\lambda')\x_w = 0$ and either $\lambda' = \lambda$ or $\x_w = 0$.  
To find a contradiction, it is sufficient to note that $G$ is a connected graph with Perron-Frobenius eigenvector $\x$.  
Indeed, let $P := \{w\in V(G) : \z_w \geq 0\}$ and let $N := V(G)\setminus P$.  
Then for any $w\in P$ and any $w'\in N$, $\x_w\x_{w'} - \z_w\z_{w'} > 0$ and by Item \eqref{item: matrix 0 or 1}, $ww'\in E(G)$.  
So $G$ is connected and this completes the proof of Item \eqref{item: xx-zz nonzero}.  
\\

Now, we prove Item \eqref{item: graph join}.  
To see that $G = G(\x, \z)$, note by Items \eqref{item: matrix 0 or 1} and \eqref{item: xx-zz nonzero}, for all distinct $u,v\in V(G)$, $\x_u\x_v - \z_u\z_v > 0$ if and only if $uv\in 
E(G)$, and otherwise, $\x_u\x_v - \z_u\z_v < 0$ and $uv\notin E(G)$.  
To see that $G = G_1\vee G_2$, note that for any $u\in P$ and any $v\in N$, $0 \neq \x_u\x_v - \z_u\z_v \geq \z_u\cdot (-\z_v)\geq 0$.  
\\

Finally, we prove Item \eqref{item: G1 G2 thresholds}.  
Suppose $u,v$ are distinct vertices such that either $u,v\in P$ or $u,v\in N$.  
Allowing the possibility that $0\in \{\z_u, \z_v\}$, the following equivalence holds: 
\begin{align*}
    \varphi(u) + \varphi(v) &\geq 0 & \text{ if and only if }\\
    \log\left|
        \dfrac{\x_u\x_v}{\z_u\z_v}
    \right|
    &\geq 1 
    & \text{ if and only if }\\
    \x_u\x_v - |\z_u\z_v| &\geq 0.  
\end{align*}
Since $\z_u,\z_v$ have the same sign, Item \eqref{item: G1 G2 thresholds}.  
This completes the proof.  
\end{proof}

From \cite{thresholdbook}, we recall the following useful characterization in terms of ``nesting'' neighborhoods: 
    $G$ is a threshold graph if and only there exists a numbering $v_1,\cdots, v_n$ of $V(G)$ such that for all $1\leq i<j\leq n$, if $v_k\in V(G)\setminus\{v_i,v_j\}$, $v_jv_k\in E(G)$ implies that $v_iv_k\in E(G)$.  
Given this ordering, if $k$ is the smallest natural number such that $v_kv_{k+1}\in E(G)$ then we have that the set $\{v_1,\cdots, v_k\}$ induces a clique and the set $\{v_{k+1},\cdots, v_n\}$ induces an independent set.  

The next lemma shows that both $P$ and $N$ have linear size.

\begin{lemma}\label{linear size parts}
If $G$ is a spread-extremal graph, then both $P$ and $N$ have size $\Omega(n)$.
\end{lemma}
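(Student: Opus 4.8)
The plan is to derive both bounds from the spread formula $s(n) = \langle \x, A\x\rangle - \langle \z, A\z\rangle$ together with the join structure $G = G_1 \vee G_2$ from Lemma \ref{lem: graph join}, where $G_1 = G[P]$ and $G_2 = G[N]$, and the fact that $\x$ is the positive Perron eigenvector while $\z_u \geq 0$ precisely for $u \in P$. First I would record the baseline: since $G(n, \lfloor 2n/3\rfloor)$ is a feasible competitor, $s(n) \geq s(G(n,\lfloor 2n/3\rfloor)) = \Omega(n)$, so any spread-extremal $G$ has $\lambda_1 - \lambda_n = \Omega(n)$; in particular $\lambda_1 = \Omega(n)$ and $\lambda_n = -\Omega(n)$ (the latter because $\lambda_1 \leq n-1$ always, so $-\lambda_n = s(n) - \lambda_1 = \Omega(n)$).

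For $|N| = \Omega(n)$: the key point is that $\z \perp \x$ forces $\z$ to have negative entries, so $N \neq \emptyset$, and I want to quantify this. Suppose $|N| = k$. Since $\lambda_n = \langle \z, A\z\rangle = 2\sum_{u\sim v}\z_u\z_v$ and the only negative contributions come from edges between $P$ and $N$ (within $P$ and within $N$, $\z$ has constant sign on each, contributing nonnegatively), we get $-\lambda_n \leq 2\sum_{u\in P, v\in N, u\sim v} |\z_u||\z_v| \leq 2\bigl(\sum_{u\in P}\z_u^2\bigr)^{1/2}\bigl(\sum_{v\in N}\z_v^2\bigr)^{1/2}\cdot(\text{something})$ — more carefully, bound $\bigl|\sum_{u\in P,v\in N, u\sim v}\z_u\z_v\bigr| \le \|\z\|_2^2 \cdot \sqrt{|P||N|}/\ldots$; the cleanest route is Cauchy–Schwarz: $-\lambda_n \le 2 \sum_{v \in N} |\z_v| \sum_{u \in P} |\z_u| \le 2\sqrt{|N|}\,\|\z|_N\|_2 \cdot \sqrt{|P|}\,\|\z|_P\|_2 \le 2\sqrt{|N|\,|P|} \le 2\sqrt{kn}$. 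Since $-\lambda_n = \Omega(n)$, this yields $\sqrt{kn} = \Omega(n)$, i.e. $k = |N| = \Omega(n)$.

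For $|P| = \Omega(n)$: here the natural estimate uses $\lambda_1$. We have $\lambda_1 = 2\sum_{u \sim v} \x_u \x_v$. Every edge lies in $G_1 \vee G_2$, and I would like to show few vertices forces a small top eigenvalue. Since $G = G_1 \vee G_2$ with $|P| = p$, the graph $G$ is contained in the join $K_p \vee \overline{K_{n-p}}$ only in the sense of edges incident to $N$; edges inside $N$ can also be present. But $\lambda_1(G) \le \lambda_1(K_p \vee K_{n-p}) $... instead, more robustly: $\lambda_1 \le \sqrt{2e(G)}$ where $e(G)$ is the number of edges, and $e(G) \le \binom{p}{2} + p(n-p) + \binom{n-p}{2}$ is not helpful since that's $\binom n 2$. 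Better: the vertices in $N$ form a threshold graph, hence a split graph, so $G_2$ has a clique-independent set partition; combined with the Perron structure one shows $\lambda_1 \le$ (max degree)$+1 \le$ const if $p$ is bounded — indeed if $|P|$ is bounded then every vertex in $N$ has bounded degree except through... no: vertices in $N$ adjacent to all of $P$ have degree $\ge$ within-$N$ degree too. I would instead argue via the eigenvalue equation at a vertex $v\in N$ of maximum $\x$-value: $\lambda_1 \x_v = \sum_{u \sim v}\x_u \le d(v)\x_v$, and for $v$ in the independent-set part of the threshold graph $G_2$, $d(v) \le |P| \le p$, giving $\lambda_1 \le p$; since such a $v$ exists whenever $N$ is nonempty and $\lambda_1 = \Omega(n)$, we conclude $p = \Omega(n)$.

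The main obstacle is the $|P| = \Omega(n)$ direction: one must be careful that the "independent-set part'' of the threshold graph $G_2 = G[N]$ is nonempty and that its vertices genuinely have all neighbors in $P$; if $G_2$ is a complete graph this fails, so the argument must split on whether $G_2$ is complete (in which case $\x$ restricted there plus a direct max-degree bound on a vertex of $P$ does the job, since then edges into $N$ dominate) or not. Handling this case analysis cleanly, while keeping the constants uniform, is where the real work lies; everything else is Cauchy–Schwarz and the Rayleigh-quotient lower bound $s(n) = \Omega(n)$.
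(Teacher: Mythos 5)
Your second paragraph is correct, and it is stronger than you realize: the bound you derive there,
$-\lambda_n \le 2\bigl(\sum_{u\in P}|\z_u|\bigr)\bigl(\sum_{v\in N}|\z_v|\bigr) \le 2\sqrt{|P|\,|N|}$,
is symmetric in $P$ and $N$. Combined with $-\lambda_n \ge cn$ (which you establish correctly from $s(n)\ge s(G(n,\lfloor 2n/3\rfloor)) > 1.1n$ and $\lambda_1\le n-1$), it gives $|P|\,|N|\ge c^2n^2/4$ and hence $|P|\ge c^2n/4$ and $|N|\ge c^2n/4$ simultaneously. Your entire third paragraph is therefore unnecessary --- and it is also where the genuine gaps are. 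First, the step $\lambda_1\x_v=\sum_{u\sim v}\x_u\le d(v)\x_v$ is only valid when $\x_v$ is the maximum of $\x$ over all of $V(G)$, not merely over $N$: a vertex $v\in N$ has neighbors in $P$ whose Perron entries may exceed $\x_v$ (in the conjectured extremal graph they do), so ``$\lambda_1\le p$'' does not follow from that vertex. Second, a vertex in the independent part of the split partition of the threshold graph $G_2=G[N]$ can still be adjacent to vertices in the clique part of $G_2$, so $d(v)\le|P|$ is not guaranteed; what is true is that a threshold graph has a vertex that is either isolated or dominating, and the dominating case is exactly the one you leave unresolved. Delete that paragraph, observe the symmetry of your Cauchy--Schwarz bound, and the proof is complete.

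For comparison, the paper's proof is different: it normalizes $\|\z\|_\infty=1$, assumes without loss of generality $|P|\le|N|$ and $|P|<n/100$, and derives a contradiction with $\lambda_n<-n/10$ from the eigenvalue equation $A\z=\lambda_n\z$ at the vertex $v$ of maximal $|\z|$-entry when $v\in N$, and from the second-order equation $A^2\z=\lambda_n^2\z$ (bounding the $\z$-weighted count of walks of length two by $|P||N|+2e(P)$) when $v\in P$. Your route avoids both the case split on the location of the extremal entry and the use of $A^2$, at the cost of invoking the Rayleigh-quotient identity $\lambda_n=\langle\z,A\z\rangle$ and the sign structure of $\z$ on $P$ and $N$; either argument is acceptable here.
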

\begin{proof}
We will show that $P$ and $N$ both have size at least $\frac{n}{100}$. First, since $G$ is spread-extremal it has spread more than $1.1n$ and hence has smallest eigenvalue $\lambda_n < \frac{-n}{10}$. Without loss of generality, for the remainder of this proof we will assume that $|P| \leq |N|$, that $\mathbf{z}$ is normalized to have infinity norm $1$, and that $v$ is a vertex satisfying $| \mathbf{z}_v| = 1$. By way of contradiction, assume that $|P|< \frac{n}{100} $. 

If $v\in N$, then we have 
\[
\lambda_n \mathbf{z}_v = -\lambda_n = \sum_{u\sim v} \mathbf{z}_u \leq \sum_{u\in P} \mathbf{z}_u \leq |P| < \frac{n}{100},
\]
contradicting that $\lambda_n < \frac{-n}{10}$. Therefore, assume that $v\in P$. Then 
\[
\lambda_n^2 \mathbf{z}_v = \lambda_n^2 = \sum_{u\sim v} \sum_{w\sim u}\mathbf{z}_w \leq \sum_{u\sim v} \sum_{\substack{w\sim u\\w\in P}} \mathbf{z}_w \leq |P||N| + 2e(P) \leq |P||N| + |P|^2 \leq \frac{99n^2}{100^2} + \frac{n^2}{100^2}.
\]
This gives $|\lambda_n| \leq \frac{n}{10}$, a contradiction.
\end{proof}

\begin{lemma}\label{upper bound on eigenvector entries}
If $\mathbf{x}$ and $\mathbf{z}$ are unit eigenvectors for $\lambda_1$ and $\lambda_n$, then $\norm{\mathbf{x}}_\infty = O(n^{-1/2})$ and $\norm{\mathbf{z}}_\infty = O(n^{-1/2})$.
\end{lemma}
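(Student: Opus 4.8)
The plan is to bound $\norm{\mathbf{x}}_\infty$ and $\norm{\mathbf{z}}_\infty$ separately, using the eigenvalue equations together with the facts established so far: that $s(n) = \Theta(n)$ (so $\lambda_1 = \Omega(n)$ and $|\lambda_n| = \Omega(n)$), that $G$ is the join $G_1 \vee G_2$ with $|P|, |N| = \Omega(n)$ (Lemmas \ref{lem: graph join} and \ref{linear size parts}), and that the threshold structure forces strong comparability of eigenvector entries within $P$ and within $N$. First I would treat $\mathbf{x}$. Normalize so that $\norm{\mathbf{x}}_\infty = \mathbf{x}_v$ for some vertex $v$ (note $\mathbf{x} > 0$). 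From $\lambda_1 \mathbf{x}_v = \sum_{u \sim v} \mathbf{x}_u$ and $\lambda_1 = \Omega(n)$, if $v \in P$ then $v$ is adjacent to all of $N$ (since $G = G_1 \vee G_2$), so $\Omega(n)\, \mathbf{x}_v = \lambda_1 \mathbf{x}_v \ge \sum_{u \in N} \mathbf{x}_u$; this alone is the wrong direction, so instead I would use $\lambda_1^2 \mathbf{x}_v = \sum_{u \sim v}\sum_{w \sim u} \mathbf{x}_w \ge \sum_{w} (\text{number of common structure})\, \mathbf{x}_w$. The cleaner route: $\lambda_1 \mathbf{x}_v = \sum_{u\sim v}\mathbf{x}_u \le \sqrt{\deg(v)} \le \sqrt{n}$ by Cauchy--Schwarz applied with the unit vector $\mathbf{x}$, i.e. $\sum_{u \sim v}\mathbf{x}_u \le \sqrt{\deg(v)}\,\norm{\mathbf{x}}_2 = \sqrt{\deg(v)} \le \sqrt{n}$. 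Hence $\norm{\mathbf{x}}_\infty = \mathbf{x}_v \le \sqrt{n}/\lambda_1 = O(n^{-1/2})$, using $\lambda_1 \ge s(n)/2 = \Omega(n)$ (since $\lambda_1 \ge |\lambda_n|$ is false in general, but $\lambda_1 - \lambda_n = s(n)$ and $\lambda_1 \ge 0$, so at least one of $\lambda_1, |\lambda_n|$ is $\ge s(n)/2$; in fact for these graphs $\lambda_1 = \Omega(n)$ directly since $G \supseteq K_{\Omega(n)}$).

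For $\mathbf{z}$ the same Cauchy--Schwarz trick gives $|\lambda_n|\,|\mathbf{z}_v| = |\sum_{u\sim v}\mathbf{z}_u| \le \sqrt{\deg(v)}\,\norm{\mathbf{z}}_2 \le \sqrt{n}$, so $\norm{\mathbf{z}}_\infty = |\mathbf{z}_v| \le \sqrt{n}/|\lambda_n| = O(n^{-1/2})$, provided $|\lambda_n| = \Omega(n)$, which follows from spread-extremality: $s(n) \ge s(G(n,\lfloor 2n/3\rfloor)) = \Omega(n)$ and $\lambda_1 = O(n)$ trivially (e.g. $\lambda_1 \le \norm{A}_\infty$ is too weak; use $\lambda_1 \le \sqrt{2e(G)} \le n$), hence $|\lambda_n| = s(n) - \lambda_1 = \Omega(n)$. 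So the argument is really just: the max entry $v$ of a unit eigenvector with eigenvalue $\mu$ satisfies $|\mu|\,|\mathbf{v}_v| \le \sqrt{\deg(v)} \le \sqrt{n}$, combined with lower bounds $\lambda_1, |\lambda_n| = \Omega(n)$ coming from the earlier lemmas.

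I expect the only real content to be establishing $\lambda_1 = \Omega(n)$ and $|\lambda_n| = \Omega(n)$ cleanly, but both are already essentially in hand: the proof of Lemma \ref{linear size parts} uses $s(G) > 1.1n$ and $\lambda_n < -n/10$, and $\lambda_1 \ge \lambda_1(K_{\lfloor 2n/3\rfloor}) \ge \lfloor 2n/3\rfloor - 1$ since $G$ contains a large clique (or simply $\lambda_1 = s(n) + \lambda_n \ge 1.1n - n = \Omega(n)$ is not quite enough, so I would instead note $\lambda_1 \ge \lambda_n + 1.1n$ and $\lambda_1 \le \sqrt{2e(G)} \le n$ forces $\lambda_n \ge \lambda_1 - 1.1n \ge -1.1n$, consistent, and separately $\lambda_1 \ge$ average degree of the clique part $= \Omega(n)$). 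The main obstacle, such as it is, is bookkeeping these $\Omega(n)$ bounds without circularity; once those are fixed the infinity-norm bounds are a one-line Cauchy--Schwarz. I would therefore structure the proof as: (1) recall $\lambda_1 = \Omega(n)$ and $-\lambda_n = \Omega(n)$ for spread-extremal $G$; (2) for a unit eigenvector $\mathbf{v}$ with eigenvalue $\mu \ne 0$ and $v$ attaining $\norm{\mathbf{v}}_\infty$, write $\mu \mathbf{v}_v = \sum_{u \sim v}\mathbf{v}_u$ and apply Cauchy--Schwarz to get $|\mathbf{v}_v| \le \sqrt{\deg(v)}/|\mu| \le \sqrt{n}/|\mu|$; (3) conclude $\norm{\mathbf{x}}_\infty \le \sqrt{n}/\lambda_1 = O(n^{-1/2})$ and $\norm{\mathbf{z}}_\infty \le \sqrt{n}/|\lambda_n| = O(n^{-1/2})$.
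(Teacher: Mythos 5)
Your argument is correct, but it is a genuinely different (and shorter) route than the paper's. The paper also starts from the eigenvalue equation at a vertex attaining the infinity norm, but instead of Cauchy--Schwarz it defines the sets $A=\{w:\mathbf{x}_w>\norm{\mathbf{x}}_\infty/4\}$ and $B=\{w:\mathbf{z}_w>-\norm{\mathbf{z}}_\infty/20\}$, uses $\lambda_1>n/2$ and $\lambda_n<-n/10$ to force $|A|,|B|=\Omega(n)$ (many entries must be comparable to the maximum for the eigenvalue equation to hold), and then concludes from $1=\norm{\mathbf{x}}_2^2\geq |A|\norm{\mathbf{x}}_\infty^2/16$. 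Your version compresses all of this into the standard bound $|\mu|\,\norm{\mathbf{v}}_\infty=\bigl|\sum_{u\sim v}\mathbf{v}_u\bigr|\leq\sqrt{\deg(v)}\,\norm{\mathbf{v}}_2\leq\sqrt{n}$, which needs only the same spectral inputs $\lambda_1=\Omega(n)$ and $|\lambda_n|=\Omega(n)$ and none of the join structure; both approaches are sound and yield the same conclusion. One aside in your writeup is wrong, though harmless: $\lambda_1\geq|\lambda_n|$ is \emph{not} ``false in general'' for adjacency matrices --- by Perron--Frobenius the spectral radius of a nonnegative symmetric matrix is $\lambda_1$, so $\lambda_1\geq s(G)/2$ follows immediately, which is exactly how the paper justifies its weak estimate $\lambda_1>n/2$; combined with $\lambda_1\leq\sqrt{2e(G)}<n$ this gives $-\lambda_n=s(G)-\lambda_1>0.1n$, so your required lower bounds are available without any of the fallback arguments you sketch.
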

\begin{proof}
During this proof we will assume that $\hat{u}$ and $\hat{v}$ are vertices satisfying $\norm{\mathbf{x}}_\infty = \mathbf{x}_{\hat{u}}$ and $\norm{\mathbf{z}}_\infty = |\mathbf{z}_{\hat{v}}|$ and without loss of generality that $\hat{v} \in N$. We will use the weak estimates that $\lambda_1 > \frac{n}{2}$ and $\lambda_n < \frac{-n}{10}$. Define sets 
\begin{align*}
    A &= \left\{ w: \mathbf{x}_w > \frac{\mathbf{x}_{\hat{u}}}{4} \right\}\\
    B &= \left\{ w: \mathbf{z}_w > \frac{-\mathbf{z}_{\hat{v}}}{20} \right\}.
\end{align*}
It suffices to show that $A$ and $B$ both have size $\Omega(n)$, for then there exists a constant $\epsilon > 0$ such that
\[
1 = \mathbf{x}^T \mathbf{x} \geq \sum_{w\in A} \mathbf{x}_w^2 \geq |A| \frac{\norm{\mathbf{x}}^2_\infty}{16} \geq \epsilon n \norm{\mathbf{x}}^2_\infty,
\]
and similarly 
\[
1 = \mathbf{z}^T \mathbf{z} \geq \sum_{w\in B} \mathbf{z}_w^2 \geq |B| \frac{\norm{\mathbf{z}}^2_\infty}{400} \geq \epsilon n \norm{\mathbf{z}}^2_\infty.
\]
We now give a lower bound on the sizes of $A$ and $B$ using the eigenvalue-eigenvector equation and the weak bounds on $\lambda_1$ and $\lambda_n$.
\[
\frac{n}{2} \norm{\mathbf{x}}_\infty = \frac{n}{2} \mathbf{x}_{\hat{u}} < \lambda_1 \mathbf{x}_{\hat{u}} = \sum_{w\sim \hat{u}} \mathbf{x}_w \leq \norm{\mathbf{x}}_\infty \left(|A| + \frac{1}{4}(n-|A|) \right),
\]
giving that $|A| > \frac{n}{3}$. Similarly,
\[
\frac{n}{10} \norm{\mathbf{z}}_\infty =  - \frac{n}{10} \mathbf{z}_{\hat{v}} < \lambda_n \mathbf{z}_{\hat{v}} = \sum_{w\sim \hat{v}} \mathbf{z}_w \leq \norm{\mathbf{z}}_\infty \left( |B| + \frac{1}{20}(n-|B|)\right),
\]
and so $|B| > \frac{n}{19}$.

\end{proof}


\begin{lemma}\label{discrete ellipse equation}

Assume that $\mathbf{x}$ and $\mathbf{z}$ are unit vectors. Then there exists a constant $C$ such that for any pair of vertices $u$ and $v$, we have 
\[
|(\lambda_1 \mathbf{x}_u^2 - \lambda_n\mathbf{z}_u^2) - (\lambda_1 \mathbf{z}_v^2 - \lambda_n \mathbf{z}_v^2)| < \frac{C}{n}.
\]
\end{lemma}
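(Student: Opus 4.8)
The plan is to prove the (equivalent, after correcting the evident typo $\z_v^2\to\x_v^2$) stronger assertion that the quantity $q_u := \lambda_1\x_u^2 - \lambda_n\z_u^2$ is the same for every vertex $u$ up to an additive error $O(1/n)$; this is symmetric in $u,v$ and immediately gives $|q_u-q_v|<C/n$. The mechanism is a local surgery on the spread-extremal graph $G$ (with $\x,\z$ its extremal unit eigenvectors, as throughout this section). Fixing distinct vertices $u,v$, I would form a competitor $G'$ on the same vertex set by deleting every edge at $u$ and then joining $u$ to $N_G(v)\setminus\{u\}$ — i.e.\ making $u$ a ``twin'' of $v$. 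This $G'$ is a simple $n$-vertex graph, so extremality gives $s(G')\le s(n)=s(G)$, and the whole proof reduces to showing $s(G')\ge s(G)+q_v-q_u-O(1/n)$.

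The heart is a pair of Rayleigh-quotient estimates. For $\lambda_1(G')$, test against the vector $\mathbf{y}$ obtained from $\x$ by resetting its $u$-entry to $\x_v$: splitting $\mathbf{y}^T A'\mathbf{y}$ according to whether an index equals $u$, using that edges avoiding $u$ are unchanged, and invoking the eigenvalue identity $\sum_{w\in N_G(v)}\x_w=\lambda_1\x_v$, one obtains the closed form $\mathbf{y}^T A'\mathbf{y}=\lambda_1-2\lambda_1\x_u^2+2\x_v(\lambda_1\x_v-a_{uv}\x_u)$ with $\norm{\mathbf{y}}^2=1-\x_u^2+\x_v^2$, where $a_{uv}\in\{0,1\}$ records adjacency in $G$. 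Dividing, and using Lemma~\ref{upper bound on eigenvector entries} ($\norm{\x}_\infty,\norm{\z}_\infty=O(n^{-1/2})$) together with $\lambda_1,|\lambda_n|\le n$ to absorb the cross-term $a_{uv}\x_u\x_v=O(1/n)$ and the denominator correction $1-\x_u^2+\x_v^2=1+O(1/n)$ against $\lambda_1(\x_v^2-\x_u^2)=O(1)$, this yields $\lambda_1(G')\ge\lambda_1+\lambda_1(\x_v^2-\x_u^2)-O(1/n)$. Running the identical computation with $\z$, $\lambda_n$, and the analogous test vector $\mathbf{y}'$ — now used to \emph{upper} bound $\lambda_n(G')$, so the error appears with the opposite sign — gives $\lambda_n(G')\le\lambda_n+\lambda_n(\z_v^2-\z_u^2)+O(1/n)$. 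Subtracting and using $s(G)=\lambda_1-\lambda_n$ gives $s(G')\ge s(G)+q_v-q_u-O(1/n)$; comparing with $s(G')\le s(G)$ forces $q_v-q_u\le C'/n$, and exchanging the roles of $u$ and $v$ gives the reverse inequality, so $|q_u-q_v|\le C'/n<C/n$ for any $C>C'$.

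I expect the only real obstacle to be the error bookkeeping in the Rayleigh-quotient step, and it is essentially the reason the earlier lemmas are set up the way they are: the estimate closes only because $\lambda_1,|\lambda_n|$ grow no faster than linearly in $n$ (indeed $\Theta(n)$, using Lemma~\ref{linear size parts} for the lower bound) while every eigenvector entry is as small as $n^{-1/2}$, so that products like $\x_u\x_v,\z_u\z_v$ and all normalization corrections are genuinely $O(1/n)$ rather than $O(1)$ — discarding the eigenvector-norm control of Lemma~\ref{upper bound on eigenvector entries} would ruin the bound. One must also be careful that $\z$ need not be sign-definite, so each error term must be bounded in absolute value rather than in sign, and must keep $u$ out of the copied neighborhood $N_G(v)$ so that $G'$ is genuinely simple and the extremality inequality $s(G')\le s(n)$ is available. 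Notably, none of the threshold/join structure of Lemma~\ref{lem: graph join} is needed for this argument.
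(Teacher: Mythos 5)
Your proposal is correct and is essentially the paper's own argument: your ``make $u$ a twin of $v$'' surgery is exactly the paper's operation of deleting $u$ and cloning $v$ (the two graphs are identical up to relabeling the clone), and your test vectors, the closed forms for $\mathbf{y}^T A'\mathbf{y}$ and $\norm{\mathbf{y}}^2$, and the use of Lemma~\ref{upper bound on eigenvector entries} to absorb the $A_{uv}\x_u\x_v$ and normalization errors all match the paper's computation. The only cosmetic difference is that you bound $\lambda_1(G')$ and $\lambda_n(G')$ by separate Rayleigh quotients and subtract, whereas the paper invokes the combined spread inequality \eqref{eq: gregory min max} in one step; these are the same estimate.
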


\begin{proof}
Let $u$ and $v$ be vertices, and create a graph $\tilde{G}$ by deleting $u$ and cloning $v$. That is, $V(\tilde{G}) = \{v'\} \cup V(G) \setminus \{u\}$ and \[E(\tilde{G}) = E(G\setminus \{u\}) \cup \{v'w:vw\in E(G)\}.\]

Note that $v \not\sim v'$. Let $\tilde{A}$ be the adjacency matrix of $\tilde{G}$. Define two vectors $\mathbf{\tilde{x}}$ and $\mathbf{\tilde{z}}$ by 
\[
\mathbf{\tilde{x}}_w = \begin{cases}
\mathbf{x}_w & w\not=v'\\
\mathbf{x}_v & w=v',
\end{cases}
\]
and
\[
\mathbf{\tilde{z}}_w = \begin{cases}
\mathbf{z}_w & w\not=v'\\
\mathbf{z}_v & w=v.
\end{cases}
\]

Then $\mathbf{\tilde{x}}^T \mathbf{\tilde{x}} = 1 - \mathbf{x}_u^2 + \mathbf{x}_v^2$ and  $\mathbf{\tilde{z}}^T \mathbf{\tilde{z}} = 1 - \mathbf{z}_u^2 + \mathbf{z}_v^2$. Similarly,
\begin{align*}
\mathbf{\tilde{x}}^T\tilde{A}\mathbf{\tilde{x}} &= \lambda_1 - 2\mathbf{x}_u \sum_{uw\in E(G)} \mathbf{x}_w + 2\mathbf{x}_{v'} \sum_{vw \in E(G)} \mathbf{x}_w - 2A_{uv}\mathbf{x}_v\mathbf{x}_u \\
&= \lambda_1 - 2\lambda_1\mathbf{x}_u^2 + 2\lambda_1 \mathbf{x}_v^2 - 2A_{uv} \mathbf{x}_u \mathbf{x}_v,
\end{align*}
and
\begin{align*}
\mathbf{\tilde{z}}^T\tilde{A}\mathbf{\tilde{z}} &= \lambda_n - 2\mathbf{z}_u \sum_{uw\in E(G)} \mathbf{z}_w + 2\mathbf{z}_{v'} \sum_{vw \in E(G)} \mathbf{z}_w - 2A_{uv}\mathbf{z}_v\mathbf{z}_u \\
&= \lambda_n - 2\lambda_n\mathbf{z}_u^2 + 2\lambda_n \mathbf{z}_v^2 - 2A_{uv} \mathbf{z}_u \mathbf{z}_v.
\end{align*}
By Equation \eqref{eq: gregory min max}, 
\begin{align*}
    0 & \geq \left(\frac{\mathbf{\tilde{x}}^T\tilde{A}\mathbf{\tilde{x}}}{\mathbf{\tilde{x}}^T \mathbf{\tilde{x}}} - \frac{\mathbf{\tilde{z}}^T\tilde{A}\mathbf{\tilde{z}}}{\mathbf{\tilde{z}}^T \mathbf{\tilde{z}}}  \right) - (\lambda_1 - \lambda_n) \\
    & = \left(\frac{\lambda_1 - 2\lambda_1 \mathbf{x}_u^2 + 2\lambda_1 \mathbf{x}_v^2 - 2A_{uv}\mathbf{x}_u\mathbf{x}_v}{1 - \mathbf{x}_u^2 + \mathbf{x}_v^2}  - \frac{\lambda_n - 2\lambda_n \mathbf{z}_u^2 + 2\lambda_n \mathbf{z}_v^2 - 2A_{uv} \mathbf{z}_u\mathbf{z}_v}{1-\mathbf{z}_u^2 + \mathbf{z}_v^2}\right) - (\lambda_1 - \lambda_n) \\
    & = \frac{-\lambda_1 \mathbf{x}_u^2 + \lambda_1\mathbf{x}_v^2 - 2A_{ij}\mathbf{x}_u\mathbf{x}_v}{1 - \mathbf{x}_u^2 + \mathbf{x}_v^2} - \frac{-\lambda_n \mathbf{z}_u^2 + \lambda_n\mathbf{z}_v^2 - 2A_{ij}\mathbf{z}_u \mathbf{z}_v}{1 - \mathbf{z}_u^2 + \mathbf{z}_v^2}.
\end{align*}

By Lemma \ref{upper bound on eigenvector entries}, we have that $|\mathbf{x}_u|$, $|\mathbf{x}_v|$, $|\mathbf{z}_u|$, and $|\mathbf{z}_v|$ are all $O(n^{-1/2})$, and so it follows that 
\[
|(\lambda_1 \mathbf{x}_u^2 - \lambda_1\mathbf{x}_v^2) - (\lambda_n \mathbf{z}_u^2 - \lambda_n \mathbf{z}_v^2)| < \frac{C}{n},
\]
for some absolute constant $C$. Rearranging terms gives the desired result. 
\end{proof}






\section{The spread-extremal problem for graphons}\label{sec: graphon background}
Graphons (or graph functions) are analytical objects which may be used to study the limiting behavior of large, dense graphs, and were originally introduced in \cite{BCLSV2012GraphLimitsSpectra} and \cite{lovasz2006limits}.

\subsection{Introduction to graphons}
Consider the set $\mathcal{W}$ of all bounded symmetric measurable functions $W:[0,1]^2 \to [0,1]$ (by symmetric, we mean $W(x, y)=W(y,x)$ for all $(x, y)\in [0,1]^2$. A function $W\in \mathcal{W}$ is called a \emph{stepfunction} if there is a partition of $[0,1]$ into subsets $S_1, S_2, \ldots, S_m$ such that $W$ is constant on every block $S_i\times S_j$. Every graph has a natural representation as a stepfunction in $\mathcal{W}$ taking values either 0 or 1 (such a graphon is referred to as a \emph{stepgraphon}). In particular, given a graph $G$ on $n$ vertices indexed $\{1, 2, \ldots, n\}$, we can define a measurable set $K_G \subseteq [0,1]^2$ as 
\[K_G = \bigcup_{u \sim v} \left[\frac{u-1}{n}, \frac{u}{n}\right]\times \left[\frac{v-1}{n}, \frac{v}{n}\right],\]
and this represents the graph $G$ as a bounded symmetric measurable function $W_G$ which takes value $1$ on $K_G$ and $0$ everywhere else. For a measurable subset $U$ we will use $m(U)$ to denote its Lebesgue measure.

This representation of a graph as a measurable subset of $[0,1]^2$ lends itself to a visual presentation sometimes referred to as a \emph{pixel picture}; see, for example, Figure \ref{bipartites_pixel} for two representations of a bipartite graph as a measurable subset of $[0,1]^2.$ Clearly, this indicates that such a representation is not unique; neither is the representation of a graph as a stepfunction. Using an equivalence relation on $\mathcal{W}$ derived from the so-called \emph{cut metric}, we can identify graphons that are equivalent up to relabelling, and up to any differences on a set of measure zero (i.e. equivalent \emph{almost everywhere}). 
\begin{figure}
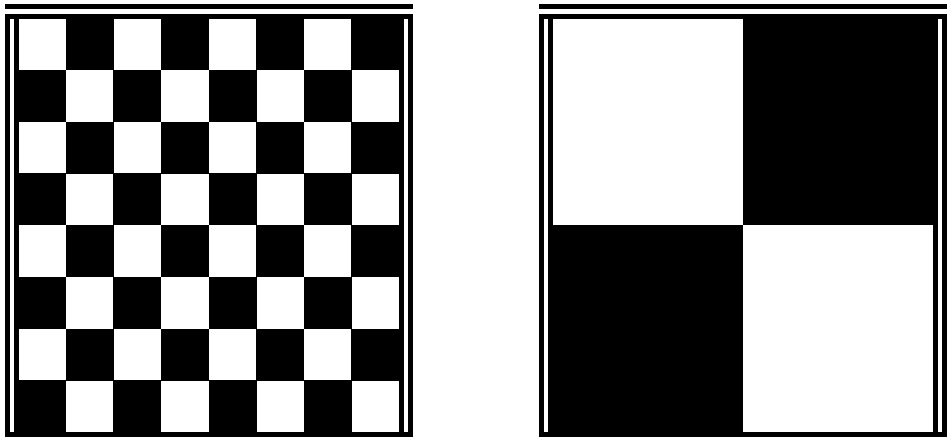

\begin{center}
\begin{tabular}{||cccccccc||}\hline\hline
& \black &  & \black &  & \black & & \black \\
\black &  & \black &  & \black & & \black & \\
& \black &  & \black &  & \black & & \black \\
\black &  & \black &  & \black & & \black & \\
& \black &  & \black &  & \black & & \black \\
\black &  & \black &  & \black & & \black & \\
& \black &  & \black &  & \black & & \black \\
\black &  & \black &  & \black & & \black & \\\hline\hline
\end{tabular} \hspace{40pt} \begin{tabular}{||cccccccc||}\hline\hline
& & & & \black & \black & \black & \black \\
& & & & \black & \black & \black & \black \\
& & & & \black & \black & \black & \black \\
& & & & \black & \black & \black & \black \\
\black & \black & \black & \black & & & & \\
\black & \black & \black & \black & & & & \\
\black & \black & \black & \black & & & & \\
\black & \black & \black & \black & & & & \\\hline\hline
\end{tabular}
\end{center}
\label{bipartites_pixel}
\caption{Two presentations of a bipartite graph as a stepfunction.}
\end{figure}

    For all symmetric, bounded Lebesgue-measurable functions $W:[0,1]^2\to \RR$, we let 
    \[
        \|W\|_\square 
        = \sup_{S, T\subseteq [0,1]} 
            \left|\int_{S\times T} W(x,y)\,dx\,dy 
        \right|.
    \]


    Here, $\|\cdot\|_\square$ is referred to as the \emph{cut norm}. 
    Next, one can also define a semidistance $\delta_\square$ on $\mathcal{W}$ as follows.  
    First, we define \emph{weak isomorphism} of graphons.  
    Let $\mathcal{S}$ be the set of of all measure-preserving functions on $[0,1]$.  
    For every $\varphi\in \mathcal{S}$ and every $W\in \ww$, define $W^\varphi:[0,1]^2\to [0,1]$ by 
    \begin{align*}
        W^\varphi(x,y) 
        :=
            W(\varphi(x),\varphi(y))
    \end{align*}
    for a.e. $(x,y)\in [0,1]^2$.  
    Now for any $W_1,W_2\in \ww$, let 
    \[
        \delta_\square(W_1, W_2) 
        = \inf_{\phi\in \mathcal{S}}
        \{
            \|W_1-W_2\circ\phi\|_\square
        \}.
    \] 
    Define the equivalence relation $\sim$ on $\ww$ as follows: for all $W_1, W_2\in \ww$, $W_1\sim W_2$ if and only if $\delta_\square(W_1,W_2) = 0$.  
    Furthermore, let $\wwhat := \ww/\sim$ be the quotient space of $\ww$ under $\sim$.  
    Note that $\delta_\square$ induces a metric on $\wwhat$.  
    Crucially, by \cite[Theorem 5.1]{lovasz2007szemeredi}, $\hat{\mathcal{W}}$ is a compact metric space.

Given $W\in \wwhat$, {we} define the Hilbert-Schmidt operator $A_W: \elltwo \to \elltwo$ {by} 
\[
    (A_Wf)(x) := \int_0^1 W(x,y)f(y) \,dy.
\]
for all $f\in \elltwo$ and a.e. $x\in [0,1]$.

    Since $W$ is symmetric and bounded, $A_W$ is a compact Hermitian operator.  
    In particular, $A_W$ has a discrete, real spectrum whose only possible accumulation point is $0$ (c.f. \cite{aubin2011applied}). 
    In particular, the maximum and minimum eigenvalues exist and we focus our attention on these extremes.  
    Let $\mu(W)$ and $\nu(W)$ be the maximum and minimum eigenvalue of $A_W$, respectively, and define the \emph{spread} of $W$ as
    \[
        \spr(W) := \mu(W) - \nu(W).
    \]

By the {M}in-{M}ax Theorem, 
we have that 
\[
    \mu(W) = \max_{\|f\|_2 = 1} \int_0^1\int_0^1 W(x,y)f(x)f(y) \, dx\, dy,
\]
and
\[
\nu(W) = \min_{\|f\|_2 = 1} \int_0^1\int_0^1 W(x,y)f(x)f(y) \, dx\, dy.
\]

Both $\mu$ and $\nu$ are continuous functions with respect to $\delta_\square$: in particular we have the following.

\begin{theorem}[c.f. Theorem 6.6 from \cite{BCLSV2012GraphLimitsSpectra} or Theorem 11.54 in \cite{Lovasz2012Hombook}]\label{thm: graphon eigenvalue continuity}
	Let $\{W_i\}_i$ be a sequence of graphons converging to $W$ with respect to $\cutdist$.  
	Then as $n\to\infty$, 
	\begin{align*}
		\mu(W_n)\to\mu(W)
		\quad\text{ and }\quad
		\nu(W_n)\to\nu(W).  
	\end{align*}
\end{theorem}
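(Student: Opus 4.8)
The plan is to prove that both $\mu$ and $\nu$ are continuous on $(\hatW,\cutdist)$ by isolating one quantitative ingredient: the bilinear form associated to $W$ is controlled by the cut norm \emph{provided the test functions are bounded}. Concretely, I would first establish the standard ``rectangle estimate''
\[
   |\langle f, A_V g\rangle|\ \le\ 4\,\norm{f}_\infty\,\norm{g}_\infty\,\norm{V}_\square
\]
for every symmetric bounded measurable $V:[0,1]^2\to\RR$ and all $f,g\in L^\infty[0,1]\subseteq\elltwo$. This is proved by decomposing $f=f_+-f_-$, $g=g_+-g_-$ into nonnegative parts and using the layer-cake identity $f_\pm(x)=\int_0^{\norm{f}_\infty}\mathbf 1[f_\pm(x)>t]\,dt$: it writes each of the four integrals $\int\!\!\int V\,f_\pm\otimes g_\pm$ as an average over $s,t$ of the quantities $\int_{\{f_\pm>s\}\times\{g_\pm>t\}}V$, each of absolute value at most $\norm{V}_\square$ by definition of the cut norm.

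Next I would record two elementary facts. First, if $A_Wh=\lambda h$ with $\norm{h}_2=1$ and $\lambda\neq 0$, then $h=\lambda^{-1}A_Wh$ together with the pointwise bound $|A_Wh(x)|\le\norm{h}_1\le 1$ gives $\norm{h}_\infty\le|\lambda|^{-1}$; in particular, the top eigenfunction of $A_W$ is bounded whenever $\mu(W)>0$ and the bottom one whenever $\nu(W)<0$. Second, since $\langle\mathbf 1,A_W\mathbf 1\rangle=\int_{[0,1]^2}W\ge 0$ and any compact self-adjoint operator on the infinite-dimensional space $\elltwo$ has $0$ in its spectrum, we have $\mu(W)\ge 0$ and $\nu(W)\le 0$, with $\mu(W)=0$ forcing $W=0$ a.e. Combining these, for every $\varepsilon>0$ there is a \emph{bounded} unit function $f_\varepsilon$ with $\langle f_\varepsilon,A_Wf_\varepsilon\rangle\ge\mu(W)-\varepsilon$, and a bounded unit $g_\varepsilon$ with $\langle g_\varepsilon,A_Wg_\varepsilon\rangle\le\nu(W)+\varepsilon$: use the extremal eigenfunction when the corresponding extreme eigenvalue is nonzero (and note $W\equiv 0$ is trivial), and when $\nu(W)=0$ (i.e. $A_W\succeq 0$), pick $m$ with $(m{+}1)$st largest eigenvalue $\lambda_{m+1}(W)\le\varepsilon$ and take a step function with $m+1$ equal parts orthogonal to the top $m$ eigenfunctions — it exists by a dimension count and has Rayleigh quotient at most $\lambda_{m+1}(W)\le\varepsilon$.

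With these in hand, given $W_n\to W$ in $\cutdist$, choose for each $n$ a measure-preserving $\varphi_n$ with $\norm{W_n-V_n}_\square\to 0$, where $V_n:=W^{\varphi_n}$. The change of variables $(x,y)\mapsto(\varphi_n(x),\varphi_n(y))$, which preserves Lebesgue measure on $[0,1]^2$, gives $\langle f\circ\varphi_n,A_{V_n}(f\circ\varphi_n)\rangle=\langle f,A_Wf\rangle$ for all $f$, with $\norm{f\circ\varphi_n}_2=\norm{f}_2$ and $\norm{f\circ\varphi_n}_\infty=\norm{f}_\infty$; moreover $\mu(V_n)=\mu(W)$ and $\nu(V_n)=\nu(W)$ since the spectrum is a relabeling invariant. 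For the lower bound on $\mu$, testing $A_{W_n}$ against $f_\varepsilon\circ\varphi_n$ and using the rectangle estimate,
\[
   \mu(W_n)\ \ge\ \langle f_\varepsilon,A_Wf_\varepsilon\rangle-4\norm{f_\varepsilon}_\infty^2\norm{W_n-V_n}_\square\ \ge\ \mu(W)-\varepsilon-4\norm{f_\varepsilon}_\infty^2\norm{W_n-V_n}_\square,
\]
so $\liminf_n\mu(W_n)\ge\mu(W)-\varepsilon$, and then $\varepsilon\downarrow 0$. For the upper bound, take a subsequence with $\mu(W_n)\to L:=\limsup_n\mu(W_n)$; if $L\le 0$ then $L\le 0\le\mu(W)$, and if $L>0$ then eventually $\mu(W_n)\ge L/2$, so a unit eigenfunction $f_n$ of $A_{W_n}$ at $\mu(W_n)$ has $\norm{f_n}_\infty\le 2/L$, whence
\[
   \mu(W_n)=\langle f_n,A_{W_n}f_n\rangle\ \le\ \langle f_n,A_{V_n}f_n\rangle+\tfrac{16}{L^2}\norm{W_n-V_n}_\square\ \le\ \mu(W)+\tfrac{16}{L^2}\norm{W_n-V_n}_\square,
\]
giving $L\le\mu(W)$. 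Hence $\mu(W_n)\to\mu(W)$. The statement for $\nu$ is the mirror image: the rectangle estimate against $g_\varepsilon\circ\varphi_n$ yields $\limsup_n\nu(W_n)\le\nu(W)$, while the eigenfunction-of-$W_n$ argument (now with $\norm{f_n}_\infty\le 2/|L|$ when $L:=\liminf_n\nu(W_n)<0$, and $L\ge 0\ge\nu(W)$ otherwise) yields $\liminf_n\nu(W_n)\ge\nu(W)$. The conceptual heart is the rectangle estimate, which is exactly what ties the cut norm to spectral data; the only place requiring genuine care is the degenerate case $\nu(W)=0$ in the construction of the bounded near-minimizer $g_\varepsilon$, with the rest being bookkeeping around subsequences and relabelings. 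This is essentially the argument in the cited references.
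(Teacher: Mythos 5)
The paper does not prove this statement at all; it is quoted directly from the cited references, so there is no in-paper argument to compare against. Your proof is correct and is, in essence, the standard argument from those references: the rectangle/counting estimate $|\langle f, A_V g\rangle|\le 4\norm{f}_\infty\norm{g}_\infty\norm{V}_\square$ is exactly the right bridge between the cut norm and spectral data, the $\norm{h}_\infty\le|\lambda|^{-1}$ bound for eigenfunctions correctly supplies the bounded test functions in both directions, and your handling of the degenerate cases $\mu(W)=0$ and $\nu(W)=0$ is sound. The one step you should be explicit about citing rather than treating as free is $\mu(W^{\varphi_n})=\mu(W)$ and $\nu(W^{\varphi_n})=\nu(W)$ for a measure-preserving but possibly non-invertible $\varphi_n$: this is the invariance of the (nonzero) spectrum under weak isomorphism, which is not a one-line change of variables (the pullback $f\mapsto f\circ\varphi_n$ is an isometry but need not be onto, so it only gives the inequality in one direction); it is standard — e.g. via equality of the cycle homomorphism densities $t(C_k,\cdot)$, i.e. the traces of powers — and the paper itself asserts it, but it deserves a reference in your write-up.
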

~\\

If $W \sim W'$ then $\mu(W) = \mu(W')$ and $\nu(W) = \nu(W')$. By compactness, we may consider the optimization problem on the factor space $\hat{\mathcal{W}}$ 
\[
\spr(\hat{\mathcal{W}})=\max_{W\in \hat{\mathcal{W}}}, \spr(W)
\]
and furthermore there is a $W \in \hat{\mathcal{W}}$ that attains the maximum. 
Since every graph is represented by $W_G\in \hat{\mathcal{W}}$, this allows us to give an upper bound for $s(n)$ in terms of $\spr(\hat{\mathcal{W}})$. 
Indeed, by replacing the eigenvectors of $G$ with their corresponding stepfunctions, the following {proposition} can be shown.

\begin{proposition}\label{graph to graphon eigenvalue scaling}
    Let $G$ be a graph on $n$ vertices. 
    Then 
    \begin{align*}
        \lambda_1(G) = n \blue{\, \cdot\, } \mu({W_G})
        \quad \text{ and }\quad 
        \lambda_n(G) = n \blue{\, \cdot\, } \nu({W_G}).
    \end{align*}
\end{proposition}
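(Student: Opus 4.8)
The plan is to match the variational characterizations of $\lambda_1(G)$ and $\lambda_n(G)$ with the Min-Max formulas for $\mu(W_G)$ and $\nu(W_G)$ recalled above, through the standard dictionary between vectors in $\RR^n$ and step functions on $[0,1]$ adapted to the partition $I_i := [\tfrac{i-1}{n},\tfrac{i}{n})$, $i=1,\dots,n$. To a vector $v=(v_1,\dots,v_n)$ I associate the step function $f_v$ that equals $v_i$ on $I_i$. Since $W_G$ is the $\{0,1\}$-valued indicator of $K_G$, it is constant and equal to $A_{uv}$ on each block $I_u\times I_v$ (here $A$ is the adjacency matrix of $G$), so a direct computation gives $\norm{f_v}_2^2 = \tfrac1n\norm{v}_2^2$ and $\int_0^1\int_0^1 W_G(x,y)f_v(x)f_v(y)\,dx\,dy = \tfrac{1}{n^2}\,v^\top A v$. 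In other words, the graphon Rayleigh quotient of $f_v$ is exactly $\tfrac1n$ times the graph Rayleigh quotient of $v$.

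First I would prove the ``easy'' inequalities $\mu(W_G)\ge \lambda_1(G)/n$ and $\nu(W_G)\le \lambda_n(G)/n$: take $v$ to be a unit eigenvector of $A$ for $\lambda_1$ (resp. $\lambda_n$), normalize $f_v$ to a unit function in $\elltwo$, and plug it into the Min-Max formulas for $\mu$ and $\nu$.

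For the reverse inequalities, the key step is an averaging (conditional expectation) argument. Given $f\in\elltwo$, define its block average $\bar f$ by $\bar f(x) := n\int_{I_i}f$ for $x\in I_i$. Because $W_G$ is constant on every block, one checks that $\int_0^1\int_0^1 W_G\, f f = \int_0^1\int_0^1 W_G\,\bar f\bar f$, while Cauchy--Schwarz applied on each interval $I_i$ gives $\norm{\bar f}_2\le\norm{f}_2$. Now $\bar f = f_v$ for the vector $v$ with $v_i = n\int_{I_i}f$, so for a unit $f$ we get $\int_0^1\int_0^1 W_G\, f f = \tfrac{1}{n^2}v^\top A v$ with $\norm{v}_2^2 = n\norm{\bar f}_2^2\le n$. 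Since $v^\top A v\le \lambda_1(G)\norm{v}_2^2$ and $\lambda_1(G)\ge 0$, the right-hand side is at most $\lambda_1(G)/n$, giving $\mu(W_G)\le\lambda_1(G)/n$; symmetrically, $v^\top A v\ge \lambda_n(G)\norm{v}_2^2$ together with $\lambda_n(G)\le 0$ yields $\int_0^1\int_0^1 W_G\, f f\ge\lambda_n(G)/n$, i.e. $\nu(W_G)\ge\lambda_n(G)/n$. Combining the two pairs of inequalities proves the proposition.

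The step I would flag as the main (though still routine) obstacle is precisely this reverse direction: one must argue that passing from all of $\elltwo$ to step functions constant on the $I_i$ changes neither extremal value, and in doing so keep track of the signs --- $\lambda_1(G)\ge 0$ and $\lambda_n(G)\le 0$ for any graph, with the empty graph handled trivially since both sides then vanish --- so that the bounds $v^\top A v\le \lambda_1(G)\norm{v}_2^2$ and $v^\top A v\ge \lambda_n(G)\norm{v}_2^2$ point in the useful direction once $\norm{\bar f}_2\le 1$ is used.
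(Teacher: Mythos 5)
Your proof is correct and follows the same route the paper gestures at ("replacing the eigenvectors of $G$ with their corresponding stepfunctions"), which the paper does not spell out; your block-averaging argument for the reverse inequalities, together with the sign observations $\lambda_1(G)\ge 0\ge\lambda_n(G)$, correctly supplies the details the paper omits.
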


Proposition \ref{graph to graphon eigenvalue scaling} implies that $s(n) \leq n\cdot\spr(\hat{\mathcal{W}}) $ for all $n$. Combined with Theorem \ref{thm: functional analysis spread}, this gives the following corollary.
\begin{corollary}
For all $n$, $s(n) \leq \frac{2n}{\sqrt{3}}$.
\end{corollary}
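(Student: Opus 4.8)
The plan is to deduce this corollary directly from Proposition \ref{graph to graphon eigenvalue scaling} and Theorem \ref{thm: functional analysis spread}, with no additional machinery required. First I would fix an arbitrary graph $G$ on $n$ vertices and pass to its associated stepgraphon $W_G \in \mathcal{W}$, which is symmetric, Lebesgue-measurable, and takes values in $[0,1]$ (indeed in $\{0,1\}$), so that $W_G$ satisfies the hypotheses of Theorem \ref{thm: functional analysis spread}. By Proposition \ref{graph to graphon eigenvalue scaling}, $\lambda_1(G) = n\,\mu(W_G)$ and $\lambda_n(G) = n\,\nu(W_G)$, hence $s(G) = \lambda_1(G) - \lambda_n(G) = n\,\spr(W_G)$.

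Next I would invoke the Min-Max characterizations of $\mu$ and $\nu$ recorded above: since $A_{W_G}$ is a compact Hermitian operator with discrete real spectrum, there exist unit functions $f, g \in \elltwo$ with $\mu(W_G) = \langle f, A_{W_G} f\rangle$ and $\nu(W_G) = \langle g, A_{W_G} g\rangle$, namely eigenfunctions for the largest and smallest eigenvalues. Applying Theorem \ref{thm: functional analysis spread} to $W = W_G$ with this particular pair $f,g$ yields
\[
\spr(W_G) = \mu(W_G) - \nu(W_G) = \langle f, A_{W_G} f\rangle - \langle g, A_{W_G} g\rangle \le \frac{2}{\sqrt{3}}.
\]

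Combining the two displays gives $s(G) = n\,\spr(W_G) \le \frac{2n}{\sqrt{3}}$ for every $n$-vertex graph $G$, and taking the maximum over all such $G$ gives $s(n) \le \frac{2n}{\sqrt{3}}$, as claimed. (Equivalently, one may observe $s(n) \le n\,\spr(\hat{\mathcal{W}})$ via Proposition \ref{graph to graphon eigenvalue scaling} and then bound $\spr(\hat{\mathcal{W}}) \le 2/\sqrt{3}$ directly from Theorem \ref{thm: functional analysis spread} by the same Min-Max argument.) There is essentially no obstacle here: the corollary is a one-line consequence once Theorem \ref{thm: functional analysis spread} is available; the only point requiring a moment's care is checking that $W_G$ meets the measurability and symmetry hypotheses of that theorem and that the extremal eigenfunctions may be taken to have unit $\elltwo$-norm, both of which are immediate.
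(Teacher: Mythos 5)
Your proposal is correct and follows exactly the route the paper takes: the paper deduces the corollary from Proposition \ref{graph to graphon eigenvalue scaling} (giving $s(n)\le n\cdot\spr(\hat{\mathcal{W}})$) combined with Theorem \ref{thm: functional analysis spread} (giving $\spr(W)\le 2/\sqrt{3}$), which is precisely your argument with the routine details of the Min-Max characterization filled in.
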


This can be proved more directly using Theorem \ref{thm: spread maximum graphs} and taking tensor powers.

\subsection{Properties of spread-extremal graphons}
Our main objective in the next sections is to solve the maximum spread problem for graphons in order to determine this upper bound for $s(n)$. As such, in this subsection we set up some preliminaries to the solution which largely comprise a translation of what is known in the graph setting (see Section~\ref{sec:graphs}). Specifically, we define what it means for a graphon to be connected, and show that spread-extremal graphons must be connected. We then prove a standard corollary of the Perron-Frobenius theorem. Finally, we prove graphon versions of Lemma~\ref{lem: graph join} and Lemma~\ref{discrete ellipse equation}.


Let $W_1$ and $W_2$ be graphons and let $\alpha_1,\alpha_2$ be positive real numbers with $\alpha_1+\alpha_2 = 1$.  
We define the \textit{direct sum} of $W_1$ and $W_2$ with weights $\alpha_1$ and $\alpha_2$, denoted $W = \alpha_1W_1\oplus \alpha_2W_2$, as follows.  
Let $\varphi_1$ and $\varphi_2$ be the increasing affine maps which send $J_1 := [0,\alpha_1]$ and $J_2 := [\alpha_1,1]$ to $[0,1]$, respectively.  
Then for all $(x,y)\in [0,1]^2$, let 
\begin{align*}
    W(x,y) := \left\{\begin{array}{rl}
        W_i(\varphi_i(x),\varphi_i(y)), &\text{if }(x,y)\in J_i\times J_i\text{ for some }i\in \{1,2\}\\
        0, & \text{otherwise}
    \end{array}\right.
    .  
\end{align*}
A graphon $W$ is \textit{connected} if $W$ is not weakly isomorphic to a direct sum $\alpha_1W_1\oplus \alpha_2W_2$ where $\alpha_1\neq 0,1$.  
Equivalently, $W$ is connected if there does not exist a measurable subset $A\subseteq [0,1]$ of positive measure such that $W(x,y) = 0$ for a.e. $(x,y)\in A\times A^c$.  \\

\begin{proposition}\label{prop: disconnected spectrum}
    Suppose $W_1,W_2$ are graphons and $\alpha_1,\alpha_2$ are positive real numbers summing to $1$.  
    Let $W:=\alpha_1W_1\oplus\alpha_2W_2$.  
    Then as multisets, 
    \begin{align*}
        \Lambda(W) 
        = 
            \{\alpha_1 u : u \in \Lambda(W_1) \}\cup \{ \alpha_2v : v\in \Lambda(W_2)\}.  
    \end{align*}
    Moreover, $\spr(W)\leq \alpha_1 \spr(W_1) + \alpha_2 \spr(W_2)$ with equality if and only $W_1$ or $W_2$ is the all-zeroes graphon.  
\end{proposition}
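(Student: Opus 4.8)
The plan is to prove the two assertions in turn: first the description of the spectrum of a direct sum, then the spread inequality with its equality characterization.

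\medskip

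\noindent\textbf{Step 1: The spectrum of $W = \alpha_1 W_1 \oplus \alpha_2 W_2$.} I would start from the eigenvalue equation $A_W f = \lambda f$ and exploit the block structure. Let $\varphi_1,\varphi_2$ be the affine bijections from the definition of the direct sum. Given $g\in\elltwo$ with $A_{W_i} g = u\, g$, I would lift it to a function on $[0,1]$ supported on $J_i$ by setting $\tilde g(x) = g(\varphi_i(x))$ for $x\in J_i$ and $\tilde g(x)=0$ otherwise; a change of variables (the Jacobian of $\varphi_i$ is $1/\alpha_i$) shows $(A_W \tilde g)(x) = \int_{J_i} W_i(\varphi_i(x),\varphi_i(y)) g(\varphi_i(y))\,dy = \alpha_i u\, \tilde g(x)$ on $J_i$, and $0$ off $J_i$ since $W$ vanishes on the off-diagonal blocks. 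Conversely, if $A_W f = \lambda f$ with $\lambda\neq 0$, then because $W$ is zero on $J_1\times J_2$ and $J_2\times J_1$, the restriction of $f$ to each $J_i$ is itself an eigenfunction of the corresponding block operator, so $f$ decomposes as a sum of lifts of eigenfunctions of $A_{W_1}$ and $A_{W_2}$. Tracking multiplicities — and handling the eigenvalue $0$ separately, e.g. by a dimension count on finite-dimensional eigenspaces plus the fact that $L^2[0,1] = L^2(J_1)\oplus L^2(J_2)$ — gives the multiset identity $\Lambda(W) = \{\alpha_1 u : u\in\Lambda(W_1)\}\cup\{\alpha_2 v : v\in\Lambda(W_2)\}$.

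\medskip

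\noindent\textbf{Step 2: The spread inequality.} From Step 1, $\mu(W) = \max\{\alpha_1\mu(W_1),\alpha_2\mu(W_2)\}$ and $\nu(W) = \min\{\alpha_1\nu(W_1),\alpha_2\nu(W_2)\}$ (using $\mu\geq 0\geq\nu$ for every graphon, since $W_i\geq 0$ forces $\mu(W_i)\geq 0$, and $0$ lies in the spectrum or is a limit point, so $\nu(W_i)\leq 0$; in fact $\nu(W_i)\le 0$ suffices). Hence
\begin{align*}
\spr(W) &= \max\{\alpha_1\mu(W_1),\alpha_2\mu(W_2)\} - \min\{\alpha_1\nu(W_1),\alpha_2\nu(W_2)\}\\
&\leq \alpha_1\mu(W_1) + \alpha_2\mu(W_2) - \alpha_1\nu(W_1) - \alpha_2\nu(W_2)\\
&= \alpha_1\spr(W_1) + \alpha_2\spr(W_2),
\end{align*}
where the inequality uses $\max\{a,b\}\leq a+b$ and $\min\{c,d\}\geq c+d$ for $a,b\geq 0\geq c,d$.

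\medskip

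\noindent\textbf{Step 3: The equality case.} Equality in Step 2 forces $\max\{\alpha_1\mu(W_1),\alpha_2\mu(W_2)\} = \alpha_1\mu(W_1)+\alpha_2\mu(W_2)$, which (both terms nonnegative) means $\alpha_1\mu(W_1)=0$ or $\alpha_2\mu(W_2)=0$, and simultaneously $\min\{\alpha_1\nu(W_1),\alpha_2\nu(W_2)\} = \alpha_1\nu(W_1)+\alpha_2\nu(W_2)$, forcing $\alpha_1\nu(W_1)=0$ or $\alpha_2\nu(W_2)=0$. Since $\alpha_1,\alpha_2>0$, some $W_i$ has $\mu(W_i)=0$ and some $W_j$ has $\nu(W_j)=0$. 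A graphon with $\mu(W_i)=0$ has all eigenvalues $\le 0$; combined with $W_i\ge 0$ (which gives $\int W_i f f \ge 0$ for, say, $f\equiv 1$, hence $\mu(W_i)\ge \int_{[0,1]^2}W_i \ge 0$), this forces $\int_{[0,1]^2} W_i = 0$, i.e. $W_i = 0$ a.e. Similarly $\mu(W_j)=0$: indeed if $\nu(W_j)=0$ then $W_j\ge0$ has $\int W_j ff\ge0$ for all $f$, so all eigenvalues are $\ge 0$; but a nonzero nonnegative graphon has $\mu>0$ as just noted, so $\nu(W_j)=0$ already implies... hmm, I should instead argue directly: if neither $W_1$ nor $W_2$ is the zero graphon, then $\mu(W_1),\mu(W_2)>0$, so $\max\{\alpha_1\mu(W_1),\alpha_2\mu(W_2)\} < \alpha_1\mu(W_1)+\alpha_2\mu(W_2)$, a strict inequality, contradiction. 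Conversely if (say) $W_1=0$ a.e., then $\Lambda(W_1)=\{0\}$, so $\spr(W) = \alpha_2\spr(W_2) = \alpha_1\spr(W_1)+\alpha_2\spr(W_2)$, giving equality.

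\medskip

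\noindent\textbf{Main obstacle.} The only genuinely delicate point is the bookkeeping of multiplicities in Step 1, especially for the zero eigenvalue and for the accumulation behavior of the spectrum; a clean way around it is to observe that $\elltwo = L^2(J_1)\oplus L^2(J_2)$ is an orthogonal decomposition into $A_W$-invariant subspaces on which $A_W$ acts (after rescaling by $\varphi_i$) as $\alpha_i A_{W_i}$, so the spectral theorem for compact self-adjoint operators gives the multiset identity directly. Everything else is elementary inequality manipulation and the nonnegativity $W_i\ge 0 \Rightarrow \mu(W_i)\ge 0$, $\nu(W_i)\le 0$.
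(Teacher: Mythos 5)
Your proposal is correct and follows essentially the same route as the paper: decompose $\elltwo = L^2(J_1)\oplus L^2(J_2)$ into invariant subspaces to get the rescaled-union description of the spectrum, then use that each spectral interval contains $0$ (since $\mu(W_i)\geq 0\geq\nu(W_i)$) to run the elementary max/min estimate, with equality forcing one $\mu(W_i)=0$ and hence $W_i=0$ a.e. The paper phrases the second half as $\diam(I_1\cup I_2)\leq\diam(I_1)+\diam(I_2)$ for intervals sharing the point $0$, but this is the same computation; your version merely makes explicit the facts ($\mu\geq 0\geq\nu$ for graphons, and $\mu(W_i)=0\Rightarrow W_i=0$ a.e.) that the paper uses implicitly.
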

\begin{proof}
    For convenience, let $\Lambda_i := \{\alpha_iu : u\in\Lambda(W_i)\}$ for each $i\in\{1,2\}$ and $\Lambda := \Lambda(W)$.  
    The first claim holds simply by considering the restriction of eigenfunctions to the intervals $[0,\alpha_1]$ and $[\alpha_1,1]$.  \\
    
    For the second claim, we first write $\spr(W) = \alpha_i\mu-\alpha_j\nu$ where $i,j\in\{1,2\}$.  
    Let $I_i := [\min(\Lambda_i), \max(\Lambda_i)]$ for each $i\in\{1,2\}$ and $I := [\min(\Lambda), \max(\Lambda)]$.  
    Clearly $\alpha_i \spr(W_i) = \diam(I_i)$ for each $i\in\{1,2\}$ and $\spr(W) = \diam(I)$.  
    Moreover, $I = I_1\cup I_2$.  
    Since $0\in I_1\cap I_2$, $\diam(I)\leq \diam(I_1)+\diam(I_2)$ with equality if and only  if either $I_1$ or $I_2$ equals $\{0\}$.  
    So the desired claim holds.  
\end{proof}
Furthermore, the following basic corollary of the Perron-Frobenius holds.  
For completeness, we prove it here.  
\begin{proposition}\label{prop: PF eigenfunction}
    Let $W$ be a connected graphon and write $f$ for an eigenfunction corresponding to $\mu(W)$.  
    Then $f$ is nonzero with constant sign a.e.  
\end{proposition}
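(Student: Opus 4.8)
The plan is to run the classical Perron--Frobenius argument for the integral operator $A_W$, using connectedness exactly where irreducibility is used in the matrix case. Write $\mu := \mu(W)$ and let $f$ be an eigenfunction for $\mu$. First I would note $\mu > 0$: connectedness forces $W$ to be not a.e.\ zero, so with $\mathbf 1$ the constant function $1$ we get $\langle \mathbf 1, A_W \mathbf 1\rangle = \int_{[0,1]^2} W > 0$, and $\mu \ge \langle \mathbf 1, A_W \mathbf 1\rangle/\|\mathbf 1\|_2^2$ by the Min--Max characterization. Next I would observe that $|f|$ is again a $\mu$-eigenfunction: since $W \ge 0$, pointwise $W(x,y)f(x)f(y) \le W(x,y)|f(x)||f(y)|$, hence $\langle |f|, A_W|f|\rangle \ge \langle f, A_W f\rangle = \mu\|f\|_2^2 = \mu\,\||f|\|_2^2$; since the Rayleigh quotient of $A_W$ is maximized by $\mu$ and (by the spectral theorem for compact self-adjoint operators) a unit vector attains this maximum only if it lies in the $\mu$-eigenspace, we conclude $A_W|f| = \mu|f|$.

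To show $f \neq 0$ a.e., set $Z := \{x \in [0,1] : f(x) = 0\}$. For a.e.\ $x \in Z$ we have $0 = \mu|f(x)| = (A_W|f|)(x) = \int_0^1 W(x,y)|f(y)|\,dy$, and since the integrand is nonnegative this forces $W(x,y)|f(y)| = 0$ for a.e.\ $y$, hence $W(x,y) = 0$ for a.e.\ $y \in Z^c$. By Tonelli this means $W = 0$ a.e.\ on $Z \times Z^c$, so connectedness forces $m(Z) = 0$ or $m(Z^c) = 0$; the latter is impossible because $f$ is a nonzero eigenfunction, so $m(Z) = 0$.

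For the sign, decompose $f = f^+ - f^-$ with $f^\pm := \max(\pm f, 0) \ge 0$, so that $|f| = f^+ + f^-$, and put $P := \{x: f(x) > 0\}$ and $N := \{x: f(x) < 0\}$. On $P$ we have $\mu f = \mu|f| = A_W f^+ + A_W f^-$, while the identity $\mu f = A_W f = A_W f^+ - A_W f^-$ holds a.e.\ on $[0,1]$; subtracting gives $A_W f^- = 0$ a.e.\ on $P$. Exactly as above, $\int_0^1 W(x,y)f^-(y)\,dy = 0$ for a.e.\ $x \in P$ forces $W = 0$ a.e.\ on $P \times N$, so connectedness forces $m(P) = 0$ or $m(N) = 0$. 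Since $m(P) + m(N) = m(Z^c) = 1$, the function $f$ has constant sign a.e., which together with $f \neq 0$ a.e.\ is the claim.

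The only real obstacle is measure-theoretic bookkeeping: carefully passing from ``$\int_0^1 W(x,y)g(y)\,dy = 0$ for a.e.\ $x$ in a set $S$'', with $g \ge 0$, to ``$W = 0$ a.e.\ on $S \times \{g > 0\}$'' via Fubini--Tonelli, and invoking cleanly that a unit-norm maximizer of $h \mapsto \langle h, A_W h\rangle$ lies in the $\mu(W)$-eigenspace. Neither point is deep, and nothing beyond the Min--Max theorem and the definition of a connected graphon is required.
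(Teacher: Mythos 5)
Your proof is correct and follows essentially the same Perron--Frobenius route as the paper: replace $f$ by $|f|$ via the Rayleigh quotient, then run the zero-set argument and invoke connectedness. You are in fact somewhat more careful than the paper, which disposes of the sign claim with a ``without loss of generality $f\ge 0$''; your decomposition $f=f^+-f^-$ and the second application of connectedness to $P\times N$ make the constant-sign conclusion rigorous for an arbitrary $\mu$-eigenfunction.
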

\begin{proof}
    Let $\mu = \mu(W)$.  
    Since 
    \begin{align*}
        \mu = \max_{\|h\|_2 = 1}\int_{(x,y)\in [0,1]^2}W(x,y)h(x)h(y), 
    \end{align*}
    it follows without loss of generality that $f\geq 0$ a.e. on $[0,1]$.  
    Let $Z:=\{x\in [0,1] : f(x) = 0\}$.  
    Then for a.e. $x\in Z$, 
    \begin{align*}
        0 = 
            \mu f(x) 
        = 
            \int_{y\in [0,1]}W(x,y)f(y)
        = 
            \int_{y\in Z^c}
                W(x,y)f(y).  
    \end{align*}
    Since $f > 0$ on $Z^c$, it follows that $W(x,y) = 0$ a.e. on $Z\times Z^c$.  
    Clearly $m(Z^c) \neq 0$.  
    If $m(Z) = 0$ then the desired claim holds, so without loss of generality, $0 < m(Z),m(Z^c)<1$.  
    It follows that $W$ is disconnected, a contradiction to our assumption, which completes the proof of the desired claim.  
\end{proof}

We may now prove a graphon version of Lemma \ref{lem: graph join}.
    
\begin{lemma}\label{lem: K = indicator function}
    Suppose $W$ is a graphon achieving maximum spread and let $f,g$ be eigenfunctions for the maximum and minimum eigenvalues for $W$, respectively.  
    Then the following claims hold: 
    \begin{enumerate}[(i)]
        \item For a.e. $(x,y)\in [0,1]^2$, \label{item: K = 0 or 1}
        \begin{align*}
            W(x,y) 
            = \left\{\begin{array}{rl}
                1, & f(x)f(y) > g(x)g(y) \\
                0, & \text{otherwise}
            \end{array}
            \right.
            .  
        \end{align*}
        \item $f(x)f(y)-g(x)g(y) \neq 0$ for a.e. $(x,y)\in [0,1]^2$.  \label{item: |diff| > 0}
        \end{enumerate}
\end{lemma}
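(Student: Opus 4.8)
The plan is to mimic the proof of Lemma~\ref{lem: graph join}, replacing the discrete ``add/delete an edge'' perturbations with their measure-theoretic analogues. Throughout, normalize $f,g$ to be unit functions; by Proposition~\ref{prop: PF eigenfunction} we may take $f>0$ a.e. on $[0,1]$, and $W$ is connected (otherwise Proposition~\ref{prop: disconnected spectrum} would let us strictly increase the spread by replacing a summand with the all-zeroes graphon, or more precisely a disconnected spread-extremal graphon would contradict extremality since one can always reconnect to gain spread). Recall from the Min--Max characterization that
\begin{align*}
    \spr(W) = \mu(W) - \nu(W) = \int_0^1\int_0^1 W(x,y)\bigl(f(x)f(y) - g(x)g(y)\bigr)\,dx\,dy,
\end{align*}
since $f,g$ are the extremal eigenfunctions. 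Write $h(x,y) := f(x)f(y) - g(x)g(y)$ for the ``score'' kernel.

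For part~\eqref{item: K = 0 or 1}, suppose for contradiction that there is a positive-measure set $S \subseteq [0,1]^2$ (which we may take symmetric) on which $h > 0$ but $W < 1$, or on which $h < 0$ but $W > 0$. Define $\tilde W$ by setting $\tilde W = 1$ on the first kind of set and $\tilde W = 0$ on the second, leaving $W$ unchanged elsewhere; then $\tilde W \in \ww$ and
\begin{align*}
    \mu(\tilde W) - \nu(\tilde W) \geq \int\int \tilde W(x,y) h(x,y)\,dx\,dy > \int\int W(x,y) h(x,y)\,dx\,dy = \spr(W),
\end{align*}
contradicting maximality. (The first inequality is the Min--Max bound applied with the test functions $f$ and $g$ for $\mu(\tilde W)$ and $\nu(\tilde W)$ respectively; the strict inequality uses that the integrand strictly increased on a set of positive measure.) This forces $W(x,y) = \mathbf 1[h(x,y) > 0]$ for a.e.\ $(x,y)$ with $h(x,y) \neq 0$; the behaviour on $\{h = 0\}$ is then pinned down once part~\eqref{item: |diff| > 0} shows that set is null.

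For part~\eqref{item: |diff| > 0}, suppose the set $Z := \{(x,y) : h(x,y) = 0\}$ has positive measure. The idea is the graphon analogue of ``adding or deleting an edge $uv$ with $\x_u\x_v = \z_u\z_v$ doesn't change the spread, hence $\x,\z$ remain eigenvectors of the perturbed matrix, forcing degeneracy incompatible with Perron--Frobenius.'' Concretely, for any symmetric measurable $U \subseteq Z$ of positive measure, modifying $W$ on $U$ (say toggling its value) changes neither $\int\int W h$ nor, by the argument above, the spread; so $f$ and $g$ remain eigenfunctions of $A_{W'}$ for the modified graphon $W'$ with the same eigenvalues $\mu,\nu$. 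Subtracting the two eigenfunction equations $A_W f = \mu f$ and $A_{W'} f = \mu f$ gives $(A_{W'} - A_W)f = 0$, i.e.\ $\int_0^1 (W'-W)(x,y) f(y)\,dy = 0$ for a.e.\ $x$; since $W' - W$ is supported on $U \subseteq Z$ and can be chosen with essentially arbitrary sign pattern on $U$, and since $f > 0$ a.e., this forces $m(U) = 0$ after an appropriate choice of $U$ (e.g.\ take $U$ to be a symmetric subset of $Z$ on which $W' - W$ has constant sign, using that $f>0$ so the integral cannot vanish unless the $x$-section of $U$ is null for a.e.\ $x$, hence $m(U)=0$). This contradicts $m(Z) > 0$.

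The main obstacle I anticipate is the measure-theoretic bookkeeping in part~\eqref{item: |diff| > 0}: in the discrete case one deletes a single edge and reads off from the rank-one perturbation $(\mathbf e_u \mathbf e_v^T + \mathbf e_v \mathbf e_u^T)\x = (\lambda - \lambda')\x$ that $\x_w = 0$ for all $w \neq u,v$, then invokes connectivity. In the graphon setting the perturbation $A_{W'} - A_W$ need not be finite-rank, so I must instead argue directly that $\int_0^1 (W'-W)(x,y)f(y)\,dy = 0$ a.e.\ together with the freedom to choose the sign of $W'-W$ on subsets of $Z$ forces $Z$ to be null — the cleanest route is probably: pick $U$ a symmetric positive-measure subset of $Z$, set $W' = W + \varepsilon \mathbf 1_U$ (clipped to $[0,1]$, valid since we can shrink $U$ to where $W < 1$, or else use $W' = W - \varepsilon \mathbf 1_U$ where $W > 0$; one of these is available on a positive-measure piece of $Z$ because $h = 0$ there does not constrain $W$), and conclude $\int_U(x,\cdot\,)\text{-section} f = 0$, which is impossible for $f > 0$ unless the section is a.e.\ null, hence $m(U) = 0$. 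One must also handle the edge case where $W \in \{0,1\}$ a.e.\ on $Z$ in a way that blocks both perturbations, but part~\eqref{item: K = 0 or 1} combined with $h = 0$ on $Z$ actually leaves $W$ genuinely free there, so this case does not arise. The rest is routine.
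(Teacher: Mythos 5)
Your argument for part \eqref{item: K = 0 or 1} on the set $\{h\neq 0\}$ is exactly the paper's: round $W$ to the indicator of $\{h>0\}$ off of $\{h=0\}$ and observe the test-function lower bound on the spread strictly increases if the modification has positive measure. That half is fine.

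The gap is in part \eqref{item: |diff| > 0}, at the step ``so $f$ and $g$ remain eigenfunctions of $A_{W'}$ \emph{with the same eigenvalues} $\mu,\nu$,'' from which you deduce $(A_{W'}-A_W)f=0$. The equality chain $\spr(W')\geq fW'f-gW'g=\spr(W)\geq\spr(W')$ does show $f,g$ are extremal eigenfunctions of $W'$, but their eigenvalues are $\mu'=fW'f$ and $\nu'=gW'g$, and $\mu'-\mu=\int_U(W'-W)f(x)f(y)$ is in general nonzero (e.g.\ if $W=0$ on a positive-measure piece of $U$ and you set $W'=1$ there, then $\mu'>\mu$). Since $h=0$ on $U$ one only gets $\mu'-\mu=\nu'-\nu$: the spread is preserved while both eigenvalues shift together. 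So the correct identity is $(A_{W'}-A_W)f=(\mu'-\mu)f$, and your conclusion $\int_{U_x}(W'-W)f=0$ does not follow; when $\mu'>\mu$ the sections of $U$ need not be null. The paper closes this differently: it writes $I_{W'}=(\mu'-\mu)f$ and $J_{W'}=(\nu'-\nu)g$, uses orthogonality of $f$ and $g$ to get $\int I_{W'}J_{W'}=0$, notes that $h=0$ and $f>0$ force $g(x)g(y)>0$ a.e.\ on $U$ (so $U$ splits into $U_+$ and $U_-$ by the sign of $g$), and then chooses the sign-definite perturbation $W'=1$ on $U_+$ so that $I_{W'},J_{W'}\geq 0$; the vanishing integral then forces $I_{W'}J_{W'}=0$ pointwise, hence $W=1$ a.e.\ on $U_+$, while the opposite perturbation forces $W=0$ a.e.\ on $U_+$, so $m(U_+)=0$ (and likewise $U_-$). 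If you want to keep your route, you must first \emph{prove} $\mu'=\mu$; one way is to take the support $V$ of the perturbation to be a symmetric positive-measure subset of $U$ contained in some $A\times A$ with $m(A)<1$, so that $(\mu'-\mu)f(x)=\varepsilon\int_{V_x}f=0$ for the positive-measure set of $x\notin A$, forcing $\mu'=\mu$ since $f>0$; then Fubini gives $m(V)=0$, and a countable cover of $U$ minus the diagonal by such sets finishes. As written, though, the step is unjustified.
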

\begin{proof}
    We proceed in the following order: 
    \begin{itemize}
        \item Prove Item \eqref{item: K = 0 or 1} holds for a.e. $(x,y)\in [0,1]^2$ such that $f(x)f(y)\neq g(x)g(y)$.  
        We will call this Item \eqref{item: K = 0 or 1}*.  
        \item Prove Item \eqref{item: |diff| > 0}.  
        \item Deduce Item \eqref{item: K = 0 or 1} also holds.  
    \end{itemize}
    
    By Propositions \ref{prop: disconnected spectrum} and \ref{prop: PF eigenfunction}, we may assume without loss of generality that $f > 0$ a.e.~on $[0,1]$.  
    For convenience, we define the quantity $d(x,y) := f(x)f(y)-g(x)g(y)$.  
    To prove Item \eqref{item: K = 0 or 1}*, we first define a graphon $W'$ by 
    \begin{align*}
       W'(x,y)
        = \left\{\begin{array}{rl}
            1, & d(x,y) > 0 \\
            0, & d(x,y) < 0 \\
            W(x,y) & \text{otherwise}
        \end{array}
        \right.
        .
    \end{align*}
    Then by inspection, 
    \begin{align*}
        \spr(W') 
        &\geq 
            \int_{(x,y)\in [0,1]^2}
                W'(x,y)(f(x)f(y)-g(x)g(y))
        \\
        &= 
            \int_{(x,y)\in [0,1]^2}
                W(x,y)(f(x)f(y)-g(x)g(y)) \\&+ \int_{d(x,y) > 0}
                (1-W(x,y))d(x,y)
            - \int_{d(x,y) < 0}
                W(x,y)d(x,y)
        \\
        &= 
            \spr(W) + \int_{d(x,y) > 0}
                (1-W(x,y))d(x,y)
            - \int_{d(x,y) < 0}
                W(x,y)d(x,y).  
    \end{align*}
    Since $W$ maximizes spread, both integrals in the last line must be $0$ and hence Item \eqref{item: K = 0 or 1}* holds.  \\
    
    Now, we prove Item \eqref{item: |diff| > 0}.  
    For convenience, we define $U$ to be the set of all pairs $(x,y)\in [0,1]^2$ so that $d(x,y) = 0$.  
    Now let $W'$ be any graphon which differs from $W$ only on $U$.  
    Then 
    \begin{align*}
        \spr(W') 
        &\geq 
            \int_{(x,y)\in [0,1]^2}
                W'(x,y)(f(x)f(y)-g(x)g(y))
        \\
        &= 
            \int_{(x,y)\in [0,1]^2}
                W(x,y)(f(x)f(y)-g(x)g(y))\\&+ \int_{(x,y)\in U}
                (W'(x,y)-W(x,y))(f(x)f(y)-g(x)g(y))
        \\
        &= 
            \spr(W).  
    \end{align*}
    Since $\spr(W)\geq \spr(W')$, $f$ and $g$ are eigenfunctions for $W'$ and we may write $\mu'$ and $\nu'$ for the corresponding eigenvalues.  
    Now, we define 
    \begin{align*}
        I_{W'}(x) 
        &:= 
            (\mu'-\mu)f(x) \\
        &= 
            \int_{y\in [0,1]}(W'(x,y)-W(x,y))f(y)\\
        &= \int_{y\in [0,1], \, (x,y)\in U}
            (W'(x,y)-W(x,y))f(y).  
    \end{align*}
    Similarly, we define 
    \begin{align*}
        J_{W'}(x) 
        &:= 
            (\nu'-\nu)g(x) \\
        &= 
            \int_{y\in [0,1]}(W'(x,y)-W(x,y))g(y)\\
        &= \int_{y\in [0,1],\, (x,y)\in U}
            (W'(x,y)-W(x,y))g(y).  
    \end{align*}
    Since $f$ and $g$ are orthogonal, 
    \begin{align*}
        0 &= \int_{x\in [0,1]}
            I_{W'}(x)J_{W'}(x).  
    \end{align*}
 By definition of $U$, we have that for a.e. $(x,y) \in U$, $0 = d(x,y) = f(x)f(y) - g(x)g(y)$. In particular, since $f(x),f(y) > 0$ for a.e. $(x,y)\in [0,1]^2$, then a.e. $(x,y)\in U$ has $g(x)g(y) > 0$.  
    So by letting 
    \begin{align*}
        U_+ &:= \{(x,y)\in U: g(x),g(y) > 0\},  \\
        U_- &:= \{(x,y)\in U:g(x),g(y)<0\},\text{ and}  \\
        U_0 &:= U\setminus (U_+\cup U_-), 
    \end{align*}
    $U_0$ has measure $0$.  
    \\
    
    First, let $W'$ be the graphon defined by 
    \begin{align*}
        W'(x,y) 
        &= \left\{\begin{array}{rl}
            1, & (x,y)\in U_+\\
            W(x,y), &\text{otherwise}
        \end{array}
        \right. 
        .  
    \end{align*}
    For this choice of $W'$, 
    \begin{align*}
        I_{W'}(x) 
        &= 
            \int_{y\in [0,1], \, (x,y)\in U_+}
                (1-W(x,y))f(y), \text{ and} \\
        J_{W'}(x) 
        &= 
            \int_{y\in [0,1], \, (x,y)\in U_+}
                (1-W(x,y))g(y).  
    \end{align*}
    Clearly $I_{W'}$ and $J_{W'}$ are nonnegative functions so $I_{W'}(x)J_{W'}(x) = 0$ for a.e. $x\in [0,1]$.  
    Since $f(y)$ and $g(y)$ are positive for a.e. $(x,y)\in U$, $W(x,y) = 1$ for a.e. on $U_+$.  \\
    
    If instead we let $W'(x,y)$ be $0$ for all $(x,y)\in U_+$, it follows by a similar argument that $W(x,y) = 0$ for a.e. $(x,y)\in U_+$.  
    So $U_+$ has measure $0$.  
    Repeating the same argument on $U_-$, we similarly conclude that $U_-$ has measure $0$.  
    This completes the proof of Item \eqref{item: |diff| > 0}.  \\
    
    Finally we note that Items \eqref{item: K = 0 or 1}* and \eqref{item: |diff| > 0} together implies Item \eqref{item: K = 0 or 1}. 
\end{proof}

From here, it is easy to see that any graphon maximizing the spread is a join of two threshold graphons.   Next we prove the graphon version of Lemma \ref{discrete ellipse equation}.

\begin{lemma}\label{lem: local eigenfunction equation}
    If $W$ is a graphon achieving the maximum spread with corresponding eigenfunctions $f,g$, then $\mu f^2 - \nu g^2 = \mu-\nu$ almost everywhere.  
\end{lemma}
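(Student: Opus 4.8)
The plan is to convert maximality of $\spr(W)$ over $\wwhat$ into a first-order optimality condition for the \emph{underlying measure} of the graphon; this is the continuous incarnation of the delete-a-vertex / clone-a-vertex move used in the proof of Lemma~\ref{discrete ellipse equation}. Write $\mu=\mu(W)$, $\nu=\nu(W)$, so $\spr(W)=\mu-\nu$, and fix the given unit eigenfunctions $f,g\in\elltwo$ with $A_Wf=\mu f$ and $A_Wg=\nu g$ (both are integrable since $[0,1]$ has finite measure). For a measurable density $\rho\colon[0,1]\to(0,\infty)$ with $\int_0^1\rho=1$, let $\Phi_\rho(x)=\int_0^x\rho$, an increasing bijection of $[0,1]$, and define the blow-up graphon $W_\rho(s,t):=W(\Phi_\rho^{-1}(s),\Phi_\rho^{-1}(t))$. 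When $\rho$ is bounded away from $0$ and $\infty$ (which will be the only case we need), $\Phi_\rho$ is bi-Lipschitz, so $W_\rho$ is a bona fide symmetric $[0,1]$-valued measurable graphon, i.e. $W_\rho\in\ww$. The monotone substitution $s=\Phi_\rho(x)$ gives, for every $h$ and $\phi:=h\circ\Phi_\rho$,
\[
    \int_0^1\!\!\int_0^1 W_\rho(s,t)h(s)h(t)\,ds\,dt=\int_0^1\!\!\int_0^1 W(x,y)\,\phi(x)\phi(y)\,\rho(x)\rho(y)\,dx\,dy,\qquad \|h\|_2^2=\int_0^1\phi(x)^2\rho(x)\,dx,
\]
so by the Min–Max characterization, $\mu(W_\rho)$ and $\nu(W_\rho)$ are respectively the supremum and infimum over $\phi\neq0$ of the weighted Rayleigh quotient $Q_\rho(\phi):=\big(\iint W(x,y)\phi(x)\phi(y)\rho(x)\rho(y)\big)\big/\big(\int\phi^2\rho\big)$. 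Since $W$ maximizes spread over $\wwhat$, we get $\mu(W_\rho)-\nu(W_\rho)\le\mu-\nu$ for all such $\rho$, with equality at $\rho\equiv1$.

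Next I would differentiate this reduced problem at the uniform density. Fix $\psi\in\mathscr{L}^\infty[0,1]$ with $\int_0^1\psi=0$, and for $|\epsilon|<\|\psi\|_\infty^{-1}$ put $\rho_\epsilon=1+\epsilon\psi>0$. Plugging the \emph{fixed} test functions $f$ and $g$ into $Q_{\rho_\epsilon}$ and using $\iint Wff=\mu$, $\iint Wff\psi(x)=\int\psi f\,(A_Wf)=\mu\int\psi f^2$ (and likewise for $g$), a direct expansion of numerator and denominator in $\epsilon$ gives
\[
    Q_{\rho_\epsilon}(f)=\mu+\epsilon\,\mu\!\int_0^1\psi f^2+O(\epsilon^2),\qquad Q_{\rho_\epsilon}(g)=\nu+\epsilon\,\nu\!\int_0^1\psi g^2+O(\epsilon^2),
\]
with implied constants depending only on $\|\psi\|_\infty$ and $\|f\|_1,\|g\|_1$. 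Hence $\mu(W_{\rho_\epsilon})-\nu(W_{\rho_\epsilon})\ge Q_{\rho_\epsilon}(f)-Q_{\rho_\epsilon}(g)=(\mu-\nu)+\epsilon\int_0^1\psi(\mu f^2-\nu g^2)+O(\epsilon^2)$. Comparing this with $\mu(W_{\rho_\epsilon})-\nu(W_{\rho_\epsilon})\le\mu-\nu$ from the previous paragraph, dividing by $\epsilon$, and letting $\epsilon\to0^+$ and then $\epsilon\to0^-$ forces $\int_0^1\psi(\mu f^2-\nu g^2)=0$.

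To conclude: this identity holds for every bounded mean-zero $\psi$, and $\mu f^2-\nu g^2\in\mathscr{L}^1[0,1]$. Taking $\psi=\mathbf 1_E-m(E)$ for an arbitrary measurable $E\subseteq[0,1]$ yields $\int_E(\mu f^2-\nu g^2)=m(E)\int_0^1(\mu f^2-\nu g^2)$, so $\mu f^2-\nu g^2$ equals its own average almost everywhere; and that average is $\mu\|f\|_2^2-\nu\|g\|_2^2=\mu-\nu$. This is exactly the asserted equation $\mu f^2-\nu g^2=\mu-\nu$ a.e.

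I do not expect a serious obstacle here: the argument is a clean first-variation computation together with an $\mathscr{L}^1$–$\mathscr{L}^\infty$ duality argument. The one place that needs care — and the one conceptual step — is the reduction in the first paragraph: recognizing that reweighting the points of $[0,1]$ by a density $\rho$ produces a graphon $W_\rho$ whose spread equals $\sup Q_\rho-\inf Q_\rho$, so that maximality of $W$ over $\wwhat$ becomes maximality of $\rho\mapsto\sup Q_\rho-\inf Q_\rho$ at $\rho\equiv1$; once that is in place, everything is routine. (If one prefers to avoid the blow-up formalism, one can instead argue directly with graphons, deleting a small interval around a Lebesgue point $x_0$ and cloning a small interval around another Lebesgue point $x_0'$, in exact analogy with the deletion/cloning in the proof of Lemma~\ref{discrete ellipse equation}; the two constructions are the same, but the error terms of order the square of the interval length make that route more tedious to write out.)
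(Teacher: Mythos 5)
Your proof is correct and is essentially the paper's argument: both perturb $W$ by a non-measure-preserving reparametrization of $[0,1]$ (your density $\rho_\epsilon=1+\epsilon\psi$ is exactly the paper's piecewise-linear stretch/shrink when $\psi=\chi_I-\chi_J$) and extract the first-order optimality condition $\int\psi(\mu f^2-\nu g^2)=0$ from maximality of the spread. The only difference is cosmetic: you allow general bounded mean-zero $\psi$ and finish by $\mathscr{L}^1$--$\mathscr{L}^\infty$ duality, whereas the paper uses only equal-length interval pairs and concludes constancy by refining partitions.
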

\begin{proof}
    We will use the notation $(x,y) \in W$ to denote that $(x,y)\in [0,1]^2$ satisfies $W(x,y) =1$. Let $\varphi:[0,1]\to[0,1]$ be an arbitrary homeomorphism which is {\em orientation-preserving} in the sense that $\varphi(0) = 0$ and $\varphi(1) = 1$.  
    Then $\varphi$ is a continuous strictly monotone increasing function which is differentiable almost everywhere.  
    Now let $\tf := \varphi'\cdot (f\circ \varphi)$, $\tg := \varphi'\cdot (g\circ \varphi)$ and $\tilde{W} := \{ (x,y)\in [0,1]^2 : (\varphi(x),\varphi(y))\in W \}$.  
    Using the substitutions $u = \varphi(x)$ and $v = \varphi(y)$, 
    \begin{align*}
        \tf \tilde{W} \tf 
        &= 
            \int_{ (x,y)\in [0,1]^2 }
                \chi_{(\varphi(x),\varphi(y))\in \tilde{W}}\; 
                \varphi'(x)\varphi'(y)\cdot 
                f(\varphi(x))f(\varphi(y))
                dx\, dy\\
        &= \int_{ (x,y)\in [0,1]^2 }
            \chi_{(x,y)\in W}\; 
            f(u)f(v)
            du\, dv\\
        &= \mu.  
    \end{align*}
    Similarly, $\tg\tilde{W}\tg = \nu$.  
    \\
    \\
    Note however that the $L_2$ norms of $\tf,\tg$ may not be $1$.  
    Indeed using the substitution $u = \varphi(x)$, 
    \[
        \|\tf\|_2^2 
        = 
            \int_{x\in[0,1]}
                \varphi'(x)^2f(\varphi(x))^2\, dx
        = 
            \int_{u\in [0,1]}
                \varphi'(\varphi^{-1}(u))\cdot f(u)^2\, du
        .
    \]
    We exploit this fact as follows.  
    Suppose $I,J$ are disjoint subintervals of $[0,1]$ of the same positive length $m(I) = m(J) = \ell > 0$ and for any $\varepsilon > 0$ sufficiently small (in terms of $\ell$), let $\varphi$ be the (unique) piecewise linear function which  stretches $I$ to length $(1+\varepsilon)m(I)$, shrinks $J$ to length $(1-\varepsilon)m(J)$, and shifts only the elements in between $I$ and $J$.  
    Note that for a.e. $x\in [0,1]$, 
    \[
        \varphi'(x) = \left\{\begin{array}{rl}
            1+\varepsilon,  & x\in I \\
            1-\varepsilon,  & x\in J \\
            1,              & \text{otherwise}. 
        \end{array}\right.
    \]
    Again with the substitution $u = \varphi(x)$, 
    \begin{align*}
        \|\tf\|_2^2 
        &= 
            \int_{x\in[0,1]}
                \varphi'(x)^2\cdot f(\varphi(x))^2\, dx\\
        &= 
            \int_{[u\in [0,1]}
                \varphi'(\varphi^{-1}(u))f(u)^2\, du \\
        &= 
            1
            +
            \varepsilon\cdot (
                \|\chi_If\|_2^2
                -\|\chi_Jf\|_2^2
            ).  
    \end{align*}
    The same equality holds for $\tg$ instead of $\tf$.  
    After normalizing $\tf$ and $\tg$, by optimality of $W$, we get a difference of Rayleigh quotients as 
    \begin{align*}
        0 
        &\leq 
         (fWf-gWg) - \dfrac{\tf\tilde{W}\tf}{\|\tf\|_2^2} - \dfrac{\tg\tilde{W}\tg}{\|\tg\|_2^2} 
            \\
        &= 
            \dfrac{
                \mu \varepsilon\cdot (
                    \|\chi_If\|_2^2
                    -\|\chi_Jf\|_2^2)
                }
                {1+\varepsilon\cdot (
                    \|\chi_If\|_2^2
                    -\|\chi_Jf\|_2^2)
                }
            - 
                \dfrac{
                    \nu\varepsilon\cdot (
                        \|\chi_Ig\|_2^2
                        -\|\chi_Jg\|_2^2)
                    }
                {1+\varepsilon\cdot (
                    \|\chi_Ig\|_2^2
                    -\|\chi_Jg\|_2^2)
                }\\
        &= (1+o(1))\varepsilon\cdot\left(
            \int_I
                (\mu f(x)^2-\nu g(x)^2)dx
            -\int_J
                (\mu f(x)^2-\nu g(x)^2)dx
        \right)
    \end{align*}
    as $\varepsilon\to 0$.  
    It follows that for all disjoint intervals $I,J\subseteq [0,1]$ of the same length that the corresponding integrals are the same.  
    Taking finer and finer partitions of $[0,1]$, it follows that the integrand $\mu f(x)^2-\nu g(x)^2$ is constant almost everywhere.  
    Since the average of this quantity over all $[0,1]$ is $\mu-\nu$, the desired claim holds.  
\end{proof}

\section{From graphons to stepgraphons}\label{sec: graphon spread reduction}


The main result of this section is as follows.  
\begin{theorem}\label{thm: reduction to stepgraphon}
    Suppose $W$ maximizes $\spr(\hat{\mathcal{W}})$.  
    Then $W$ is a stepfunction taking values $0$ and $1$ of the following form 
    \begin{align*}
    	
    	\quad .
    \end{align*}
    Furthermore, the internal divisions separate according to the sign of the eigenfunction corresponding to the minimum eigenvalue of $W$.  
\end{theorem}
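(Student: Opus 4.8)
The idea is to extract everything from the two structural lemmas already proved --- Lemma~\ref{lem: K = indicator function}, which says the extremal graphon equals $\mathbb{1}\!\left[f(x)f(y)>g(x)g(y)\right]$, and Lemma~\ref{lem: local eigenfunction equation}, the ellipse identity $\mu f^2-\nu g^2=\mu-\nu$ --- reduce $W$ to a one-parameter object, and then upgrade the resulting threshold-join description to a genuine finite stepfunction.

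\smallskip
\noindent\emph{Step 1: a one-parameter description.}
Let $W$ be spread-extremal with extremal eigenfunctions $f,g$ and eigenvalues $\mu:=\mu(W)$, $\nu:=\nu(W)$. Since $W$ is connected (otherwise Proposition~\ref{prop: disconnected spectrum} produces strictly larger spread), Proposition~\ref{prop: PF eigenfunction} lets us take $f>0$ a.e.; moreover $\mu>0$ and $\nu<0$ (e.g.\ since $\mu\le 1<\spr(W)$). Put $w:=g/f$. The ellipse identity gives $f^2=(\mu-\nu)/(\mu-\nu w^2)$ and $g^2=w^2f^2$, so $f$ is a strictly decreasing function of $|w|$ and both $f,g$ are determined by $w$ (with $\sign g=\sign w$). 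Dividing the adjacency condition of Lemma~\ref{lem: K = indicator function} by $f(x)f(y)>0$ turns it into
\[
    W(x,y)=1 \iff w(x)w(y)<1 \qquad \text{for a.e. } (x,y).
\]
In particular, writing $P:=\{g\ge 0\}=\{w\ge 0\}$ and $N:=[0,1]\setminus P$ (both of positive measure, since $\int fg=0$ and $f>0$), we get $W=W_P\vee W_N$, where $W_P,W_N$ are the restrictions of $W$ to $P\times P$ and to $N\times N$; each is a threshold graphon with threshold function $-\log|w|$, whose ``clique'' part is $\{|w|<1\}$ and ``independent'' part is $\{|w|>1\}$. Since the pieces of any stepfunction representation of $W$ must be (unions of) level sets of $w$, such a partition automatically refines the $P/N$ split --- this is the final assertion of the theorem.

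\smallskip
\noindent\emph{Step 2: $W$ is a stepfunction (the crux).}
It remains to prove that $w$ takes finitely many values. The plan is to combine the two eigenvalue equations $\mu f=A_Wf$ and $\nu g=A_Wg$ with the explicit formulas $f=f(w)$, $g=g(w)$. For $t=w(x)>0$ these read $\mu f(t)=\int_{\{w<1/t\}}f$ and $\nu\,t\,f(t)=\int_{\{w<1/t\}}g$ (with an analogous pair on $N$); note that the threshold partner of the value $t$ is simply $1/t$, so on each side the ``clique'' and ``independent'' ranges are interchanged by $t\mapsto 1/t$. Assuming $w$ varied over a continuum, differentiating these relations would force the distribution of $w$ to possess a prescribed density on an interval and on its reciprocal; feeding this back into the full (non-local) eigenvalue equations, together with the normalizations $\|f\|_2=1$, $\int fg=0$ and positivity of the distribution, yields a contradiction. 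Equivalently one argues by a local exchange: two ``adjacent'' $w$-values not separated by a genuine jump of $(f,g)$ may be merged without changing $\spr$, and only a bounded number of genuine jumps is compatible with the eigenvalue equations and the ellipse identity. Excluding continuously-varying eigenfunctions is the main obstacle; everything else is bookkeeping.

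\smallskip
\noindent\emph{Step 3: the seven-block template.}
Once $W$ is a stepgraphon, order the value-blocks of $P$ by increasing $w$ (equivalently decreasing $f$) and those of $N$ by increasing $|w|$. The condition $w_iw_j<1$ forces the adjacency pattern on each side to be an ``anti-staircase'' $\{(i,j):i+j\le c\}$ for a constant $c$, while $W\equiv 1$ on $P\times N$; this is exactly the shape drawn in the statement once the number of blocks is bounded by four on the side carrying the dominating (clique) structure and by three on the other. For this bound, substitute the finitely many pairs $(f_i,g_i)$ into the scalar relations $\mu f_i=\sum_j[\,w_iw_j<1\,]\,\alpha_j f_j$, $\nu g_i=\sum_j[\,w_iw_j<1\,]\,\alpha_j g_j$ and $\mu f_i^2-\nu g_i^2=\mu-\nu$, and run a finite case analysis over the admissible anti-staircase patterns --- the combinatorial reduction later phrased as the passage from $2^7$ to $17$ cases. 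Combining Steps~1--3 yields Theorem~\ref{thm: reduction to stepgraphon}.
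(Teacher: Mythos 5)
Your Step 1 is a faithful repackaging of Lemma \ref{lem: K = indicator function} and Lemma \ref{lem: local eigenfunction equation}, and the reduction to the single parameter $w=g/f$ with $W(x,y)=\mathbb{1}[w(x)w(y)<1]$ is correct. But Step 2 — which you yourself flag as ``the crux'' — is not a proof; it is a statement that a contradiction ``would'' follow from differentiating the eigenvalue equations, or ``equivalently'' from a local exchange. Neither route is carried out, and the differentiation route in particular is not obviously viable: the relation $\mu f(t)=\int_{\{w<1/t\}}f$ constrains the pushforward distribution of $w$ but does not by itself rule out an absolutely continuous component. The paper's actual mechanism is the averaging/cloning construction (Lemma \ref{lem: eigenfunction averaging} and Corollary \ref{cor: averaging reduction}): one finds a point $x_0\in U$ that is simultaneously \emph{typical} (satisfies the eigenfunction equation pointwise) and \emph{average} ($f(x_0)^2$ equals the mean of $f^2$ on $U$), replaces $U$ by clones of $x_0$, and shows extremality forces $W=0$ on $U^2$ and $f$ constant on $U$. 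Crucially, this lemma's hypothesis requires $U$ to be essentially an independent set, so it only bounds the number of $f$-values on $P\cap S$ and $N\cap S$ (at most $2$ each, via Darboux--Froda plus a three-interval contradiction). This is the content your proposal is missing.

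A second, related gap is in Step 3: the bound on the number of blocks on the clique parts $P\cap C$ and $N\cap C$ does \emph{not} come from the averaging argument (which is unavailable there) nor from the $17$-case analysis you cite — that case analysis belongs to the later optimization over the block sizes $\alpha_i$, after the $7$-block template is already established. In the paper it comes from a separate counting argument: the nesting-neighborhood property (Claim B) combined with the eigenfunction equation shows $\mu f(x)$ can take at most $3$ values on each of $P\cap C$ and $N\cap C$, giving at most $10$ blocks, which are then merged down to $7$ by identifying identical columns. Without supplying the averaging lemma for the independent parts and this counting argument for the clique parts, the proposal does not establish that $W$ is a stepfunction at all, let alone one of the stated shape.
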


We begin Section \ref{sub-sec: L2 averaging} by mirroring the argument in \cite{terpai} which proved a conjecture of Nikiforov regarding the largest eigenvalue of a graph and its complement, $\mu+\overline{\mu}$.  
There Terpai showed that performing two operations on graphons leads to a strict increase in $\mu+\overline{\mu}$.  
Furthermore based on previous work of Nikiforov from \cite{Nikiforov4}, the conjecture for graphs reduced directly to maximizing $\mu+\overline{\mu}$ for graphons.  Using these operations, Terpai \cite{terpai} reduced to a $4\times 4$ stepgraphon and then completed the proof by hand.

In our case, we are not so lucky and are left with a $7\times 7$ stepgraphon after performing similar but more technical operations, detailed in this section.  
In order to reduce to a $3\times 3$ stepgraphon, we appeal to interval arithmetic (see Section \ref{sub-sec: numerics} and Appendices 
    \ref{sec: ugly}
    and \ref{sec: appendix}). 
Furthermore, our proof requires an additional technical argument to translate the result for graphons (Theorem \ref{thm: spread maximum graphon}) to our main result for graphs (Theorem \ref{thm: spread maximum graphs}).  
In Section \ref{sub-sec: stepgraphon proof}, we prove Theorem \ref{thm: reduction to stepgraphon}.

\subsection{Averaging}\label{sub-sec: L2 averaging}
For convenience, we introduce some terminology. For any graphon $W$ with $\lambda$-eigenfunction $h$, we say that $x\in [0,1]$ is \textit{typical} (with respect to $W$ and $h$) if 
\begin{align*}
    \lambda\cdot h(x) = \int_{y\in [0,1]}W(x,y)h(y).  
\end{align*}
Note that a.e. $x\in [0,1]$ is typical.  
Additionally if $U\subseteq [0,1]$ is measurable with positive measure, then we say that $x_0\in U$ is \textit{average} (on $U$, with respect to $W$ and $h$) if 
\begin{align*}
    h(x_0)^2 = \dfrac{1}{m(U)}\int_{y\in U}h(y)^2.  
\end{align*}
Given $W,h,U$, and $x_0$ as above, we define the $L_2[0,1]$ function  $\av_{U,x_0}h$ by setting 
\begin{align*}
    (\av_{U,x_0}h)(x) := \left\{\begin{array}{rl}
        h(x_0), & x\in U\\
        h(x), & \text{otherwise}
    \end{array}\right.
    .  
\end{align*}
Clearly $\|\av_{U,x_0}h\|_2 = \|h\|_2$.  
Additionally, we define the graphon $\av_{U,x_0}W$ by setting 
\begin{align*}
    \av_{U,x_0}W (x,y)
    := \left\{\begin{array}{rl}
        0, & (x,y)\in U\times U \\
        W(x_0,y), &(x,y)\in U\times U^c \\
        W(x,x_0), &(x,y)\in U^c\times U \\
        W(x,y), & (x,y)\in U^c\times U^c
    \end{array}\right.
    .  
\end{align*}

In the graph setting, this is analogous to replacing $U$ with an independent set whose vertices are clones of $x_0$. The following lemma indicates how this cloning affects the eigenvalues.

\begin{lemma}\label{lem: eigenfunction averaging}
    Suppose $W$ is a graphon with $h$ a $\lambda$-eigenfunction and suppose there exist disjoint measurable subsets $U_1,U_2\subseteq [0,1]$ of positive measures $\alpha$ and $\beta$, respectively.  
    Let $U:=U_1\cup U_2$.  
    Moreover, suppose $W = 0$ a.e. on $(U\times U)\setminus (U_1\times U_1)$. Additionally, suppose $x_0\in U_2$ is typical and average on $U$, with respect to $W$ and $h$.  
    Let $\tilh := \av_{U,x_0}h$ and $\tW := \av_{U,x_0}W$.  
    Then for a.e. $x\in [0,1]$, 
    \begin{align}\label{eq: averaged vector image}
        (A_{\tW}\tilh)(x) 
        &= 
            \lambda\tilh(x) + 
            \left\{\begin{array}{rl}
                0, & x\in U\\
                m(U)\cdot W(x_0,x)h(x_0) - \int_{y\in U} W(x,y)h(y), &\text{otherwise}
            \end{array}\right.
        .  
    \end{align}
    Furthermore, 
    \begin{align}\label{eq: averaged vector product}
        \langle A_{\tW}\tilh, \tilh\rangle
        = 
            \lambda + \int_{(x,y)\in U_1\times U_1}W(x,y)h(x)h(y).  
    \end{align}
\end{lemma}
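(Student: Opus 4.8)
The plan is to compute $(A_{\tW}\tilh)(x)$ directly from the definitions of $\av_{U,x_0}W$ and $\av_{U,x_0}h$, splitting the integral $\int_0^1 \tW(x,y)\tilh(y)\,dy$ according to whether $y\in U$ or $y\in U^c$, and treating the cases $x\in U$ and $x\in U^c$ separately. For $x\in U^c$: on $U^c$ we have $\tilh=h$, so $\int_{U^c}\tW(x,y)\tilh(y)\,dy = \int_{U^c}W(x,y)h(y)\,dy$; and on $U$ we have $\tW(x,y)=W(x,x_0)$ and $\tilh(y)=h(x_0)$, contributing $m(U)W(x,x_0)h(x_0)$. Adding and subtracting $\int_U W(x,y)h(y)\,dy$ converts the first piece into $\int_0^1 W(x,y)h(y)\,dy = \lambda h(x) = \lambda\tilh(x)$ (using that $x$ is typical — here I would remark that a.e. $x$ is typical, which is why the statement is a.e.), leaving the stated correction term $m(U)W(x_0,x)h(x_0)-\int_U W(x,y)h(y)\,dy$. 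For $x\in U$: now $\tilh(x)=h(x_0)$, and I split further into $x\in U_1$ and $x\in U_2$. The key structural input is the hypothesis $W=0$ a.e. on $(U\times U)\setminus(U_1\times U_1)$, together with the fact that $\tW$ is defined to vanish on all of $U\times U$. For $y\in U^c$: if $x\in U_1\subseteq U$ then $\tW(x,y)=W(x_0,y)$ (since $x_0\in U_2$, wait — need $\tW(x,y)=W(x_0,y)$ for $(x,y)\in U\times U^c$, which is exactly the definition), and similarly if $x\in U_2$. So $\int_{U^c}\tW(x,y)\tilh(y)\,dy=\int_{U^c}W(x_0,y)h(y)\,dy$. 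Since $x_0$ is typical, $\int_0^1 W(x_0,y)h(y)\,dy=\lambda h(x_0)$, and since $W(x_0,\cdot)=0$ a.e. on $U$ (because $x_0\in U_2$ and $(U_2\times U)$ avoids $U_1\times U_1$, so $W$ vanishes there a.e.), this integral over $U^c$ equals $\lambda h(x_0)=\lambda\tilh(x)$; the $U\times U$ part contributes $0$. This gives the $0$ correction term on $U$, establishing \eqref{eq: averaged vector image}.

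For \eqref{eq: averaged vector product}, I would integrate \eqref{eq: averaged vector image} against $\tilh$. The main term integrates to $\lambda\|\tilh\|_2^2 = \lambda\|h\|_2^2 = \lambda$. The correction term is supported on $U^c$, where $\tilh=h$, so it contributes
\[
\int_{x\in U^c} h(x)\Bigl(m(U)W(x_0,x)h(x_0)-\int_{y\in U}W(x,y)h(y)\Bigr)\,dx.
\]
Here I expect the cleanest route is to use that $x_0$ is \emph{average} on $U$: I want to show the displayed quantity equals $\int_{U_1\times U_1}W(x,y)h(x)h(y)$. The second part, $-\int_{x\in U^c}\int_{y\in U}W(x,y)h(x)h(y)$, can be rewritten using symmetry of $W$ and the eigenfunction equation for $h$: for $y\in U$, $\int_{x\in[0,1]}W(x,y)h(x)\,dx=\lambda h(y)$, and for $y\in U_1$ the integrand over $x\in U$ vanishes a.e. by the support hypothesis while for $y\in U_2$ it vanishes entirely; I would also need to separate the $y\in U_1$ and $y\in U_2$ contributions and fold in the first part $m(U)h(x_0)\int_{x\in U^c}W(x_0,x)h(x)\,dx = m(U)h(x_0)\cdot\lambda h(x_0)$ (again using $x_0$ typical and $W(x_0,\cdot)=0$ a.e. on $U$). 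Combining these and invoking $h(x_0)^2 = \tfrac1{m(U)}\int_U h(y)^2\,dy$ (the "average" property) to reconcile the $\lambda$-terms should collapse everything to the single remaining term $\int_{U_1\times U_1}W(x,y)h(x)h(y)$.

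The main obstacle I anticipate is the bookkeeping in the second identity: there are several overlapping integrals ($U_1\times U_1$, $U_1\times U^c$, $U_2\times U^c$, $U\times U^c$ via $x_0$) and one must carefully use three separate hypotheses — typicality of $x_0$, the averaging property $h(x_0)^2=\frac1{m(U)}\int_U h^2$, and the support condition $W|_{(U\times U)\setminus(U_1\times U_1)}=0$ — in exactly the right places so that all the $\lambda$-weighted cross terms cancel. A secondary subtlety is remembering that all equalities hold only almost everywhere and that "typical" is an a.e. condition, so that replacing $\int_0^1 W(x,y)h(y)\,dy$ by $\lambda h(x)$ is legitimate for a.e. $x$; I would state this once at the outset. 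The first identity \eqref{eq: averaged vector image} itself is essentially a direct unwinding of definitions and should be routine once the case split is set up.
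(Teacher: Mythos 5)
Your proposal follows essentially the same route as the paper's proof: both identities are obtained by unwinding the definitions of $\av_{U,x_0}W$ and $\av_{U,x_0}h$, splitting integrals over $U$ versus $U^c$, rewriting $\int_{U^c}$ as $\int_{[0,1]}-\int_{U}$ so that typicality of $x_0$ (resp.\ of a.e.\ $y$) converts the full integrals into $\lambda h(x_0)$ (resp.\ $\lambda h(y)$), and then cancelling $\lambda\, m(U)h(x_0)^2$ against $\lambda\int_U h^2$ via the averaging hypothesis, with the support hypothesis reducing $\int_{U\times U}Whh$ to $\int_{U_1\times U_1}Whh$. One step is misstated, though: for $y\in U_1$ the integrand $W(x,y)h(x)$ over $x\in U$ does \emph{not} vanish a.e.\ --- the support hypothesis kills it only for $x\in U_2$, and the surviving piece $\int_{x\in U_1}W(x,y)h(x)\,dx$ is exactly what produces the term $\int_{U_1\times U_1}W(x,y)h(x)h(y)$ in \eqref{eq: averaged vector product}. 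Since your stated final answer correctly retains that term, this reads as a slip rather than a gap, but when you write out the bookkeeping you should replace that sentence with: for a.e.\ $y\in U_1$, $\int_{x\in U}W(x,y)h(x)\,dx=\int_{x\in U_1}W(x,y)h(x)\,dx$, while for a.e.\ $y\in U_2$ the integral is $0$.
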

\begin{proof}
    We first prove Equation \eqref{eq: averaged vector image}.  
    Note that for a.e. $x\in U$, 
    Then 
    \begin{align*}
        (A_{\tW}\tilh)(x) 
        &= 
            \int_{y\in [0,1]}
                \tW(x,y)\tilh(y)
        \\
        &= 
            \int_{y\in U}
                \tW(x,y)\tilh(y) 
            + \int_{y\in [0,1]\setminus U}
                \tW(x,y)\tilh(y)
        \\
        &= 
            \int_{y\in [0,1]\setminus U}
                W(x_0,y)h(y)
        \\
        &= 
            \int_{y\in [0,1]}
                W(x_0,y)h(y) 
                - \int_{y\in U}
                    W(x_0,y)h(y)
        \\
        &= 
            \lambda h(x_0)\\
        &= \lambda \tilh(x), 
    \end{align*}
    as desired.  
    Now note that for a.e. $x\in [0,1]\setminus U$, 
    \begin{align*}
        (A_{\tW}\tilh)(x) 
        &= 
            \int_{y\in [0,1]}
                \tW(x,y)\tilh(y)
        \\
        &= 
            \int_{y\in U}
                \tW(x,y)\tilh(y) 
            + \int_{y\in [0,1]\setminus U}
                \tW(x,y)\tilh(y)
        \\
        &= 
             \int_{y\in U}
                W(x_0,x)h(x_0)
            +  \int_{y\in [0,1]\setminus U}
                W(x,y)h(y) 
        \\
        &= 
             m(U)\cdot W(x_0,x)h(x_0)
             +  \int_{y\in [0,1]}
                W(x,y)h(y) 
            - \int_{y\in U}
                W(x,y)h(y) 
        \\
        &= 
             \lambda h(x)
             + m(U)\cdot W(x_0,x)h(x_0)
             - \int_{y\in U}
                W(x,y)h(y)
        .
    \end{align*}
    So again, the claim holds and this completes the proof of Equation \eqref{eq: averaged vector image}.  
    Now we prove Equation \eqref{eq: averaged vector product}.  
    Indeed by Equation \eqref{eq: averaged vector image}, 
    \begin{align*}
        \langle (A_{\tW}\tilh), \tilh\rangle 
        &= 
            \int_{x\in [0,1]}
                (A_{\tW}\tilh)(x)
                \tilh(x)
        \\
        &= 
            \int_{x\in [0,1]}
                \lambda\tilh(x)^2
            + \int_{x\in [0,1]\setminus U}
                \left(
                    m(U)\cdot W(x_0,x)h(x_0) - \int_{y\in U}W(x,y)h(y)
                \right)
                \cdot h(x)
        \\
        &= 
            \lambda
            + m(U)\cdot h(x_0) \left(
                \int_{x\in [0,1]}
                    W(x_0,x)h(x)
                - \int_{x\in U}
                    W(x_0,x)h(x)
            \right)
        \\
        &\quad 
            - \int_{y\in U}
                \left(
                    \int_{x\in [0,1]}
                        W(x,y)h(x)
                    -\int_{x\in U}
                        W(x,y)h(x)
                \right)
                \cdot h(y)
        \\
        &= 
            \lambda
            + m(U)\cdot h(x_0)\left(
                \lambda h(x_0) 
                - \int_{y\in U}
                    0
            \right)
            - \int_{y\in U}
                \left(
                    \lambda h(y)^2
                    -\int_{x\in U}
                        W(x,y)h(x)h(y)
                \right)
        \\
        &= 
            \lambda 
            + \lambda m(U)\cdot h(x_0)^2 
            - \lambda \int_{y\in U}
                h(y)^2
            + \int_{(x,y)\in U\times U}
                W(x,y)h(x)h(y)
        \\
        &= 
            \lambda 
            + \int_{(x,y)\in U_1\times U_1}
                W(x,y)h(x)h(y), 
    \end{align*}
    and this completes the proof of desired claims.  
\end{proof}

We have the following useful corollary.  
\begin{corollary}\label{cor: averaging reduction}
    Suppose $\spr(W) = \spr(\hat{\mathcal{W}})$ with maximum and minimum eigenvalues $\mu,\nu$ corresponding respectively to eigenfunctions $f,g$.  
    Moreover, suppose that there exist disjoint subsets $A,B\subseteq [0,1]$ and $x_0\in B$ so that the conditions of Lemma \ref{lem: eigenfunction averaging} are met for $W$ with $\lambda = \mu$, $h = f$, $U_1=A $, and $U_2=B$.  
    Then, 
    \begin{enumerate}[(i)]
        \item
            \label{item: U independent} 
            $W(x,y) = 0$ for a.e. $(x,y)\in U^2$, and 
        \item
            \label{item: f,g constant on U}
            $f$ is constant on $U$.  
    \end{enumerate}

\end{corollary}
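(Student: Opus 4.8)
The plan is to clone $x_0$ across $U:=A\cup B$ simultaneously in both extremal eigenfunctions, via the operation of Lemma~\ref{lem: eigenfunction averaging}, and to read off both claims from the fact that this cannot strictly increase the spread. Throughout I may assume $f>0$ a.e.\ on $[0,1]$ (Propositions~\ref{prop: disconnected spectrum} and~\ref{prop: PF eigenfunction}). By Lemma~\ref{lem: local eigenfunction equation}, $\mu f^2-\nu g^2=\mu-\nu$ a.e., so $g^2$ is (a.e.) an affine function of $f^2$; hence ``average on $U$ with respect to $(W,f)$'' and ``average on $U$ with respect to $(W,g)$'' define the same set up to a null set, and $m(U)g(x_0)^2=\int_U g^2$ follows from $m(U)f(x_0)^2=\int_U f^2$. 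Since typicality (with respect to $f$ and to $g$) holds at a.e.\ point, I may take the $x_0\in B$ given by the hypothesis to be, in addition, typical and average on $U$ with respect to $(W,g)$, so that $\tf:=\av_{U,x_0}f$ and $\tg:=\av_{U,x_0}g$ are both unit functions. Write $\tW:=\av_{U,x_0}W$.

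Applying Lemma~\ref{lem: eigenfunction averaging} with $(\lambda,h,U_1,U_2)=(\mu,f,A,B)$ and then with $(\nu,g,A,B)$ (the hypotheses hold because $W=0$ a.e.\ on $(U\times U)\setminus(A\times A)$ and $x_0$ has been arranged appropriately; note $\tW$ does not depend on the eigenfunction being averaged) yields
\[
    \langle A_{\tW}\tf,\tf\rangle=\mu+\!\!\int_{A\times A}\!\!Wff,\qquad
    \langle A_{\tW}\tg,\tg\rangle=\nu+\!\!\int_{A\times A}\!\!Wgg.
\]
By the Min--Max theorem applied to $\tW$ with the unit functions $\tf,\tg$,
\[
    \spr(\tW)\ \ge\ \langle A_{\tW}\tf,\tf\rangle-\langle A_{\tW}\tg,\tg\rangle
    =(\mu-\nu)+\!\!\int_{A\times A}\!\!W(x,y)\bigl(f(x)f(y)-g(x)g(y)\bigr),
\]
and by Lemma~\ref{lem: K = indicator function}\eqref{item: K = 0 or 1} the last integrand is $\ge 0$ a.e.\ (it equals $f(x)f(y)-g(x)g(y)>0$ where $W=1$, and $0$ where $W=0$). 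Hence $\spr(\tW)\ge\mu-\nu=\spr(W)=\spr(\hat{\mathcal W})$, so by maximality every inequality is an equality; in particular $\int_{A\times A}W(x,y)(f(x)f(y)-g(x)g(y))=0$, and as the integrand is nonnegative a.e.\ while $f(x)f(y)-g(x)g(y)\ne 0$ a.e.\ (Lemma~\ref{lem: K = indicator function}\eqref{item: |diff| > 0}), $W=0$ a.e.\ on $A\times A$. Together with $W=0$ a.e.\ on $(U\times U)\setminus(A\times A)$ this is~\eqref{item: U independent}, and it also gives $\int_{A\times A}Wff=\int_{A\times A}Wgg=0$.

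For~\eqref{item: f,g constant on U}, the equalities above now force $\mu(\tW)=\mu$ and $\nu(\tW)=\nu$ (using $\mu(\tW)\ge\langle A_{\tW}\tf,\tf\rangle=\mu$, $\nu(\tW)\le\langle A_{\tW}\tg,\tg\rangle=\nu$, and $\spr(\tW)=\mu-\nu$), so the unit function $\tf$ is a $\mu$-eigenfunction of $\tW$. Subtracting $A_{\tW}\tf=\mu\tf$ from $A_Wf=\mu f$ on $[0,1]\setminus U$ gives, for a.e.\ $x\notin U$,
\[
    \int_{y\in U}W(x,y)f(y)\,dy=m(U)\,f(x_0)\,W(x,x_0).
\]
Since $W$ is $\{0,1\}$-valued (Lemma~\ref{lem: K = indicator function}\eqref{item: K = 0 or 1}) and $f>0$ a.e.: for a.e.\ $x\notin U$ with $W(x,x_0)=0$ the right-hand side vanishes, forcing $W(x,\cdot)=0$ a.e.\ on $U$; while for a.e.\ $x\notin U$ with $W(x,x_0)=1$,
\[
    m(U)f(x_0)=\int_{y\in U}W(x,y)f(y)\,dy\le\int_{y\in U}f(y)\,dy\le\Bigl(m(U)\!\int_U f^2\Bigr)^{1/2}=m(U)f(x_0),
\]
so the Cauchy--Schwarz step is tight and $f$ is constant on $U$. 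Finally, $W(x,x_0)=1$ on a positive-measure set of $x\notin U$: otherwise $W(x_0,\cdot)=0$ a.e.\ on $[0,1]$ (it already vanishes a.e.\ on $U$), hence $\mu f(x_0)=0$, so $f(x_0)=0$ (as $\mu>0$), whence $\int_U f^2=m(U)f(x_0)^2=0$, contradicting $f>0$ a.e. So $f$ is constant on $U$, which is~\eqref{item: f,g constant on U}.

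I expect the main obstacle to be the minimum-eigenvalue side of the argument: one must supply a genuine unit test function for $\tW$ whose Rayleigh quotient is $\le\nu$, and the natural choice $\av_{U,x_0}g$ preserves the $L^2$ norm only because $g^2$ is an affine function of $f^2$ (Lemma~\ref{lem: local eigenfunction equation}), which is exactly what transfers the ``average at $x_0$'' property from $f$ to $g$ and keeps the treatment of $\mu$ and $\nu$ symmetric. The remaining work --- the two Rayleigh-quotient identities, the nonnegativity of $W(ff-gg)$, and the Cauchy--Schwarz tightness argument --- is routine; a minor technical nuisance is the measure-zero bookkeeping in selecting a single $x_0$ satisfying all the relevant ``almost every'' properties.
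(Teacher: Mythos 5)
Your proof is correct and follows essentially the same route as the paper's: apply Lemma~\ref{lem: eigenfunction averaging} to both extremal eigenfunctions, sandwich $\spr(\tilde W)$ against $\spr(W)$ to force $\int_{A\times A}W(x,y)\bigl(f(x)f(y)-g(x)g(y)\bigr)=0$ and hence item~\eqref{item: U independent}, and then extract constancy of $f$ on $U$ from the resulting eigenfunction identity via the tightness of Cauchy--Schwarz. The only departures are cosmetic: you justify explicitly, via Lemma~\ref{lem: local eigenfunction equation}, that $x_0$ may be taken average and typical with respect to $g$ as well (a point the paper leaves implicit), and in the degenerate case where $W(\cdot,x_0)$ vanishes off $U$ you reach a contradiction through $f(x_0)=0$ rather than through disconnectedness of $W$.
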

\begin{proof}
    Without loss of generality, we assume that $\|f\|_2 = \|g\|_2 = 1$.  
    Write $\tW$ for the graphon and $\tf,\tg$ for the corresponding functions produced by Lemma \ref{lem: eigenfunction averaging}.  
    By Lemma \ref{prop: PF eigenfunction}, we may assume without loss of generality that $f > 0$ a.e. on $[0,1]$.  
    We first prove Item \eqref{item: U independent}.  
    Note that 
    \begin{align}
        \spr(\tW)
        &\geq \int_{(x,y)\in [0,1]^2}
           \tilde{W}(x,y)(\tilde{f}(x)\tilde{f}(y)-\tilde{g}(x)\tilde{g}(y))
        \nonumber\\
        &= 
            (\mu-\nu) + \int_{(x,y)\in A\times A}W(x,y)(f(x)f(y)-g(x)g(y))
        \nonumber\\
        &=
            \spr(W) + \int_{(x,y)\in A\times A}W(x,y)(f(x)f(y)-g(x)g(y)).  
        \label{eq: sandwich spread}
    \end{align}
    Since $\spr(W)\geq \spr(\tilde{W})$ and by Lemma \ref{lem: K = indicator function}.\eqref{item: |diff| > 0}, $f(x)f(y)-g(x)g(y) > 0$ for a.e. $(x,y)\in A\times A$ such that $W(x,y) \neq 0$.  
    Item \eqref{item: U independent} follows.  \\
    
    For Item \eqref{item: f,g constant on U}, we first note that $f$ is a $\mu$-eigenfunction for $\tilde{W}$. Indeed, if not, then the inequality in \eqref{eq: sandwich spread} holds strictly, a contradiction to the fact that $\spr(W)\geq \spr(\tW)$.  Again by Lemma \ref{lem: eigenfunction averaging}, 
    \begin{align*}
        m(U)\cdot W(x_0,x)f(x_0) = \int_{y\in U}W(x,y)f(y)
    \end{align*}
    for a.e. $x\in [0,1]\setminus U$.  
    Let $S_1 := \{x\in [0,1]\setminus U : W(x_0,x) = 1\}$ and $S_0 := [0,1]\setminus (U\cup S_1)$.  
    We claim that $m(S_1) = 0$.  
    Assume otherwise.  
    By Lemma \ref{lem: eigenfunction averaging} and by Cauchy-Schwarz, for a.e.  $x\in S_1$
    \begin{align*}
        m(U)\cdot f(x_0)
        &=
            m(U)\cdot W(x_0,x)f(x_0) 
        \\
        &= 
            \int_{y\in U}W(x,y)f(y)
        \\
        &\leq 
            \int_{y\in U} f(y)
        \\
        &\leq m(U)\cdot f(x_0), 
    \end{align*}
    and by sandwiching, $W(x,y) = 1$ and $f(y) = f(x_0)$ for a.e. $y\in U$.  
    Since $m(S_1) > 0$, it follows that $f(y) = f(x_0) = 0$ for a.e. $y\in U$, as desired.  \\
    
    So we assume otherwise, that $m(S_1) = 0$.  
    Then for a.e. $x\in [0,1]\setminus U$, $W(x_0,x) = 0$ and 
    \begin{align*}
        0 
        &= 
            m(U)\cdot W(x_0,x)f(x_0)= 
            \int_{y\in U}
                W(x,y)f(y)
    \end{align*}
    and since $f>0$ a.e. on $[0,1]$, it follows that $W(x,y) = 0$ for a.e. $y\in U$.  
    So altogether, $W(x,y) = 0$ for a.e. $(x,y)\in ([0,1]\setminus U)\times U$.  
    So $W$ is a disconnected, a contradiction to Fact \ref{prop: disconnected spectrum}.  
    So the desired claim holds.  
\end{proof}

\subsection{Proof of Theorem \ref{thm: reduction to stepgraphon}}\label{sub-sec: stepgraphon proof}
\begin{proof}
	For convenience, we write $\mu := \mu(W)$ and $\nu := \nu(W)$ and let $f,g$ denote the corresponding unit eigenfunctions.  
	Moreover by Proposition \ref{prop: PF eigenfunction}, we may assume without loss of generality that $f>0$.  \\
	
	First, we show without loss of generality that $f,g$ are monotone on the sets $P := \{x\in [0,1] : g(x)\geq 0\}$ and $N := [0,1]\setminus P$.  
	Indeed, we define a total ordering $\preccurlyeq$ on $[0,1]$ as follows.  
    For all $x$ and $y$, we let $x \preccurlyeq y$ if: 
    \begin{enumerate}[(i)]
        \item $g(x)\geq 0$ and $g(y)<0$, or 
        \item Item (i) does not hold and $f(x) > f(y)$, or 
        \item Item (i) does not hold, $f(x) = f(y)$, and $x\leq y$.  
    \end{enumerate}
    By inspection, the function 
    $\varphi:[0,1]\to[0,1]$ defined by 
    \begin{align*}
        \varphi(x) 
        := 
            m(\{y\in [0,1] : y\preccurlyeq x\}).  
    \end{align*}
    is a weak isomorphism between $W$ and its entrywise composition with $\varphi$. 
    By invariance of $\spr(\cdot)$ under weak isomorphism, we make the above replacement and write $f,g$ for the replacement eigenfunctions. That is, we are assuming that our graphon is relabeled so that $[0,1]$ respects $\preccurlyeq$.  \\
    
    As above, let $P := \{x\in [0,1] : g(x) \geq 0\}$ and $N:= [0,1]\setminus P$.  
    By Lemma \ref{lem: local eigenfunction equation}, $f$ and $-g$ are monotone {nonincreasing}  on $P$.  
    Additionally, $f$ and $g$ are monotone {nonincreasing} on $N$.  
    Without loss of generality, we may assume that $W$ is of the form from Lemma \ref{lem: K = indicator function}.  
    Now we let $S := \{x\in [0,1] : f(x) < |g(x)|\}$ and $C:=[0,1]\setminus S$. By Lemma \ref{lem: K = indicator function} we have that $W(x,y)=1$ for almost every $x,y\in C$ and $W(x,y)=0$ for almost every $x,y\in S \cap P$ or $x,y\in S\cap N$. We have used the notation $C$ and $S$ because the analogous sets in the graph setting form a clique or a stable set respectively. 
    We first prove the following claim.  \\
    \\
    \textbf{Claim A:} Except on a set of measure $0$, $f$ takes on at most $2$ values on $P\cap S$, and at most $2$ values on $N\cap S$.  \\
    
    We first prove this claim for $f$ on $P\cap S$.  
    Let $D$ be the set of all discontinuities of $f$ on the interior of the interval $P\cap S$.  
    Clearly $D$ consists only of jump-discontinuities.  
    By the Darboux-Froda Theorem, $D$ is at most countable and moreover, $(P\cap S)\setminus D$ is a union of at most countably many disjoint intervals $\ii$.  
    Moreover, $f$ is continuous on the interior of each $I\in\ii$.  
    \\
    
    We show now that $f$ is piecewise constant on the interiors of each $I\in \ii$.  
    Indeed, let $I\in \ii$.  
    Since $f$ is a $\mu$-eigenfunction function for $W$, 
    \begin{align*}
        \mu f(x) = \int_{y\in [0,1]}W(x,y)f(y)
    \end{align*}
    for a.e. $x\in [0,1]$ and by continuity of $f$ on the interior of $I$, this equation holds everywhere on the interior of $I$.  
    Additionally since $f$ is continuous on the interior of $I$, by the Mean Value Theorem, there exists some $x_0$ in the interior of $I$ so that 
    \begin{align*}
        f(x_0)^2 = \dfrac{1}{m(U)}\int_{x\in U}f(x)^2.  
    \end{align*}
    By Corollary \ref{cor: averaging reduction}, $f$ is constant on the interior of $U$, as desired.  \\
    
    If $|\ii|\leq 2$, the desired claim holds, so we may assume otherwise.  
    Then there exists distinct $I_1,I_2,I_3\in \ii$.  
    Moreover, $f$ equals a constant $f_1,f_2,f_3$ on the interiors of $I_1,I_2,$ and $I_3$, respectively.  
    Additionally since $I_1,I_2,$ and $I_3$ are separated from each other by at least one jump discontinuity, we may assume without loss of generality that $f_1 < f_2 < f_3$.  
    It follows that there exists a measurable subset $U\subseteq I_1\cup I_2\cup I_3$ of positive measure so that 
    \begin{align*}
        f_2^2 &= \dfrac{1}{m(U)}\int_{x\in U}f(x)^2.  
    \end{align*}
    By Corollary \ref{cor: averaging reduction}, $f$ is constant on $U$, a contradiction.  
    So Claim A holds on $P\cap S$.  
    For Claim A on $N\cap S$, we may repeat this argument with $P$ and $N$ interchanged, and $g$ and $-g$ interchanged.  
    \\
    
    Now we show the following claim.  \\
    \\
    \textbf{Claim B:} For a.e. $(x,y) \in (P\times P)\cup (N\times N)$ such that $f(x)\geq f(y)$, we have that for a.e. $z\in [0,1]$, $W(x,z) = 0$ implies that $W(y,z) = 0$.  \\
    
    We first prove the claim for a.e. $(x,y)\in P\times P$.  
    Suppose $W(y,z) = 0$.  By Lemma \ref{lem: K = indicator function}, in this case $z\in P$.
    Then for a.e. such $x,y$, by Lemma \ref{lem: local eigenfunction equation}, $g(x)\leq g(y)$.  
    By Lemma \ref{lem: K = indicator function}.\eqref{item: K = 0 or 1}, $W(x,z) = 0$ implies that $f(x)f(z) < g(x)g(z)$.  
    Since $f(x)\geq f(y)$ and $g(x)\leq g(y)$, $f(y)f(z) < g(y)g(z)$.  
    Again by Lemma \ref{lem: K = indicator function}.\eqref{item: K = 0 or 1}, $W(y,z) = 0$ for a.e. such $x,y,z$, as desired.  
    So the desired claim holds for a.e. $(x,y)\in P\times P$ such that $f(x)\geq f(y)$.  
    We may repeat the argument for a.e. $(x,y)\in N\times N$ to arrive at the same conclusion.  
    \\
    \\
    The next claim follows directly from Lemma \ref{lem: local eigenfunction equation}.    \\
    \\
    \textbf{Claim C:} For a.e. $x\in [0,1]$, $x\in C$ if and only if $f(x) \geq 1$, if and only if $|g(x)| \leq 1$.  
    \\
    \\
    Finally, we show the following claim.  
    \\
    \\
    \textbf{Claim D:} Except on a set of measure $0$,  $f$ takes on at most $3$ values on $P\cap C$, and at most $3$ values on $N\cap C$.  \\
    
    For a proof, we first write $P\cap S = S_1 \cup S_2$ so that $S_1,S_2$ are disjoint and $f$ equals some constant $f_1$ a.e. on $S_1$ and $f$ equals some constant $f_2$ a.e. on $S_2$.  
    By Lemma \ref{lem: local eigenfunction equation}, $g$ equals some constant $g_1$ a.e. on $S_1$ and $g$ equals some constant $g_2$ a.e. on $S_2$.  
    By definition of $P$, $g_1,g_2\geq 0$.  
    Now suppose $x\in P\cap C$ so that 
    \begin{align*}
        \mu f(x) &= \int_{y\in [0,1]}W(x,y)f(y).  
    \end{align*}
    Then by Lemma \ref{lem: K = indicator function}.\eqref{item: K = 0 or 1}, 
    \begin{align*}
        \mu f(x) 
        &= 
            \int_{y\in (P\cap C)\cup N}
                f(y)
            + \int_{y\in S_1}W(x,y)f(y)
            + \int_{y\in S_2}W(x,y)f(y)
        .  
    \end{align*}
    By Claim B, this expression for $\mu f(x)$ may take on at most $3$ values.  
    So the desired claim holds on $P\cap C$.  
    Repeating the same argument, the claim also holds on $N\cap C$.  \\
    
    We are nearly done with the proof of the theorem, as we have now reduced $W$ to a $10\times 10$ stepgraphon. To complete the proof, we show that we may reduce to at most $7\times 7$.  
    We now partition $P\cap C, P\cap S, N\cap C$, and $N\cap S$ so that $f$ and $g$ are constant a.e. on each part as: 
    \begin{itemize}
        \item $P\cap C = U_1\cup U_2\cup U_3$, 
        \item $P\cap S = U_4\cup U_5$, 
        \item $N\cap C = U_6\cup U_7\cup U_8$, and 
        \item $N\cap S = U_9\cup U_{10}$.  
    \end{itemize}
    Then by Lemma \ref{lem: K = indicator function}.\eqref{item: K = 0 or 1}, there exists a matrix $(m_{ij})_{i,j\in [10]}$ so that for all $(i,j)\in [10]\times [10]$, 
    \begin{itemize}
        \item $m_{ij}\in \{0,1\}$, 
        \item $W(x,y) = m_{ij}$ for a.e. $(x,y)\in U_i\times U_j$, 
        \item $m_{ij} = 1$ if and only if $f_if_j > g_ig_j$, and 
        \item $m_{ij} = 0$ if and only if $f_if_j < g_ig_j$.  
    \end{itemize}
    Additionally, we set $\alpha_i = m(U_i)$ and also denote by $f_i$ and $g_i$ the constant values of $f,g$ on each $U_i$, respectively, for each $i= 1, \ldots, 10$.  
    Furthermore, by Claim C and Lemma \ref{lem: K = indicator function} we assume without loss of generality that that $f_1 > f_2 > f_3 \geq 1 > f_4 > f_5$ and that $f_6 > f_7 > f_8 \geq 1 > f_9 > f_{10}$.  
    Also by Lemma \ref{lem: local eigenfunction equation}, $0 \leq g_1 < g_2 < g_3 \leq 1 < g_4 < g_5$ and $0 \leq -g_1 < -g_2 < -g_3 \leq 1 < -g_4 < -g_5$.  
    Also, by Claim B, no two columns of $m$ are identical within the sets $\{1,2,3,4,5\}$ and within $\{6,7,8,9,10\}$.  
    Shading $m_{ij} = 1$ black and $m_{ij} = 0$ white, we let 
    \begin{align*}
        M = 
        \begin{tabular}{||ccc||cc|||ccc||cc||}\hline\hline
            \black & \black & \black & \black & \black & \black & \black & \black & \black & \black \\
            \black & \black & \black & \black &  & \black & \black & \black & \black & \black \\
            \black & \black & \black &  &  & \black & \black & \black & \black & \black \\\hline\hline
            \black & \black &  &  &  & \black & \black & \black & \black & \black \\
            \black &  &  &  &  & \black & \black & \black & \black & \black \\\hline\hline\hline
            \black & \black & \black & \black & \black & \black & \black & \black & \black & \black \\
            \black & \black & \black & \black & \black & \black & \black & \black & \black &  \\
            \black & \black & \black & \black & \black & \black & \black & \black &  &  \\\hline\hline
            \black & \black & \black & \black & \black & \black & \black &  &  &  \\
            \black & \black & \black & \black & \black & \black &  &  &  &  \\\hline\hline
        \end{tabular}
        \quad .
    \end{align*}
   Therefore, $W$ is a stepgraphon with values determined by $M$ and the size of each block determined by the $\alpha_i$.  \\
    
    We claim that $0\in \{\alpha_3, \alpha_4, \alpha_5\}$ and $0\in\{\alpha_8,\alpha_9,\alpha_{10}\}$.  
    For the first claim,  assume to the contrary that all of $\alpha_3, \alpha_4, \alpha_5$ are positive and note that there exists some $x_4\in U_4$ such that 
    \begin{align*}
        \mu f_4 = \mu f(x_4) = \int_{y\in [0,1]}W(x_4,y)f(y).  
    \end{align*}
    Moreover for some measurable subsets $U_3'\subseteq U_3$ and $U_5'\subseteq U_5$ of positive measure so that with $U := U_3'\cup U_4\cup U_5'$, 
    \begin{align*}
        f(x_4)^2 = \dfrac{1}{m(U)}\int_{y\in U}f(y)^2.  
    \end{align*}
    
     Note that by Lemma \ref{lem: local eigenfunction equation}, we may assume that $x_4$ is average on $U$ with respect to $g$ as well. The conditions of Corollary \ref{cor: averaging reduction} are met for $W$ with $A=U_3', B=U_4\cup U_5',x_0 = x_4$.  
    Since $\int_{A\times A}W(x,y)f(x)f(y) > 0$, this is a contradiction to the corollary, so the desired claim holds.  
    The same argument may be used to prove that $0\in \{\alpha_8,\alpha_9,\alpha_{10}\}$.  
    \\
    
    We now form the principal submatrix $M'$ by removing the $i$-th row and column from $M$ if and only if $\alpha_i = 0$.  Since $\alpha_i=0$, $W$ is a stepgraphon with values determined by $M'$.
    Let $M_P'$ denote the principal submatrix of $M'$ corresponding to the indices $i\in\{1,\dots,5\}$ so that $\alpha_i>0$. That is, $M_P'$ corresponds to the upper left hand block of $M$.
    We use red to indicate rows and columns present in $M$ but not $M_P'$.  When forming the submatrix $M_P'$, we borrow the internal subdivisions which are present in the definition of $M$ above to denote where $f\geq 1$ and where $f<1$ (or between $S \cap P$ and $C \cap P$). Note that this is not the same as what the internal divisions denote in the statement of the theorem.
    Since $0\in\{\alpha_3,\alpha_4,\alpha_5\}$, it follows that $M_P'$ is a principal submatrix of 
    \begin{align*}
        \begin{tabular}{||ccc||cc||}\hline\hline
            \black & \black & \redcell & \black & \black \\
            \black & \black & \redcell & \black & \\
            \redcell & \redcell & \redcell & \redcell & \redcell \\\hline\hline
            \black & \black & \redcell & & \\
            \black & & \redcell & & \\
            \hline\hline
        \end{tabular}
        \quad , \quad
        \begin{tabular}{||ccc||cc||}\hline\hline
            \black & \black & \black & \redcell & \black \\
            \black & \black & \black & \redcell & \\
            \black & \black & \black & \redcell & \\\hline\hline
            \redcell & \redcell & \redcell & \redcell & \redcell \\
            \black & & & \redcell & \\
            \hline\hline
        \end{tabular}
        \quad, 
        \text{ or }\quad 
        \begin{tabular}{||ccc||cc||}\hline\hline
            \black & \black & \black & \black & \redcell \\
            \black & \black & \black & \black & \redcell \\
            \black & \black & \black & & \redcell\\\hline\hline
            \black & \black & & & \redcell \\
            \redcell & \redcell & \redcell & \redcell & \redcell \\
            \hline\hline
        \end{tabular}
        \quad .
    \end{align*}
    In the second case, columns $2$ and $3$ are identical in $M'$, and in the third case, columns $1$ and $2$ are identical in $M'$.  
    So without loss of generality, $M_P'$ is a principal submatrix of one of 
    \begin{align*}
        \begin{tabular}{||ccc||cc||}\hline\hline
            \black & \black & \redcell & \black & \black \\
            \black & \black & \redcell & \black & \\
            \redcell & \redcell & \redcell & \redcell & \redcell \\\hline\hline
            \black & \black & \redcell & & \\
            \black &  & \redcell & & \\
            \hline\hline
        \end{tabular}
        \quad , \quad 
        \begin{tabular}{||ccc||cc||}\hline\hline
            \black & \redcell & \black & \redcell & \black \\
            \redcell & \redcell & \redcell & \redcell & \redcell \\
            \black & \redcell & \black & \redcell & \\\hline\hline
            \redcell & \redcell & \redcell & \redcell & \redcell \\
            \black & \redcell & & \redcell & \\
            \hline\hline
        \end{tabular}
        \quad, 
        \text{ or }\quad 
        \begin{tabular}{||ccc||cc||}\hline\hline
            \black & \redcell & \black & \black & \redcell \\
            \redcell & \redcell & \redcell & \redcell & \redcell \\
            \black & \redcell & \black & & \redcell\\\hline\hline
            \black & \redcell & & & \redcell \\
            \redcell & \redcell & \redcell & \redcell & \redcell \\
            \hline\hline
        \end{tabular}
        \quad .
    \end{align*}
    In each case, $M_P'$ is a principal submatrix of 
    \begin{align*}
        \begin{tabular}{||cc||cc||}\hline\hline
            \black & \black & \black & \black \\
            \black & \black & \black & \\\hline\hline
            \black & \black & & \\
            \black & & & \\
            \hline\hline
        \end{tabular}
        \quad .
    \end{align*}
    An identical argument shows that the principal submatrix of $M'$ on the indices $i\in\{6,\dots,10\}$ such that $\alpha_i>0$ is a principal submatrix of 
    \begin{align*}
        \begin{tabular}{||cc||cc||}\hline\hline
            \black & \black & \black & \black \\
            \black & \black & \black & \\\hline\hline
            \black & \black & & \\
            \black & & & \\
            \hline\hline
        \end{tabular}
        \quad .
    \end{align*}
    Finally, we note that $0\in \{\alpha_1,\alpha_6\}$.  
    Indeed otherwise the corresponding columns are identical in $M'$, a contradiction.  
    So without loss of generality, row and column $6$ were also removed from $M$ to form $M'$.  
    This completes the proof of the theorem.  
\end{proof}

\section{Spread maximum graphons}\label{sec:spread_graphon}

In this section, we complete the proof of the graphon version of the spread conjecture of Gregory, Hershkowitz, and Kirkland from \cite{gregory2001spread}. In particular, we prove the following theorem.
For convenience and completeness, we state this result in the following level of detail.  

\begin{theorem}\label{thm: spread maximum graphon}
    If $W$ is a graphon that maximizes spread, then $W$ may be represented as follows.  
    For all $(x,y)\in [0,1]^2$, 
    \begin{align*}
        W(x,y)
        =\left\{\begin{array}{rl}
            0, &(x,y)\in [2/3, 1]^2\\
            1, &\text{otherwise}
        \end{array}\right. .
    \end{align*}
    Furthermore,
    \begin{align*}
        \mu 
        &= 
            \dfrac{1+\sqrt{3}}{3}
        \quad 
        \text{ and }
        \quad 
        \nu 
        = 
            \dfrac{1-\sqrt{3}}{3}
    \end{align*}
    are the maximum and minimum eigenvalues of $W$, respectively, and if $f,g$ are unit eigenfunctions associated to $\mu,\nu$, respectively, then, up to a change in sign, they may be written as follows.  
    For every $x\in [0,1]$, 
    \begin{align*}
        f(x) 
        &= 
            \dfrac{1}{2\sqrt{3+\sqrt{3}}}
            \cdot 
            \left\{\begin{array}{rl}
                3+\sqrt{3}, 
                &
                    x\in [0,2/3]
                \\
                2\cdot\sqrt{3}
                &
                    \text{otherwise}
            \end{array}\right. 
        , \text{ and }\\
        g(x) 
        &= 
            \dfrac{1}{2\sqrt{3-\sqrt{3}}}
            \cdot 
            \left\{\begin{array}{rl}
                3-\sqrt{3}, 
                &
                    x\in [0,2/3]
                \\
                -2\cdot\sqrt{3}
                &
                    \text{otherwise}
            \end{array}\right. 
        .  
    \end{align*}
    \end{theorem}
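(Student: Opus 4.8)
The plan is to start from Theorem \ref{thm: reduction to stepgraphon}, which tells us that any spread-maximizing graphon $W$ is, up to weak isomorphism, a $0/1$ stepgraphon supported on the fixed $7\times 7$ block pattern displayed there, with unknown block lengths $\alpha = (\alpha_1,\dots,\alpha_7)$ satisfying $\alpha_i \ge 0$ and $\sum_i \alpha_i = 1$, and with the internal divisions separating according to the sign of the minimum eigenfunction. Writing $B$ for the $7\times 7$ symmetric $0/1$ matrix recording the value of $W$ on each block and $D = \diag(\alpha_1,\dots,\alpha_7)$, the nonzero spectrum of $A_W$ coincides with that of $D^{1/2}BD^{1/2}$, so $\spr(W) = \spr(D^{1/2}BD^{1/2})$. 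The problem thus reduces to a finite-dimensional optimization over the simplex $\{\alpha : \alpha \ge 0,\ \mathbf{1}^T\alpha = 1\}$, and the goal is to show the maximum is attained only at $\alpha_1 = 2/3$ with one independent-set coordinate equal to $1/3$ and all others zero; that is, at the $2\times 2$ stepgraphon equal to $0$ on $[2/3,1]^2$ and $1$ elsewhere.

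First I would organize the optimization by the support of $\alpha$. Naively there are $2^7$ subsets, but whenever some $\alpha_i = 0$ the corresponding row and column of $B$ are deleted and many patterns collapse onto one another (and the symmetry of $B$ identifies still more), reducing the analysis to $17$ genuinely distinct cases. In each case one has an explicit parametrized family of small symmetric matrices; combining the eigenvalue equations for $\mu$ and $\nu$ with the graphon identity $\mu f^2 - \nu g^2 \equiv \mu - \nu$ from Lemma \ref{lem: local eigenfunction equation} (which pins the constant values of $f,g$ on the blocks to $\mu,\nu,\alpha$) yields a rigid system of polynomial constraints per case. Running these through interval arithmetic should eliminate all but two survivors: the $2\times 2$ case with $\alpha_1,\alpha_7 \ne 0$ and the $3\times 3$ case with $\alpha_4,\alpha_5,\alpha_7 \ne 0$. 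The $3\times 3$ case is then excluded separately: its characteristic polynomial is an explicit cubic in $\mu$ (respectively $\nu$), and classical facts about cubics combined with a symbolic elimination show that no choice of the three block lengths makes its spread reach $2/\sqrt 3$.

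Once the case analysis is complete, only the $2\times 2$ case remains, so $W$ equals, up to weak isomorphism, the stepgraphon that is $1$ off $[t,1]^2$ and $0$ on $[t,1]^2$ for some $t\in(0,1)$. Here $D^{1/2}BD^{1/2} = \begin{pmatrix} t & \sqrt{t(1-t)} \\ \sqrt{t(1-t)} & 0 \end{pmatrix}$, whose eigenvalues are $\tfrac12\big(t \pm \sqrt{t(4-3t)}\big)$, so $\spr(W) = \sqrt{t(4-3t)}$; on $(0,1)$ this is maximized uniquely at $t = 2/3$, giving $\spr(W) = 2/\sqrt 3$ and forcing $t = 2/3$. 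Substituting $t = 2/3$ gives $\mu = \tfrac{1+\sqrt 3}{3}$ and $\nu = \tfrac{1-\sqrt 3}{3}$. Finally, since $f$ and $g$ are constant on $[0,2/3]$ and on $[2/3,1]$, the eigenfunction equations reduce to $\mu f_2 = \tfrac23 f_1$ and $\nu g_2 = \tfrac23 g_1$, together with the normalizations $\tfrac23 f_1^2 + \tfrac13 f_2^2 = \tfrac23 g_1^2 + \tfrac13 g_2^2 = 1$; solving these, choosing $f>0$ via Perron--Frobenius (Proposition \ref{prop: PF eigenfunction}) and fixing the sign of $g$, yields exactly the stated closed forms.

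I expect the main obstacle to be the middle step: reducing the $17$ parametrized spectral optimization problems to two survivors carries all the technical weight, both in setting up each case cleanly (keeping the block orderings and the sign and monotonicity data from Theorem \ref{thm: reduction to stepgraphon} consistent) and in making the interval-arithmetic certificates rigorous and exhaustive. The exclusion of the $3\times 3$ case is a secondary difficulty of the same flavour. By contrast, once we are down to the $2\times 2$ family the remainder is an elementary single-variable optimization followed by a short linear-algebra computation.
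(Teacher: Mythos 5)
Your proposal follows essentially the same route as the paper: reduce via Theorem \ref{thm: reduction to stepgraphon} to a finite-dimensional optimization over the $7\times 7$ block pattern, collapse to $17$ support cases, eliminate $15$ by interval arithmetic using the eigenvalue equations together with $\mu f^2-\nu g^2=\mu-\nu$, dispose of the three-block case $\{4,5,7\}$ by a cubic/symbolic argument, and finish the surviving $2\times 2$ family with the single-variable computation $\spr=\sqrt{t(4-3t)}$ maximized at $t=2/3$. The only cosmetic difference is that you work with the symmetrized matrix $D^{1/2}BD^{1/2}$ where the paper uses the (spectrally equivalent) non-symmetric quotient matrix, and the paper's treatment of the $\{4,5,7\}$ case concludes $\alpha_7=0$ (reducing it to the $2\times 2$ case) rather than excluding it outright, which amounts to the same thing.
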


To help outline our proof of Theorem \ref{thm: spread maximum graphon}, let the spread-extremal graphon have block sizes $\alpha_1,\ldots, \alpha_7$. Note that the spread of the graphon is the same as the spread of matrix $M^*$ in Figure \ref{fig: 7-vertex loop graph}, and so we will optimize the spread of $M^*$ over choices of $\alpha_1,\dots, \alpha_7$. Let $G^*$ be the unweighted graph (with loops) corresponding to the matrix. 

We proceed in the following steps.  
\begin{enumerate}[1. ]
    \item 
        In Section \ref{appendix 17 cases}, we reduce the proof of Theorem \ref{thm: spread maximum graphon} to $17$ cases, each corresponding to a subset $S$ of $V(G^*)$. For each such $S$ we define an optimization problem $\SPRS$, the solution to which gives us an upper bound on the spread of any graphon in the case corresponding to $S$.
    \item 
        In Section \ref{sub-sec: numerics}, we appeal to interval arithmetic to translate these optimization problems into algorithms.  
        Based on the output of the $17$ programs we wrote, we eliminate $15$ of the $17$ cases.  
        We address the multitude of formulas used throughout and relocate their statements and proofs to Appendix \ref{sub-sec: formulas}.  
    \item 
        Finally in Section \ref{sub-sec: cases 4|57 and 1|7}, we complete the proof of Theorem \ref{thm: spread maximum graphon} by analyzing the $2$ remaining cases.  
        Here, we apply Vi\`ete's Formula for roots of cubic equations and make a direct argument.  
\end{enumerate}

\begin{figure}[ht]
    \centering
    \begin{minipage}{0.55\textwidth}
\[M^* := D_\alpha^{1/2}\left[\begin{array}{cccc|ccc}
        1 & 1 & 1 & 1 & 1 & 1 & 1 \\
        1 & 1 & 1 & 0 & 1 & 1 & 1 \\
        1 & 1 & 0 & 0 & 1 & 1 & 1 \\
        1 & 0 & 0 & 0 & 1 & 1 & 1 \\\hline
        1 & 1 & 1 & 1 & 1 & 1 & 0 \\
        1 & 1 & 1 & 1 & 1 & 0 & 0 \\
        1 & 1 & 1 & 1 & 0 & 0 & 0 
    \end{array}\right]D_\alpha^{1/2}\]\end{minipage}\quad\begin{minipage}{0.4\textwidth}
    \scalebox{.7}[.7]{
             \begin{tikzpicture}
	
	\usetikzlibrary{backgrounds}
	\usetikzlibrary{patterns}
	\usetikzlibrary{decorations,calc}
	
	\pgfdeclarelayer{bg}
	\pgfdeclarelayer{fg}
	\pgfsetlayers{bg,main,fg}

	\tikzstyle{my-vx}=[draw, circle, very thick, draw=black, fill=white]
	\tikzstyle{my-arc}=[draw=black, very thick]

	\begin{pgfonlayer}{fg}
	
		\node [my-vx] (v1) at (-3.5,2) {$1$};
		\node [my-vx] (v2) at (-3.5,0.5) {$2$};
		\node [my-vx] (v3) at (-3.5,-1) {$3$};
		\node [my-vx] (v4) at (-3.5,-2.5) {$4$};
		
		\node [my-vx] (v5) at (1.5,1.5) {$5$};
		\node [my-vx] (v6) at (1.5,0) {$6$};
		\node [my-vx] (v7) at (1.5,-1.5) {$7$};

		\draw [my-arc] (v1) -- (v2);
		\draw [my-arc] (v1) edge[loop, looseness=4] (v1);
		\draw [my-arc] (v1) edge[bend right=25] (v3);
		\draw [my-arc] (v1) edge[bend right=35] (v4);
		
		\draw [my-arc] (v2) edge[loop, looseness=3.5, min distance=0mm, in = 45, out=315] (v2);
		\draw [my-arc] (v2) edge (v3);
		
		\draw [my-arc] (v5) edge[loop, looseness=3.5, min distance=0mm, in=45, out=135] (v5);
		\draw [my-arc] (v5) edge (v6);

		
		
		

	\end{pgfonlayer}
	
	\begin{pgfonlayer}{main}
		\draw [my-arc, fill=cyan] (-3.5,0) ellipse (1.1 and 3);
		\draw [my-arc, fill=pink] (1.5,0) ellipse (0.9 and 2.5);
		
	\end{pgfonlayer}

\draw [my-arc] (v1) edge (v5);
\draw [my-arc] (v1) edge (v6);
\draw [my-arc] (v1) edge (v7);
\draw [my-arc] (v2) edge (v5);
\draw [my-arc] (v2) edge (v6);
\draw [my-arc] (v2) edge (v7);
\draw [my-arc] (v3) edge (v5);
\draw [my-arc] (v3) edge (v6);
\draw [my-arc] (v3) edge (v7);
\draw [my-arc] (v4) edge (v5);
\draw [my-arc] (v4) edge (v6);
\draw [my-arc] (v4) edge (v7);
	\begin{pgfonlayer}{bg}
		\coordinate (c1) at (-3.5,-3) {} {} {} {} {} {} {} {} {} {} {} {} {} {} {} {} {} {};
		\coordinate (c2) at (1.5,-2.5) {} {} {} {} {} {} {} {} {} {} {} {} {} {} {} {} {} {};
		\coordinate (c3) at (1.5,2.5) {} {} {} {} {} {} {} {} {} {} {} {} {} {} {} {} {};
		\coordinate (c4) at (-3.5,3) {} {} {} {} {} {} {} {} {} {} {} {} {} {} {} {};
		

	\end{pgfonlayer}


\end{tikzpicture}
         }
         \end{minipage}
    \caption{{The matrix $M^*$ with corresponding graph $G^*$, where $D_\alpha$ is the diagonal matrix with entries $\alpha_1, \ldots, \alpha_7$. } }
    \label{fig: 7-vertex loop graph}
    
\end{figure}
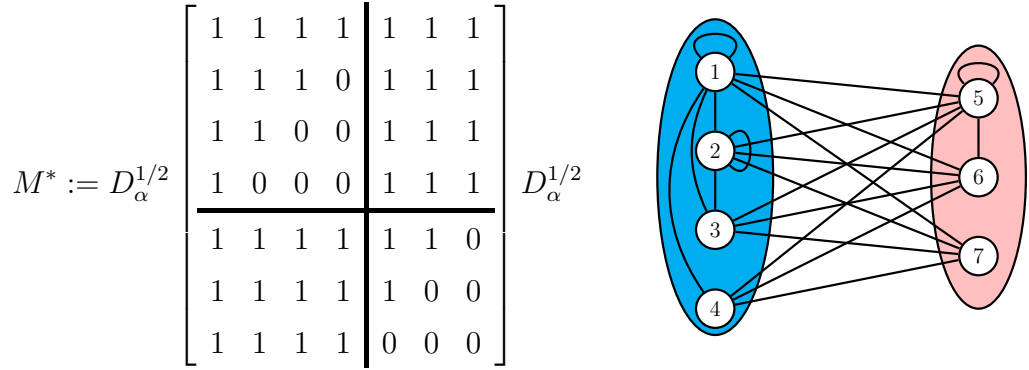

For concreteness, we define $G^*$ on the vertex set $\{1,\dots,7\}$.  
Explicitly, the neighborhoods $N_1,\dots, N_7$ of $1,\dots, 7$ are defined as: 
\begin{align*}
    \begin{array}{ll}
        N_1 := \{1,2,3,4,5,6,7\}
        & 
            N_2 := \{1,2,3,5,6,7\}
        \\
        N_3 := \{1,2,5,6,7\}
        & 
            N_4 := \{1,5,6,7\}
        \\
        N_5 := \{1,2,3,4,5,6\}
        & 
            N_6 := \{1,2,3,4,5\}
        \\
        N_7 := \{1,2,3,4\}
    \end{array}
    .
\end{align*}
More compactly, we may note that 
\begin{align*}
    \begin{array}{llll}
        N_1 = \{1,\dots, 7\}
        & 
            N_2 = N_1\setminus\{4\}
        &
            N_3 = N_2\setminus\{3\}
        & 
            N_4 = N_3\setminus\{2\}
        \\
        &
            N_5 = N_1\setminus\{7\}
        & 
            N_6 = N_5\setminus\{6\}
        &
            N_7 = N_6\setminus\{5\}
    \end{array}
    .
\end{align*}

\subsection{Stepgraphon case analysis}\label{sub-sec: cases}
{Let $W$ be a graphon maximizing spread. By Theorem \ref{thm: reduction to stepgraphon}, we may assume that $W$ is a $7\times 7$ stepgraphon corresponding to $G^\ast$. We will break into cases depending on which of the $7$ weights $\alpha_1, \ldots \alpha_7$ are zero and which are positive. {For some of these combinations the corresponding graphons are isomorphic}, and in this section we will outline how one can show that we need only consider $17$ cases rather than $2^7$. 

We will present each case with the set of indices which have strictly positive weight. Additionally, we will use vertical bars to partition the set of integers according to its intersection with the sets $\{1\}$, $\{2,3,4\}$ and $\{5,6,7\}$. Recall that vertices in block $1$ are dominating vertices and vertices in blocks $5$, $6$, and $7$ have negative entries in the eigenfunction corresponding to $\nu$. For example, we use $4|57$ to refer to the case that $\alpha_4, \alpha_5, \alpha_7$ are all positive and $\alpha_1 = \alpha_2 = \alpha_3 = \alpha_6 = 0$; see Figure \ref{457 fig}.

\begin{figure}[ht]
\begin{center}
\begin{minipage}{0.45\textwidth}\centering
\begin{tabular}{||c||cc||}\hline\hline
            & \black & \black  \\ \hline \hline
            \black & \black &  \\ 
            \black &  &  \\\hline\hline
\end{tabular} 
\end{minipage}\quad\begin{minipage}{0.45\textwidth}\centering \scalebox{.7}[.7]{\begin{tikzpicture}
	
	\usetikzlibrary{backgrounds}
	\usetikzlibrary{patterns}
	\usetikzlibrary{decorations,calc}
	
	\pgfdeclarelayer{bg}
	\pgfdeclarelayer{fg}
	\pgfsetlayers{bg,main,fg}

	\tikzstyle{my-vx}=[draw, circle, very thick, draw=black, fill=white]
	\tikzstyle{my-arc}=[draw=black, very thick]

	\begin{pgfonlayer}{fg}
	
		\node [my-vx] (v4) at (-2,0) {$4$};
		
		\node [my-vx] (v5) at (1.5,.75) {$5$};
		\node [my-vx] (v7) at (1.5,-.75) {$7$};

		
		
		\draw [my-arc] (v5) edge[loop, looseness=3.5, min distance=0mm, in=45, out=135] (v5);

		
		
		

	\end{pgfonlayer}
	
	\begin{pgfonlayer}{main}
		\draw [my-arc, fill=cyan] (-2,0) ellipse (.7 and 0.8);
		\draw [my-arc, fill=pink] (1.5,0) ellipse (0.8 and 2.0);
		
	\end{pgfonlayer}

\draw [my-arc] (v4) edge (v5);
\draw [my-arc] (v4) edge (v7);
	\begin{pgfonlayer}{bg}


	\end{pgfonlayer}


\end{tikzpicture}}\end{minipage}
\end{center}
    \caption{The family of graphons and the graph corresponding to case $4|57$}
    \label{457 fig}
\end{figure}
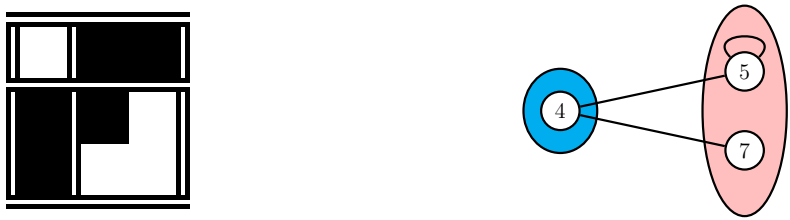

To give an upper bound on the spread of any graphon { corresponding to case} $4|57$ we solve a constrained optimization problem. Let $f_4, f_5, f_7$ and $g_4, g_5, g_7$ denote the eigenfunction entries for unit eigenfunctions $f$ and $g$ of the graphon. Then we maximize $\mu - \nu$ subject to

\begin{align*}
    \alpha_4 + \alpha_5 + \alpha_7 &=1\\
    \alpha_4f_4^2 + \alpha_5f_5^2 + \alpha_7 f_7^2 &=1 \\
    \alpha_4g_4^2 + \alpha_5g_5^2 + \alpha_7 g_7^2 &= 1\\
    \mu f_i^2 - \nu g_i^2 &= \mu-\nu \quad \text{for all } i \in \{4,5,7\}\\
  \mu f_4 = \alpha_5 f_5 + \alpha_7 f_7, \quad \mu f_5 &= \alpha_4 f_4 + \alpha_5 f_5, \quad  \mu f_7 = \alpha_4 f_4\\
   \nu g_4 = \alpha_5 g_5 + \alpha_7 g_7, \quad \nu g_5 &= \alpha_4 g_4 + \alpha_5 g_5, \quad  \nu g_7 = \alpha_4 g_4
\end{align*}

The first three constraints say that the weights sum to $1$ and that $f$ and $g$ are unit eigenfunctions. The fourth constraint is from Lemma \ref{lem: local eigenfunction equation}. The final two lines of constraints say that $f$ and $g$ are eigenfunctions for $\mu$ and $\nu$ respectively. Since these equations must be satisfied for any spread-extremal graphon, the solution to this optimization problem gives an upper bound on any spread-extremal graphon {corresponding to} case $4|57$. For each case we formulate a similar optimization problem in Appendix \ref{appendix 17 cases}.

First, if two distinct blocks of vertices have the same neighborhood, then without loss of generality we may assume that only one of them has positive weight. For example, see Figure \ref{123567 fig}: in case $123|567$, blocks $1$ and $2$ have the same neighborhood, and hence without loss of generality we may assume that only block $1$ has positive weight. Furthermore, in this case the resulting graphon could be considered as case $13|567$ or equivalently as case $14|567$; the graphons corresponding to these cases are isomorphic. Therefore cases $123|567$, $13|567$, and $14|567$ reduce to considering only case $14|567$. 

\begin{figure}[ht]
\begin{center}
\begin{minipage}{0.3\textwidth}\centering
\begin{tabular}{||ccc||ccc||}\hline\hline
            \black & \black & \black & \black & \black & \black \\
            \black & \black & \black & \black & \black & \black  \\
            \black & \black &  & \black & \black & \black  \\ \hline \hline
            \black & \black & \black & \black & \black &  \\
            \black & \black & \black & \black &  &  \\
            \black & \black & \black &  &  &  \\\hline\hline
\end{tabular} 
\end{minipage}\quad \begin{minipage}{0.3\textwidth}\centering
\begin{tabular}{||cc||ccc||}\hline\hline
            \black & \black & \black & \black & \black \\
            \black &  & \black & \black & \black  \\ \hline \hline
            \black & \black & \black & \black &  \\
            \black & \black & \black &  &  \\
            \black & \black &  &  &  \\\hline\hline
\end{tabular}\end{minipage}\quad\begin{minipage}{0.3\textwidth}\centering
\begin{tabular}{||cc||ccc||}\hline\hline
            \black & \black & \black & \black & \black \\
            \black &  & \black & \black & \black  \\ \hline \hline
            \black & \black & \black & \black &  \\
            \black & \black & \black &  &  \\
            \black & \black &  &  &  \\\hline\hline
\end{tabular} 
\end{minipage}
\newline\vspace{10pt}

\begin{minipage}{0.3\textwidth}\centering \scalebox{.7}[.7]{\begin{tikzpicture}
	
	\usetikzlibrary{backgrounds}
	\usetikzlibrary{patterns}
	\usetikzlibrary{decorations,calc}
	
	\pgfdeclarelayer{bg}
	\pgfdeclarelayer{fg}
	\pgfsetlayers{bg,main,fg}

	\tikzstyle{my-vx}=[draw, circle, very thick, draw=black, fill=white]
	\tikzstyle{my-arc}=[draw=black, very thick]

	\begin{pgfonlayer}{fg}
	
		\node [my-vx] (v1) at (-3,1.5) {$1$};
		\node [my-vx] (v2) at (-3,0) {$2$};
		\node [my-vx] (v3) at (-3,-1.5) {$3$};
		
		\node [my-vx] (v5) at (1.5,1.5) {$5$};
		\node [my-vx] (v6) at (1.5,0) {$6$};
		\node [my-vx] (v7) at (1.5,-1.5) {$7$};

		\draw [my-arc] (v1) -- (v2);
		\draw [my-arc] (v1) edge[loop, looseness=4] (v1);
		\draw [my-arc] (v1) edge[bend right=45] (v3);
		
		\draw [my-arc] (v2) edge[loop, looseness=3.5, min distance=0mm, in = 225, out=135] (v2);
		\draw [my-arc] (v2) edge (v3);
		
		\draw [my-arc] (v5) edge[loop, looseness=3.5, min distance=0mm, in=45, out=135] (v5);
		\draw [my-arc] (v5) edge (v6);

		
		
		

	\end{pgfonlayer}
	
	\begin{pgfonlayer}{main}
		\draw [my-arc, fill=cyan] (-3,0) ellipse (1.2 and 2.5);
		\draw [my-arc, fill=pink] (1.5,0) ellipse (0.9 and 2.5);
		
	\end{pgfonlayer}

\draw [my-arc] (v1) edge (v5);
\draw [my-arc] (v1) edge (v6);
\draw [my-arc] (v1) edge (v7);
\draw [my-arc] (v2) edge (v5);
\draw [my-arc] (v2) edge (v6);
\draw [my-arc] (v2) edge (v7);
\draw [my-arc] (v3) edge (v5);
\draw [my-arc] (v3) edge (v6);
\draw [my-arc] (v3) edge (v7);
	\begin{pgfonlayer}{bg}
		\coordinate  {} {} {} {} {} {} {} {} {} {} {} {} {} {} {} {} {} {};
		\coordinate  {} {} {} {} {} {} {} {} {} {} {} {} {} {} {} {} {} {};
		\coordinate  {} {} {} {} {} {} {} {} {} {} {} {} {} {} {} {} {};
		\coordinate  {} {} {} {} {} {} {} {} {} {} {} {} {} {} {} {};
		

	\end{pgfonlayer}


\end{tikzpicture}}\end{minipage}\quad\begin{minipage}{0.3\textwidth}\centering \scalebox{.7}[.7]{\begin{tikzpicture}

	\usetikzlibrary{backgrounds}
	\usetikzlibrary{patterns}
	\usetikzlibrary{decorations,calc}
	
	\pgfdeclarelayer{bg}
	\pgfdeclarelayer{fg}
	\pgfsetlayers{bg,main,fg}

	\tikzstyle{my-vx}=[draw, circle, very thick, draw=black, fill=white]
	\tikzstyle{my-arc}=[draw=black, very thick]

	\begin{pgfonlayer}{fg}
	
		\node [my-vx] (v1) at (-3,1) {$1$};
		\node [my-vx] (v3) at (-3,-1) {$3$};
		
		\node [my-vx] (v5) at (1.5,1.5) {$5$};
		\node [my-vx] (v6) at (1.5,0) {$6$};
		\node [my-vx] (v7) at (1.5,-1.5) {$7$};

		\draw [my-arc] (v1) edge[loop, looseness=4] (v1);
		\draw [my-arc] (v1) edge[bend right=0] (v3);
		
		\draw [my-arc] (v5) edge[loop, looseness=3.5, min distance=0mm, in=45, out=135] (v5);
		\draw [my-arc] (v5) edge (v6);

		
		
		

	\end{pgfonlayer}
	
	\begin{pgfonlayer}{main}
		\draw [my-arc, fill=cyan] (-3,0) ellipse (0.9 and 2.2);
		\draw [my-arc, fill=pink] (1.5,0) ellipse (0.9 and 2.5);
		
	\end{pgfonlayer}

\draw [my-arc] (v1) edge (v5);
\draw [my-arc] (v1) edge (v6);
\draw [my-arc] (v1) edge (v7);
\draw [my-arc] (v3) edge (v5);
\draw [my-arc] (v3) edge (v6);
\draw [my-arc] (v3) edge (v7);
	\begin{pgfonlayer}{bg}
		\coordinate  {} {} {} {} {} {} {} {} {} {} {} {} {} {} {} {} {} {};
		\coordinate  {} {} {} {} {} {} {} {} {} {} {} {} {} {} {} {} {} {};
		\coordinate  {} {} {} {} {} {} {} {} {} {} {} {} {} {} {} {} {};
		\coordinate  {} {} {} {} {} {} {} {} {} {} {} {} {} {} {} {};
		

	\end{pgfonlayer}


\end{tikzpicture}}\end{minipage}\quad\begin{minipage}{0.3\textwidth}\centering \scalebox{.7}[.7]{\begin{tikzpicture}
	
	\usetikzlibrary{backgrounds}
	\usetikzlibrary{patterns}
	\usetikzlibrary{decorations,calc}
	
	\pgfdeclarelayer{bg}
	\pgfdeclarelayer{fg}
	\pgfsetlayers{bg,main,fg}

	\tikzstyle{my-vx}=[draw, circle, very thick, draw=black, fill=white]
	\tikzstyle{my-arc}=[draw=black, very thick]

	\begin{pgfonlayer}{fg}
	
		\node [my-vx] (v1) at (-3,1) {$1$};
		\node [my-vx] (v3) at (-3,-1) {$4$};
		
		\node [my-vx] (v5) at (1.5,1.5) {$5$};
		\node [my-vx] (v6) at (1.5,0) {$6$};
		\node [my-vx] (v7) at (1.5,-1.5) {$7$};

		\draw [my-arc] (v1) edge[loop, looseness=4] (v1);
		\draw [my-arc] (v1) edge[bend right=0] (v3);

		\draw [my-arc] (v5) edge[loop, looseness=3.5, min distance=0mm, in=45, out=135] (v5);
		\draw [my-arc] (v5) edge (v6);

		
		
		

	\end{pgfonlayer}
	
	\begin{pgfonlayer}{main}
		\draw [my-arc, fill=cyan] (-3,0) ellipse (.9 and 2.0);
		\draw [my-arc, fill=pink] (1.5,0) ellipse (0.9 and 2.5);
		
	\end{pgfonlayer}

\draw [my-arc] (v1) edge (v5);
\draw [my-arc] (v1) edge (v6);
\draw [my-arc] (v1) edge (v7);
\draw [my-arc] (v3) edge (v5);
\draw [my-arc] (v3) edge (v6);
\draw [my-arc] (v3) edge (v7);
	\begin{pgfonlayer}{bg}
		\coordinate  {} {} {} {} {} {} {} {} {} {} {} {} {} {} {} {} {} {};
		\coordinate  {} {} {} {} {} {} {} {} {} {} {} {} {} {} {} {} {} {};
		\coordinate  {} {} {} {} {} {} {} {} {} {} {} {} {} {} {} {} {};
		\coordinate  {} {} {} {} {} {} {} {} {} {} {} {} {} {} {} {};
		

	\end{pgfonlayer}


\end{tikzpicture}}\end{minipage}
\end{center}
\caption{Redundancy, then renaming: we can assume $\alpha_2=0$ in the family of graphons corresponding to $123|567$, which produces families of graphons corresponding to both cases $13|567$ and $14|567$.}
\label{123567 fig}
\end{figure}
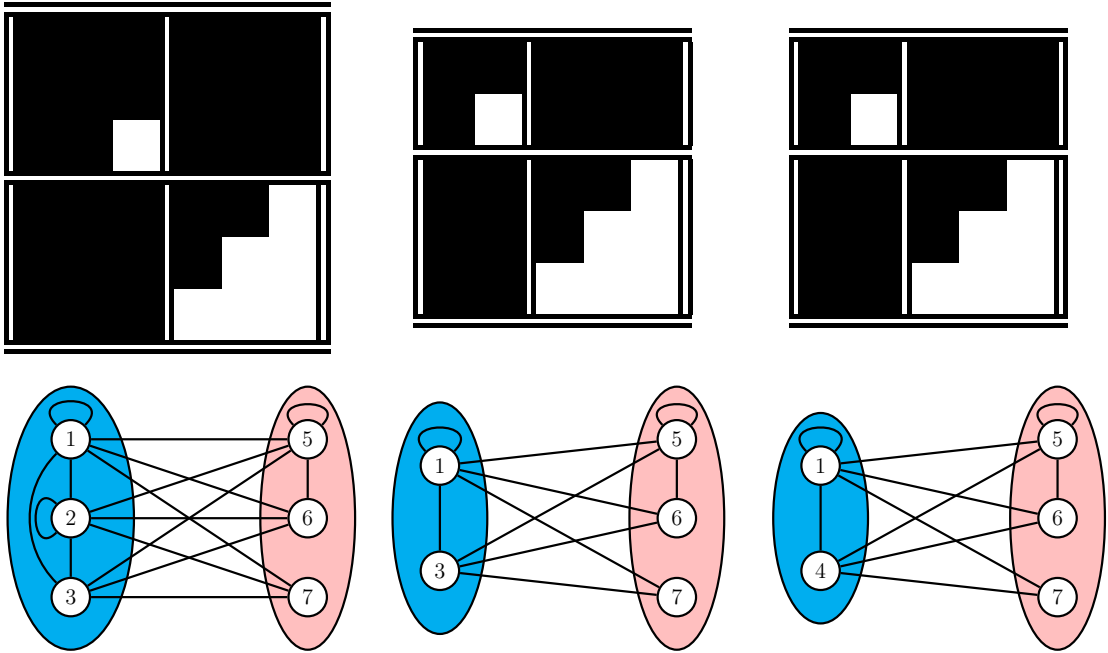

Additionally, if there is no dominant vertex, then some pairs cases may correspond to isomorphic graphons and the optimization problems are equivalent up to flipping the sign of the eigenvector corresponding to $\nu$. For example, see Figure \ref{23457 24567}, in which cases $23|457$ and $24|567$ reduce to considering only a single one. However, because of how we choose to order the eigenfunction entries when setting up the constraints of the optimization problems, there are some examples of cases corresponding to isomorphic graphons that we solve as separate optimization problems. For example, the graphons corresponding to cases $1|24|7$ and $1|4|57$ are isomorphic, but we will consider them separate cases; see Figure \ref{1247 1457}.


\begin{figure}[ht]
\begin{center}
\begin{minipage}{0.45\textwidth}\centering
\begin{tabular}{||ccc||cc||}\hline\hline
            \black & \black & & \black & \black  \\
            \black &  &  & \black & \black  \\ 
            &  &  & \black & \black \\ \hline \hline
            \black & \black & \black & \black &  \\
            \black & \black & \black &  &  \\\hline\hline
\end{tabular} 
\end{minipage}\quad\begin{minipage}{0.45\textwidth}\centering \scalebox{.7}[.7]{\begin{tikzpicture}
	
	\usetikzlibrary{backgrounds}
	\usetikzlibrary{patterns}
	\usetikzlibrary{decorations,calc}
	
	\pgfdeclarelayer{bg}
	\pgfdeclarelayer{fg}
	\pgfsetlayers{bg,main,fg}

	\tikzstyle{my-vx}=[draw, circle, very thick, draw=black, fill=white]
	\tikzstyle{my-arc}=[draw=black, very thick]

	\begin{pgfonlayer}{fg}
	
		\node [my-vx] (v2) at (-3.5,1.5) {$2$};
		\node [my-vx] (v3) at (-3.5,0) {$3$};
		\node [my-vx] (v4) at (-3.5,-1.5) {$4$};
		
		\node [my-vx] (v5) at (1.5,1) {$5$};
		\node [my-vx] (v7) at (1.5,-1) {$7$};

		\draw [my-arc] (v2) edge[loop, looseness=3.5, min distance=0mm, in = 135, out=45] (v2);
		\draw [my-arc] (v2) edge (v3);
		
		\draw [my-arc] (v5) edge[loop, looseness=3.5, min distance=0mm, in=45, out=135] (v5);

		
		
		

	\end{pgfonlayer}
	
	\begin{pgfonlayer}{main}
		\draw [my-arc, fill=cyan] (-3.5,0) ellipse (1.1 and 3);
		\draw [my-arc, fill=pink] (1.5,0) ellipse (0.9 and 2.5);
		
	\end{pgfonlayer}

\draw [my-arc] (v2) edge (v5);
\draw [my-arc] (v2) edge (v7);
\draw [my-arc] (v3) edge (v5);
\draw [my-arc] (v3) edge (v7);
\draw [my-arc] (v4) edge (v5);
\draw [my-arc] (v4) edge (v7);
	\begin{pgfonlayer}{bg}


	\end{pgfonlayer}


\end{tikzpicture}}\end{minipage}
\newline \vspace{20pt}

\begin{minipage}{0.45\textwidth}\centering
\begin{tabular}{||cc||ccc||}\hline\hline
            \black &  & \black & \black & \black \\
            &  & \black & \black & \black  \\ \hline \hline
            \black & \black & \black & \black &  \\ 
            \black & \black & \black &  &  \\
            \black & \black &  &  &  \\\hline\hline
\end{tabular} 
\end{minipage}\quad\begin{minipage}{0.45\textwidth}\centering \scalebox{.7}[.7]{\begin{tikzpicture}
	
	\usetikzlibrary{backgrounds}
	\usetikzlibrary{patterns}
	\usetikzlibrary{decorations,calc}
	
	\pgfdeclarelayer{bg}
	\pgfdeclarelayer{fg}
	\pgfsetlayers{bg,main,fg}

	\tikzstyle{my-vx}=[draw, circle, very thick, draw=black, fill=white]
	\tikzstyle{my-arc}=[draw=black, very thick]

	\begin{pgfonlayer}{fg}
	
		\node [my-vx] (v2) at (5.5,1.5) {$5$};
		\node [my-vx] (v3) at (5.5,0) {$6$};
		\node [my-vx] (v4) at (5.5,-1.5) {$7$};
		
		\node [my-vx] (v5) at (1.5,1) {$2$};
		\node [my-vx] (v7) at (1.5,-1) {$4$};

		\draw [my-arc] (v2) edge[loop, looseness=3.5, min distance=0mm, in = 135, out=45] (v2);
		\draw [my-arc] (v2) edge (v3);
		
		\draw [my-arc] (v5) edge[loop, looseness=3.5, min distance=0mm, in=45, out=135] (v5);

		
		
		

	\end{pgfonlayer}
	
	\begin{pgfonlayer}{main}
		\draw [my-arc, fill=pink] (5.5,0) ellipse (1.1 and 3);
		\draw [my-arc, fill=cyan] (1.5,0) ellipse (0.9 and 2.5);
		
	\end{pgfonlayer}

\draw [my-arc] (v2) edge (v5);
\draw [my-arc] (v2) edge (v7);
\draw [my-arc] (v3) edge (v5);
\draw [my-arc] (v3) edge (v7);
\draw [my-arc] (v4) edge (v5);
\draw [my-arc] (v4) edge (v7);
	\begin{pgfonlayer}{bg}


	\end{pgfonlayer}


\end{tikzpicture}}\end{minipage}
\end{center}
\caption{Changing the sign of $g$: the optimization problems in these cases are equivalent.}
\label{23457 24567}
\end{figure}
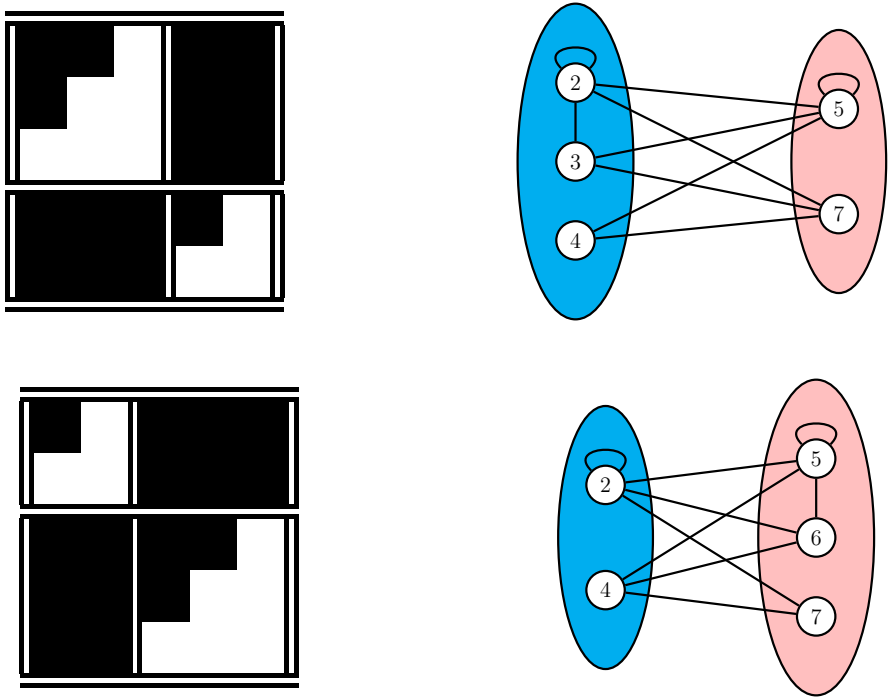
 

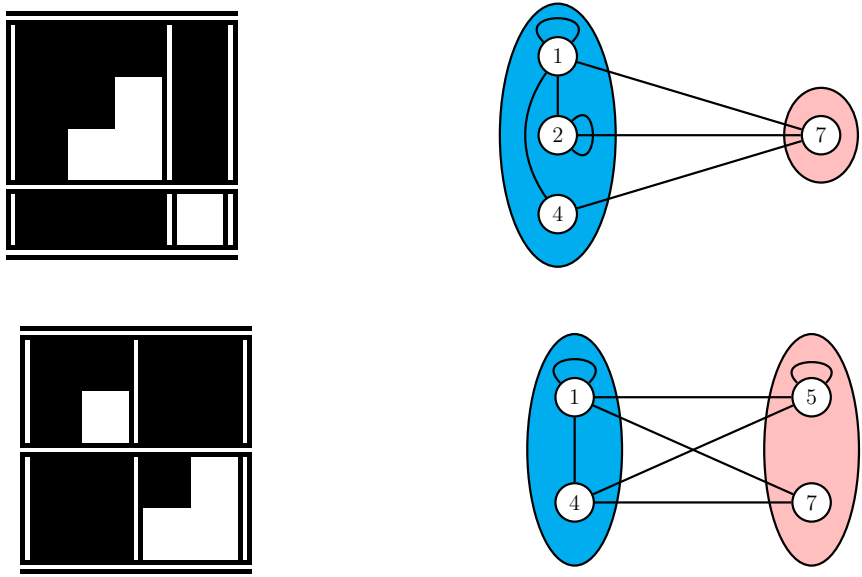
\begin{figure}
    \begin{center}
    \begin{minipage}{0.45\textwidth}\centering
    \begin{tabular}{||ccc||c||}\hline\hline
            \black & \black & \black & \black \\
            \black & \black &  & \black  \\
            \black &  &  & \black \\ \hline \hline
            \black & \black & \black &  \\\hline\hline
\end{tabular} 
    \end{minipage}\quad\begin{minipage}{0.45\textwidth}\centering \scalebox{.7}[.7]{\begin{tikzpicture}

	\usetikzlibrary{backgrounds}
	\usetikzlibrary{patterns}
	\usetikzlibrary{decorations,calc}
	
	\pgfdeclarelayer{bg}
	\pgfdeclarelayer{fg}
	\pgfsetlayers{bg,main,fg}

	\tikzstyle{my-vx}=[draw, circle, very thick, draw=black, fill=white]
	\tikzstyle{my-arc}=[draw=black, very thick]

	\begin{pgfonlayer}{fg}
	
		\node [my-vx] (v1) at (-3.5,1.5) {$1$};
		\node [my-vx] (v2) at (-3.5,0) {$2$};
		
		\node [my-vx] (v4) at (-3.5,-1.5) {$4$};

		\node [my-vx] (v7) at (1.5,0) {$7$};

		\draw [my-arc] (v1) -- (v2);
		\draw [my-arc] (v1) edge[loop, looseness=4] (v1);
		\draw [my-arc] (v1) edge[bend right=35] (v4);
		
		\draw [my-arc] (v2) edge[loop, looseness=3.5, min distance=0mm, in = 45, out=315] (v2);
		
	\end{pgfonlayer}
	
	\begin{pgfonlayer}{main}
		\draw [my-arc, fill=cyan] (-3.5,0) ellipse (1.1 and 2.5);
		\draw [my-arc, fill=pink] (1.5,0) ellipse (0.7 and .9);
		
	\end{pgfonlayer}

\draw [my-arc] (v1) edge (v7);
\draw [my-arc] (v2) edge (v7);
\draw [my-arc] (v4) edge (v7);
	\begin{pgfonlayer}{bg}
		\coordinate  {} {} {} {} {} {} {} {} {} {} {} {} {} {} {} {} {} {};
		\coordinate  {} {} {} {} {} {} {} {} {} {} {} {} {} {} {} {} {} {};
		\coordinate  {} {} {} {} {} {} {} {} {} {} {} {} {} {} {} {} {};
		\coordinate  {} {} {} {} {} {} {} {} {} {} {} {} {} {} {} {};
		

	\end{pgfonlayer}


\end{tikzpicture}}\end{minipage}
    \newline \vspace{20pt}

    \begin{minipage}{0.45\textwidth}\centering
    \begin{tabular}{||cc||cc||}\hline\hline
            \black & \black & \black & \black \\
            \black &  & \black & \black  \\ \hline \hline
            \black & \black & \black &  \\ 
            \black & \black &  &  \\\hline\hline
\end{tabular} 
    \end{minipage}\quad\begin{minipage}{0.45\textwidth}\centering \scalebox{.7}[.7]{\begin{tikzpicture}
	
	\usetikzlibrary{backgrounds}
	\usetikzlibrary{patterns}
	\usetikzlibrary{decorations,calc}
	
	\pgfdeclarelayer{bg}
	\pgfdeclarelayer{fg}
	\pgfsetlayers{bg,main,fg}

	\tikzstyle{my-vx}=[draw, circle, very thick, draw=black, fill=white]
	\tikzstyle{my-arc}=[draw=black, very thick]

	\begin{pgfonlayer}{fg}
	
		\node [my-vx] (v1) at (-3,1) {$1$};
		\node [my-vx] (v3) at (-3,-1) {$4$};
		
		\node [my-vx] (v5) at (1.5,1) {$5$};
		\node [my-vx] (v7) at (1.5,-1) {$7$};

		\draw [my-arc] (v1) edge[loop, looseness=4] (v1);
		\draw [my-arc] (v1) edge[bend right=0] (v3);
		
		\draw [my-arc] (v5) edge[loop, looseness=3.5, min distance=0mm, in=45, out=135] (v5);

		
		
		

	\end{pgfonlayer}
	
	\begin{pgfonlayer}{main}
		\draw [my-arc, fill=cyan] (-3,0) ellipse (0.9 and 2.2);
		\draw [my-arc, fill=pink] (1.5,0) ellipse (0.9 and 2.2);
		
	\end{pgfonlayer}

\draw [my-arc] (v1) edge (v5);
\draw [my-arc] (v1) edge (v7);
\draw [my-arc] (v3) edge (v5);
\draw [my-arc] (v3) edge (v7);
	\begin{pgfonlayer}{bg}
		\coordinate  {} {} {} {} {} {} {} {} {} {} {} {} {} {} {} {} {} {};
		\coordinate  {} {} {} {} {} {} {} {} {} {} {} {} {} {} {} {} {} {};
		\coordinate  {} {} {} {} {} {} {} {} {} {} {} {} {} {} {} {} {};
		\coordinate  {} {} {} {} {} {} {} {} {} {} {} {} {} {} {} {};
		

	\end{pgfonlayer}


\end{tikzpicture}}\end{minipage}
    \end{center}
    \caption{{The cases $1|24|7$ and $1|4|57$ correspond to the same family graphons but we consider the optimization problems separately, due to our prescribed ordering of the vertices.}}
    \label{1247 1457}
\end{figure}

Repeated applications of these three principles show that there are only $17$ distinct cases that we must consider. The details are straightforward to verify, see Lemma \ref{lem: 19 cases}.

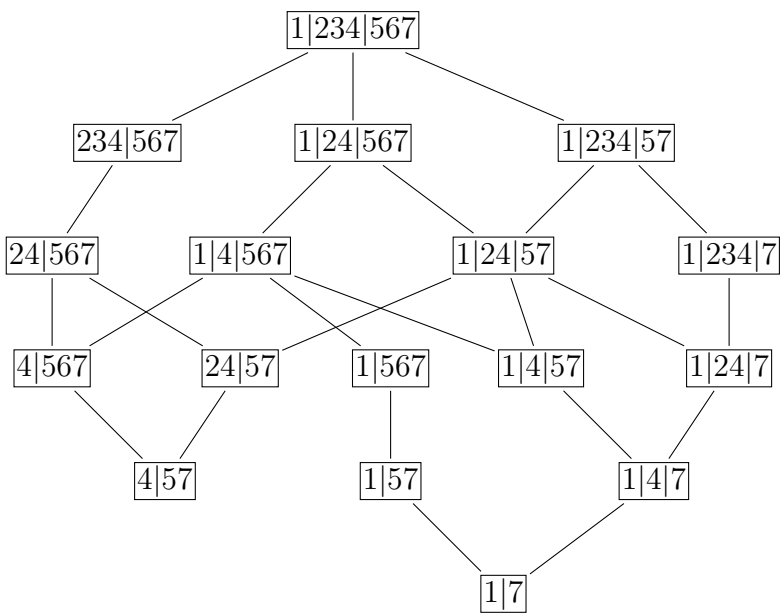
\begin{figure}[ht]
    \centering
    \scalebox{1.0}{
        \begin{tikzpicture}
	\tikzstyle{mybox} = [draw,outer sep=1.5,inner sep=1,minimum size=10]

\node [mybox] (v1) at (-6,4) {$1|234|567$};

\node [mybox] (v2) at (-9,2.5) {$234|567$};
\node [mybox] (v3) at (-6,2.5) {$1|24|567$};
\node [mybox] (v4) at (-2.5,2.5) {$1|234|57$};

	\draw  (v1) edge (v2);
	\draw  (v1) edge (v3);
	\draw  (v1) edge (v4);

\node [mybox] (v5) at (-10,1) {$24|567$};
\node [mybox] (v6) at (-7.5,1) {$1|4|567$};
\node [mybox] (v7) at (-4,1) {$1|24|57$};
\node [mybox] (v8) at (-1,1) {$1|234|7$};
\node [mybox] (v9) at (-10,-0.5) {$4|567$};

	\draw  (v2) edge (v5);
	\draw  (v3) edge (v6);
	\draw  (v3) edge (v7);
	\draw  (v4) edge (v7);
	\draw  (v4) edge (v8);

\node [mybox] (v10) at (-7.5,-0.5) {$24|57$};
\node [mybox] (v11) at (-5.5,-0.5) {$1|567$};
\node [mybox] (v12) at (-3.5,-0.5) {$1|4|57$};
\node [mybox] (v13) at (-1,-0.5) {$1|24|7$};

	\draw  (v5) edge (v9);
	\draw  (v5) edge (v10);
	\draw  (v6) edge (v11);
	\draw  (v6) edge (v9);
	\draw  (v6) edge (v12);
	\draw  (v7) edge (v10);
	\draw  (v7) edge (v12);
	\draw  (v7) edge (v13);
	\draw  (v8) edge (v13);

\node [mybox] (v14) at (-8.5,-2) {$4|57$};
\node [mybox] (v15) at (-5.5,-2) {$1|57$};
\node [mybox] (v16) at (-2,-2) {$1|4|7$};

	\draw  (v9) edge (v14);
	\draw  (v10) edge (v14);
	\draw  (v11) edge (v15);
	\draw  (v12) edge (v16);
	\draw  (v13) edge (v16);

\node [mybox] (v17) at (-4,-3.5) {$1|7$};

	\draw  (v15) edge (v17);
	\draw  (v16) edge (v17);

\end{tikzpicture}
    }
    \caption{
        The set $\sset$, as a poset ordered by inclusion.  
        Each element is a subset of $V(G^*) = \{1,\dots,7\}$, written without braces and commas.  
        As noted in the proof of Lemma \ref{lem: 19 cases}, the sets $\{1\}$, $\{2,3,4\}$, and $\{5,6,7\}$ have different behavior in the problems $\SPRS$.  
        For this reason, we use vertical bars to separate each $S\in\sset$ according to the corresponding partition.  
    }
    \label{fig: 17 cases}
\end{figure}

The distinct cases that we must consider are the following, summarized in Figure \ref{fig: 17 cases}.
\begin{align*}
    \sset 
    &:= 
    \left\{\begin{array}{r}
    1|234|567, 1|24|567, 1|234|57, 1|4|567, 1|24|57, 1|234|7, 234|567, 
    \\
    24|567, 4|567, 24|57, 1|567, 1|4|57, 1|2|47, 1|57, 4|57, 1|4|7, 1|7
    \end{array}
    \right\}
\end{align*}

}

\subsection{Interval arithmetic}\label{sub-sec: numerics}

Interval arithmetic is a computational technique which bounds errors that accumulate during computation.  
For convenience, let $\RR^* := [-\infty, +\infty]$ be the extended real line.  
To enhance order floating point arithmetic, we replace extended real numbers with unions of intervals which are guaranteed to contain them.  
Moreover, we extend the basic arithmetic operations $+, -, \times, \div$, and $\sqrt{}$ to operations on unions of intervals.  
This technique has real-world applications in the hard sciences, but has also been used in computer-assisted proofs.  
For two famous examples, we refer the interested reader to \cite{2002HalesKepler} for Hales' proof of the Kepler Conjecture on optimal sphere-packing in $\RR^2$, and to \cite{2002WarwickInterval} for Warwick's solution of Smale's $14$th problem on the Lorenz attractor as a strange attractor.  

As stated before, we consider extensions of the binary operations $+, -, \times,$ and $\div$ as well as the unary operation $\sqrt{}$ defined on $\RR$ to operations on unions of intervals of extended real numbers.  
For example if $[a,b], [c,d]\subseteq \RR$, then we may use the following extensions of $+, -, $ and $\times$: 
\begin{align*}
    [a,b] + [c,d] 
    &= 
        [a+c,b+d], \\
    [a,b] - [c,d] 
    &= 
        [a-d,b-c], \text{and}\\
    [a,b] \times [c,d] 
    &= 
    \left[ 
        \min\{ac, ad, b c, b d\}, 
        \max\{a c, a d, b c, b d\} 
    \right].  
\end{align*}
For $\div$, we must address the cases $0\in [c,d]$ and $0\notin [c,d]$.  
Here, we take the extension 
\begin{align*}
    [a,b] \div [c,d] 
    &= \left[ 
        \min\bigg\{\frac{a}{c}, \frac{a}{d}, \frac{b}{c}, \frac{b}{d}\bigg\}, \max\bigg\{\frac{a}{c}, \frac{a}{d}, \frac{b}{c}, \frac{b}{d}\bigg\} 
    \right] 
\end{align*}
where
\begin{align*}
    1 \div [c,d] 
    &= 
    \left\{\begin{array}{rl}
        \left[
            \min\{
                c^{-1}, d^{-1}
            \}, 
            \max\{
                c^{-1}, d^{-1}
            \}
        \right], 
        & 0\notin [c,d]
        \\
        \left[ 
            d^{-1},
            +\infty
        \right], 
        & \text{c=0}
        \\
        \left[ 
            -\infty,
            c^{-1}
        \right], 
        & 
        d = 0
        \\
        \left[ 
            -\infty,\frac{1}{c} 
        \right] 
        \cup \left[ 
            \frac{1}{d},+\infty
        \right], 
        & c < 0 < d
    \end{array}\right. 
    .
\end{align*}
Additionally, we may let 
\begin{align*}
    \sqrt{[a,b]}
    &= 
    \left\{\begin{array}{rl}
        \emptyset, & b < 0\\
        \left[
            \sqrt{\max\left\{
                0, a
            \right\}}
            , 
            \sqrt{b}
        \right], 
        & \text{othewise}
    \end{array}\right.
    .  
\end{align*}
When endpoints of $[a,b]$ and $[c,d]$ include $-\infty$ or  $+\infty$, the definitions above must be modified slightly in a natural way.  

We use interval arithmetic to prove the strict upper bound $<2/\sqrt{3}$ for the maximum graphon spread claimed in Theorem \ref{thm: spread maximum graphon}, for any solutions to $15$ of the $17$ constrained optimization problems $\SPRS$ stated in Lemma \ref{lem: 19 cases}.  
The constraints in each $\SPRS$ allow us to derive equations for the variables $(\alpha_i,f_i,g_i)_{i\in S}$ in terms of each other, and $\mu$ and $\nu$.  
For the reader's convenience, we relocate these formulas and their derivations to Appendix \ref{sub-sec: formulas}.  
In the programs corresponding to each set $S\in\sset$, we find we find two indices $i\in S\cap\{1,2,3,4\}$ and $j\in S\cap \{5,6,7\}$ such that for all $k\in S$, $\alpha_k,f_k,$ and $g_k$ may be calculated, step-by-step, from $\alpha_i, \alpha_j, \mu,$ and $\nu$.  
See Table \ref{tab: i j program search spaces} for each set $S\in\sset$, organized by the chosen values of $i$ and $j$.  
\begin{table}[ht]
    \centering
    \begin{tabular}{c||r|r|r|r}
     & $1$                      & $2$        & $3$                         & $4$      \\ \hline\hline
    \multirow{2}{*}{$5$}   & \multirow{2}{*}{$1|57$}  & $24|57$    & \multirow{2}{*}{$1|234|57$} & $4|57$   \\
                           &                          & $1|24|57$  &                             & $1|4|57$ \\ \hline
    \multirow{2}{*}{$6$}   & \multirow{2}{*}{$1|567$} & $24|567$   & $234|567$                   & $4|567$  \\
                           &                          & $1|24|567$ & $1|234|567$                 & $1|4|57$ \\ \hline
    $7$                    & $1|7$                    & $1|24|7$   & $1|234|7$                   & $1|4|7$ 
    \end{tabular}
    \caption{
        The indices $i,j$ corresponding to the search space used to bound solutions to $\SPRS$.  
    }
    \label{tab: i j program search spaces}
\end{table}

In the program corresponding to a set $S\in\sset$, we search a carefully chosen set $\Omega \subseteq [0,1]^3\times [-1,0]$ for values of $(\alpha_i,\alpha_j,\mu,\nu)$ which satisfy $\SPRS$.  
We first divide $\Omega$ into a grid of ``boxes''.  
Starting at depth $0$, we test each box $B$ for feasibility by assuming that $(\alpha_i,\alpha_j,\mu,\nu)\in B$ and that $\mu-\nu\geq 2/\sqrt{3}$.  
Next, we calculate $\alpha_k,f_k,$ and $g_k$ for all $k\in S$ in interval arithmetic using the formulas from Section \ref{sec: appendix}.  
When the calculation detects that a constraint of $\SPRS$ is not satisfied, e.g., by showing that some $\alpha_k, f_k,$ or $g_k$ lies in an empty interval, or by constraining $\sum_{i\in S}\alpha_i$ to a union of intervals which does not contain $1$, then the box is deemed infeasible.  
Otherwise, the box is split into two boxes of equal dimensions, with the dimension of the cut alternating cyclically.  

For each $S\in\sset$, the program $\SPRS$ has $3$ norm constraints, $2|S|$ linear eigenvector constraints, $|S|$ elliptical constraints, $\binom{|S|}{2}$ inequality constraints, and $3|S|$ interval membership constraints.  
By using interval arithmetic, we have a computer-assisted proof of the following result.  
\begin{lemma}\label{lem: 2 feasible sets}
    Suppose $S\in\sset\setminus\{\{1,7\}, \{4,5,7\}\}$.  
    Then any solution to $\SPR_S$ attains a value strictly less than $2/\sqrt{3}$.  
\end{lemma}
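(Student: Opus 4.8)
The plan is to reduce each of the fifteen cases $S \in \sset \setminus \{\{1,7\},\{4,5,7\}\}$ to a finite search over a compact parameter domain, and then to discharge that search by a verified interval-arithmetic computation. The starting point is the observation, made in Subsection \ref{sub-sec: numerics}, that for each $S$ the constraints of $\SPR_S$ — the three normalization constraints $\sum_{k\in S}\alpha_k=1$, $\sum_{k\in S}\alpha_k f_k^2 = 1$, $\sum_{k\in S}\alpha_k g_k^2 = 1$; the $2|S|$ linear eigenvector equations $\mu f_k = \sum_{\ell \in N_k \cap S} \alpha_\ell f_\ell$ and $\nu g_k = \sum_{\ell \in N_k \cap S}\alpha_\ell g_\ell$; the $|S|$ elliptical constraints $\mu f_k^2 - \nu g_k^2 = \mu - \nu$ from Lemma \ref{lem: local eigenfunction equation}; and the $\binom{|S|}{2}$ sign/inequality constraints $f_k f_\ell > g_k g_\ell$ or $<$ according to $M^*$ — are rigid enough that everything is determined by four real unknowns. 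Concretely, using Table \ref{tab: i j program search spaces} one fixes an index $i \in S \cap \{1,2,3,4\}$ and an index $j \in S \cap \{5,6,7\}$, and then the formulas collected in Appendix \ref{sub-sec: formulas} express every remaining $\alpha_k$, $f_k$, $g_k$ ($k \in S$) as an explicit algebraic function of the four quantities $(\alpha_i, \alpha_j, \mu, \nu)$, built from $+,-,\times,\div,\sqrt{\ }$. So a solution to $\SPR_S$ with $\mu - \nu \ge 2/\sqrt{3}$ would correspond to a point of the compact box $\Omega = [0,1]^2 \times [\,\cdot\,]\times[\,\cdot\,] \subseteq [0,1]^3\times[-1,0]$ satisfying all the derived equalities and inequalities simultaneously, together with $\mu - \nu \ge 2/\sqrt{3}$.

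Next I would run, for each of the fifteen sets $S$, the branch-and-prune procedure described in the text: subdivide $\Omega$ into a grid of boxes; for each box $B$, assume $(\alpha_i,\alpha_j,\mu,\nu)\in B$ and $\mu - \nu \ge 2/\sqrt{3}$, then evaluate the Appendix \ref{sub-sec: formulas} formulas for all $\alpha_k, f_k, g_k$ in interval arithmetic (with the operations $+,-,\times,\div,\sqrt{\ }$ extended to intervals as specified in Subsection \ref{sub-sec: numerics}); declare $B$ infeasible as soon as some interval comes out empty, or $\sum_{k\in S}\alpha_k$ is confined to a union of intervals missing $1$, or a norm constraint, an eigenvector equation, or a sign inequality is violated over the whole box; otherwise bisect $B$ (cycling through the four coordinate directions) and recurse. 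The content of the lemma is exactly the assertion that for each of these fifteen $S$ the recursion terminates with every box pruned — i.e. the feasible set under the added hypothesis $\mu-\nu\ge 2/\sqrt 3$ is empty, hence every genuine solution of $\SPR_S$ has value strictly below $2/\sqrt 3$. Since the bound claimed is the strict inequality $<2/\sqrt3$, one has to be slightly careful: the interval computation certifies the open condition by showing the closed feasibility problem "$\mu-\nu\ge 2/\sqrt 3$" is already infeasible, which is the stronger statement and suffices.

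The main obstacle is making the interval computation actually terminate and be trustworthy. Termination requires that the derived formulas be numerically well-conditioned on $\Omega$ — in particular one must choose the pair $(i,j)$ from Table \ref{tab: i j program search spaces} so that the chain of substitutions never divides by an interval straddling $0$ in a way that blows up the enclosures (this is why the table is case-dependent), and one must pick the initial sub-grid fine enough and the recursion depth generous enough that the pruning criteria fire before the box widths stagnate; cases with larger $|S|$, such as $1|234|567$, have the longest substitution chains and the worst enclosure inflation, so they are the delicate ones. Trustworthiness requires that the floating-point implementation use correctly-rounded outward-rounded interval operations so that the printed "infeasible" verdicts are rigorous; this is standard but must be stated. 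The two excluded sets $\{1,7\}$ and $\{4,5,7\}$ are precisely the ones where the procedure does \emph{not} prune everything — these survive and are handled separately in Subsection \ref{sub-sec: cases 4|57 and 1|7} via Vi\`ete's formulas — so part of the verification is the (automatic) check that the output for all other fifteen cases is a complete pruning while no smaller list would do. I would record the concrete grid parameters, recursion depths, and running times in the appendix, and cite the interval-arithmetic precedents \cite{2002HalesKepler,2002WarwickInterval} for the methodology.
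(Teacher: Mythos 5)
Your proposal follows essentially the same route as the paper: reduce each $\SPR_S$ to a four-parameter search over $(\alpha_i,\alpha_j,\mu,\nu)$ using the case-dependent index pair from Table \ref{tab: i j program search spaces} and the algebraic formulas of Appendix \ref{sub-sec: formulas}, then certify infeasibility of the closed condition $\mu-\nu\ge 2/\sqrt{3}$ by an outward-rounded branch-and-prune interval computation. This matches the paper's computer-assisted proof in Subsection \ref{sub-sec: numerics} and Appendix \ref{sec: appendix}, including the handling of the strict inequality and the role of the two surviving cases $\{1,7\}$ and $\{4,5,7\}$.
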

To better understand the role of interval arithmetic in our proof, consider the following example.  
\begin{example}\label{ex: infeasible box}
    Suppose $\mu,\nu$, and $(\alpha_i,f_i,g_i)$ is a solution to $\SPR_{\{1,\dots,7\}}$.  
    We show that $(\alpha_3, \mu,\nu)\notin [.7,.8]\times [.9,1]\times [-.2,-.1]$.  
    By Proposition \ref{prop: fg23 assume 23}, 
    $\displaystyle{g_3^2 = \frac{\nu(\alpha_3 + 2 \mu)}{\alpha_3(\mu + \nu) + 2 \mu \nu}}$.  
    Using interval arithmetic, 
    \begin{align*}
        \nu(\alpha_3 + 2 \mu) 
        &= 
            [-.2,-.1] \times \big([.7,.8] + 2 \times [.9,1] \big) 
        \\ 
        &= [-.2,-.1] \times [2.5,2.8] = [-.56,-.25], \text{ and }
        \\
        \alpha_3(\mu + \nu) + 2 \mu \nu 
        &= 
            [.7,.8]\times 
            \big([.9,1] 
            + [-.2,-.1]\big) 
            + 2 \times [.9,1] 
            \times [-.2,-.1] 
        \\
        &= 
            [.7,.8] 
            \times [.7,.9] 
            + [1.8,2] 
            \times  [-.2,-.1] 
        \\
        &= 
            [.49,.72] 
            + [-.4,-.18] 
        = [.09,.54].  
    \end{align*}
Thus 
\begin{align*}
    g_3^2 
    &= 
    \frac{
        \nu
        (\alpha_3 + 2 \mu)
    }
    {
        \alpha_3(\mu + \nu) 
        + 2 \mu \nu
    } 
    = 
        [-.56,-.25] 
        \div [.09,.54] 
    = 
        [-6.\overline{2},-.4\overline{629}].
\end{align*}
Since $g_3^2\geq 0$, we have a contradiction.  
\end{example}

Example \ref{ex: infeasible box} illustrates a number of key elements. 
First, we note that through interval arithmetic, we are able to provably rule out the corresponding region. 
However, the resulting interval for the quantity $g_3^2$ is over fifty times bigger than any of the input intervals. 
This growth in the size of intervals is common, and so, in some regions, fairly small intervals for variables are needed to provably illustrate the absence of a solution. 
For this reason, using a computer to complete this procedure is ideal, as doing millions of calculations by hand would be untenable. 

However, the use of a computer for interval arithmetic brings with it another issue.  
Computers have limited memory, and therefore cannot represent all numbers in $\mathbb{R}^*$. 
Instead, a computer can only store a finite subset of numbers, which we will denote by $F\subsetneq \mathbb{R}^*$. 
This set $F$ is not closed under the basic arithmetic operations, and so when some operation is performed and the resulting answer is not in $F$, some rounding procedure must be performed to choose an element of $F$ to approximate the exact answer. 
This issue is the cause of roundoff error in floating point arithmetic, and must be treated in order to use computer-based interval arithmetic as a proof.

PyInterval is one of many software packages designed to perform interval arithmetic in a manner which accounts for this crucial feature of floating point arithmetic.  
Given some $x \in \mathbb{R}^*$, let $fl_-(x)$ be the largest $y \in F$ satisfying $y \le x$, and $fl_+(x)$ be the smallest $y \in F$ satisfying $y \ge x$. 
Then, in order to maintain a mathematically accurate system of interval arithmetic on a computer, once an operation is performed to form a union of intervals  $\bigcup_{i=1}^k[a_i, b_i]$, the computer forms a union of intervals containing $[fl_-(a_i),fl_+(b_i)]$ for all $1\leq i\leq k$.  
The programs which prove Lemma \ref{lem: 2 feasible sets} can be found at \cite{2021riasanovsky-spread}.  

\subsection{Completing the proof of Theorem \ref{thm: spread maximum graphon}}\label{sub-sec: cases 4|57 and 1|7}
Finally, we complete the second main result of this paper. We will need the following lemma.

\begin{lemma} \label{lem: SPR457}
 If $(\alpha_4,\alpha_5,\alpha_7)$ is a solution to $\SPR_{\{4,5,7\}}$, then $\alpha_7=0.$
\end{lemma}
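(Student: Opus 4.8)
The plan is to turn the constraints defining $\SPR_{\{4,5,7\}}$ into a polynomial system in the three eigenvalues and the block weights, and then to use Vi\`ete's relations for the characteristic cubic — together with the elliptic identity of Lemma~\ref{lem: local eigenfunction equation} — to show that at any maximizer the middle eigenvalue vanishes, which is equivalent to $\alpha_7=0$.

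\emph{Step 1: reduce to a statement about the middle eigenvalue.} The spread of a case-$4|57$ stepgraphon equals the spread of $D_\alpha^{1/2}BD_\alpha^{1/2}$, where $B=\begin{pmatrix}0&1&1\\1&1&0\\1&0&0\end{pmatrix}$ is the principal submatrix of the matrix of Figure~\ref{fig: 7-vertex loop graph} on rows and columns $\{4,5,7\}$ and $D_\alpha=\diag(\alpha_4,\alpha_5,\alpha_7)$. This matrix is similar to $D_\alpha B$, whose characteristic polynomial is $p(\lambda)=\lambda^3-\alpha_5\lambda^2-\alpha_4(\alpha_5+\alpha_7)\lambda+\alpha_4\alpha_5\alpha_7$. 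Write $\mu\ge\mu_2\ge\nu$ for its roots, i.e.\ the eigenvalues of the graphon. At a maximizer one checks directly from the eigenvector and elliptic constraints that $\alpha_4,\alpha_5>0$ and $\mu>0>\nu$: if $\alpha_4=0$ then $\mu f_7=\alpha_4 f_4$ forces $f_7=0$, and the elliptic constraint at block $7$ then gives $\mu=\nu$; if $\alpha_5=0$ the configuration is a scaled $K_2$ of spread $2\sqrt{\alpha_4\alpha_7}\le 1<2/\sqrt3$, so it is not optimal; and $\nu<0$ follows from $\det B=-1$. Vi\`ete now gives $\mu\mu_2\nu=-\alpha_4\alpha_5\alpha_7$, so with $\mu>0>\nu$ and $\alpha_4,\alpha_5>0$ we get $\alpha_7=\mu_2\,|\mu\nu|/(\alpha_4\alpha_5)$; in particular $\alpha_7=0\iff\mu_2=0$. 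Since feasibility (nonnegativity of $\alpha_7$) already forces $\mu_2\ge0$, it suffices to rule out $\mu_2>0$.

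\emph{Step 2: the reduced polynomial system.} The eigenvector equations give $f_7=(\alpha_4/\mu)f_4$ and $f_5=(\alpha_4/(\mu-\alpha_5))f_4$, and similarly for $g$; using $\mu-\alpha_5=-(\mu_2+\nu)$ and $\nu-\alpha_5=-(\mu+\mu_2)$ these are rational multiples of $f_4,g_4$ with nonzero denominators, since $\mu>\mu_2>\nu$ strictly (simplicity of $\mu$ for a connected graphon, Proposition~\ref{prop: PF eigenfunction}, rules out $\mu_2=\mu$, and $\mu_2\ge0>\nu$ rules out $\mu_2=\nu$). Substituting into the elliptic constraints $\mu f_i^2-\nu g_i^2=\mu-\nu$ at $i=4,7$ and solving yields
\[
f_4^2=\frac{\mu(\alpha_4^2-\nu^2)}{\alpha_4^2(\mu+\nu)},\qquad g_4^2=\frac{\nu(\alpha_4^2-\mu^2)}{\alpha_4^2(\mu+\nu)},
\]
while the elliptic constraint at $i=5$ becomes a polynomial relation $R(\alpha_4,\mu,\mu_2,\nu)=0$. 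The remaining Vi\`ete relations $\alpha_5=\mu+\mu_2+\nu$ and $\alpha_4(1-\alpha_4)=-(\mu\mu_2+\mu_2\nu+\nu\mu)$, together with $\alpha_4+\alpha_5+\alpha_7=1$, express every $\alpha_i$ in terms of $(\mu,\mu_2,\nu)$, and the two $L^2$-normalizations impose two more polynomial equations (these are precisely the formulas of Appendix~\ref{sub-sec: formulas} specialised to $S=\{4,5,7\}$). This reduces the whole constraint set to a small polynomial system in $(\mu,\mu_2,\nu)$.

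\emph{Step 3: finish by eliminating $\mu_2$, and the main obstacle.} I would eliminate $\alpha_4$ and one normalization to obtain a single polynomial identity $Q(\mu,\mu_2,\nu)=0$, and then, exploiting that $\mu,\mu_2,\nu$ are the roots of the \emph{same} cubic $p$ (so their elementary symmetric functions are already constrained by Vi\`ete), show — incorporating the Lagrange condition for maximality of $\mu-\nu$ if the feasible set turns out to be positive-dimensional — that $Q$ is incompatible with $\mu_2>0$; hence $\mu_2=0$ and $\alpha_7=0$, at which point case $4|57$ collapses to the two-block case $1|7$ (equivalently $4|5$). This elimination is, I expect, the hard part: the resulting polynomials are genuinely messy, and the argument must carefully discard the spurious branches introduced by passing from $f_i,g_i$ to $f_i^2,g_i^2$ and by the sign of $\mu+\nu$ (the displayed formulas for $f_4^2,g_4^2$ are only consistent once that sign is pinned down, which has to be established on the relevant branch), so that the only surviving solution of the full system is the one with $\mu_2=0$. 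Following the paper's outline, this last step is carried out as a computer-assisted symbolic (resultant / Gr\"obner-basis) computation.
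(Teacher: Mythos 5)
Your Steps 1 and 2 are essentially sound: the reduction to ``middle eigenvalue $\mu_2=0$'' via the Vi\`ete product $\mu\mu_2\nu=-\alpha_4\alpha_5\alpha_7$ is a nice reformulation, and the displayed formulas for $f_4^2$ and $g_4^2$ are what one gets by solving the elliptic constraints at blocks $4$ and $7$ against the eigenvector relations. But there is a genuine gap exactly where you flag one: Step 3 carries the entire weight of the lemma and is not executed. You describe what you \emph{would} do --- eliminate down to a single relation $Q(\mu,\mu_2,\nu)=0$ and show it is incompatible with $\mu_2>0$ --- and then defer it as ``the hard part'' to an unspecified resultant/Gr\"obner computation. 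The hedge about a Lagrange condition also points at a real structural difficulty: the feasible set of $\SPR_{\{4,5,7\}}$ is generically zero-dimensional (two degrees of freedom in $\alpha$ against two independent elliptic constraints once the normalizations are imposed), so there is no first-order optimality condition to exploit; one must instead locate \emph{every} real feasible point with $\mu_2>0$ and verify that each has spread below $2/\sqrt3$, and nothing in the proposal performs or bounds that enumeration.

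For contrast, the paper's proof avoids the elliptic and normalization constraints entirely. It writes the spread of the $3\times 3$ quotient matrix as an explicit analytic function $S(\varepsilon_1,\varepsilon_2)$ of the two free weights via Vi\`ete's trigonometric formula for the roots of the cubic, observes that a solution of $\SPR_{\{4,5,7\}}$ with all weights positive would be an interior critical point of $S$, and then eliminates all common zeros of $\partial S/\partial\varepsilon_1$ and $\partial S/\partial\varepsilon_2$ in the relevant region by reducing the gradient system to two explicit polynomials $P=Q=0$ and checking by computer algebra that none of their common real zeros is an admissible critical point. This forces some $\alpha_i=0$, and the three boundary faces are dispatched by hand (your $\alpha_5=0$ computation is the same as the paper's; $\alpha_4=0$ gives characteristic polynomial $x^2(x-\alpha_5)$ and spread $\alpha_5\le 1$), leaving only $\alpha_7=0$ compatible with spread at least $2/\sqrt3$. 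To complete your version you must either carry out the Step 3 elimination in full, including the branch analysis you mention, or replace it with an interior-critical-point argument of this type.
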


We delay the proof of Lemma \ref{lem: SPR457} to Section \ref{sec: ugly} because it is technical. We now proceed with the Proof of Theorem \ref{thm: spread maximum graphon}.

\begin{proof}[Proof of Theorem \ref{thm: spread maximum graphon}]
Let $W$ be a graphon such that $\spr(W) = \max_{U\in\ww}\spr(U)$.  
By Lemma \ref{lem: 2 feasible sets} and Lemma \ref{lem: SPR457}, $W$ is a $2\times 2$ stepgraphon. Let the weights of the parts be $\alpha_1$ and $1-\alpha_1$.

Thus, it suffices to demonstrate the uniqueness of the desired solution $\mu, \nu, $ and $(\alpha_i,f_i,g_i)_{i\in \{1,7\}}$ to $\SPR_{\{1,7\}}$.  
Indeed, we first note that with 
\begin{align*}
    N(\alpha_1)
    := \left[\begin{array}{ccc}
        \alpha_1 & 1-\alpha_1\\
        \alpha_1 & 0 
    \end{array}\right], 
\end{align*}
the quantities $\mu$ and $\nu$ are precisely the eigenvalues of the characteristic polynomial 
\begin{align*}
    p(x) 
    = 
        x^2-\alpha_1x-\alpha_1(1-\alpha_1).  
\end{align*}
In particular, 
\begin{align*}
    \mu 
    &=
        \dfrac{
            \alpha_1 + \sqrt{\alpha_1(4-3\alpha_1)}
        }
        {2}
    , \quad 
    \nu 
    =
        \dfrac{
            \alpha_1 - \sqrt{\alpha_1(4-3\alpha_1)}
        }
        {2}
    , 
\end{align*}
and 
\begin{align*}
    \mu-\nu 
    &=
        \sqrt{\alpha_1(4-3\alpha_1)}.  
\end{align*}
Optimizing, it follows that $(\alpha_1, 1-\alpha_1) = (2/3, 1/3)$. Calculating the eigenfunctions and normalizing them gives that $\mu, \nu, $ and their respective eigenfunctions match those from the statement of Theorem \ref{thm: spread maximum graphon}.

\end{proof}
\section{From graphons to graphs}\label{sub-sec: graphons to graphs}

In this section, we show that Theorem \ref{thm: spread maximum graphon} implies Conjecture \ref{thm: spread maximum graphs} for all $n$ sufficiently large; that is, the solution to the problem of maximizing the spread of a graphon implies the solution to the problem of maximizing the spread of a graph for sufficiently large $n$.  

The outline for our argument is as follows.  
First, we define the spread-maximum graphon $W$ as in Theorem \ref{thm: spread maximum graphon}.  
Let $\{G_n\}$ be any sequence where each $G_n$ is a spread-maximum graph on $n$ vertices and denote by $\{W_n\}$ the corresponding sequence of graphons.  
We show that, after applying measure-preserving transformations to each $W_n$, the extreme eigenvalues and eigenvectors of each $W_n$ converge suitably to those of $W$.  
It follows for $n$ sufficiently large that except for $o(n)$ vertices, $G_n$ is a join of a clique of $2n/3$ vertices and an independent set of $n/3$ vertices (Lemma \ref{lem: few exceptional vertices}).  
Using results from Section \ref{sec:graphs}, we precisely estimate the extreme eigenvector entries on this $o(n)$ set.  
Finally, Lemma \ref{lem: no exceptional vertices} shows that the set of $o(n)$ exceptional vertices is actually empty, completing the proof.  
\\

Before proceeding with the proof, we state the following corollary of the Davis-Kahan theorem \cite{DK}, stated for graphons.

\begin{corollary}\label{cor: DK eigenfunction perturbation}
    Suppose $W,W':[0,1]^2\to [0,1]$ are 
    graphons
    Let $\mu$ be an eigenvalue of $W$ with $f$ a corresponding unit eigenfunction.  
Let $\{h_k\}$ be an orthonormal eigenbasis for $W'$ with corresponding eigenvalues $\{\mu_k'\}$.  
    Suppose that $|\mu_k'-\mu| > \delta$ for all $k\neq 1$.  
    Then 
   
    \begin{align*}
        \sqrt{1 - \langle h_1,f\rangle^2}
        \leq 
            \dfrac{\|A_{W'-W}f\|_2}{\delta}.  
    \end{align*}

\end{corollary}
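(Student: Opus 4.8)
The plan is to view this as a direct translation of the standard Davis--Kahan $\sin\theta$ theorem into the language of Hilbert--Schmidt operators on $\elltwo$. First I would set up notation: let $A := A_W$ and $A' := A_{W'}$, both compact self-adjoint operators on the Hilbert space $\elltwo$. Since $A'$ is compact self-adjoint, the orthonormal eigenbasis $\{h_k\}$ with eigenvalues $\{\mu_k'\}$ exists, and I may expand the given unit eigenfunction $f$ in this basis as $f = \sum_k c_k h_k$ with $c_k = \langle h_k, f\rangle$ and $\sum_k c_k^2 = 1$. The quantity to bound is $\sqrt{1-c_1^2} = \bigl(\sum_{k\neq 1} c_k^2\bigr)^{1/2}$.

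The key computation is to evaluate $A_{W'-W}f = (A' - A)f = A'f - \mu f$, using that $Af = \mu f$. Expanding in the eigenbasis, $A'f = \sum_k \mu_k' c_k h_k$ and $\mu f = \sum_k \mu c_k h_k$, so
\begin{align*}
    \|A_{W'-W}f\|_2^2 = \sum_k (\mu_k' - \mu)^2 c_k^2 \geq \sum_{k\neq 1} (\mu_k' - \mu)^2 c_k^2 \geq \delta^2 \sum_{k\neq 1} c_k^2,
\end{align*}
where the last inequality uses the hypothesis $|\mu_k' - \mu| > \delta$ for all $k \neq 1$. Taking square roots and dividing by $\delta$ gives $\bigl(\sum_{k\neq 1} c_k^2\bigr)^{1/2} \leq \|A_{W'-W}f\|_2 / \delta$, which is exactly the claimed bound since $\sqrt{1-\langle h_1,f\rangle^2} = \bigl(\sum_{k\neq 1}c_k^2\bigr)^{1/2}$.

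There is essentially no serious obstacle here; the only points requiring a word of care are the justification that $A'$ admits the orthonormal eigenbasis expansion (this is the spectral theorem for compact self-adjoint operators, already invoked in the excerpt when discussing $A_W$), and the convergence of the series manipulations, which is fine because all coefficient sequences are in $\ell^2$ and the operators are bounded. One should also note the convention that if $A_{W'-W}f \notin \elltwo$ the statement is vacuous, but since $W, W'$ are bounded this never happens. I would present this as a short self-contained proof rather than quoting \cite{DK} verbatim, since the operator-theoretic version is cleaner than extracting it from the matrix statement.
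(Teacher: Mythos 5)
Your proof is correct: expanding $f$ in the orthonormal eigenbasis of $A_{W'}$, using self-adjointness to get $\langle h_k, A_{W'}f\rangle = \mu_k' c_k$, and applying Parseval to $A_{W'-W}f = A_{W'}f - \mu f$ yields exactly $\|A_{W'-W}f\|_2^2 = \sum_k(\mu_k'-\mu)^2c_k^2 \ge \delta^2(1-c_1^2)$ as you say. The paper gives no proof of this corollary, simply citing the Davis--Kahan theorem, and your argument is precisely the standard direct derivation of that instance, so there is nothing to compare beyond noting that your write-up supplies the omitted details.
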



Before proving Theorem \ref{thm: spread maximum graphs}, we prove the following approximate result.  
For all nonnegative integers $n_1,n_2,n_3$, let $G(n_1,n_2,n_3) := (K_{n_1}\dot{\cup} K_{n_2}^c)\vee K_{n_3}^c$.  

\begin{lemma}\label{lem: few exceptional vertices}
    For all positive integers integers $n$, let $G_n$ denote a graph on $n$ vertices which maximizes spread.  
    Then $G_n = G(n_1,n_2,n_3)$ for some nonnegative integers $n_1,n_2,n_3$ such that $n_1 = (2/3+o(1))n$, $n_2 = o(n)$, and $n_3 = (1/3+o(1))n$.  
\end{lemma}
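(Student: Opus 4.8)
\textbf{Proof proposal for Lemma \ref{lem: few exceptional vertices}.}

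The plan is to transfer the structural rigidity of the spread-extremal graphon $W$ from Theorem \ref{thm: spread maximum graphon} down to the graph level via graphon convergence and the Davis--Kahan perturbation bound (Corollary \ref{cor: DK eigenfunction perturbation}). First I would fix a sequence $\{G_n\}$ of spread-maximum graphs and pass to the associated graphons $\{W_n\}$. By Proposition \ref{graph to graphon eigenvalue scaling} together with the fact that $s(n) \leq n\cdot\spr(\hat{\mathcal{W}})$ and the tensor-power argument of Section \ref{sec:graphs}, we have $\spr(W_n)\to \spr(\hat{\mathcal W}) = 2/\sqrt 3$. Since $\hat{\mathcal W}$ is compact under $\cutdist$ and $W$ is the \emph{unique} maximizer up to weak isomorphism (Theorem \ref{thm: spread maximum graphon}), every subsequential limit of $\{W_n\}$ equals $W$ in $\hat{\mathcal W}$; hence, after applying measure-preserving transformations $\sigma_n$ to each $W_n$, we get $\|W_n - W\|_\square \to 0$. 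By Theorem \ref{thm: graphon eigenvalue continuity} the extreme eigenvalues converge, $\mu(W_n)\to \mu = (1+\sqrt3)/3$ and $\nu(W_n)\to\nu = (1-\sqrt3)/3$, and the spectral gap separating $\mu$ (resp. $\nu$) from the rest of the spectrum of $W_n$ is bounded below by an absolute constant $\delta>0$ for $n$ large. Since convergence in cut norm controls $\|A_{W_n - W}\|_{2\to 2}$ on the bounded eigenfunctions, Corollary \ref{cor: DK eigenfunction perturbation} gives unit eigenfunctions $f_n, g_n$ of $W_n$ with $\|f_n - f\|_2 \to 0$ and $\|g_n - g\|_2\to 0$ (up to sign), where $f,g$ are the explicit step functions of Theorem \ref{thm: spread maximum graphon}.

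Next I would exploit the characterization of edges from Lemma \ref{lem: K = indicator function}\eqref{item: K = 0 or 1} at the graph level, which holds since $G_n = G(\x,\z)$ by Lemma \ref{lem: graph join}: vertices $u,v$ are adjacent iff $\x_u\x_v - \z_u\z_v > 0$, where $\x,\z$ are the extremal eigenvectors of $G_n$, i.e. the rescalings $\sqrt n\, f_n, \sqrt n\, g_n$ of the stepfunction eigenfunctions. The $L^2$-convergence $f_n\to f$, $g_n\to g$ implies that the Lebesgue measure of the set of $x\in[0,1]$ on which $(f_n(x), g_n(x))$ deviates from one of the two values $(f(x),g(x))$ takes (namely the ``clique value'' on $[0,2/3]$ and the ``independent-set value'' on $[2/3,1]$) by more than a fixed threshold tends to $0$. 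Translating back, all but $o(n)$ vertices of $G_n$ fall into one of two classes according to whether $(\x_u, \z_u)$ is close to the clique value or the independent-set value; call these classes $C_n$ and $I_n$. Using Lemma \ref{lem: graph join}\eqref{item: G1 G2 thresholds} (so $G_n$ is a join of two threshold graphs) together with the nesting-neighborhood characterization, and the sign structure $g>0$ on $[0,2/3]$, $g<0$ on $[2/3,1]$, one shows $C_n$ induces a clique, $I_n$ induces an independent set, and every vertex of $C_n$ is adjacent to every vertex of $I_n$ --- so the graph on $C_n\cup I_n$ is exactly $K_{|C_n|}\vee \overline{K_{|I_n|}}$ with $|C_n| = (2/3 + o(1))n$, $|I_n| = (1/3+o(1))n$.

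It remains to handle the $o(n)$ ``exceptional'' vertices $X_n := V(G_n)\setminus(C_n\cup I_n)$ and to argue that $G_n$ has the claimed form $G(n_1,n_2,n_3)$. Here I would again invoke that $G_n$ is the join of two threshold graphs $G_1 = G_n[P]$ and $G_2 = G_n[N]$ (Lemma \ref{lem: graph join}), where $P = \{u : \z_u\geq 0\}$ and $N$ its complement. Since $g<0$ precisely on $[2/3,1]$ and $|g|$ is bounded away from $0$ there, $L^2$-convergence forces $|N| = (1/3+o(1))n$ and in fact every vertex of $N$ is an independent-set-type vertex (its $\x$-value is the smaller one, its $\z$-value the negative one), so $G_2 = \overline{K_{|N|}}$ and $n_3 := |N| = (1/3+o(1))n$. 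On the $P$-side, $G_1$ is a threshold graph on $(2/3+o(1))n$ vertices; by the nesting characterization it is $K_{n_1}\dot\cup \overline{K_{n_2}}$ plus possibly a few vertices of intermediate degree, but a threshold graph whose vertices split (up to $o(n)$) into a clique part and an independent part, with the extremal-eigenvector/threshold-function values forcing a sharp dichotomy $\varphi(u) = \log|\x_u/\z_u|$ either large or small, is exactly of the form $K_{n_1}\dot\cup\overline{K_{n_2}}$; the $o(n)$ exceptional vertices are absorbed into $\overline{K_{n_2}}$, giving $n_2 = o(n)$ and $n_1 = (2/3+o(1))n$. Thus $G_n = (K_{n_1}\dot\cup \overline{K_{n_2}})\vee\overline{K_{n_3}} = G(n_1,n_2,n_3)$. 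I expect the main obstacle to be the last step: cleanly ruling out vertices of ``intermediate'' behavior, i.e. showing the threshold structure on $P$ genuinely collapses to a clique-plus-independent-set rather than a longer threshold sequence. This requires combining the discrete near-equality $\lambda_1\x_u^2 - \lambda_n\z_u^2 \approx \lambda_1 - \lambda_n$ from Lemma \ref{discrete ellipse equation} (which pins down the possible $(\x_u,\z_u)$ pairs to an ellipse) with the threshold/nesting order and the $O(n^{-1/2})$ eigenvector bound of Lemma \ref{upper bound on eigenvector entries}, to show that only the two extreme points of that ellipse can be populated by more than a bounded number of vertices — with the genuinely leftover vertices collected into the $\overline{K_{n_2}}$ block, to be eliminated afterward in Lemma \ref{lem: no exceptional vertices}.
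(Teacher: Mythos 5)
Your setup — graphon convergence via compactness and uniqueness of the maximizer, Davis--Kahan to get $L^2$-convergence of the extreme eigenfunctions, and the partition of $V(G_n)$ into two "typical" classes plus an $o(n)$ exceptional set — is exactly the paper's route, and the adjacency structure you derive among the typical vertices (clique, independent set, complete join) is correct. The gap is in your handling of the exceptional vertices $X_n$. The lemma asserts the \emph{exact} structure $G_n=(K_{n_1}\dot\cup\overline{K_{n_2}})\vee\overline{K_{n_3}}$, so you must show that every exceptional vertex is non-adjacent to all of the clique part, adjacent to all of $N_n$, and non-adjacent to the other exceptional vertices. Your appeal to the threshold structure of $G_n[P]$ does not give this: a threshold graph whose vertex set splits into a clique part and an independent part is \emph{not} in general a disjoint union of the two (e.g.\ a star is a threshold graph), so "absorbed into $\overline{K_{n_2}}$" does not follow from the nesting characterization alone. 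And the ellipse equation by itself only confines $(\x_u,\z_u)$ to a one-parameter curve; it cannot tell you the sign of $\x_u\x_v-\z_u\z_v$ for $u$ exceptional and $v$ typical, which is what determines the adjacencies via Lemma \ref{lem: graph join}.

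The missing idea is the paper's Claim I: for $v\in E_n$ one writes the eigenvalue equations $\mu_n\varphi_v=\int W_n(x,y)\varphi_n(y)$ and $\nu_n\psi_v=\int W_n(x,y)\psi_n(y)$, notes that by Lemma \ref{lem: graph join} the neighborhood of $v$ contains all of $N_n$ (or all of $P_n$, depending on $\mathrm{sign}(\psi_v)$), and parametrizes the unknown part of the neighborhood by $\rho_n:=m(\pp_n\cap\,\uu_n)$. Substituting the resulting expressions for $(\varphi_v,\psi_v)$ into the ellipse equation yields a quadratic in $\rho_n$ whose roots are $o(1)$-close to $\{0,2/3\}$ (resp.\ $\{0,1/3\}$); the nonzero root would force $(\varphi_v,\psi_v)$ to the typical clique value, contradicting $v\in E_n$. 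Hence $\rho_n\approx 0$, each exceptional vertex has neighborhood essentially exactly $N_n$, and its eigenvector entries are pinned to the specific \emph{third} point $\bigl(f_2/(3\mu),\,g_2/(3\nu)\bigr)$ of the ellipse — not to one of the "two extreme points" as you suggest. Only with this value in hand can one check the sign table $f_1\cdot\frac{f_2}{3\mu}<g_1\cdot\frac{g_2}{3\nu}$, $\bigl(\frac{f_2}{3\mu}\bigr)^2<\bigl(\frac{g_2}{3\nu}\bigr)^2$, $\frac{f_2}{3\mu}\cdot f_2>\frac{g_2}{3\nu}\cdot g_2$, which is what makes $E_n$ an independent set joined to $N_n$ and detached from $P_n$. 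Note also that $|E_n|$ is only controlled to be $o(n)$ here, not $O(1)$; eliminating it entirely is deferred to Lemma \ref{lem: no exceptional vertices}, as you correctly anticipate.
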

\begin{proof}
    Our argument outline is: 
    \begin{enumerate}
        \item 
            show that the eigenvectors for the spread-extremal graphs resemble the eigenfunctions of the spread-extremal graphon in an $L_2$ sense
        \item 
            show that with the exception of a small proportion of vertices, a spread-extremal graph is the join of a clique and an independent set
    \end{enumerate}
    
    Let $\pp := [0,2/3]$ and $\nn := [0,1]\setminus \pp$.  
    By Theorem \ref{thm: spread maximum graphon}, the graphon $W$ which is the indicator function of the set $[0,1]^2\setminus \nn^2$ maximizes spread.  
    Denote by $\mu$ and $\nu$ its maximum and minimum eigenvalues, respectively.  
    For every positive integer $n$, let $G_n$ denote a graph on $n$ vertices which maximizes spread, let $W_n$ be any stepgraphon corresponding to $G_n$, and let $\mu_n$ and $\nu_n$ denote the maximum and minimum eigenvalues of $W_n$, respectively.  
    By Theorems \ref{thm: graphon eigenvalue continuity} and \ref{thm: spread maximum graphon}, and compactness of $\hatW$,
    \begin{align*}
        \max\left\{
            |\mu-\mu_n|, 
            |\nu-\nu_n|, 
            \delta_\square(W, W_n)
        \right\}\to 0.  
    \end{align*}
    Moreover, we may apply measure-preserving transformations to each $W_n$ so that without loss of generality,  $\|W-W_n\|_\square\to 0$.  
    As in Theorem \ref{thm: spread maximum graphon}, let $f$ and $g$ be unit eigenfunctions which take values $f_1,f_2, g_1, g_2$.  
    Furthermore, let $\varphi_n$ be a nonnegative unit $\mu_n$-eigenfunction for $W_n$ and let $\psi_n$ be a $\nu_n$-eigenfunction for $W_n$.  
    \\
    
    We show that without loss of generality, $\varphi_n\to f$ and $\psi_n\to g$ in the $L_2$ sense.  
    Since $\mu$ is the only positive eigenvalue of $W$ and it has multiplicity $1$, taking $\delta := \mu/2$, Corollary \ref{cor: DK eigenfunction perturbation} implies that 
    \begin{align*}
    	1-\langle f,\varphi_n\rangle^2
    	&\leq
    		\dfrac{4\|A_{W-W_n}f\|_2^2}
    		{\mu^2}
    	\\
    	&=
    	    \dfrac{4}{\mu^2}
    	    \cdot
    	    \left\langle
    	        A_{W-W_n}f, 
    	        A_{W-W_n}f
    	    \right\rangle
	    \\
	    &\leq 
	        \dfrac{4}{\mu^2}
	        \cdot
	        \|
	            A_{W-W_n}f
            \|_1
            \cdot 
            \|
	            A_{W-W_n}f
            \|_\infty
        \\
        &\leq 
            \dfrac{4}{\mu^2}
	        \cdot
	        \left(
	            \|A_{W-W_n}\|_{\infty\to1}\|f\|_\infty
	        \right)
	        \cdot
	        \|f\|_\infty
        \\
        &\leq
            \dfrac{
                16\|W-W_n\|_\square
                \cdot \|f\|_\infty^2}
            {\mu^2},
    \end{align*}
    where the last inequality follows from Lemma 8.11 of \cite{Lovasz2012Hombook}.
    Since $\|f\|_\infty\leq 1/\mu$, this proves the first claim.  
    The second claim follows by replacing $f$ with $g$, and $\mu$ with $|\nu|$.  
    \\
    \\
    \textbf{Note: }For the remainder of the proof, we will introduce quantities $\varepsilon_i > 0$ 
    in lieu of writing complicated expressions explicitly.  
    When we introduce a new $\varepsilon_i$, we will remark that given $\varepsilon_0,\dots,\varepsilon_{i-1}$ sufficiently small, $\varepsilon_i$ can be made sufficiently small enough to meet some other conditions.  
    \\
    
    Let $\varepsilon_0 > 0$ and for all $n\geq 1$, define 
    \begin{align*}
        \pp_n 
        &:= 
            \{
                x\in [0,1] : 
                |\varphi_n(x) - f_1| < \varepsilon_0
                \text{ and }
                |\psi_n(x)-g_1| < \varepsilon_0
            \}, 
        \\
        \nn_n 
        &:= 
            \{
                x\in [0,1] : 
                |\varphi_n(x) - f_2| < \varepsilon_0
                \text{ and }
                |\psi_n(x)-g_2| < \varepsilon_0
            \}, \text{ and }
        \\
        \ee_n 
        &:= 
            [0,1]\setminus (\pp_n\cup \nn_n).  
    \end{align*}
    Since 
    \begin{align*}
        \int_{|\varphi_n-f|\geq \varepsilon_0}
            |\varphi_n-f|^2
        &\leq 
        \int_{}
            |\varphi_n-f|^2
        \to 0, 
        \text{ and }\\
        \int_{|\psi_n-g|\geq \varepsilon_0}
            |\psi_n-g|^2
        &\leq 
        \int_{}
            |\psi_n-g|^2
        \to 0, 
    \end{align*}
    it follows that 
    \begin{align*}
        \max\left\{
            m(\pp_n\setminus\pp), 
            m(\nn_n\setminus\nn), 
            m(\ee_n)
        \right\}
        \to 0.  
    \end{align*}
    
    For all $u\in V(G_n)$, let $S_u$ be the subinterval of $[0,1]$ corresponding to $u$ in $W_n$, and denote by $\varphi_u$ and $\psi_u$ the constant values of $\varphi_n$ on $S_u$.  
    For convenience, we define the following discrete analogues of $\pp_n, \nn_n, \ee_n$: 
    \begin{align*}
        P_n 
        &:= 
            \{
                u\in V(G_n) : 
                |\varphi_u - f_1| < \varepsilon_0
                \text{ and }
                |\psi_u-g_1| < \varepsilon_0
            \}, 
        \\
        N_n 
        &:= 
            \{
                u\in V(G_n) : 
                |\varphi_u - f_2| < \varepsilon_0
                \text{ and }
                |\psi_u-g_2| < \varepsilon_0
            \}, \text{ and }
        \\
        E_n 
        &:= 
            V(G_n) \setminus (P_n\cup N_n).  
    \end{align*}
    
    Let $\varepsilon_1>0$.  
    By Lemma \ref{discrete ellipse equation} and using the fact that $\mu_n\to \mu$ and $\nu_n\to \nu$, 
    \begin{align}\label{eq: recall graph ellipse equation}
        \left|
            \mu \varphi_u^2 - \nu\psi_u^2 - (\mu-\nu)
        \right|
        &<
            \varepsilon_1
        \quad \text{ for all }u\in V(G_n) 
    \end{align}
    for all $n$ sufficiently large.  
    Let $\varepsilon_0'>0$.  
    We next need the following claim, which says that the eigenvector entries of the exceptional vertices behave as if they have neighborhood $N_n$.
    \\
    \\
    \textbf{Claim I.  }
    Suppose $\varepsilon_0$ is sufficiently small and $n$ is sufficiently large in terms of $\varepsilon_0'$.   
    Then for all $v\in E_n$, 
    \begin{align}\label{eq: exceptional vertex entries}
        \max\left\{
            \left|
                \varphi_v - \dfrac{f_2}{3\mu}
            \right|, 
            \left|
                \psi_v - \dfrac{g_2}{3\nu}
            \right|
        \right\}
        < \varepsilon_0'.  
    \end{align}
    Indeed, suppose $v \in E_n$ and let 
    \begin{align*}
        U_n 
        := 
        \{w\in V(G_n) : vw\in E(G_n)\}
        \quad\text{ and }\quad 
        \uu_n 
        := 
        \bigcup_{w\in U_n}
            S_w.  
    \end{align*}
    We take two cases, depending on the sign of $\psi_v$.  
    \\
    \\
    \textbf{Case A: $\psi_v \geq 0$.  }
    
    Recall that $f_2 > 0 > g_2$.  
    Furthermore, $\varphi_v \geq 0$ and by assumption, $\psi_v\geq 0$.  
    It follows that for all $n$ sufficiently large, $f_2\varphi_v - g_2\psi_v > 0$, so by Lemma \ref{lem: graph join}, $N_n\subseteq U_n$.  
    Since $\varphi_n$ is a $\mu_n$-eigenfunction for $W_n$, 
    \begin{align*}
        \mu_n \varphi_v
        &=
            \int_{y\in [0,1]}W_n(x,y)\varphi_n(y)
            \\
        &= 
            \int_{y\in \pp_n\cap \uu_n}\varphi_n(y)
            + \int_{y\in \nn_n}\varphi_n(y)
            + \int_{y\in \ee_n\cap \uu_n}\varphi_n(y).  
    \end{align*}
    Similarly, 
    \begin{align*}
        \nu_n \psi_v
        &=
            \int_{y\in [0,1]}K_n(x,y)\psi_n(y)
            \\
        &= 
            \int_{y\in \pp_n\cap \uu_n}\psi_n(y)
            + \int_{y\in \nn_n}\psi_n(y)
            + \int_{y\in \ee_n\cap \uu_n}\psi_n(y).  
    \end{align*}
    Let $\rho_n := m(\pp_n\cap \uu_n)$.  
    Note that for all $\varepsilon_2 > 0$, as long as $n$ is sufficiently large and $\varepsilon_1$ is sufficiently small, then 
    \begin{align}\label{eq: eigenvector entries with rho (case A)}
        \max\left\{
        \left|
            \varphi_v-\dfrac{3\rho_n f_1 + f_2}{3\mu}
        \right|
        , 
        \left|
            \psi_v-\dfrac{3\rho_n g_1 + g_2}{3\nu}
        \right|
        \right\}
        < 
            \varepsilon_2.  
    \end{align}
    Let $\varepsilon_3 > 0$.  
    By Equations \eqref{eq: recall graph ellipse equation} and \eqref{eq: eigenvector entries with rho (case A)} and with $\varepsilon_1,\varepsilon_2$ sufficiently small, 
    \begin{align*}
        \left|
            \mu\cdot \left(
                \dfrac{3\rho_n f_1 + f_2}{3\mu}
            \right)^2
            -\nu\cdot \left(
                \dfrac{3\rho_n g_1 + g_2}{3\nu}
            \right)^2
            - (\mu-\nu)
        \right|
        < \varepsilon_3.  
    \end{align*}
    Substituting the values of $f_1,f_2, g_1,g_2$ from Theorem \ref{thm: spread maximum graphon} and simplifying, it follows that 
    \begin{align*}
        \left|
            \dfrac{\sqrt{3}}{2}
            \cdot 
            \rho_n(3\rho_n-2)
        \right|
        < \varepsilon_3
    \end{align*}
    Let $\varepsilon_4 > 0$.  
    It follows that if $n$ is sufficiently large and $\varepsilon_3$ is sufficiently small, then 
    \begin{align}\label{eq: (case A) rho estimates}
        \min\left\{
            \rho_n, 
            |2/3-\rho_n|
        \right\}
        < \varepsilon_4.  
    \end{align}
    Combining Equations \eqref{eq: eigenvector entries with rho (case A)} and \eqref{eq: (case A) rho estimates}, it follows that with $\varepsilon_2,\varepsilon_4$ sufficiently small, then 
    \begin{align*}
        \max\left\{
            \left|
                \varphi_v - \dfrac{f_2}{3\mu}
            \right|, 
            \left|
                \psi_v - \dfrac{g_2}{3\mu}
            \right|
        \right\}
        &< \varepsilon_0', \text{ or }
        \\
        \max\left\{
            \left|
                \varphi_v - \dfrac{2f_1 + f_2}{3\mu}
            \right|, 
            \left|
                \psi_v - \dfrac{2g_1 + g_2}{3\mu}
            \right|
        \right\}
        &< \varepsilon_0'.  
    \end{align*}
    Note that 
    \begin{align*}
        f_1 &= \dfrac{2f_1 + f_2}{3\mu}
        \quad \text{ and }\quad 
        g_1 = \dfrac{2g_1 + g_2}{3\nu}.  
    \end{align*}
    Since $v\in E_n$, the second inequality does not hold, which completes the proof of the desired claim.  
    \\
    \\
    \textbf{Case B: $\psi_v < 0$.  }
    
    Recall that $f_1 > g_1 > 0$.  
    Furthermore, $\varphi_v \geq 0$ and by assumption, $\psi_v < 0$.  
    It follows that for all $n$ sufficiently large, $f_1\varphi_v - g_1\psi_v > 0$, so by Lemma \ref{lem: graph join}, $P_n\subseteq U_n$.  
    Since $\varphi_n$ is a $\mu_n$-eigenfunction for $W_n$, 
    \begin{align*}
        \mu_n \varphi_v
        &=
            \int_{y\in [0,1]}W_n(x,y)\varphi_n(y)
            \\
        &= 
            \int_{y\in \nn_n\cap \uu_n}\varphi_n(y)
            + \int_{y\in \pp_n}\varphi_n(y)
            + \int_{y\in \ee_n\cap \uu_n}\varphi_n(y).  
    \end{align*}
    Similarly, 
    \begin{align*}
        \nu_n \psi_v
        &=
            \int_{y\in [0,1]}W_n(x,y)\psi_n(y)
            \\
        &= 
            \int_{y\in \nn_n\cap \uu_n}\psi_n(y)
            + \int_{y\in \pp_n}\psi_n(y)
            + \int_{y\in \ee_n\cap \uu_n}\psi_n(y).  
    \end{align*}
    Let $\rho_n := m(\nn_n\cap \uu_n)$.  
    Note that for all $\varepsilon_5 > 0$, as long as $n$ is sufficiently large and $\varepsilon_1$ is sufficiently small, then

    \begin{align}\label{eq: eigenvector entries with rho (case B)}
        \max\left\{
        \left|
            \varphi_v-\dfrac{2f_1 + 3\rho_n f_2}{3\mu}
        \right|
        , 
        \left|
            \psi_v-\dfrac{2g_1 + 3\rho_n g_2}{3\nu}
        \right|
        \right\}
        < 
            \varepsilon_5.  
    \end{align}
    Let $\varepsilon_6 > 0$.  
    By Equations \eqref{eq: recall graph ellipse equation} and \eqref{eq: eigenvector entries with rho (case B)} and with $\varepsilon_1,\varepsilon_2$ sufficiently small, 
    \begin{align*}
        \left|
            \mu\cdot \left(
                \dfrac{2f_1 + 3\rho_n f_2}{3\mu}
            \right)^2
            -\nu\cdot \left(
                \dfrac{2g_1 + 3\rho_n g_2}{3\nu}
            \right)^2
            - (\mu-\nu)
        \right|
        < \varepsilon_6.  
    \end{align*}
    Substituting the values of $f_1,f_2, g_1,g_2$ from Theorem \ref{thm: spread maximum graphon} and simplifying, it follows that 
    \begin{align*}
        \left|
            2\sqrt{3}
            \cdot 
            \rho_n(3\rho_n-1)
        \right|
        < \varepsilon_6
    \end{align*}
    Let $\varepsilon_7 > 0$.  
    It follows that if $n$ is sufficiently large and $\varepsilon_6$ is sufficiently small, then 
    \begin{align}\label{eq: (case B) rho estimates}
        \min\left\{
            \rho_n, 
            |1/3-\rho_n|
        \right\}
        < \varepsilon_7.  
    \end{align}
    Combining Equations \eqref{eq: eigenvector entries with rho (case A)} and \eqref{eq: (case B) rho estimates}, it follows that with $\varepsilon_2,\varepsilon_4$ sufficiently small, then 
    \begin{align*}
        \max\left\{
            \left|
                \varphi_v - \dfrac{2f_1}{3\mu}
            \right|, 
            \left|
                \psi_v - \dfrac{2g_1}{3\mu}
            \right|
        \right\}
        &< \varepsilon_0', \text{ or }
        \\
        \max\left\{
            \left|
                \varphi_v - \dfrac{2f_1 + f_2}{3\mu}
            \right|, 
            \left|
                \psi_v - \dfrac{2g_1 + g_2}{3\mu}
            \right|
        \right\}
        &< \varepsilon_0'.  
    \end{align*}
    Again, note that 
    \begin{align*}
        f_1 &= \dfrac{2f_1 + f_2}{3\mu}
        \quad \text{ and }\quad 
        g_1 = \dfrac{2g_1 + g_2}{3\nu}.  
    \end{align*}
    Since $v\in E_n$, the second inequality does not hold.  
    \\
    
    Similarly, note that 
    \begin{align*}
        f_2 &= \dfrac{2f_1}{3\mu}
        \quad \text{ and }\quad 
        g_2 = \dfrac{2g_1}{3\nu}.  
    \end{align*}
    Since $v\in E_n$, the first inequality does not hold, a contradiction.  
    So the desired claim holds.  
    \\

    We now complete the proof of Lemma \ref{lem: no exceptional vertices} by showing that for all $n$ sufficiently large, $G_n$ is the join of an independent set $N_n$ with a disjoint union of a clique $P_n$ and an independent set $E_n$. 
    
    As above, we let $\varepsilon_0,\varepsilon_0'>0$ be arbitrary.  
    By definition of $P_n$ and $N_n$ and by Equation \eqref{eq: exceptional vertex entries} from Claim I, 
    then for all $n$ sufficiently large, 
    \begin{align*}
        \max\left\{
            \left|
                \varphi_v - f_1
            \right|, 
            \left|
                \psi_v - g_1
            \right|
        \right\}
        &< \varepsilon_0
            &\text{ for all }v\in P_n
        \\
        \max\left\{
            \left|
                \varphi_v - \dfrac{f_2}{3\mu}
            \right|, 
            \left|
                \psi_v - \dfrac{g_2}{3\nu}
            \right|
        \right\}
        &< \varepsilon_0'
            &\text{ for all }v\in E_n
        \\
        \max\left\{
            \left|
                \varphi_v - f_2
            \right|, 
            \left|
                \psi_v - g_2
            \right|
        \right\}
        &< \varepsilon_0
            &\text{ for all }v\in N_n
    \end{align*}
    With rows and columns respectively corresponding to the vertex sets $P_n, E_n,$ and $N_n$, we note the following inequalities: 
    Indeed, note the following inequalities: 
    \begin{align*}
        \begin{array}{c|c||c}
            f_1^2 
            > 
            g_1^2
            &
            f_1\cdot\dfrac{f_2}{3\mu} 
            < 
            g_1\cdot\dfrac{g_2}{3\nu}
            &
            f_1f_2 
            > 
            g_1g_2\\\hline
            &
            \left(\dfrac{f_2}{3\mu}\right)^2 
            < 
            \left(\dfrac{g_2}{3\nu}\right)^2
            &
            \dfrac{f_2}{3\mu}\cdot f_2 
            > \dfrac{g_2}{3\nu}\\\hline\hline
            &&
            f_2^2 < g_2^2
        \end{array}
        \quad .
    \end{align*}
    Let $\varepsilon_0, \varepsilon_0'$ be sufficiently small.  
    Then for all $n$ sufficiently large and for all $u,v\in V(G_n)$, then $\varphi_u\varphi_v-\psi_u\psi_v < 0$ if and only if $u,v\in E_n$, $u,v\in N_n$, or $(u,v)\in (P_n\times E_n)\cup (E_n\times P_n)$.  
    By Lemma \ref{lem: graph join}, since $m(P_n) \to 2/3$ and $m(N_n) \to 1/3$, the proof is complete.  
\end{proof}

We have now shown that the spread-extremal graph is of the form $(K_{n_1}\dot{\cup} K_{n_2}^c)\vee K_{n_3}^c$ where $n_2 = o(n)$. The next lemma refines this to show that actually $n_2 = 0$.

\begin{lemma}\label{lem: no exceptional vertices}
    For all nonnegative integers $n_1,n_2,n_3$, let $G(n_1, n_2, n_3) := (K_{n_1}\cup K_{n_2}^c)\vee K_{n_3}^c$.  
    Then for all $n$ sufficiently large, the following holds.  
    If $\spr(G(n_1,n_2,n_3))$ is maximized subject to the constraint $n_1+n_2+n_3 = n$ and $n_2 = o(n)$, then $n_2 = 0$.  
\end{lemma}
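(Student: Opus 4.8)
The plan is a contradiction argument. Suppose $(n_1,n_2,n_3)$ maximizes $\spr(G(n_1,n_2,n_3))$ subject to $n_1+n_2+n_3=n$ and $n_2=o(n)$, and assume $n_2\geq 1$. I will show that $\spr\big(G(n_1+n_2,0,n_3)\big)>\spr\big(G(n_1,n_2,n_3)\big)$, which is a contradiction since the triple $(n_1+n_2,0,n_3)$ has middle coordinate $0=o(n)$ and so lies in the same constrained family. Note first that a maximizer of the constrained problem is genuinely spread-extremal: by Lemma~\ref{lem: few exceptional vertices} the spread-extremal graph $G_n$ has the form $G(m_1,m_2,m_3)$ with $m_2=o(n)$, hence $G_n$ lies in the constrained family and the constrained maximum equals $s(n)$. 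Applying Lemma~\ref{lem: few exceptional vertices} again, we may therefore assume $n_1=(2/3+o(1))n$, $n_2=o(n)$, $n_3=(1/3+o(1))n$; in particular, for $n$ large, $1\leq n_2<n_1-1$, $n_2<(n_1-1)/3$, $n_1-1+n_3=n-n_2-1\geq n_2$, $n_3\geq 1$, and (by the weak bound $\lambda_n<-n/10$ from Section~\ref{sec:graphs}) the least adjacency eigenvalue of each graph in sight is $<-1$.

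The first step is to record the equitable partition of $G(n_1,n_2,n_3)$ into the clique $K_{n_1}$, the independent set of size $n_2$, and the independent set of size $n_3$; its quotient matrix and characteristic polynomial are
\[
B=\begin{pmatrix} n_1-1 & 0 & n_3 \\ 0 & 0 & n_3 \\ n_1 & n_2 & 0 \end{pmatrix},\qquad p(x):=\det(xI-B)=x^3-(n_1-1)x^2-n_3(n_1+n_2)x+(n_1-1)n_2n_3 .
\]
The full adjacency spectrum is the union of the three roots of $p$ with $-1$ (multiplicity $n_1-1$, from vectors supported on the clique that sum to $0$) and $0$ (multiplicity $n_2+n_3-2$). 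Since $\lambda_1(G(n_1,n_2,n_3))\geq\lambda_1(K_{n_1})=n_1-1>0$ by interlacing, and $\lambda_n(G(n_1,n_2,n_3))<-1$, the extreme adjacency eigenvalues are exactly the extreme roots of $p$, so $\spr(G(n_1,n_2,n_3))=\lambda_1-\lambda_3$ where $\lambda_1\geq\lambda_2\geq\lambda_3$ denote the roots of $p$. The same bookkeeping applied to $G(n_1+n_2,0,n_3)$, whose $2\times2$ quotient matrix is $\left(\begin{smallmatrix} n_1+n_2-1 & n_3 \\ n_1+n_2 & 0\end{smallmatrix}\right)$, gives $\spr(G(n_1+n_2,0,n_3))^2=(n_1+n_2-1)^2+4n_3(n_1+n_2)$.

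The heart of the argument is a symmetric-function identity for $(\lambda_1-\lambda_3)^2$. Writing $e_1=n_1-1$, $e_2=-n_3(n_1+n_2)$, $e_3=-(n_1-1)n_2n_3$ for the elementary symmetric functions of $\lambda_1,\lambda_2,\lambda_3$, we have $(\lambda_1-\lambda_3)^2=(\lambda_1+\lambda_3)^2-4\lambda_1\lambda_3=(e_1-\lambda_2)^2-4e_3/\lambda_2$ (with $\lambda_2\neq0$ since $e_3\neq0$ when $n_2\geq1$); rearranging $p(\lambda_2)=0$ into $e_3/\lambda_2=\lambda_2^2-e_1\lambda_2+e_2$ and substituting yields
\[
(\lambda_1-\lambda_3)^2=e_1^2-4e_2+2e_1\lambda_2-3\lambda_2^2=(n_1-1)^2+4n_3(n_1+n_2)+2(n_1-1)\lambda_2-3\lambda_2^2 .
\]
I would then prove $0<\lambda_2<n_2$: positivity is immediate from $\lambda_1>0>\lambda_3$ and $\lambda_1\lambda_2\lambda_3=e_3<0$; for the upper bound, $p(n_2)=n_2\big(n_2^2-n_2(n_1-1+n_3)-n_3\big)\leq-n_2n_3<0$ (using $n_1-1+n_3\geq n_2$), while $p$ is nonnegative on $[\lambda_3,\lambda_2]$ and $n_2>0>\lambda_3$, so $n_2>\lambda_2$. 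Now subtract the two spread formulas, using $(n_1-1)^2-(n_1+n_2-1)^2=-n_2(2n_1+n_2-2)$:
\[
\spr(G(n_1,n_2,n_3))^2-\spr(G(n_1+n_2,0,n_3))^2=h(\lambda_2),\qquad h(t):=2(n_1-1)t-3t^2-n_2(2n_1+n_2-2).
\]
A one-line computation gives $h(n_2)=-4n_2^2$, and $h$ is a downward parabola with vertex at $t=(n_1-1)/3$; since $0<\lambda_2<n_2<(n_1-1)/3$, we get $h(\lambda_2)<h(n_2)=-4n_2^2<0$, hence $\spr(G(n_1,n_2,n_3))<\spr(G(n_1+n_2,0,n_3))$, the desired contradiction. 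Therefore $n_2=0$.

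I expect the main obstacle to be care rather than depth: one must correctly peel off the two ``spurious'' eigenvalues $0$ and $-1$ of the equitable partition and argue that the extreme adjacency eigenvalues really are the extreme roots of $p$ (this is precisely where $\lambda_n<-1$ and $\lambda_1\geq n_1-1$ are used), and one must verify $0<\lambda_2<n_2$, which is elementary but relies on the coarse location $n_1=(2/3+o(1))n$ from Lemma~\ref{lem: few exceptional vertices} so that $n_2<n_1-1\leq\lambda_1$ and $n_2<(n_1-1)/3$. Everything else is forced by the identity above together with the single line $h(n_2)=-4n_2^2$. It is worth emphasizing that the right comparison graph is $G(n_1+n_2,0,n_3)$ — absorbing the $n_2$ exceptional vertices into the clique — whereas absorbing them instead into the independent set (i.e.\ comparing with $G(n_1,0,n_2+n_3)$) does not produce a clean inequality.
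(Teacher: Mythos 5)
Your proof is correct, and it takes a genuinely different route from the paper's. The paper proves this lemma by a continuous relaxation: it reparametrizes $(n_1,n_3)$ as $((2/3-\varepsilon_1)n,(1/3-\varepsilon_2)n)$, writes the spread of the $3\times 3$ quotient matrix explicitly via Vi\`ete's trigonometric formula for cubic roots, and then shows (with computer-algebra-assisted computations of first and second partial derivatives) that the relaxed objective $S_z(\varepsilon_1,\varepsilon_2)$ is strictly concave near the origin with a unique interior maximum at scale $1/n$; integrality plus concavity then force the lattice optimum onto a corner with $n_2=0$. You instead make an exact, discrete comparison with the single competitor $G(n_1+n_2,0,n_3)$ obtained by absorbing the $n_2$ exceptional vertices into the clique. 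The symmetric-function identity $(\lambda_1-\lambda_3)^2=e_1^2-4e_2+2e_1\lambda_2-3\lambda_2^2$, the localization $0<\lambda_2<n_2$ of the middle quotient eigenvalue, and the clean evaluation $h(n_2)=-4n_2^2$ give a short, computer-free, purely algebraic argument. The points that genuinely need care are the ones you flag: identifying $\spr(G(n_1,n_2,n_3))$ with the gap between the extreme roots of the cubic requires peeling off the spurious eigenvalues $-1^{(n_1-1)}$ and $0^{(n_2+n_3-2)}$ and hence requires $\lambda_3<-1$, which you get from the weak bound $\lambda_n<-n/10$ after observing that the constrained maximizer is a genuine spread maximizer; and the inequalities $n_2<(n_1-1)/3$ and $n_1-1+n_3\ge n_2$ come from Lemma~\ref{lem: few exceptional vertices} applied to that maximizer. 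The paper's heavier machinery does return slightly more (the exact optimal $(n_1,n_3)$ for each residue of $n$ modulo $3$), but that information is re-derived in the proof of Theorem~\ref{thm: spread maximum graphs} anyway, so your argument is a legitimate and arguably cleaner replacement.
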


\emph{Proof outline:} We aim to maximize the spread of $G(n_1, n_2, n_3)$ subject to $n_2 = o(n)$. The spread of $G(n_1, n_2, n_3)$ is the same as the spread of the quotient matrix
\[
Q_n = \begin{bmatrix}
n_1 - 1 & 0 & n_3\\
0 & 0 & n_3 \\
n_1 & n_2 & 0
\end{bmatrix}.
\]

We reparametrize with parameters $\varepsilon_1$ and $\varepsilon_2$ representing how far away $n_1$ and $n_3$ are proportionally from $\frac{2n}{3}$ and $\frac{n}{3}$, respectively. Namely, $\varepsilon_1 = \frac{2}{3} - \frac{n_1}{n}$ and $\varepsilon_2 = \frac{1}{3} - \frac{n_3}{n}$. Then $\varepsilon_1 + \varepsilon_2 = \frac{n_2}{n}$. Hence maximizing the spread of $G(n_1,n_2,n_3)$ subject to $n_2 = o(n)$ is equivalent to maximizing the spread of the matrix
\[
n\begin{bmatrix}
\frac{2}{3} - \varepsilon_1 - \frac{1}{n} & 0 & \frac{1}{3} - \varepsilon_2 \\
0 & 0 & \frac{1}{3} - \varepsilon_2 \\
\frac{2}{3} - \varepsilon_1 & \varepsilon_1 + \varepsilon_2 & 0
\end{bmatrix}
\]
subject to the constraint that $\textstyle \frac{2}{3}-\varepsilon_1$ and $\textstyle \frac{1}{3}-\varepsilon_2$ are nonnegative integer multiples of $\frac{1}{n}$ and $\varepsilon_1+\varepsilon_2 = o(1)$. In order to utilize calculus, we instead solve a continuous relaxation of the optimization problem. 

As such, consider the following matrix. 
\begin{align*}
    M_z(\varepsilon_1,\varepsilon_2)
    := \left[\begin{array}{ccc}
        \dfrac{2}{3}-\varepsilon_1-z & 0 & \dfrac{1}{3}-\varepsilon_2\\
        0 & 0 & \dfrac{1}{3}-\varepsilon_2\\
        \dfrac{2}{3}-\varepsilon_1 & \varepsilon_1+\varepsilon_2 & 0
    \end{array}\right]
    .  
\end{align*}

Since $M_z(\varepsilon_1,\varepsilon_2)$ is diagonalizable, we may let $S_z(\varepsilon_1,\varepsilon_2)$ be the difference between the maximum and minimum eigenvalues of $M_z(\varepsilon_1,\varepsilon_2)$.  
We consider the optimization problem $\pp_{z,C}$ defined for all $z\in\RR$ and all $C>0$ such that $|z|$ and $C$ are sufficiently small, by

\begin{align*}
    (\pp_{z,C}): 
    \left\{\begin{array}{rl}
        \max
        & 
            S_z(\varepsilon_1,\varepsilon_2)\\
        \text{s.t}. 
        & 
            \varepsilon_1,\varepsilon_2\in [-C,C].
    \end{array}\right.
\end{align*}

{
We show that as long as $C$ and $|z|$ are sufficiently small, then the optimum of $\pp_{z,C}$ is attained by \begin{align*}
(\varepsilon_1, \varepsilon_2) 
&= 
    \left(
        (1+o(z))\cdot \dfrac{7z}{30}, 
        (1+o(z))\cdot \dfrac{-z}{3}
    \right).  
\end{align*}
Moreover we show that in the feasible region of $\pp_{z,C}$, $S_{z,C}(\varepsilon_1,\varepsilon_2)$ is concave-down in $(\varepsilon_1,\varepsilon_2)$.  
We return to the original problem by imposing the constraint that $ \frac{2}{3}-\varepsilon_1$ and $\frac{1}{3}-\varepsilon_2$ are multiples of $\frac{1}{n}$.  
Together these two observations complete the proof of the lemma.  
Under these added constraints, the optimum is obtained when  
\begin{align*}
    (\varepsilon_1, \varepsilon_2)
    &=
    \left\{\begin{array}{rl}
        (0,0), & n\equiv 0 \pmod{3}\\
        (2/3,-2/3), & n\equiv 1 \pmod{3}\\
        (1/3,-1/3), & n\equiv 2 \pmod{3}
    \end{array}\right.
    .  
\end{align*}
}
Since the details are straightforward but tedious calculus, we delay this part of the proof to Section \ref{sec: 2 by 2 reduction}.  

We may now complete the proof of Theorem \ref{thm: spread maximum graphs}.

\begin{proof}[Proof of Theorem \ref{thm: spread maximum graphs}]
Suppose $G$ is a graph on $n$ vertices which maximizes spread.  
By Lemma \ref{lem: few exceptional vertices}, $G = (K_{n_1}\dot{\cup} K_{n_2}^c)\vee K_{n_3}^c$ for some nonnegative integers $n_1,n_2,n_3$ such that $n_1+n_2+n_3 = n$ where 
\begin{align*}
    (n_1,n_2,n_3)
    &= 
    \left(
        \left(
            \dfrac{2}{3}+o(1)
        \right), 
        o(n), 
        \left(
            \dfrac{1}{3}+o(1)
        \right)
        \cdot n
    \right).  
\end{align*}

By Lemma \ref{lem: no exceptional vertices}, if $n$ is sufficiently large, then $n_2 = 0$.  
To complete the proof of the main result, it is sufficient to find the unique maximum of $\spr( K_{n_1}\vee K_{n_2}^c )$, subject to the constraint that $n_1+n_2 = n$.  { This is determined in \cite{gregory2001spread} to be the join of a clique on $\lfloor\frac{2n}{3}\rfloor$ and an independent set on $\lceil \frac{n}{3} \rceil$ vertices.} The interested reader can prove that $n_1$ is the nearest integer to $(2n-1)/3$ by considering the spread of the quotient matrix
\begin{align*}
        \left[\begin{array}{cc}
            n_1-1 & n_2\\
            n_1 & 0
        \end{array}\right]
\end{align*}and optimizing the choice of $n_1$.

\end{proof}

\section{The Bipartite Spread Conjecture}\label{sec:bispread}

In \cite{gregory2001spread}, the authors investigated the structure of graphs which maximize the spread over all graphs with a fixed number of vertices $n$ and edges $m$, denoted by $s(n,m)$. In particular, they proved the upper bound
\begin{equation}\label{eqn:spread_bound}
    s(G) \le \lambda_1 + \sqrt{2 m - \lambda^2_1} \le 2 \sqrt{m},
\end{equation}
and noted that equality holds throughout if and only if $G$ is the union of isolated vertices and $K_{p,q}$, for some $p+q \le n$ satisfying $m=pq$ \cite[Thm. 1.5]{gregory2001spread}. This led the authors to conjecture that if $G$ has $n$ vertices, $m \le \lfloor n^2/4 \rfloor$ edges, and spread $s(n,m)$, then $G$ is bipartite \cite[Conj. 1.4]{gregory2001spread}. In this section, we prove an asymptotic form of this conjecture and provide an infinite family of counterexamples to the exact conjecture which verifies that the error in the aforementioned asymptotic result is of the correct order of magnitude. Recall that $s_b(n,m)$, $m \le \lfloor n^2/4 \rfloor$, is the maximum spread over all bipartite graphs with $n$ vertices and $m$ edges. To explicitly compute the spread of certain graphs, we make use of the theory of equitable partitions. In particular, we note that if $\phi$ is an automorphism of $G$, then the quotient matrix of $A(G)$ with respect to $\phi$, denoted by $A_\phi$, satisfies $\Lambda(A_\phi) \subset \Lambda(A)$, and therefore $s(G)$ is at least the spread of $A_\phi$ (for details, see \cite[Section 2.3]{brouwer2011spectra}). Additionally, we require two propositions, one regarding the largest spectral radius of subgraphs of $K_{p,q}$ of a given size, and another regarding the largest gap between sizes which correspond to a complete bipartite graph of order at most $n$. 

Let $K_{p,q}^m$, $0 \le pq-m <\min\{p,q\}$, be the subgraph of $K_{p,q}$ resulting from removing $pq-m$ edges all incident to some vertex in the larger side of the bipartition (if $p=q$, the vertex can be from either set). In \cite{liu2015spectral}, the authors proved the following result.

\begin{proposition}\label{prop:bi_spr}
If $0 \le pq-m <\min\{p,q\}$, then $K_{p,q}^m$ maximizes $\lambda_1$ over all subgraphs of $K_{p,q}$ of size $m$.
\end{proposition}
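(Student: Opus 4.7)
The plan is to show that the maximizer $G \subseteq K_{p,q}$ of $\lambda_1$ among subgraphs with $m$ edges is isomorphic to $K_{p,q}^m$. Assume without loss of generality $p \le q$, set $k := pq - m$ so that $0 \le k < p$, and note that $K_{p,q}$ has edge-connectivity $p > k$, so $G$ is connected and Perron--Frobenius supplies a positive unit eigenvector $\mathbf{x}$ with $\lambda := \lambda_1(G)$.

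The first step is a compression argument. Order the parts $X = \{u_1,\ldots,u_p\}$ and $Y = \{v_1,\ldots,v_q\}$ so that $\mathbf{x}$ is nonincreasing along each. If some $i < j$ and $v \in Y$ satisfy $u_j v \in E(G)$ but $u_i v \notin E(G)$, then removing $u_j v$ and inserting $u_i v$ preserves $|E(G)|$ and changes the Rayleigh quotient by
\[
2 x_v (x_{u_i} - x_{u_j}) \ge 0,
\]
so $\lambda_1$ does not decrease; the analogous statement holds on the $Y$ side. Iterating, one may assume $G$ is a bipartite \emph{difference graph}: with the chosen orderings the bipartite adjacency matrix has the staircase shape $B_{ij} = 1 \iff j \le d_i$ for some $d_1 \ge d_2 \ge \cdots \ge d_p$.

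Writing $H := K_{p,q} - G$ and $\bar d_i := q - d_i$, the complement is itself a staircase with $\bar d_1 \ge \bar d_2 \ge \cdots \ge \bar d_p$ and $\sum \bar d_i = k < p$. The second step is to show the only optimal shape is the ``single-column'' one $\bar d_1 = \bar d_2 = \cdots = \bar d_k = 1$, $\bar d_{>k} = 0$, which realizes $K_{p,q}^m$ (a star in $H$ centered at a vertex in $Y$). I would argue that any other staircase admits a corner-cell move that consolidates two $H$-edges into a common column; the corresponding two-switch changes $\mathbf{x}^T A(G)\mathbf{x}$ by a nonnegative amount, strictly positive unless the Perron entries involved coincide. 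Iterating eliminates every shape except the single-column one. The choice of orientation (column rather than row, i.e., center in $Y$ rather than in $X$) is then pinned down by comparing the two remaining candidates via equitable partitions: each gives a $4\times 4$ quotient matrix—for the $Y$-centered candidate, the partition is $\{X_1, X_2, \{v^\ast\}, Y \setminus \{v^\ast\}\}$ where $X_1$ consists of the $X$-vertices still adjacent to $v^\ast$—whose leading eigenvalue equals $\lambda_1$, and a short monotonicity computation in the parameter $q - p \ge 0$ shows the $Y$-centered star is strictly preferable whenever $p < q$ (and the two are isomorphic when $p = q$).

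The main obstacle will be managing strict inequalities throughout the compression and consolidation steps: when Perron entries coincide, the basic swaps yield only weak improvements, so one must either invoke an $\varepsilon$-perturbation (breaking ties and passing to the limit) or argue directly that any zero-gain swap still leaves a graph isomorphic to $K_{p,q}^m$. Once the tie-breaking is in place, the identification of the extremal graph follows cleanly from the two steps above.
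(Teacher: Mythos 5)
First, note that the paper does not prove this proposition at all: it is quoted from \cite{liu2015spectral}, so there is no internal proof to compare against, and your sketch is attempting to supply an argument the paper outsources. Your Step 1 (Perron-ordering compression to a chain/difference graph, with the usual tie-breaking caveat) is standard and salvageable, and your Step 3 (comparing the two star-complement candidates via the $4\times 4$ equitable quotient) is a finite, checkable computation consistent with how the paper itself handles $K_{p,q}^m$ later in Section \ref{sec:bispread}.

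The genuine gap is Step 2, which is where the entire difficulty of the result lives. After compression, the complement $H$ is a Young-diagram shape with $k$ cells, and you assert that a ``corner-cell move'' consolidating two $H$-cells into a common column changes $\mathbf{x}^T A(G)\mathbf{x}$ by a nonnegative amount. But such a move necessarily trades a row index for a column index: it deletes a present edge $u_{i'}v_{j'}$ of $G$ and restores a missing edge $u_i v_j$ with (say) $x_{u_i}\le x_{u_{i'}}$ but $x_{v_j}\ge x_{v_{j'}}$, so the change $2\bigl(x_{u_i}x_{v_j}-x_{u_{i'}}x_{v_{j'}}\bigr)$ is a difference of products in which one factor improves and the other degrades. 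Its sign is \emph{not} determined by the monotonicity of the Perron vector, and no amount of tie-breaking fixes this; one needs quantitative control on the eigenvector entries (this is exactly the obstruction that makes the general Bhattacharya--Friedland--Peled problem of maximizing $\lambda_1$ over bipartite graphs with prescribed parts and size hard, and why even this near-complete special case warranted the separate paper \cite{liu2015spectral}). As written, the consolidation step is an unsupported assertion rather than a proof, so the argument does not go through without substantial new input at precisely this point.
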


 We also require estimates regarding the longest sequence of consecutive sizes $m < \lfloor n^2/4\rfloor$ for which there does not exist a complete bipartite graph on at most $n$ vertices and exactly $e$ edges. As pointed out by \cite{pc1}, the result follows quickly by induction. However, for completeness, we include a brief proof.

\begin{proposition}\label{prop:seq}
The length of the longest sequence of consecutive sizes $m < \lfloor n^2/4\rfloor$ for which there does not exist a complete bipartite graph on at most $n$ vertices and exactly $m$ edges is zero for $n \le 4$ and at most $\sqrt{2n-1}-1$ for $n \ge 5$.
\end{proposition}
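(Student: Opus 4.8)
The plan is a short induction on $n$. Write $R_n := \{\, pq : p,q\ge 1,\ p+q \le n \,\}$ for the set of sizes realized by a complete bipartite graph on at most $n$ vertices, so that the quantity to be bounded is the maximum number of consecutive integers in $[1,\lfloor n^2/4\rfloor)$ that avoid $R_n$. For $n\le 4$ one notes that $\lfloor n^2/4\rfloor\le n$, so every integer $m$ with $1\le m<\lfloor n^2/4\rfloor$ satisfies $m\le n-1$ and is realized by the star $K_{1,m}$; hence there are no missing sizes at all and the longest run has length $0$, as claimed. For the inductive step I would use the decomposition $R_n = R_{n-1}\cup\{\, k(n-k) : 1\le k\le \lfloor n/2\rfloor \,\}$ together with the elementary bound $pq\le\lfloor (p+q)^2/4\rfloor\le\lfloor (n-1)^2/4\rfloor$, valid whenever $p+q\le n-1$. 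The upshot is that in the ``new window'' $\big(\lfloor (n-1)^2/4\rfloor,\ \lfloor n^2/4\rfloor\big]$ the only members of $R_n$ are the products $k(n-k)$ arising from $p+q=n$ exactly, so only these need to be tracked there; below $\lfloor(n-1)^2/4\rfloor$ the inductive hypothesis applied at $n-1$ governs all runs, and no run can straddle the boundary because $\lfloor(n-1)^2/4\rfloor = \lfloor (n-1)/2\rfloor\lceil(n-1)/2\rceil\in R_{n-1}\subseteq R_n$.

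It then remains to bound the gaps between consecutive products $k(n-k)$ inside the new window, which I would do by splitting on the parity of $n$. For $n=2t$ the relevant products are $t^2-j^2$ for $0\le j\le j_0$, where $j_0$ is the largest integer with $j_0^2<t$; consecutive ones differ by $2j+1$, and the smallest of them, $t^2-j_0^2$, sits just above the realizable value $t^2-t=(t-1)t=\lfloor(n-1)^2/4\rfloor$ (attained by $K_{t-1,t}$). For $n=2t+1$ the relevant products are $t^2+t-j(j+1)$ for $0\le j\le j_1$, where $j_1$ is the largest integer with $j_1(j_1+1)<t$; consecutive ones differ by $2(j+1)$, and the smallest sits just above the realizable value $t^2=\lfloor(n-1)^2/4\rfloor$ (attained by $K_{t,t}$). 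In each case one computes the ``interior'' gaps (differences between consecutive products, minus one) and the ``boundary'' gap (from the smallest product down to $\lfloor(n-1)^2/4\rfloor$) and, using the defining inequalities $j_0^2<t\le(j_0+1)^2$ and $j_1(j_1+1)<t\le(j_1+1)(j_1+2)$ and distinguishing whether an interior gap or the boundary gap is the larger, checks that every gap in the window has length at most $\sqrt{2n-1}-1$. Together with the inductive hypothesis for the region below $\lfloor(n-1)^2/4\rfloor$ (which gives only the weaker bound $\sqrt{2n-3}-1\le\sqrt{2n-1}-1$), this closes the induction.

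The one step needing genuine care is this parity-wise gap count: the target $\sqrt{2n-1}-1$ is close enough to the true maximum run length (which along the subsequence $n=2a^2$ is already $\sqrt{2n}-2$, coming from the boundary gap) that a single crude estimate such as ``every gap is at most $2j_1+1$'' does not suffice, and one really must separate the case where an interior gap dominates from the case where the boundary gap dominates; once that split is made, each resulting inequality reduces, after squaring, to a consequence of $j_0^2\le t-1$ or $j_1(j_1+1)\le t-1$. Everything else is routine bookkeeping: the decomposition of $R_n$, the bound $pq\le\lfloor(p+q)^2/4\rfloor$, the explicit complete bipartite realizations $K_{1,m}$, $K_{t,t}$, $K_{t-1,t}$ of the extreme sizes used above on at most $n$ vertices, and checking that the base case $n=4$ feeds correctly into $n=5$.
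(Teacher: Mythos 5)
Your proposal is correct and follows essentially the same route as the paper's proof: induction on $n$, reducing to the window $\bigl(\lfloor(n-1)^2/4\rfloor,\lfloor n^2/4\rfloor\bigr)$ where only products $k(n-k)$ can occur, then bounding the interior gaps between consecutive such products together with the boundary gap down to $\lfloor(n-1)^2/4\rfloor$. The only difference is presentational — you split by the parity of $n$ and track $j_0,j_1$ explicitly, whereas the paper uses the unified parametrization $(n/2+k)(n/2-k)$ with $k$ ranging over integers or half-integers — and your version is, if anything, slightly more careful about the boundary-gap estimate.
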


\begin{proof}
We proceed by induction. By inspection, for every $n \le 4$, $m \le \lfloor n^2/4 \rfloor$, there exists a complete bipartite graph of size $m$ and order at most $n$, and so the length of the longest sequence is trivially zero for $n \le 4$. When $n =m = 5$, there is no complete bipartite graph of order at most five with exactly five edges. This is the only such instance for $n =5$, and so the length of the longest sequence for $n = 5$ is one.

Now, suppose that the statement holds for graphs of order at most $n-1$, for some $n > 5$. We aim to show the statement for graphs of order at most $n$. By our inductive hypothesis, it suffices to consider only sizes $m \ge\lfloor (n-1)^2/4 \rfloor$ and complete bipartite graphs on $n$ vertices. We have
$$\left( \frac{n}{2} + k \right)\left( \frac{n}{2} - k \right) \ge \frac{(n-1)^2}{4} \qquad \text{ for} \quad |k| \le \frac{\sqrt{2n-1}}{2}.$$
When $1 \le k \le \sqrt{2n-1}/2$, the difference between the sizes of $K_{n/2+k-1,n/2-k+1}$ and $K_{n/2+k,n/2-k}$ is at most
\begin{align*}
    \big| E\big(K_{\frac{n}{2}+k-1,\frac{n}{2}-k+1}\big)\big| - \big| E\big(K_{n/2+k,n/2-k}\big)\big|
    &=2k-1  \le  \sqrt{2n-1} -1.
\end{align*} 
Let $k^*$ be the largest value of $k$ satisfying $k \le \sqrt{2n-1}/2$ and $n/2 + k \in \mathbb{N}$. Then
\begin{align*}
    \big| E\big(K_{\frac{n}{2}+k^*,\frac{n}{2}-k^*}\big)\big| &< \left(\frac{n}{2} + \frac{\sqrt{2n-1}}{2} -1 \right)\left(\frac{n}{2} - \frac{\sqrt{2n-1}}{2} +1 \right) \\
    &= \sqrt{2n-1} + \frac{(n-1)^2}{4}  - 1,
\end{align*}
and the difference between the sizes of $K_{n/2+k^*,n/2-k^*}$ and $K_{\lceil \frac{n-1}{2}\rceil,\lfloor \frac{n-1}{2}\rfloor}$ is at most
\begin{align*}
    \big| E\big(K_{\frac{n}{2}+k^*,\frac{n}{2}-k^*}\big)\big| - \big| E\big(K_{\lceil \frac{n-1}{2}\rceil,\lfloor \frac{n-1}{2}\rfloor}\big)\big| &< \sqrt{2n-1} + \frac{(n-1)^2}{4} -\left\lfloor \frac{(n-1)^2}{4} \right\rfloor - 1 \\
    &< \sqrt{2n-1}.
\end{align*}
Combining these two estimates completes our inductive step, and the proof.
\end{proof}

We are now prepared to prove an asymptotic version of \cite[Conjecture 1.4]{gregory2001spread}, and provide an infinite class of counterexamples that illustrates that the asymptotic version under consideration is the tightest version of this conjecture possible.

\begin{theorem}
$$s(n,m) - s_b(n,m) \le \frac{1+16 \,m^{-3/4}}{m^{3/4}}\, s(n,m)$$
for all $n,m \in \mathbb{N}$ satisfying $m \le \lfloor n^2/4\rfloor$. In addition, for any $\epsilon>0$, there exists some $n_\epsilon$ such that
$$s(n,m) - s_b(n,m) \ge  \frac{1-\epsilon}{m^{3/4}} \, s(n,m)$$
for all $n\ge n_\epsilon$ and some $m \le \lfloor n^2/4\rfloor$ depending on $n$.
\end{theorem}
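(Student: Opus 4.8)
The plan is to prove the two inequalities separately; throughout write $s=s(n,m)$ and $s_b=s_b(n,m)$.

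\emph{Upper bound.} The idea is that it suffices to produce a single bipartite graph on $n$ vertices with $m$ edges whose spread is close to $2\sqrt m$, and that a complete bipartite graph with a few edges deleted does the job. If $m\le 4$ there is a complete bipartite graph with exactly $m$ edges on at most $4\le n$ vertices, so $s_b=2\sqrt m=s$ and there is nothing to prove; assume $m\ge 5$. I would set $n':=\lceil 2\sqrt m\,\rceil$, so that $5\le n'\le n$ and $\lfloor (n')^2/4\rfloor\ge m$, and then apply Proposition~\ref{prop:seq} \emph{at the scale $n'$} (this is the key move: invoke it at the scale of $\sqrt m$, not at $n$): there is a complete bipartite graph $K_{p,q}$ with $p+q\le n'$ and $pq=m+r$, where $0\le r\le \sqrt{2n'-1}-1\le 2m^{1/4}$. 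Deleting $r$ edges of $K_{p,q}$ and padding with isolated vertices gives a bipartite $H$ on $n$ vertices with $m$ edges; evaluating the Rayleigh quotient of $A(H)$ at the normalized Perron vector of $K_{p,q}$ (coordinates $(2p)^{-1/2}$ and $(2q)^{-1/2}$) yields $\lambda_1(H)\ge \sqrt{pq}-r/\sqrt{pq}=m/\sqrt{m+r}$, hence $s_b\ge s(H)=2\lambda_1(H)\ge 2m/\sqrt{m+r}$ since $H$ is bipartite. Combining with $s\le 2\sqrt m$ from \eqref{eqn:spread_bound},
\[
s-s_b\;\le\;2\sqrt m-\frac{2m}{\sqrt{m+r}}\;=\;\frac{2\sqrt m\,r}{\sqrt{m+r}\,(\sqrt{m+r}+\sqrt m)}\;\le\;\frac{r}{\sqrt m}\;\le\;2m^{-1/4},
\]
and a one-line estimate shows $2m^{-1/4}\le (1+16m^{-3/4})\,s/m^{3/4}$, using the lower bound $s\ge 2m/\sqrt{m+2m^{1/4}}$ just obtained; the constant $16$ exists precisely to absorb this slack.

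\emph{Lower bound, part one: pinning down $s_b$.} For the near-tight examples I will choose $m$ so that no complete bipartite graph on $\le n$ vertices has exactly $m$ edges and, moreover, the nearest attainable size above $m$ is as far above as Proposition~\ref{prop:seq} allows, namely $\approx 2m^{1/4}$. Concretely, take an integer $b$ and set $t:=b^2+4b$, $n:=2t$, $\Delta:=(b+1)^2-1$, $m:=t^2-\Delta$; then $t-\Delta=2b$ and $m\sim b^4$. Since $m>t^2-t$, after deleting isolated vertices any bipartite graph with $m$ edges on $\le n$ vertices has parts of sizes $t-k$ and $t+k$ for some $0\le k\le b$, hence is a subgraph of $K_{t-k,t+k}$; by Proposition~\ref{prop:bi_spr}, $s_b=2\max_{0\le k\le b}\lambda_1\!\big(K_{t-k,t+k}^{\,m}\big)$. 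An equitable-partition computation for $K_{p,q}^{\,m}$ (three classes: the partially deleted vertex, the rest of its side, the opposite side) gives
\[
\lambda_1\!\big(K_{p,q}^{\,m}\big)^2\;=\;\tfrac12\Big(m+\sqrt{m^2-4\,r(q-1)(p-r)}\Big),\qquad r:=pq-m,
\]
and with $r_k=\Delta-k^2$ one checks that $k\mapsto r_k(q_k-1)(p_k-r_k)$ is minimized near $k=0$ and $k=b$ with minimum $(2+o(1))\,m^{5/4}$; plugging back in gives $s_b=2\sqrt m-(2+o(1))\,m^{-1/4}$.

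\emph{Lower bound, part two: the non-bipartite competitor and conclusion.} The competitor is $G:=K_{t-b-1,\,t+b+1}+e$, where $e$ is an edge inside the larger part: $G$ has $t^2-(b+1)^2+1=m$ edges, is non-bipartite, and still lives on exactly $n$ vertices — it spends no extra vertex, which is exactly what the bipartite ``almost-complete'' graphs with $m$ edges cannot do here, and is why a gap survives. Its adjacency matrix has a $3$-class equitable partition; the eigenvalues other than $0$ and $-1$ are the roots of $\lambda^3-\lambda^2-(m-1)\lambda+(m-1-2p)=0$ with $p=t-b-1$, and expanding about $\pm\sqrt{m-1}$ shows $\lambda_1(G)$ and $\lambda_n(G)$ are displaced from $\pm\sqrt{m-1}$ by $q^{-1}+O(m^{-3/2})$ in the \emph{same} direction, so $s(G)=2\sqrt{m-1}-O(m^{-3/2})=2\sqrt m-O(m^{-1/2})$. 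Hence $s\ge s(G)$ yields $s-s_b\ge (2-o(1))\,m^{-1/4}\ge \frac{1-\epsilon}{m^{3/4}}\,s$ once $b$ (hence $n$ and $m$) is large enough, since $\frac{1-\epsilon}{m^{3/4}}\,s\le 2(1-\epsilon)m^{-1/4}$. To cover every large $n$ rather than only those of the form $2(b^2+4b)$, I will adjust $\Delta$ by $O(1)$ and, where needed, use parts summing to $n-1$; the asymptotics are unaffected.

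\emph{Where the difficulty lies.} The upper bound is essentially bookkeeping once Proposition~\ref{prop:seq} is invoked at the scale $n'\approx 2\sqrt m$. The hard part is the lower bound, where two demands pull against each other: $m$ must sit a full $\approx 2m^{1/4}$ below every attainable complete bipartite size (the worst case of Proposition~\ref{prop:seq}, which forces the arithmetic choice of $t$ and $\Delta$), yet there must exist a non-bipartite graph with $m$ edges on $\le n$ vertices whose spread is within $o(m^{-1/4})$ of $2\sqrt m$. Reconciling them hinges on the dichotomy between adding an edge inside a part (no extra vertex, and $\lambda_1,\lambda_n$ move together, so the spread stays $\approx 2\sqrt{m-1}$) versus adding a pendant (an extra vertex, not affordable at $n$ vertices). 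The accompanying technical point is the exact eigenvalue formula for $K_{p,q}^{\,m}$ together with the minimization of $r(q-1)(p-r)$ over admissible hosts, which is what makes the estimate $s_b=2\sqrt m-(2+o(1))m^{-1/4}$ precise.
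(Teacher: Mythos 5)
Your proof is correct and follows essentially the same route as the paper's: the upper bound invokes Proposition~\ref{prop:seq} at scale $\lceil 2\sqrt m\,\rceil$ to get $r\le 2m^{1/4}$ and compares $2\sqrt m$ with the spread of a near-complete bipartite graph (your Rayleigh-quotient estimate in place of the paper's exact quotient-matrix formula for $K^m_{p,q}$ is a harmless simplification), and the lower bound uses the same counterexample family — a complete bipartite graph with imbalance $\approx\sqrt{n/2}$ plus one edge inside a part — together with Proposition~\ref{prop:bi_spr} and the quotient-matrix formula to pin down $s_b = 2\sqrt m - (2+o(1))m^{-1/4}$. The one loose end is that your lower-bound construction as written covers only $n$ of the form $2(b^2+4b)$, whereas the theorem requires every large $n$; the paper avoids this by fixing $n$ first and choosing the imbalance $k$ (your $b+1$) adaptively so that $m=(n/2+k)(n/2-k)+1$, which is exactly the adjustment your final sentence gestures at and is indeed routine to carry out.
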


\begin{proof}
The main idea of the proof is as follows. To obtain an upper bound on $s(n,m) - s_b(n,m)$, we upper bound $s(n,m)$ by $2 \sqrt{m}$ using Inequality \eqref{eqn:spread_bound}, and we lower bound $s_b(n,m)$ by the spread of some specific bipartite graph. To obtain a lower bound on $s(n,m) - s_b(n,m)$ for a specific $n$ and $m$, we explicitly compute $s_b(n,m)$ using Proposition \ref{prop:bi_spr}, and lower bound $s(n,m)$ by the spread of some specific non-bipartite graph.

First, we analyze the spread of $K_{p,q}^m$, $0 < pq-m <q \le p$, a quantity that will be used in the proof of both the upper and lower bound.
Let us denote the vertices in the bipartition of $K_{p,q}^m$ by $u_1,...,u_p$ and $v_1,...,v_{q}$, and suppose without loss of generality that $u_1$ is not adjacent to $v_1,...,v_{pq-m}$. Then
$$\phi = (u_1)(u_2,...,u_p)(v_1,...,v_{pq-m})(v_{pq-m+1},...,v_{q})$$
is an automorphism of $K^m_{p,q}$. The corresponding quotient matrix is given by
$$ A_\phi = \begin{pmatrix} 0 & 0 & 0 & m-(p-1)q \\ 0 & 0 & pq-m & m-(p-1)q \\ 0 & p-1 & 0  & 0 \\ 1 & p-1 & 0 & 0 \end{pmatrix},$$
has characteristic polynomial
$$Q(p,q,m) = \det[A_\phi - \lambda I] = \lambda^4 -m \lambda^2 + (p-1)(m-(p-1)q)(pq-m),$$
and, therefore,
\begin{equation}\label{eqn:bispread_exact}
    s\left(K^m_{p,q}\right) \ge 2 \left( \frac{m + \sqrt{m^2-4(p-1)(m-(p-1)q)(pq-m)}}{2} \right)^{1/2}.
\end{equation}
For $pq = \Omega(n^2)$ and $n$ sufficiently large, this lower bound is actually an equality, as $A(K^m_{p,q})$ is a perturbation of the adjacency matrix of a complete bipartite graph with each partite set of size $\Omega(n)$ by an $O(\sqrt{n})$ norm matrix. For the upper bound, we only require the inequality, but for the lower bound, we assume $n$ is large enough so that this is indeed an equality.

Next, we prove the upper bound. For some fixed $n$ and $m\le \lfloor n^2/4 \rfloor$, let $m = pq -r$, where $p,q,r \in \mathbb{N}$, $p+q \le n$, and $r$ is as small as possible. If $r = 0$, then by \cite[Thm. 1.5]{gregory2001spread} (described above), $s(n,m) = s_b(n,m)$ and we are done. Otherwise, we note that $0<r < \min \{p,q\}$, and so Inequality \eqref{eqn:bispread_exact} is applicable (in fact, by Proposition \ref{prop:seq}, $r = O(\sqrt{n})$). Using the upper bound $s(n,m) \le 2 \sqrt{m}$ and Inequality \eqref{eqn:bispread_exact}, we have 
\begin{equation}\label{eqn:spr_upper}
\frac{s(n,pq-r)-s\left(K^{m}_{p,q}\right)}{s(n,pq-r)} \le 1 - \left(\frac{1}{2}+\frac{1}{2} \sqrt{1-\frac{4(p-1)(q-r) r}{(pq-r)^2}} \right)^{1/2}.
\end{equation}
To upper bound $r$, we use Proposition \ref{prop:seq} with $n'=\lceil 2 \sqrt{m}\rceil \le n$ and $m$. This implies that
$$ r \le \sqrt{2 \lceil 2 \sqrt{m}\rceil -1} -1 < \sqrt{2 ( 2 \sqrt{m}+1) -1} -1 = \sqrt{ 4 \sqrt{m} +1}-1 \le 2 m^{1/4}.$$
Recall that $\sqrt{1-x} \ge 1 - x/2 - x^2/2$ for all $x \in [0,1]$, and so
\begin{align*}
     1 - \big(\tfrac{1}{2} + \tfrac{1}{2} \sqrt{1-x} \big)^{1/2}
     &\le 1 - \big( \tfrac{1}{2} + \tfrac{1}{2} (1 - \tfrac{1}{2}x - \tfrac{1}{2}x^2) \big)^{1/2} = 1 - \big(1 - \tfrac{1}{4} (x + x^2) \big)^{1/2} \\
     &\le 1 - \big(1 - \tfrac{1}{8}(x + x^2) - \tfrac{1}{32}(x + x^2)^2 \big) \\
     &\le \tfrac{1}{8} x + \tfrac{1}{4} x^2
 \end{align*}
for $ x \in [0,1]$. To simplify Inequality \eqref{eqn:spr_upper}, we observe that
$$\frac{4(p-1)(q-r)r}{(pq-r)^2} \le \frac{4r}{m} \le  \frac{8}{m^{3/4}}.$$
Therefore,
$$\frac{s(n,pq-r)-s\left(K^{m}_{p,q}\right)}{s(n,pq-r)} \le \frac{1}{m^{3/4}}+ \frac{16}{m^{3/2}}.$$
This completes the proof of the upper bound.

Finally, we proceed with the proof of the lower bound. Let us fix some $0<\epsilon<1$, and consider some sufficiently large $n$. Let $m = (n/2+k)(n/2-k)+1$, where $k$ is the smallest number satisfying $n/2 + k \in \mathbb{N}$ and $\hat \epsilon:=1 - 2k^2/n < \epsilon/2$ (here we require $n = \Omega(1/\epsilon^2)$). Denote the vertices in the bipartition of $K_{n/2+k,n/2-k}$ by $u_1,...,u_{n/2+k}$ and $v_1,...,v_{n/2-k}$, and consider the graph $K^+_{n/2+k,n/2-k}:=K_{n/2+k,n/2-k} \cup \{(v_1,v_2)\}$ resulting from adding one edge to $K_{n/2+k,n/2-k}$ between two vertices in the smaller side of the bipartition. Then
$$ \phi = (u_1 ,...,u_{n/2+k})(v_1, v_2)(v_3,...,v_{n/2-k})$$
is an automorphism of $K^+_{n/2+k,n/2-k}$, and 
$$A_\phi = \begin{pmatrix} 0 & 2 & n/2- k - 2 \\ n/2+k & 1 & 0 \\ n/2+k & 0 & 0 \end{pmatrix} $$
has characteristic polynomial
\begin{align*}
\det[A_\phi - \lambda I] &= -\lambda^3 +\lambda^2 + \left(n^2/4 - k^2\right) \lambda - (n/2+k)(n/2-k-2) \\
&= -\lambda^3 + \lambda^2 + \left(\frac{ n^2}{4} - \frac{(1-\hat \epsilon) n}{2} \right)\lambda - \left(\frac{n^2}{4} - \frac{(3-\hat \epsilon)n}{2} -\sqrt{2(1-\hat \epsilon)n} \right).
\end{align*}
By matching higher order terms, we obtain
$$ \lambda_{max}(A_\phi) = \frac{n}{2}-\frac{1-\hat \epsilon}{2} + \frac{\left( 8-(1-\hat \epsilon)^2 \right)}{4 n} +o(1/n),$$
$$\lambda_{min}(A_\phi) = -\frac{n}{2}+\frac{1-\hat \epsilon}{2} + \frac{\left( 8+(1-\hat \epsilon)^2 \right)}{4 n} +o(1/n),$$
and
$$s(K^+_{n/2+k,n/2-k}) \ge n-(1-\hat \epsilon)-\frac{(1-\hat \epsilon)^2}{2n} + o(1/n).$$

Next, we aim to compute $s_b(n,m)$, $m = (n/2+k)(n/2-k)+1$. By Proposition \ref{prop:bi_spr}, $s_b(n,m)$ is equal to the maximum of $s(K^m_{n/2+\ell,n/2-\ell})$ over all $\ell \in [0,k-1]$, $k-\ell \in \mathbb{N}$. As previously noted, for $n$ sufficiently large, the quantity $s(K^m_{n/2+\ell,n/2-\ell})$ is given exactly by Equation (\ref{eqn:bispread_exact}), and so the optimal choice of $\ell$ minimizes
\begin{align*}
    f(\ell) &:= (n/2+\ell-1)(k^2-\ell^2-1)(n/2-\ell-(k^2-\ell^2-1))\\
    &=(n/2+\ell)\big((1-\hat \epsilon)n/2-\ell^2\big)\big(\hat \epsilon n/2 +\ell^2-\ell \big) + O(n^2).
\end{align*}
We have
$$ f(k-1) = (n/2+k-2)(2k-2)(n/2-3k+3),$$
and if $\ell \le \frac{4}{5} k$, then $f(\ell) = \Omega(n^3)$. Therefore the minimizing $\ell$ is in $ [\frac{4}{5} k,k]$. The derivative of $f(\ell)$ is given by
\begin{align*}
    f'(\ell) &=(k^2-\ell^2-1)(n/2-\ell-k^2+\ell^2+1)\\
    &\qquad-2\ell(n/2+\ell-1)(n/2-\ell-k^2+\ell^2+1)\\
    &\qquad+(2\ell-1)(n/2+\ell-1)(k^2-\ell^2-1).
\end{align*}
For $\ell \in  [\frac{4}{5} k,k]$,
\begin{align*}
    f'(\ell)
    &\le \frac{n(k^2-\ell^2)}{2}-\ell n(n/2-\ell-k^2+\ell^2)+2\ell(n/2+\ell)(k^2-\ell^2)\\
    &\le \frac{9 k^2 n}{50} - \tfrac{4}{5} kn(n/2-k- \tfrac{9}{25} k^2)+ \tfrac{18}{25} (n/2+k)k^3 \\
    &= \frac{81 k^3 n}{125}-\frac{2 k n^2}{5} + O(n^2)\\
    &=kn^2\left(\frac{81(1-\hat \epsilon)}{250}-\frac{2}{5}\right)+O(n^2)<0
\end{align*}
for sufficiently large $n$. This implies that the optimal choice is $\ell = k-1$, and $s_b(n,m) = s(K^m_{n/2+k-1,n/2-k+1})$. The characteristic polynomial $Q(n/2+k-1,n/2-k+1,n^2/4 -k^2+1)$ equals
$$ \lambda^4 - \left(n^2/4 -k^2+1 \right)\lambda^2+2(n/2+k-2)(n/2-3k+3)(k-1).$$
By matching higher order terms, the extreme root of $Q$ is given by
$$\lambda = \frac{n}{2}  -\frac{1-\hat \epsilon}{2} - \sqrt{\frac{2(1-\hat \epsilon)}{n}}+\frac{27-14\hat \epsilon-\hat \epsilon^2}{4n}+o(1/n),$$
and so
$$ s_b(n,m) = n -(1-\hat \epsilon) - 2 \sqrt{\frac{2(1-\hat \epsilon)}{n}}+\frac{27-14\hat \epsilon-\hat \epsilon^2}{2n}+o(1/n),$$
and 
\begin{align*}
    \frac{s(n,m)-s_b(n,m)}{s(n,m)} &\ge \frac{2^{3/2}(1-\hat \epsilon)^{1/2}}{n^{3/2}} - \frac{14-8\hat \epsilon}{n^2}+o(1/n^2)\\
    &=\frac{(1-\hat \epsilon)^{1/2}}{m^{3/4}} + \frac{(1-\hat \epsilon)^{1/2}}{(n/2)^{3/2}}\bigg[1-\frac{(n/2)^{3/2}}{m^{3/4}}\bigg] - \frac{14-8\hat \epsilon}{n^2}+o(1/n^2)\\
    &\ge \frac{1-\epsilon/2}{m^{3/4}} +o(1/m^{3/4}).
\end{align*}
This completes the proof.
\end{proof}

\section{Concluding remarks}\label{sec: conclusion}

In this work we provided a proof of the spread conjecture for sufficiently large $n$, a proof of an asymptotic version of the bipartite spread conjecture, and an infinite class of counterexamples that illustrates that our asymptotic version of this conjecture is the strongest result possible. There are a number of interesting future avenues of research, some of which we briefly describe below. These avenues consist primarily of considering the spread of more general classes of graphs (directed graphs, graphs with loops) or considering more general objective functions.

Our proof of the spread conjecture for sufficiently large $n$ immediately implies a nearly-tight estimate for the adjacency matrix of undirected graphs with loops, also commonly referred to as symmetric $0-1$ matrices. Given a directed graph $G = (V,\mathcal{A})$, the corresponding adjacency matrix $A$ has entry $A_{i,j} = 1$ if the arc $(i,j) \in \mathcal{A}$, and is zero otherwise. In this case, $A$ is not necessarily symmetric, and may have complex eigenvalues. One interesting question is what digraph of order $n$ maximizes the spread of its adjacency matrix, where spread is defined as the diameter of the spectrum. Is this more general problem also maximized by the same set of graphs as in the undirected case? This problem for either loop-less directed graphs or directed graphs with loops is an interesting question, and the latter is equivalent to asking the above question for the set of all $0-1$ matrices.

Another approach is to restrict ourselves to undirected graphs or undirected graphs with loops, and further consider the competing interests of simultaneously producing a graph with both $\lambda_1$ and $-\lambda_n$ large, and understanding the trade-off between these two goals. To this end, we propose considering the class of objective functions 
$$f(G; \beta) = \beta \lambda_1(G) - (1-\beta) \lambda_n(G), \qquad \beta \in [0,1].$$
When $\beta = 0$, this function is maximized by the complete bipartite graph $K_{\lceil n /2 \rceil, \lfloor n/2 \rfloor}$ and when $\beta = 1$, this function is maximized by the complete graph $K_n$. This paper treats the specific case of $\beta = 1/2$, but none of the mathematical techniques used in this work rely on this restriction. In fact, the structural graph-theoretic results of Section \ref{sec:graphs}, suitably modified for arbitrary $\beta$, still hold (see the thesis \cite[Section 3.3.1]{urschel2021graphs} for this general case). Understanding the behavior of the optimum between these three well-studied choices of $\beta = 0,1/2,1$ is an interesting future avenue of research.

More generally, any linear combination of graph eigenvalues could be optimized over any family of graphs. Many sporadic examples of this problem have been studied and Nikiforov \cite{Nikiforov} proposed a general framework for it and proved some conditions under which the problem is well-behaved. We conclude with some specific instances of the problem that we think are most interesting. 

Given a graph $F$, maximizing $\lambda_1$ over the family of $n$-vertex $F$-free graphs can be thought of as a spectral version of Tur\'an's problem. Many papers have been written about this problem which was proposed in generality in \cite{Nikiforov3}. We remark that these results can often strengthen classical results in extremal graph theory. Maximizing $\lambda_1 + \lambda_n$ over the family of triangle-free graphs has been considered in \cite{Brandt} and is related to an old conjecture of Erd\H{o}s on how many edges must be removed from a triangle-free graph to make it bipartite \cite{erdos}. In general it would be interesting to maximize $\lambda_1 + \lambda_n$ over the family of $K_r$-free graphs. When a graph is regular the difference between $\lambda_1$ and $\lambda_2$ (the spectral gap) is related to the graph's expansion properties. Aldous and Fill \cite{AldousFill} asked to minimize $\lambda_1 - \lambda_2$ over the family of $n$-vertex connected regular graphs. Partial results were given by \cite{quartic1, quartic2, Guiduli1, Guiduli2}. A nonregular version of the problem was proposed by Stani\'c \cite{Stanic} who asked to minimize $\lambda_1-\lambda_2$ over connected $n$-vertex graphs. Finally, maximizing $\lambda_3$ or $\lambda_4$ over the family of $n$-vertex graphs seems to be a surprisingly difficult question and even the asymptotics are not known (see \cite{Nikiforov2}).

\section*{Acknowledgements}
 The work of A. Riasanovsky was supported in part by NSF award DMS-1839918 (RTG). The work of M. Tait was supported in part by NSF award DMS-2011553. The work of J. Urschel was supported in part by ONR Research Contract N00014-17-1-2177. The work of J.~Breen was supported in part by NSERC Discovery Grant RGPIN-2021-03775. The authors are grateful to Louisa Thomas for greatly improving the style of presentation.

{ \small 
	\bibliographystyle{plain}
	\bibliography{bib-pream/spread.bib} }

\appendix
\renewcommand{\thesection}{\Alph{section}}

\section{Technical proofs}\label{sec: ugly}

\subsection{Reduction to 17 cases}\label{appendix 17 cases}

Now, we introduce the following specialized notation.  
For any nonempty set $S\subseteq V(G^*)$ and any labeled partition $(I_i)_{i\in S}$ of $[0,1]$, we define the stepgraphon $W_\ii$ as follows.  
For all $i,j\in S$, $W_\ii$ equals $1$ on $I_i\times I_j$ if and only if $ij$ is an edge (or loop) of $G^*$, and $0$ otherwise.  
If $\alpha = (\alpha_i)_{i\in S}$ where $\alpha_i = m(I_i)$ for all $i\in S$, we may write $W_\alpha$ to denote the graphon $W_\ii$ up to weak isomorphism.  
\\

{To make the observations from Section \ref{sub-sec: cases} more explicit, we note that Theorem \ref{thm: reduction to stepgraphon} implies that a spread-optimal graphon has the form $W = W_\ii$ where $\ii = (I_i)_{i\in S}$ is a labeled partition of $[0,1]$, $S\subseteq [7]$, and each $I_i$ is measurable with positive positive measure.   
Since $W$ is a stepgraphon, its extreme eigenfunctions may be taken to be constant on $I_i$, for all $i\in S$.  
With $f,g$ denoting the extreme eigenfunctions for $W$, we may let $f_i$ and $g_i$ be the constant value of $f$ and $g$, respectively, on step $S_i$, for all $i\in S$.  
Appealing again to Theorem \ref{thm: reduction to stepgraphon}, we may assume without loss of generality that $f_i\geq 0$ for all $i\in S$, and for all $i\in S$, $g_i\geq 0$ implies that $i\in\{1,2,3,4\}$.  
By Lemma \ref{lem: local eigenfunction equation}, for each $i\in S$, $\mu f_i^2-\nu g_i^2 = \mu-\nu$.  
Combining these facts, we note that $f_i$ and $g_i$ belong to specific intervals as in Figure \ref{fig: f g interval}.  }

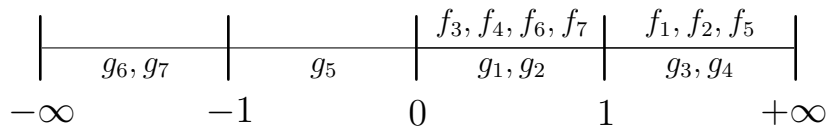
\begin{figure}[ht]
    \centering
    \begin{tikzpicture}
	\node[label={[yshift=-50pt]{\large $-\infty$}}] (A) at (-5, 0) {
	    \Huge $\left|\right.$
    };
    
    \node[label={[yshift=-50pt]{\large $-1$}}] (B) at (-2.5, 0) {
	    \Huge $\left|\right.$
    };
    
    \node[label={[yshift=-50pt]{\large $0$}}] (C) at (0, 0) {
	    \Huge $\left|\right.$
    };
    
    \node[label={[yshift=-50pt]{\large $1$}}] (D) at (2.5, 0) {
	    \Huge $\left|\right.$
    };
    
    \node[label={[yshift=-50pt]{\large $+\infty$}}] (E) at (5, 0) {
	    \Huge $\left.\right|$
    };

    \draw (A.center) -- (B.center) 
    node [midway, below=0pt] {
        $g_6,g_7$
    };
    
    \draw (B.center) -- (C.center) 
    node [midway, below=0pt] {
        $g_5$
    };
    
    \draw (C.center) -- (D.center) 
    node [midway, above=0pt] {
        $f_3,f_4,f_6,f_7$
    };
    
    \draw (C.center) -- (D.center) 
    node [midway, below=0pt] {
        $g_1,g_2$
    };
    
    \draw (D.center) -- (E.center) 
    node [midway, above=0pt] {
        $f_1,f_2,f_5$
    };
    
    \draw (D.center) -- (E.center) 
    node [midway, below=0pt] {
        $g_3,g_4$
    };
\end{tikzpicture}
    \caption{
        Intervals containing the quantities $f_i$ and $g_i$.  
        Note that $f_i$ and $g_i$ are only defined for all $i\in S$.  
    }
\label{fig: f g interval}
\end{figure}

{For convenience, we define the following sets $F_i$ and $G_i$, for all $i\in S$.  
First, let $\uu := [0,1]$ and $\vv := [1,+\infty]$.  
With some abuse of notation, we denote $-\uu = [-1,0]$ and $-\vv = [-\infty,-1]$.  }

For each $i\in V(G^*)$, we define the intervals $F_i$ and $G_i$ by 
\begin{align*}
    (F_i, G_i) 
    &:= 
    \left\{\begin{array}{rl}
        (\vv, \uu), &i\in \{1,2\}\\
        (\uu, \vv), &i\in \{3,4\} \\
        (\vv, -\uu), &i=5 \\
        (\uu, -\vv), &i\in \{6,7\}
    \end{array}\right. 
    .
\end{align*}


Given that the set $S$ and the quantities $(\alpha_i,f_i,g_i)_{i\in S}$ are clear from context, we label the following equation: 
\begin{align}
    \sum_{i\in S}
        \alpha_i 
    &= 
    \sum_{i\in S}
        \alpha_if_i^2
    =
    \sum_{i\in S}
        \alpha_ig_i^2
    = 
        1
    \label{eq: program norms}
    .  
\end{align}
Furthermore when $i\in S$ is understood from context, we define the equations 
\begin{align}
    \mu f_i^2 - \nu g_i^2
    &= 
        \mu - \nu
    \label{eq: program ellipse} 
    \\
    \sum_{j\in N_i\cap S}\alpha_jf_j
    &=
        \mu f_i
    \label{eq: program eigen f}
    \\
    \sum_{j\in N_i\cap S}\alpha_jg_j
    &=
        \nu g_i
    \label{eq: program eigen g}
\end{align}
Additionally, we consider the following inequalities.  
For all $S\subseteq V(G^*)$ and all distinct $i,j\in S$, 
\begin{align}\label{ieq: program inequality constraint}
    f_if_j - g_ig_j
    &\left\{\begin{array}{rl}
        \geq 0, &ij\in E(G^*)\\
        \leq 0, &ij\notin E(G^*)
    \end{array}\right.  
\end{align}
Finally, for all nonempty $S\subseteq V(G^*)$, we define the constrained-optimization problem $\SPRS$ by: 
\begin{align*}
    (\text{SPR}_S): 
    \left\{\begin{array}{rll}
        \max 
        &
            \mu-\nu
        \\
        \text{s.t}
        &
            \text{Equation }\eqref{eq: program norms}
        \\
        &
            \text{Equations }
                \eqref{eq: program ellipse}, 
                \eqref{eq: program eigen f}, \text{ and }
                \eqref{eq: program eigen g}
        &
            \text{ for all }i\in S
        \\
        &
            \text{Inequality }
                \eqref{ieq: program inequality constraint}
        &
            \text{ for all distinct }i,j\in S
        \\
        & 
            (\alpha_i,f_i,g_i)\in [0,1] \times  F_i \times G_i
        &
            \text{ for all }i\in S
        \\
        &
            \mu,\nu\in\RR
    \end{array}\right.
    .  
\end{align*}
For completeness, we state and prove the following observation.  

\begin{proposition}\label{prop: problem solutions}
    Let $W\in\ww$ such that $\spr(W) = \max_{U\in \mathcal{W}}\spr(U)$ and write $\mu,\nu$ for the maximum and minimum eigenvalues of $W$, with corresponding unit eigenfunctions $f,g$.  
    Then for some nonempty set $S\subseteq V(G^*)$, the following holds.  
    There exists a triple $(I_i, f_i, g_i)_{i\in S}$, where $(I_i)_{i\in S}$ is a labeled partition of $[0,1]$ with parts of positive measure and $f_i,g_i\in \RR$ for all $i\in S$, such that: 
    \begin{enumerate}[(i)]
        \item\label{item: W = W_I} $W = W_\ii$.  
        \item\label{item: f,g constants} Allowing the replacement of $f$ by $-f$ and of $g$ by $-g$, for all $i\in S$, $f$ and $g$ equal $f_i$ and $g_i$ a.e. on $I_i$.  
        \item\label{item: problem solution} With $\alpha_i := m(I_i)$ for all $i\in S$, $\SPRS$ is solved by $\mu,\nu$, and $(\alpha_i, f_i, g_i)_{i\in S}$.  
    \end{enumerate}
\end{proposition}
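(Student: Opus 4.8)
\textbf{Proof proposal for Proposition \ref{prop: problem solutions}.}

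The plan is to assemble the statement from the structural results already established, treating it as a bookkeeping exercise that packages Theorem \ref{thm: reduction to stepgraphon}, Lemma \ref{lem: K = indicator function}, Lemma \ref{lem: local eigenfunction equation}, and Propositions \ref{prop: PF eigenfunction}–\ref{prop: disconnected spectrum} into the language of the optimization problems $\SPRS$. First I would invoke Theorem \ref{thm: reduction to stepgraphon}: since $W$ maximizes $\spr(\hatW)$, after applying a weak isomorphism we may assume $W$ is a $7\times 7$ stepgraphon of the prescribed form, i.e. $W = W_\ii$ for a labeled partition $\ii = (I_i)_{i\in S}$ with $S\subseteq V(G^*)$ and each $I_i$ of positive measure (discarding any blocks of measure zero and relabeling gives $S$). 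This is exactly Item \eqref{item: W = W_I}. Because $W_\ii$ is constant on each block $I_i\times I_j$, standard Perron–Frobenius/compact-operator theory (as used implicitly throughout Section \ref{sec: graphon background}) lets us choose the extreme eigenfunctions $f,g$ to be constant a.e.\ on each $I_i$; call those constants $f_i,g_i$. Using Proposition \ref{prop: PF eigenfunction} we may take $f>0$ a.e., hence $f_i\geq 0$; the freedom to replace $g$ by $-g$ is recorded in Item \eqref{item: f,g constants}, and the sign convention "$g_i\geq 0 \implies i\in\{1,2,3,4\}$" comes from the internal-division statement in Theorem \ref{thm: reduction to stepgraphon} together with the ordering conventions set up in the discussion preceding Figure \ref{fig: f g interval}.

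Next I would verify that $\mu,\nu,(\alpha_i,f_i,g_i)_{i\in S}$ with $\alpha_i := m(I_i)$ satisfies every constraint of $\SPRS$. Equation \eqref{eq: program norms} is just $\sum_i m(I_i) = 1$ together with $\|f\|_2^2 = \|g\|_2^2 = 1$ written out blockwise. Equation \eqref{eq: program ellipse} is precisely Lemma \ref{lem: local eigenfunction equation} ($\mu f^2 - \nu g^2 = \mu-\nu$ a.e.) evaluated on $I_i$. Equations \eqref{eq: program eigen f} and \eqref{eq: program eigen g} are the eigenfunction equations $A_W f = \mu f$, $A_W g = \nu g$ restricted to a typical point of $I_i$, using that $W_\ii(x,y) = 1$ exactly when $ij\in E(G^*)$ for $x\in I_i$, $y\in I_j$. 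Inequality \eqref{ieq: program inequality constraint} follows from Lemma \ref{lem: K = indicator function}\eqref{item: K = 0 or 1}: for $x\in I_i$, $y\in I_j$ with $i\neq j$, $W(x,y)=1$ forces $f_if_j - g_ig_j > 0$ and $W(x,y)=0$ forces $f_if_j-g_ig_j<0$, and the edge/non-edge status of $ij$ in $G^*$ matches $W$ on $I_i\times I_j$. The interval memberships $(\alpha_i,f_i,g_i)\in[0,1]\times F_i\times G_i$ are read off from Figure \ref{fig: f g interval}, which in turn is justified by the sign conventions and by Claim C in the proof of Theorem \ref{thm: reduction to stepgraphon} (membership in $C$ versus $S$, i.e. $f_i\geq 1$ versus $f_i<1$, and the analogous dichotomy for $|g_i|$). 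Finally, $\mu-\nu = \spr(W) = \max_{U}\spr(U)$ is the optimal value, so in particular $\mu,\nu,(\alpha_i,f_i,g_i)_{i\in S}$ attains the maximum in $\SPRS$ — no feasible point of $\SPRS$ can exceed $\max_U \spr(U)$, since any feasible point corresponds (by reversing the construction) to a stepgraphon whose spread is at least the value of its Rayleigh quotients; hence Item \eqref{item: problem solution} holds.

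The only genuinely delicate point — and the one I would write most carefully — is the claim that the extreme eigenfunctions can be taken constant on each block and that, after the weak isomorphism realizing Theorem \ref{thm: reduction to stepgraphon}, the sign pattern of $(f_i,g_i)$ lands in the intervals of Figure \ref{fig: f g interval} rather than merely in some relabeling of them. This requires tracking the specific ordering $\preccurlyeq$ introduced in the proof of Theorem \ref{thm: reduction to stepgraphon} and checking that blocks $1,\dots,4$ are exactly those with $g_i\geq 0$ while $5,6,7$ have $g_i<0$, with the further clique/stable-set subdivision ($f_i\geq 1$ on $C$, $f_i<1$ on $S$) determining which of $F_i\in\{\uu,\vv\}$ and $G_i\in\{\pm\uu,\pm\vv\}$ applies. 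All of this is implicit in Section \ref{sec: graphon spread reduction}, so the proof amounts to citing those facts in the right order; I expect it to be short once the notational dictionary between $W_\ii$ and $\SPRS$ is stated explicitly, which I would do at the start.
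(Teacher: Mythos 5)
Your proposal is correct and follows essentially the same route as the paper's proof: Item (i) from Theorem \ref{thm: reduction to stepgraphon} after absorbing measure-zero blocks, Item (ii) from constancy of extreme eigenfunctions on the blocks of a stepgraphon, and Item (iii) by checking each constraint via Lemmas \ref{lem: K = indicator function} and \ref{lem: local eigenfunction equation} and noting that any feasible point of $\SPRS$ realizes $\mu,\nu$ as eigenvalues of a stepgraphon, hence cannot exceed $\spr(W)$. The only cosmetic difference is at the "delicate point" you flag: the paper settles the memberships $(f_i,g_i)\in F_i\times G_i$ directly from the ellipse equation together with the loop structure of $G^*$ (a block has $f_i^2>1>g_i^2$ exactly when $i\in\{1,2,5\}$, the looped vertices), rather than by re-tracing the ordering $\preccurlyeq$, but both arguments rest on the same facts.
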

\begin{proof}
First we prove Item \eqref{item: W = W_I}.  
By Theorem \ref{thm: reduction to stepgraphon} and the definition of $G^*$, there exists a nonempty set $S\subseteq V(G^*)$ and a labeled partition $\ii = (I_i)_{i\in S}$ such that $W = W_\ii$.  
By merging any parts of measure $0$ into some part of positive measure, we may assume without loss of generality that $m(I_i) > 0$ for all $i\in S$.  
So Item \eqref{item: W = W_I} holds.  

For Item \eqref{item: f,g constants}, the eigenfunctions corresponding to the maximum and minimum eigenvalues of a stepgraphon must be constant on each block by convexity and the Courant-Fischer Min-Max Theorem.

Finally, we prove Item \eqref{item: problem solution}, we first prove that for all $i\in V(G^*)$, $(f_i,g_i)\in F_i\times G_i$.  
By Lemma \ref{lem: local eigenfunction equation}, 
\begin{align*}
    \mu f_i^2-\nu g_i^2 &= \mu-\nu 
\end{align*}
for all $i\in S$.  
In particular, either $f_i^2\leq 1\leq g_i^2$ or $g_i^2\leq 1\leq f_i^2$.  
By Lemma \ref{lem: K = indicator function}, for all $i,j\in S$, $f_if_j-g_ig_j\neq 0$ and $ij\in E(G)$ if and only if $f_if_j-g_ig_j > 0$. Note that the loops of $G^*$ are $1, 2, $ and $5$.  
It follows that for all $i\in S$, $f_i^2 > 1 > g_i^2$ if and only if $i\in\{1,2,5\}$, and $g_i^2>1>f_i^2$, otherwise.  
Since $f$ is positive on $[0,1]$, this completes the proof that $f_i\in F_i$ for all $i\in S$.  
Similarly since $g$ is positive on $\bigcup_{i\in \{1,2,3,4\}\cap S}I_i$ and negative on $\bigcup_{i\in \{5,6,7\}}I_i$, by inspection $g_i\in G_i$ for all $i\in S$.  
Similarly, Inequalities \eqref{ieq: program inequality constraint} follow directly from Lemma \ref{lem: K = indicator function}.  

Continuing, we note the following.  
Since $W$ is a stepgraphon, if $\lambda\neq 0$ is an eigenvalue of $W$, there exists a $\lambda$-eigenfunction $h$ for $W$ such that for all $i\in S$, $h = h_i$ on $I_i$ for some $h_i\in \RR$.  
Moreover for all $i\in S$, since $m(I_i) > 0$, 
\begin{align*}
    \lambda h_i 
    &=
    \sum_{i\in S}
        \alpha_ih_i.  
\end{align*}
In particular, any solution to $\SPRS$ is at most $\mu-\nu$.  
Since $f,g$ are eigenfunctions corresponding to $W$ and the eigenvalues $\mu,\nu$, respectively, Equations \eqref{eq: program eigen f}, and \eqref{eq: program eigen g} hold.  
Finally since $(I_i)_{i\in S}$ is a partition of $[0,1]$ and since $\|f\|_2^2 = \|g\|_2^2 = 1$, Equation \eqref{eq: program norms} holds.  
So $\mu,\nu$, and $(\alpha_i, f_i, g_i)_{i\in S}$ lie in the domain of $\SPRS$.  
This completes the proof of item \eqref{item: problem solution}, and the desired claim.  
\end{proof}

We enhance Proposition \ref{prop: problem solutions} as follows.  

\begin{lemma}\label{lem: 19 cases}
    Proposition \ref{prop: problem solutions} holds with the added assumption that $S\in\sset$.  
\end{lemma}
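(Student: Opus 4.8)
The plan is to take the partition $\ii = (I_i)_{i\in S}$ guaranteed by Proposition \ref{prop: problem solutions} and show that, after the reductions described in Subsection \ref{sub-sec: cases}, the index set $S$ may be taken from the explicit list $\sset$ displayed before the lemma. I would organize this as a finite bookkeeping argument built on the three reduction principles already articulated in the text: (1) \emph{duplicate neighborhoods} --- if two indices $i,j\in S$ have $N_i\cap S = N_j\cap S$, then by merging (replacing the two blocks by a single block with combined measure, which yields a weakly isomorphic graphon since $f_i, g_i$ are forced by Equation \eqref{eq: program ellipse} and the eigenvalue equations to be determined by the block structure) we may assume only one of them has positive weight; (2) \emph{relabeling within $\{2,3,4\}$ and within $\{5,6,7\}$} --- because $N_2\supseteq N_3\supseteq N_4$ is a nested chain (and similarly $N_5\supseteq N_6\supseteq N_7$), once some of these blocks have zero weight the surviving ones can be renumbered to the ``top'' of the chain without changing the graphon; (3) \emph{sign flip of $g$} --- when $1\notin S$ (no dominating block), swapping the roles of $\{2,3,4\}$ and $\{5,6,7\}$ together with $g\mapsto -g$ gives an isomorphic graphon, identifying pairs of cases.

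The concrete steps I would carry out are: first, record that $S\subseteq V(G^*) = [7]$ and recall from the definition of $G^*$ the neighborhoods $N_1,\dots,N_7$. Second, handle the membership of $1$: if $1\in S$, note $1$ is dominating, and analyze the possible nonempty intersections $S\cap\{2,3,4\}$ and $S\cap\{5,6,7\}$, using principle (2) to assume these intersections are ``downward closed prefixes'' $\{2\},\{2,3\},\{2,3,4\}$ (resp. for $5,6,7$) --- actually, since the nesting makes $N_4\subsetneq N_3\subsetneq N_2$, any single surviving block in $\{2,3,4\}$ can be taken to be whichever one is needed, and I would just enumerate: $S\cap\{2,3,4\}\in\{\emptyset,\{2\text{ or }3\text{ or }4\},\{2,4\},\{2,3,4\}\}$ up to relabeling, and check which of these survive principle (1) (e.g. if blocks $2$ and $3$ both have positive weight but have equal restricted neighborhoods in the current $S$, merge). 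Third, do the same enumeration when $1\notin S$, and then apply principle (3) to collapse the resulting list. Fourth, cross off every resulting $S$ that is not in $\sset$ by exhibiting which reduction applies, and conversely check each of the $17$ listed sets is genuinely reachable (so the list is neither too big nor too small). The poset picture in Figure \ref{fig: 17 cases} is the target: I would verify that every subset of $[7]$ not shown reduces, via (1)--(3), to one shown.

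I expect the main obstacle to be purely organizational rather than mathematical: there are $2^7 = 128$ subsets, and one must argue --- cleanly and without missing a case --- that every one of them collapses to a member of $\sset$. The subtlety flagged in the text (Figure \ref{1247 1457}) is that principle (3) is \emph{not} always applied: cases like $1|24|7$ and $1|4|57$ give isomorphic graphons but are kept separate because the prescribed ordering $f_1\ge f_2\ge\cdots$ and the sign conventions on $g$ from Figure \ref{fig: f g interval} make them distinct \emph{optimization problems} $\SPRS$. So the bookkeeping must track not just the graphon up to isomorphism but the labeled data $(\alpha_i,f_i,g_i)_{i\in S}$ subject to the interval constraints $(f_i,g_i)\in F_i\times G_i$; the claim is only that \emph{some} valid relabeling lands in $\sset$, and I would make sure the relabelings used in principles (1) and (2) respect the interval constraints (they do, since $F_i,G_i$ depend only on which of the three groups $\{1\},\{2,3,4\},\{5,6,7\}$ the index lies in, and all our relabelings stay within a group). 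Once that compatibility is noted, the argument reduces to a finite check which, as the text says, is ``straightforward to verify.''
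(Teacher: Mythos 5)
Your overall strategy coincides with the paper's: the actual proof formalizes your principle (1) as a claim that any $i\in S$ may be replaced by any $j\in V(G^*)$ with $N_i\cap S=N_j\cap S$ (this single claim covers both your merging principle (1) and your relabeling principle (2), since $j$ need not lie in $S$), reduces to a table of $21$ candidate sets, and then applies your sign-flip principle (3) via the involution $\tau$ exchanging $\{2,3,4\}$ with $\{5,6,7\}$ together with $g\mapsto -g$. Your remark that all relabelings stay within the groups $\{1\},\{2,3,4\},\{5,6,7\}$ and hence respect the interval constraints $F_i\times G_i$ is exactly the compatibility check the paper performs. One small correction: what forces $f_i=f_j$ and $g_i=g_j$ when $N_i\cap S=N_j\cap S$ is the eigenvector equations (e.g.\ $\mu f_j=\sum_{k\in N_j\cap S}\alpha_kf_k=\mu f_i$ with $\mu\neq 0$), not the ellipse equation \eqref{eq: program ellipse}.

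There is, however, one genuine gap. After all three of your reductions are exhausted, the set $S=\{4,7\}$ survives: $N_4\cap S=\{7\}\neq\{4\}=N_7\cap S$, so no merge or relabeling applies, and $\tau(\{4,7\})=\{4,7\}$, so the sign flip does not help either; yet $4|7$ is not in $\sset$. Your plan to ``cross off every resulting $S$ not in $\sset$ by exhibiting which reduction applies'' therefore fails on this case, which must instead be eliminated by an argument of a different kind: the corresponding stepgraphon is bipartite (supported on $(I_4\times I_7)\cup(I_7\times I_4)$), so $\spr(W)=2\sqrt{\alpha_4\alpha_7}\le 1<2/\sqrt{3}$, contradicting extremality of $W$. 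Relatedly, you assert but do not justify that $S$ meets both $\{1,2,3,4\}$ and $\{5,6,7\}$; this requires the observation that, since $f>0$ a.e.\ and $\langle f,g\rangle=0$, the eigenfunction $g$ takes each sign on a set of positive measure. Without it, sets such as $S\subseteq\{5,6,7\}$ are not excluded by your three principles. With these two additions your bookkeeping goes through and matches the paper's.
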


\begin{proof}
We begin our proof with the following claim.  
\\
\\
{\bf Claim A: }
    Suppose $i\in S$ and $j\in V(G^*)$ are distinct such that $N_i\cap S = N_j\cap S$.  
    Then Proposition \ref{prop: problem solutions} holds with the set $S' := (S\setminus\{i\})\cup \{j\}$ replacing $S$.  

First, we define the following quantities.  
For all $k\in S'\setminus \{j\}$, let $(f_k', g_k', I_k') := (f_k, g_k, I_k)$, and also let $(f_j', g_j') := (f_i, g_i)$.  
If $j\in S$, let $I_j' := I_i\cup I_j$, and otherwise, let $I_j' := I_i$.  
Additionally let $\ii' := (I_k')_{k\in S'}$ and for each $k\in S'$, let $\alpha_k' := m(I_k')$.  
By the criteria from Proposition \ref{prop: problem solutions}, the domain criterion $(\alpha_k', f_k', g_k') \in [0,1]\times F_k\times G_k$ as well as Equation \eqref{eq: program ellipse} holds for all $k\in S'$.  
Since we are reusing $\mu,\nu$, the constraint $\mu,\nu\in \RR$ also holds.  

It suffices to show that Equation \eqref{eq: program norms} holds, and that Equations \eqref{eq: program eigen f} and \eqref{eq: program eigen g} hold for all $k\in S'$.  
To do this, we first note that for all $k\in S'$, $f = f_k'$ and $g = g_k'$ on $I_k'$.  
By definition, $f = f_k$ and $g = g_k$ on $I_k' = I_k$ for all $k\in S'\setminus\{j\}$ as needed by Claim A.  
Now suppose $j\notin S$.  
Then $f = f_i = f_j'$ and $g = g_i = g_j'$ and $I_j' = I_i$ on the set $I_i = I_j'$, matching Claim A.  
Finally, suppose $j\in S$.  
Note by definition that $f = f_i = f_j'$ and $g = g_i = g_j'$ on $I_i$.  
Since and $I_j' = I_i\cup I_j$, it suffices to prove that $f = f_j'$ and $g = g_j'$ on $I_j$.  
We first show that $f_j = f_i$ and $g_j = g_i$.  
Indeed, 
\begin{align*}
    \mu f_j 
    &=
    \sum_{k\in N_j\cap S}
        \alpha_k f_k
    = 
    \sum_{k\in N_i\cap S}
        \alpha_k f_k
    = 
    \mu f_i
\end{align*}
and since $\mu\neq 0$, $f_j = f_i$.  
Similarly, $g_j = g_i$.  
So $f = f_j = f_i = f_j'$ and $g = g_j = g_i = g_j'$ on the set $I_j' = I_i \cup I_j$.

Finally, we claim that $W_{\ii'} = W$.  
Indeed, this follows directly from Lemma \ref{lem: K = indicator function} and the fact that $W = W_\ii$.  
Since $\ii'$ is a partition of $[0,1]$ and since $f,g$ are unit eigenfunctions for $W$ Equation \eqref{eq: program norms} holds, and Equations \eqref{eq: program eigen f} and \eqref{eq: program eigen g} hold for all $k\in S'$.  
This completes the proof of Claim A.  

Next, we prove the following claim.  
\\
{\bf Claim B: }
    If $S$ satisfies the criteria of Proposition \ref{prop: problem solutions}, then without loss of generality the following holds.  
    \begin{enumerate}[(a)]
        \item\label{item: vertex 1}
            If there exists some $i\in S$ such that $N_i = S$, then $i = 1$.  
        \item\label{item: vertices 1234}
            $S\cap \{1,2,3,4\}\neq\emptyset$.  
        \item\label{item: vertices 234}
            $S\cap \{2,3,4\}$ is one of $\emptyset, \{4\}, \{2,4\}$, and $\{2,3,4\}$.  
        \item\label{item: vertices 567}
            $S\cap \{5,6,7\}$ is one of $\{7\}, \{5,7\}$, and $\{5,6,7\}$.  
    \end{enumerate}

Since $N_1\cap S = S = N_i$, item \eqref{item: vertex 1} follows from Claim A applied to the pair $(i,1)$.  
Since $f,g$ are orthogonal and $f$ is positive on $[0,1]$, $g$ is positive on a set of positive measure, so item \eqref{item: vertices 1234} holds.  

To prove item \eqref{item: vertices 234}, we have $4$ cases.  
If $S\cap \{2,3,4\} = \{2\}$, then $N_2\cap S = N_1\cap S$ and we may apply Claim A to the pair $(2,1)$.  
If $S\cap \{2,3,4\} = \{3\}$ or $\{3,4\}$,  then $N_3\cap S = N_4\cap S$ and we may apply Claim A to the pair $(3,4)$.  
If $S\cap \{2,3,4\} = \{2,3\}$, then $N_2\cap S = N_1\cap S$ and we may apply Claim A to the pair $(2,1)$.  
So item \eqref{item: vertices 234} holds.  
For item \eqref{item: vertices 567}, we reduce $S\cap \{5,6,7\}$ to one of $\emptyset, \{7\}, \{5,7\}$, and $\{5,6,7\}$ in the same fashion.  
To eliminate the case where $S\cap \{5,6,7\} = \emptyset$, we simply note that since $f$ and $g$ are orthogonal and $f$ is positive on $[0,1]$, $g$ is negative on a set of positive measure.  
This completes the proof of Claim B.  
\begin{table}[ht]
    \centering
    \begin{tabular}{c||r|r|r|r}
    \multicolumn{1}{l||}{}        & \multicolumn{1}{c|}{$\emptyset$} & \multicolumn{1}{c|}{$\{4\}$} & \multicolumn{1}{c|}{$\{2,4\}$} & \multicolumn{1}{c}{$\{2,3,4\}$} \\ \hline\hline
    \multirow{2}{*}{$\{7\}$}     & \multirow{2}{*}{$1|7$}           & $4|7$                        & $24|7$                         & $234|7$                         \\
                                 &                                  & $1|4|7$                      & $1|24|7$                       & $1|234|7$                       \\ \hline
    \multirow{2}{*}{$\{5,7\}$}   & \multirow{2}{*}{$1|57$}          & $4|57$                       & $24|57$                        & $234|57$                        \\
                                 &                                  & $1|4|57$                     & $1|24|57$                      & $1|234|57$                      \\ \hline
    \multirow{2}{*}{$\{5,6,7\}$} & \multirow{2}{*}{$1|567$}         & $4|567$                      & $24|567$                       & $234|567$                       \\
                                 &                                  & $1|4|567$                    & $1|24|567$                     & $1|234|567$                    
    \end{tabular}
    
    \caption{The $21$ sets which arise from repeated applications of Claim B.  }
    \label{tab: table 21}
\end{table}

After repeatedly applying Claim B, we may replace $S$ with one of the cases found in Table \ref{tab: table 21}.  
Let $\mathcal{S}_{21}$ denote the sets in Table \ref{tab: table 21}.  
By definition, 
\begin{align*}
    \mathcal{S}_{21}
    &= 
    \sset
    \bigcup \left\{
        \{4,7\}, \{2,4,7\}, \{2,3,4,7\}, \{2,3,4,5,7\}
    \right\}
    .  
\end{align*}
Finally, we eliminate the $4$ cases in $\mathcal{S}_{21}\setminus \sset$.  
If $S = \{4,7\}$, then $W$ is a bipartite graphon, hence $\spr(W) \leq 1$, a contradiction since $\max_{U\in\ww}\spr(W) > 1$.  

For the three remaining cases, let $\tau$ be the permutation on $\{2,\dots,7\}$ defined as follows.  
For all $i\in \{2,3,4\}$, $\tau(i) := i+3$ and $\tau(i+3) := i$.  
If $S$ is among $\{2,4,7\}, \{2,3,4,7\}, \{2,3,4,5,7\}$, we apply $\tau$ to $S$ in the following sense.  
Replace $g$ with $-g$ and replace $(\alpha_i, I_i, f_i, g_i)_{i\in S}$ with $(\alpha_{\tau(i)}, I_{\tau(i)}, f_{\tau(i)}, -g_{\tau(i)})_{i\in \tau(S)}$.  
By careful inspection, it follows that $\tau(S)$ satisfies the criteria from Proposition \ref{prop: problem solutions}.  
Since $\tau(\{2,4,7\}) = \{4,5,7\}$, $\tau(\{2,3,4,7\}) = \{4,5,6,7\}$, and $\tau(\{2,3,4,5,7\}) = \{2,4,5,6,7\}$, this completes the proof.  
\end{proof}

\subsection{Proof of Lemma \ref{lem: SPR457}}

Let $(\alpha_4, \alpha_5, \alpha_7)$ be a solution to $\SPR_{457}$. 

First, let $T := 
\{(\varepsilon_1,\varepsilon_2)\in(-1/3, 2/3)\times (-2/3, 1/3) : \varepsilon_1+\varepsilon_2 \in (0,1)\}$, and for all $\varepsilon = (\varepsilon_1,\varepsilon_2)\in T$, let 
\begin{align*}
    M(\varepsilon)
    &:=
        \left[\begin{array}{ccc}
            2/3-\varepsilon_1 & 0 & 1/3-\varepsilon_2\\
            0 & 0 & 1/3-\varepsilon_2\\
            2/3-\varepsilon_1 & \varepsilon_1 + \varepsilon_2 & 0
        \end{array}\right]
    .  
\end{align*}
As a motivation, suppose $\mu,\nu$, and $(\alpha_4,\alpha_5,\alpha_7)$ are part of a solution to $\SPR_{\{4,5,7\}}$.  
Then with $\varepsilon := (\varepsilon_1,\varepsilon_2) = (2/3-\alpha_5, 1/3-\alpha_4)$, $\varepsilon\in T$ and $\mu,\nu$ are the maximum and minimum eigenvalues of $M(\varepsilon)$, respectively.  
By the end of the proof, we show that any solution of $\SPR_{\{4,5,7\}}$ has $\alpha_7 = 0$.  
\\

To proceed, we prove the following claims.  
\\
\\
{\bf Claim A: }
    For all $\varepsilon\in T$, $M(\varepsilon)$ has two distinct positive eigenvalues and one negative eigenvalue.  

Since $M(\varepsilon)$ is diagonalizable, it has $3$ real eigenvalues which we may order as $\mu\geq \delta\geq \nu$.  
Since $\mu\delta\nu = \det(M(\varepsilon)) = -\alpha_4\alpha_5\alpha_7\neq 0 < 0$, $M(\varepsilon)$ has an odd number of negative eigenvalues.  
Since $0 < \alpha_5 = \mu + \delta + \nu$, it follows that $\mu \geq \delta > 0 > \nu$.  
Finally, note by the Perron-Frobenius Theorem that $\mu > \delta$.  
This completes the proof of Claim A.  
\\

Next, we define the following quantities, treated as functions of $\varepsilon$ for all $\varepsilon\in T$.  
For convenience, we suppress the argument ``$\varepsilon$'' in most places.  
Let $k(x) = ax^3+bx^2+cx+d$ be the characteristic polynomial of $M(\varepsilon)$.  
By inspection, 
\begin{align*}
    &a
    =
        1
    &b
    =
        \varepsilon_1-\dfrac{2}{3}
    \\
    &c
    =
        \dfrac{
            (3\varepsilon_2+2)
            (3\varepsilon_2-1)
        }
        {9}
    &d
    =
        \dfrac{
            (\varepsilon_1+\varepsilon_2)
            (3\varepsilon_1-2)
            (3\varepsilon_2-1)
        }
        {9}
\end{align*}
Continuing, let 
\begin{align*}
    &p
    := 
        \dfrac{
            3a
                c
            -b^2
        }
        {
            3a^2
        }
    &q
    := 
        \dfrac{
            2b^3
            -9a
                b
                c
            +27a^2
                d
        }
        {
            27a^3
        }
    \\
    &A
    :=
        2\sqrt{
           \dfrac{-p}
           {3}
        }
    &B
    :=
        \dfrac{
            -b
        }
        {3a}
    \\
    &\phi
    :=
        \arccos\left(
            \dfrac{
                3q
            }
            {
                A
                p
            }
        \right).  
\end{align*}
Let $S(\varepsilon)$ be the difference between the maximum and minimum eigenvalues of $M(\varepsilon)$.  
We show the following claim.  
\\
\\
{\bf Claim B: }
    For all $\varepsilon\in T$, 
    \begin{align*}
        S(\varepsilon) 
        &=
            \sqrt{3}\cdot A(\varepsilon)\cdot\cos\left(
                \dfrac{2\phi(\varepsilon) - \pi}{6}
            \right).  
    \end{align*}
    Moreover, $S$ is analytic on $T$.  

Indeed, by Vi\'{e}te's Formula, using the fact that $k(x,y)$ has exactly $3$ distinct real roots, the quantities $a(\varepsilon),\dots,\phi(x,y)$ are analytic on $T$.  
Moreover, the eigenvalues of $M(\varepsilon)$ are $x_0, x_1, x_2$ where, for all $k\in\{0,1,2\}$, 
\begin{align*}
    x_k(\varepsilon)
    &=
        A(\varepsilon)\cdot \cos\left(
            \dfrac{\phi + 2\pi\cdot k}{3}
        \right)
        + B(\varepsilon)
    .  
\end{align*}
Moreover, $x_0(\varepsilon),x_1(\varepsilon),x_2(\varepsilon)$ are analytic on $T$.  
For all $k,\ell\in \{1,2,3\}$, let 
\begin{align*}
    D(k,\ell,x)
    &:=
        \cos\left(
        x+\dfrac{2\pi k}{3}
    \right)
    - \cos\left(
        x+\dfrac{2\pi \ell}{3}
    \right)
\end{align*}
For all $(k,\ell)\in \{(0,1), (0,2), (2,1)\}$, note the trigonometric identities 
\begin{align*}
    D(k,\ell,x)
    &=
        \sqrt{3}\cdot\left\{\begin{array}{rl}
            \cos\left(
                x - \dfrac{\pi}{6}
            \right), 
                & (k,\ell) = (0, 1)
        \\
            \cos\left(
                x + \dfrac{\pi}{6}
            \right), 
                & (k,\ell) = (0, 2)
        \\
            \sin(x), 
                & (k,\ell) = (2, 1)
        \end{array}\right.
    .  
\end{align*}
By inspection, for all $x\in (0,\pi/3)$, 
\begin{align*}
    D(0,1)
    &>
        \max\left\{
            D(0,2), D(2,1)
        \right\}
    \geq 
    \min\left\{
            D(0,2), D(2,1)
        \right\}
    \geq 
    0.  
\end{align*}
Since $A > 0$ and $\phi \in (0,\pi/3)$, the claimed equality holds.  
Since $x_0(\varepsilon),x_1(\varepsilon)$ are analytic, $S(\varepsilon)$ is analytic on $T$.  
This completes the proof of Claim B.  
\\

Next, we compute the derivatives of  $S(\varepsilon)$ on $T$.  
For convenience, denote by $A_i, \phi_i, $ and $S_i$ for the partial derivatives of $A$ and $\phi$ by $\varepsilon_i$, respectively, for $i\in \{1,2\}$.  
Furthermore, let 
\begin{align*}
    \psi(\varepsilon)
    &:=
        \dfrac{2\phi(\varepsilon)-\pi}{6}.  
\end{align*}
The next claim follows directly from Claim B.  
\\
\\
{\bf Claim C: }
    For all $i\in T$, then on the set $T$, we have 
    \begin{align*}
        3S_i
        &= 
            3A_i\cdot \cos\left(
                \psi
            \right)
            - 
            \cdot A\phi_i \sin\left(
                \psi
            \right)
        .  
    \end{align*}
    Moreover, each expression is analytic on $T$.  

Finally, we solve $\SPR_{\{4,5,7\}}$.  
\\
\\
{\bf Claim D: }
    If $(\alpha_4,\alpha_5,\alpha_7)$ is a solution to $\SPR_{\{4,5,7\}}$, then $0\in\{\alpha_4,\alpha_5,\alpha_7\}$. 
    
With $(\alpha_4,\alpha_5,\alpha_7) := (1/3-\varepsilon_2, 2/3-\varepsilon_1,  \varepsilon_1+\varepsilon_2)$ and using the fact that $S$ is analytic on $T$, it is sufficient to eliminate all common zeroes of $S_1$ and $S_2$ on $T$.  
With the help of a computer algebra system and the formulas for $S_1$ and $S_2$ from Claim C, we replace the system $S_1 = 0$ and $S_2 = 0$ with a polynomial system of equations $P = 0$ and $Q = 0$ whose real solution set contains all previous solutions.  
Here, 
\begin{align*}
    P(\varepsilon) 
    &=
        9\varepsilon_1^3 + 18\varepsilon_1^2\varepsilon_2 + 54\varepsilon_1\varepsilon_2^2 + 18\varepsilon_2^3 - 15\varepsilon_1^2 - 33\varepsilon_1\varepsilon_2 - 27\varepsilon_2^2 + 5\varepsilon_1 + \varepsilon_2 
\end{align*}
and $Q = 43046721\varepsilon_1^{18}\varepsilon_2+\cdots + (-532480\varepsilon_2)$ is a polynomial of degree $19$, with coefficients between $-184862311457373$ and $192054273812559$.  
For brevity, we do not express $Q$ explicitly.  

To complete the proof of Claim D, it suffices to show that no common real solution to $P = Q = 0$ which lies in $T$ also satisfies $S_1 = S_2 = 0$.  
Again using a computer algebra system, we first find all common zeroes of $P$ and $Q$ on $\RR^2$.  
Included are the rational solutions $(2/3, -2/3), (-1/3, 1/3), (0,0), (2/3, 1/3), $ and $(2/3, -1/6)$ which do not lie in $T$.  
Furthermore, the solution $(1.2047\dots, 0.0707\dots)$ may also be eliminated.  
For the remaining $4$ zeroes, $S_1, S_2\neq 0$.  A notebook showing these calculations can be found at \cite{2021riasanovsky-spread}.

{\bf Claim E: }
    If $\mu, \nu$, and $\alpha = (\alpha_4,\alpha_5,\alpha_7)$ is part of a solution to $\SPR_{\{4,5,7\}}$ such that $\mu-\nu\geq 1$, then $\alpha_7 = 0$.  

By definition of $\SPR_{\{4,5,7\}}$, $\mu$ and $\nu$ are eigenvalues of the matrix 
\begin{align*}
    N(\alpha) 
    :=
        \left[\begin{array}{ccc}
            \alpha_5 & 0 & \alpha_4\\
            0 & 0 & \alpha_4\\
            \alpha_5 & \alpha_7 & 0
        \end{array}\right].
\end{align*}
Furthermore, $N(\alpha)$ has characteristic polynomial 
\begin{align*}
    p(x) 
    &= 
        x^3
        - \alpha_5 x^2
        -\alpha_4\cdot(\alpha_5+\alpha_7)
        +\alpha_4\alpha_5\alpha_7
    .  
\end{align*}
Recall that $\alpha_4+\alpha_5+\alpha_7 = 1$.  
By Claim D, $0\in\{4,5,7\}$, and it follows that $p\in\{p_4, p_5, p_7\}$ where 
\begin{align*}
    p_4(x) 
    &:=
        x^2\cdot (x-\alpha_5), 
    \\
    p_5(x) 
    &:=
        x\cdot (x^2-\alpha_4(1-\alpha_4)), 
    \text{ and }
    \\
    p_7(x) 
    &:=
        x\cdot (x^2-(1-\alpha_4)x-\alpha_4(1-\alpha_4)).  
\end{align*}
If $p = p_4$, then $\mu-\nu = \alpha_5\leq 1$, and if $p = p_5$, then $\mu-\nu = 2\sqrt{\alpha_4(1-\alpha_4)} \leq 1$.  
So $p = p_7$, which completes the proof of Claim E.  

This completes the proof of Lemma \ref{lem: SPR457}.

\subsection{Proof of Lemma \ref{lem: no exceptional vertices}} \label{sec: 2 by 2 reduction}

First, we find $S_z(\varepsilon_1,\varepsilon_3)$ using Vi\`{e}te's Formula.  
In doing so, we define functions  $k_z(\varepsilon_1,\varepsilon_2;x),\dots,\delta_z(\varepsilon_1,\varepsilon_2)$.  
To ease the burden on the reader, we suppress the subscript $z$ and the arguments $\varepsilon_1,\varepsilon_2$ when convenient and unambiguous.  
Let $k(x) = ax^3+bx^2+cx+d$ be the characteristic polynomial of $M_z(\varepsilon_1,\varepsilon_2)$.  
By inspection, 
\begin{align*}
    &a
    =
        1
    &b
    =
        \varepsilon_1+z-\dfrac{2}{3}
    \\
    &c
    =
        \dfrac{
            (3\varepsilon_2+2)
            (3\varepsilon_2-1)
        }
        {9}
    &d
    =
        \dfrac{
            (\varepsilon_1+\varepsilon_2)
            (3\varepsilon_1+3z-2)
            (3\varepsilon_2-1)
        }
        {9}
\end{align*}
Continuing, let 
\begin{align*}
    &p
    := 
        \dfrac{
            3a
                c
            -b^2
        }
        {
            3a^2
        }
    &q
    := 
        \dfrac{
            2b^3
            -9a
                b
                c
            +27a^2
                d
        }
        {
            27a^3
        }
    \\
    &A
    :=
        2\sqrt{
           \dfrac{-p}
           {3}
        }
    &B
    :=
        \dfrac{
            -b
        }
        {3a}
    \\
    &\phi
    :=
        \arccos\left(
            \dfrac{
                3q
            }
            {
                A
                p
            }
        \right).  
\end{align*}
By Vi\`{e}te's Formula, the roots of $k_z(\varepsilon_1,\varepsilon_2;x)$ are the suggestively defined quantities: 
\begin{align*}
    &\mu
    :=
        A
        \cos\left(
            \dfrac{
                \phi
            }
            {3}
        \right)
        +B
    &\nu
    :=
        A
        \cos\left(
            \dfrac{
                \phi
                +2\pi
            }
            {3}
        \right)
        +B
    \\
    \delta
    &:=
        A
        \cos\left(
            \dfrac{
                \phi
                +4\pi
            }
            {3}
        \right)
        +B
    .  
\end{align*}
First, We prove the following claim.  
\\
\\
{\bf Claim A: }
    If $(\varepsilon_1,\varepsilon_2,z)$ is sufficiently close to $(0,0,0)$, then 
    \begin{align}\label{eq: spread trig formula}
    S_z(\varepsilon_1,\varepsilon_2)
    &=
        A_z(\varepsilon_1,\varepsilon_2)\sqrt{3}\,
        \cdot\cos\left(
            \dfrac{2\phi_z(\varepsilon_1,\varepsilon_2)-\pi}{6}
        \right)
        .  
\end{align}
Indeed, suppose $z>0$ and $z\to 0$.  
Then for all $(\varepsilon_1,\varepsilon_2)\in (-3z,3z)$, $\varepsilon_1,\varepsilon_2\to 0$.  
With the help of a computer algebra system, we substitute in $z=0$ and $\varepsilon_1,\varepsilon_2=0$ to find the limits: 
\begin{align*}
    (a,b,c,d)
    &\to 
        \left(
            1,
            \dfrac{-2}{3},
            \dfrac{-2}{9},
            0
        \right)
    \\
    (p,q)
    &\to 
        \left(
            \dfrac{-10}{27},
            \dfrac{-52}{729}
        \right)
    \\
    (A,B,\phi)
    &\to 
        \left(
            \dfrac{2\sqrt{10}}{9},
            \dfrac{2}{9},
            \arccos\left(
                \dfrac{13\sqrt{10}}{50}
            \right)
        \right).  
\end{align*}
Using a computer algebra system, these substitutions imply that 
\begin{align*}
    (\mu,\nu,\delta)
    \to 
        \left(
            0.9107\dots, 
            -0.2440\dots, 
            0.  
        \right)
\end{align*}
So for all $z$ sufficiently small, $S = \mu-\nu$.  
After some trigonometric simplification, 
\begin{align*}
    \mu - \nu
    &=
        A\cdot\left(
            \cos\left(
                \dfrac{\phi}{3}
            \right)
            -\cos\left(
                \dfrac{\phi+2\phi}{3}
            \right)
        \right)
    = 
        A\sqrt{3}\,
        \cdot\cos\left(
            \dfrac{2\phi-\pi}{6}
        \right)
\end{align*}
and Equation \eqref{eq: spread trig formula}.  
This completes the proof of Claim A.  
\\

Now we prove the following claim.  
\\
\\
{\bf Claim B: }
    There exists a constants $C_0'>0$ such that the following holds.  
    If $|z|$ is sufficiently small, then $S_z$ is concave-down on $[-C_0,C_0]^2$ and strictly decreasing on $[-C_0,C_0]^2\setminus [-C_0z, C_0z]^2$.  
\\
\\
First, we define 
\begin{align*}
    D_z(\varepsilon_1,\varepsilon_2) 
    := \restr{\left(
        \dfrac{\partial^2 S_z}{\partial\varepsilon_1^2}
        \cdot \dfrac{\partial^2 S_z}{\partial\varepsilon_2^2}
        - \left(\dfrac{\partial^2 S_z}{\partial\varepsilon_1\partial\varepsilon_2}\right)^2
    \right)}
    {(\varepsilon_1,\varepsilon_2,z)}
    .  
\end{align*}
As a function of $(\varepsilon_1,\varepsilon_2)$, $D_z$ is the determinant of the Hessian matrix of $S_z$.  
Using a computer algebra system, we note that 
\begin{align*}
        D_0(0,0)
    &=
        22.5\dots, 
        \quad \text{ and }
    \\
    \restr{\left(
        \dfrac{
            \partial^2S
        }
        {
            \partial \varepsilon_1^2
        }, 
         \dfrac{
             \partial^2S
         }
         {
             \partial \varepsilon_1
             \partial \varepsilon_2
         }, 
        \dfrac{
            \partial^2S
        }
        {
            \partial \varepsilon_2^2
        }
    \right)
    }
    {(0,0,0)}
    &= 
    \left(
        -8.66\dots, 
        -8.66\dots, 
        -11.26\dots
    \right).  
\end{align*}
Since $S$ is analytic to $(0,0,0)$, there exist constants $C_1, C_2>0$ such that the following holds.  
For all $z\in [-C_1,C_1]$, $S_z$ is concave-down on $[-C_1,C_1]^2$.  
This completes the proof of the first claim.  
Moreover for all $z\in [-C_1,C_1]$ and for all $(\varepsilon_1,\varepsilon_2)\in [-C_1,C_1]^2$, 
\begin{align*}
        \restr{\max\left\{
            \dfrac{\partial^2S_z}
            {\partial\varepsilon_1^2},
            \dfrac{\partial^2S_z}
            {\partial\varepsilon_1\partial\varepsilon_2},
            \dfrac{\partial^2S_z}
            {\partial\varepsilon_2^2}
        \right\}}
        {(\varepsilon_1,\varepsilon_2,z)}
    &\leq -C_2.  
\end{align*}
to complete the proof of the second claim, note also that since $S$ is analytic at $(0,0,0)$, there exist constants $C_3,C_4>0$ such that for all $z\in [-C_3,C_3]$ and all $(\varepsilon_1,\varepsilon_2)\in [-C_3, C_3]^2$, 
\begin{align*}
    \dfrac{\partial^2 S}{\partial z\partial\varepsilon_i} \leq C_4.  
\end{align*}
Since $(0,0)$ is a local maximum of $S_0$, 
\begin{align*}
    \restr{
        \dfrac{\partial S}
        {\partial \varepsilon_i}
    }
    {(\varepsilon_1,\varepsilon_2,z)}
    &= 
        \restr{
        \dfrac{\partial S}
        {\partial \varepsilon_i}
    }
    {(0,0,0)}
        +\int_{w=0}^{z}
            \restr{
            \dfrac{\partial^2 S}
            {\partial z\partial \varepsilon_i}
            }
            {(0,0,w)}
        dw
    \\
    &\quad + \int_{ {\bf u} = (0,0)}^{ (\varepsilon_1,\varepsilon_2) }
        \restr{\dfrac{
            \partial^2 S
        }
        {
            \partial {\bf u}\partial \varepsilon_i
        }}
        {({\bf u}, z)}
        d{\bf u}
    \\
    &\leq 
        C_4\cdot z
        - C_2 \cdot \|(\varepsilon_1,\varepsilon_2)\|_2.  
\end{align*}
Since $C_2, C_4>0$, this completes the proof of Claim B.  
\\

Next, we prove the following claim.  \\
\\
{\bf Claim C: }
    If $z$ is sufficiently small, then $\pp_{z,C_0}$ is solved by a unique point $(\varepsilon_1^*,\varepsilon_2^*) = (\varepsilon_1^*(z),\varepsilon_2^*(z))$.  
    Moreover as $z\to0$, 
    \begin{align}\label{eq: optimal epsilon approximation}
        \left(
            \varepsilon_1^*, \varepsilon_2^*
        \right)
        &=
        \left(
            (1+o(z))\, 
            \dfrac{7z}{30}, 
            (1+o(z))\, 
            \dfrac{-z}{3}
        \right).  
    \end{align}
Indeed, the existence of a unique maximum $(\varepsilon_1^*, \varepsilon_2^*)$ on $[-C_0,C_0]^2$ follows from the fact that $S_z$ is strictly concave-down and bounded on $[-C_0, C_0]^2$ for all $z$ sufficiently small.   
Since $S_z$ is strictly decreasing on $[-C_0,C_0]^2\setminus (-C_0z, C_0z)^2$, it follows that $(\varepsilon_1^*, \varepsilon_2^*)\in (-C_0z, C_0z)$.  
For the second claim, note that since $S$ is analytic at $(0,0,0)$, 
\begin{align*}
    0 &= 
    \restr{
        \dfrac{\partial S}
        {\partial \varepsilon_i}
    }
    {(\varepsilon_1^*, \varepsilon_2^*, z)}
    =
    \sqrt{3}\cdot \left(
        \dfrac{\partial A}{\partial\varepsilon_i}\cdot
        \cos\left(
            \dfrac{2\phi-\pi}{6}
        \right)
        - \dfrac{A}{3}\cdot
        \dfrac{\partial\phi}{\partial\varepsilon_i}\cdot 
        \sin\left(
            \dfrac{2\phi-\pi}{6}
        \right)
    \right)
\end{align*}
for both $i = 1$ and $i = 2$.  
Let
\begin{align*}
    \tau_i := 
        \dfrac{
            3\cdot \dfrac{\partial A}{\partial\varepsilon_i}
        }
        {
            A\cdot \dfrac{\partial \phi}{\partial \varepsilon_i}
        }
\end{align*}
for both $i = 1$ and $i = 2$
Then by Equation \eqref{eq: spread trig formula}, 
\begin{align*}
    \restr{
        \arctan(\tau_i)
    }
    {(\varepsilon_1^*,\varepsilon_2^*,z)}
    &=
        \restr{
            \dfrac{2\phi-\pi}{6}
        }
        {(\varepsilon_1^*,\varepsilon_2^*,z)}
\end{align*}
for both $i=1$ and $i=2$.  
We first consider linear approximation of the above quantities under the limit  $(\varepsilon_1,\varepsilon_2,z)\to (0,0,0)$.  
Here, we write $f(\varepsilon_1,\varepsilon_2,z) \sim g(\varepsilon_1,\varepsilon_2,z)$ to mean that 
\begin{align*}
    f(\varepsilon_1,\varepsilon_2,z) 
    &=
        \left(
            1
            +o\left(\max\left\{
                |\varepsilon_1|,
                |\varepsilon_2|,
                |z|
            \right\}\right)
        \right)
        \cdot g(\varepsilon_1,\varepsilon_2,z).  
\end{align*}
With the help of a computer algebra system, we note that 
\begin{align*}
    \arctan\left(
        \tau_1
    \right)
    &\sim
        \dfrac{
            -78\varepsilon_1
            -96\varepsilon_2
            -3z
            -40\arctan\left(
                \dfrac{1}{3}
            \right)
        }
        {40}
    \\
    \arctan\left(
        \tau_2
    \right)
    &\sim
        \dfrac{
            -64\varepsilon_1
            -103\varepsilon_2
            -14z
            -20\arctan\left(
                \dfrac{1}{3}
            \right)
        }
        {20}
    \\
    \dfrac{2\phi-\pi}
    {6}
    &\sim 
       \dfrac{
            108\varepsilon_1
            +81\varepsilon_2
            +18z
            +20\arccos\left(
                \dfrac{13\sqrt{10}}{50}
            \right)
            -10\pi
        }
        {60}
    .  
\end{align*}
By inspection, the constant terms match due to the identity 
\begin{align*}
    -\arctan\left(
        \dfrac{1}{3}
    \right)
    &=
    \dfrac{1}{3}
    \arccos\left(
        \dfrac{13\sqrt{10}}{50}
    \right)
    -\dfrac{\pi}{6}.  
\end{align*}
Since $\max\left\{|\varepsilon_1^*|, |\varepsilon_2^*|\right\}\leq C_0z$, replacing $(\varepsilon_1,\varepsilon_2)$ with $(\varepsilon_1^*,\varepsilon_2^*)$ implies that 
\begin{align*}
    \dfrac{-78\varepsilon_1^*-96\varepsilon_2^*-3z}{2}
    &= (1+o(z))\cdot 
        (36\varepsilon_1^*+27\varepsilon_2^*+6z), \quad \text{ and }
    \\
        -64\varepsilon_1^*-103\varepsilon_2^*-14z
    &=
        (1+o(z))\cdot
        (36\varepsilon_1^*+27\varepsilon_2^*+6z)
\end{align*}
as $z\to 0$.  
After applying Gaussian Elimination to this $3$-variable system of $2$ equations, it follows that 
\begin{align*}
    (\varepsilon_1^*, \varepsilon_2^*) 
    &= 
        \left(
            (1+o(z))\cdot \dfrac{7z}{30}, 
            (1+o(z))\cdot \dfrac{-z}{3}
        \right).  
\end{align*}
This completes the proof of Claim C.  
\\
\\

For the next step, we prove the following claim.  
First, let $\qq_{n}$ denote the program formed from $\pp_{n^{-1}, C_0}$ subject to the added constraint that $\textstyle n\cdot( \frac{2}{3}-\varepsilon_1 ),n\cdot( \frac{1}{3}-\varepsilon_2 )\in\mathbb{Z}$.  
\\
\\
{\bf Claim D: }
    For all $n$ sufficiently large, $\qq_{n}$ is solved by a unique point $(n_1^*, n_3^*)$ which satisfies $n_1^* + n_3^* = n$.  
\\
\\

Note by Lemma \ref{lem: few exceptional vertices} that for all $n$ sufficiently large, 
\begin{align*}
    \max\left\{
        \left|\dfrac{n_1}{n}-\dfrac{2}{3}\right|,
        \left|\dfrac{n_3}{n}-\dfrac{1}{3}\right|
    \right\}
    &\leq C_0.  
\end{align*}
Moreover, by Claim C, $\pp_{n^{-1}}$ is solved uniquely by 
\begin{align*}
    (\varepsilon_1^*, \varepsilon_2^*) 
    &= 
        \left(
            (1+o(z))\cdot \dfrac{7}{30n}, 
            (1+o(z))\cdot \dfrac{-1}{3n}
        \right).  
\end{align*}
Since 
\begin{align*}
    \dfrac{2n}{3}-n\cdot \varepsilon_1^*
    &= 
        \dfrac{2n}{3}-(1+o(1))\cdot \dfrac{7}{30} 
\end{align*}
and $7/30 < 1/3$, it follows for $n$ sufficiently large that $2n/3-n\cdot\varepsilon_1^*\in I_1$ where 
\begin{align*}
    I_1 &:= 
    \left\{\begin{array}{rl}
        \left(
            \dfrac{2n}{3}
            -1, 
            \dfrac{2n}{3}
        \right), 
        & 
            3 \mid n 
        \\
        \left(
            \left\lfloor 
                \dfrac{2n}{3}
            \right\rfloor, 
            \left\lceil 
                \dfrac{2n}{3}
            \right\rceil
        \right), 
        & 
            3 \nmid n 
    \end{array}\right.
    .  
\end{align*}
Similarly since 
\begin{align*}
    n\cdot(\varepsilon_1^* + \varepsilon_2^*)
    &=
        (1+o(1))
        \cdot \left(
            \dfrac{7}{30}-\dfrac{1}{3}
        \right)
    = 
        (1+o(1))
        \cdot 
            \dfrac{-1}{10}
\end{align*}
and $1/10 < 1/3$, it follows that $n\cdot (\varepsilon_1^*+\varepsilon_2^*)\in (-1,0)$.  
Altogether, 
\begin{align*}
    \left(
        \dfrac{2n}{3}-n\cdot \varepsilon_1, 
        n\cdot(\varepsilon_1^*+\varepsilon_2^*)
    \right)
    &\in I_1\times (-1, 0).  
\end{align*}

Note that to solve $\qq_{n}$, it is sufficient to maximize $S_{n^{-1}}$ on the set $[-C_0, C_0]^2 \cap \{ (n_1/n, n_3/n) \}_{u,v\in\mathbb{N}}$.  
Since $S_{n^{-1}}$ is concave-down on $I_1\times (-1, 0)$, $(n_1^*, n-n_1^*-n_3^*)$ is a corner of the square $I_1\times (-1,0)$.  
So $n_1^*+n_2^* = n$, which implies Claim D.  
This completes the proof of the main result.

\section{A Computer-Assisted Proof of Lemma \ref{lem: 2 feasible sets}}\label{sec: appendix}

In this appendix, we derive a number of formulas that a stepgraphon corresponding to some set $S \subseteq \{1,2,3,4,5,6,7\}$ in Lemma \ref{lem: 19 cases} satisfies, and detail how these formulas are used to provide a computer-assisted proof of Lemma \ref{lem: 2 feasible sets}.

\subsection{Formulas}\label{sub-sec: formulas}

In this subsection, we derive the formulas used in our computer-assisted proof, from the equations described in Section \ref{appendix 17 cases}. 
First, we define a number of functions which will ease the notational burden in the results that follow. Let
\begin{align*}
    F_1(x) &:= (\mu+\nu)x + 2 \mu \nu, 
    \\
    F_2(x) &:= 2 ( \mu \nu + (\mu+\nu) x)^2 + (\mu+\nu) x^3, 
    \\
    F_3(x) &:= 
        4\mu^2\nu^2\cdot (\mu\nu + (\mu+\nu)x)^2 
        \\
        &\quad - 2(\mu+\nu)x^3\cdot ((\mu+\nu)x + \mu\nu)((\mu+\nu)x+3\mu\nu)
        \\
        &\quad - (\mu+\nu)x^5\cdot (2\mu\nu + (\mu+\nu)x), 
    \\
    F_4(x) &:=
        4\mu^2\nu^2x\cdot ((3(\mu+\nu)x + \mu\nu)\cdot (2(\mu+\nu)x+\mu\nu)-\mu\nu(\mu+\nu)x)
        \\
        &\quad +4(\mu+\nu)x^4\cdot (((\mu+\nu)x+\mu\nu)^2 + (\mu+\nu)^2\cdot ((\mu+\nu)x+4\mu\nu))
        \\
        &\quad + (\mu+\nu)^2x^7.  
\end{align*}
Letting  $S := \{i\in\{1,\dots,7\} : \alpha_i>0\}$, we prove the following six formulas.

\begin{proposition}\label{prop: fg23 assume 23}
	Let $i\in \{1,2,5\} \cap S$ and $j\in \{3,4,6,7\} \cap S$ be such that $N_i \cap S = (N_j\cap S)\dot{\cup}\{j\}$. Then
	\begin{align*}
		f_j^2 
			&= \dfrac{(\alpha_j+2\nu)\mu}{F_1(\alpha_j)}, 
		\quad \quad g_j^2 
			= \dfrac{(\alpha_j+2\mu)\nu}{F_1(\alpha_j)}
	\end{align*}
	and
	\begin{align*}
	    f_i &= \left( 1 + \frac{\alpha_j}{\mu} \right) f_j,
		 \quad \quad g_i = \left( 1 + \frac{\alpha_j}{\nu} \right) g_j.
	\end{align*}
	Moreover, $F_1(\alpha_j)$ and $\alpha_j+2\nu$ are negative.  
\end{proposition}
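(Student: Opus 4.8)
The plan is to set up and solve the linear eigenvector equations for $f$ and $g$ restricted to the steps indexed by $S$, using the hypothesis $N_i \cap S = (N_j \cap S) \dot\cup \{j\}$, which encodes that vertex $i$ "sees" everything that $j$ sees plus $j$ itself. Concretely, since $f$ is a $\mu$-eigenfunction, Equation \eqref{eq: program eigen f} gives $\mu f_i = \sum_{k \in N_i \cap S} \alpha_k f_k$ and $\mu f_j = \sum_{k \in N_j \cap S} \alpha_k f_k$. Subtracting, and using $N_i \cap S = (N_j \cap S) \dot\cup \{j\}$, I would get $\mu f_i - \mu f_j = \alpha_j f_j$, hence $f_i = (1 + \alpha_j/\mu) f_j$. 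The same computation with $g$ and $\nu$ in place of $f$ and $\mu$ yields $g_i = (1 + \alpha_j/\nu) g_j$. This disposes of the second pair of formulas immediately, modulo noting $\mu, \nu \neq 0$ (which follows since $W$ maximizes spread and hence $\mu - \nu > 1 > 0$, and from Proposition \ref{prop: disconnected spectrum}/Perron–Frobenius that $\mu > 0 > \nu$).

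Next I would pin down $f_j^2$ and $g_j^2$. The key additional relation is the elliptical constraint Equation \eqref{eq: program ellipse}: $\mu f_j^2 - \nu g_j^2 = \mu - \nu$ for the index $j$ (and likewise for $i$). Also I have the two eigenvector equations written at $j$: one more relation comes from comparing the eigenvector equation at $j$ against the structure. The cleanest route: from $\mu f_i = \alpha_j f_j + \mu f_j$ (just derived) and $\mu f_j = \sum_{k \in N_j \cap S}\alpha_k f_k$, plus the analogous equation for the neighborhood of $j$ viewed one step further — actually the more efficient tactic is to use the elliptical constraints at both $i$ and $j$ together with $f_i = (1+\alpha_j/\mu)f_j$, $g_i = (1+\alpha_j/\nu)g_j$. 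Substituting into $\mu f_i^2 - \nu g_i^2 = \mu - \nu = \mu f_j^2 - \nu g_j^2$ gives
\[
\mu\left(1 + \tfrac{\alpha_j}{\mu}\right)^2 f_j^2 - \nu\left(1 + \tfrac{\alpha_j}{\nu}\right)^2 g_j^2 = \mu f_j^2 - \nu g_j^2,
\]
which simplifies (the $\mu f_j^2$ and $\nu g_j^2$ terms partially cancel) to a relation of the form $\alpha_j(\alpha_j + 2\mu)\tfrac{1}{\mu} f_j^2 \cdot \mu = \ldots$; carefully, $\mu(1+\alpha_j/\mu)^2 - \mu = \alpha_j(\alpha_j + 2\mu)/\mu$, so the identity becomes $\tfrac{\alpha_j(\alpha_j+2\mu)}{\mu}f_j^2 = \tfrac{\alpha_j(\alpha_j + 2\nu)}{\nu}g_j^2$, i.e. $\nu(\alpha_j + 2\mu) f_j^2 = \mu(\alpha_j + 2\nu) g_j^2$ after dividing by $\alpha_j > 0$. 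Combining this single relation with the elliptical constraint $\mu f_j^2 - \nu g_j^2 = \mu - \nu$ is then a $2\times 2$ linear system in the unknowns $f_j^2, g_j^2$; solving it (Cramer's rule) should produce exactly $f_j^2 = (\alpha_j + 2\nu)\mu / F_1(\alpha_j)$ and $g_j^2 = (\alpha_j + 2\mu)\nu / F_1(\alpha_j)$, where the determinant works out to $F_1(\alpha_j) = (\mu+\nu)\alpha_j + 2\mu\nu$ up to a checkable constant factor. I would verify the arithmetic of this elimination carefully, since that is where sign and coefficient errors creep in.

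Finally, for the sign claims: $\alpha_j + 2\nu < 0$ should follow because $f_j^2 \geq 0$ forces $(\alpha_j + 2\nu)\mu$ and $F_1(\alpha_j)$ to have the same sign, and I expect to argue directly that $\alpha_j + 2\nu < 0$ using known bounds — namely $j \in \{3,4,6,7\}$ means $f_j \in \uu = [0,1]$ and $g_j \in \vv$ or $-\vv$ (so $g_j^2 \geq 1$), and from the elliptical constraint $\mu f_j^2 = \mu - \nu + \nu g_j^2$; combined with $\alpha_j \leq 1$ and the graphon eigenvalue estimates (e.g. $\nu < (1-\sqrt 3)/3$-ish, certainly $\nu < -1/2$ say, which can be extracted from $\mu - \nu > 1$ and $\mu + \nu = $ trace-type bounds, or more simply from the fact $\nu \le -\mu/\,$something), one gets $2\nu < -1 \le -\alpha_j$. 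Then $F_1(\alpha_j) = (\mu+\nu)\alpha_j + 2\mu\nu$: since $\mu\nu < 0$ and we can bound $(\mu+\nu)\alpha_j$ in magnitude, $F_1(\alpha_j) < 0$ as well, which is consistent with $f_j^2 = (\alpha_j+2\nu)\mu/F_1(\alpha_j) \geq 0$ (negative over negative). The main obstacle I anticipate is not the linear algebra but getting these sign estimates clean — i.e., establishing quantitatively that $\alpha_j + 2\nu < 0$ and $F_1(\alpha_j) < 0$ from the available bounds on $\mu$ and $\nu$; I would want to invoke the crude bounds $\mu > 1/2$, $\nu < -1/2$ (which follow from $\mu - \nu = \spr(W) \geq \spr(\text{2-by-2 graphon}) = 2/\sqrt 3$ together with $\mu + \nu$ being small, or directly from Theorem-level estimates already available) to make this rigorous and short.
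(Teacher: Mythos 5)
Your derivation of the main formulas is essentially the paper's: both arguments obtain $f_i = (1+\alpha_j/\mu)f_j$ and $g_i = (1+\alpha_j/\nu)g_j$ by subtracting eigenvector equations over $N_i\cap S$ and $N_j \cap S$, and then determine $f_j^2, g_j^2$ from the two ellipse constraints $\mu f_i^2 - \nu g_i^2 = \mu f_j^2 - \nu g_j^2 = \mu - \nu$; your $2\times 2$ system is one row operation away from the paper's, and your elimination does yield the stated formulas (the determinant works out to $(\nu-\mu)F_1(\alpha_j)$). One step you skip that the paper does not: the degenerate case $F_1(\alpha_j)=0$, equivalently $(1+\alpha_j/\mu)^2=(1+\alpha_j/\nu)^2$, in which the system has no unique solution. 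The paper rules this out by noting it would force $f_i^2g_j^2 = f_j^2g_i^2$, contradicting $f_i^2 g_j^2 > 1 > f_j^2 g_i^2$, which follows from the interval constraints ($f_i^2, g_j^2 \ge 1$ and $f_j^2, g_i^2 \le 1$ for $i\in\{1,2,5\}$, $j\in\{3,4,6,7\}$, with strictness coming from Lemma \ref{lem: K = indicator function}). You need some such argument before asserting the solution is unique.

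The genuine gap is in the sign claims. Your fallback plan — establish $\alpha_j+2\nu<0$ "directly" from $2\nu<-1\le-\alpha_j$ — rests on $\nu<-1/2$, which is false: from $\mu\le 1$ and $\mu-\nu\ge 2/\sqrt{3}$ one only gets $\nu\le 1-2/\sqrt{3}\approx-0.155$, and the extremal graphon itself has $\nu=(1-\sqrt{3})/3\approx-0.244$. So $2\nu>-1$ is entirely consistent with the hypotheses and no crude bound of this type can close the argument; likewise your magnitude bound on $(\mu+\nu)\alpha_j$ will not by itself force $F_1(\alpha_j)<0$ when $\nu$ is near $-0.155$ and $\alpha_j$ is near $1$. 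The mechanism you mention first is the right one, but you must apply it to $g_j^2$ rather than $f_j^2$: since $j\in\{3,4,6,7\}$ forces $g_j^2\ge 1>0$ while the numerator $(\alpha_j+2\mu)\nu$ is unambiguously negative ($\mu>0>\nu$), the formula for $g_j^2$ gives $F_1(\alpha_j)<0$ outright; then $f_j^2>0$ and $\mu>0$ in the formula for $f_j^2$ give $\alpha_j+2\nu<0$. That is exactly what the paper means by "inspecting the formulas," and it is the only ingredient your sign argument needs.
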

\begin{proof}
	By Lemma \ref{lem: local eigenfunction equation}, 
	\begin{align*}
		\mu f_i^2 - \nu g_i^2 &= \mu-\nu\\
		\mu f_j^2 - \nu g_j^2 &= \mu-\nu.  
	\end{align*}
By taking the difference of the eigenvector equations for $f_i$ and $f_j$ (and also $g_i$ and $g_j$), we obtain
	\begin{align*}
		\alpha_j f_j &= \mu(f_i - f_j)\\
		\alpha_j g_j &= \nu(g_i - g_j),
	\end{align*}
or, equivalently,
\begin{align*}
		f_i &= \left( 1 + \frac{\alpha_j}{\mu} \right) f_j\\
		g_i &= \left( 1 + \frac{\alpha_j}{\nu} \right) g_j.
	\end{align*}
	This leads to the system of equations 
	\begin{align*}
		\left[\begin{array}{cc}
			\mu 
				& -\nu\\
			\mu\cdot \left(1+\dfrac{\alpha_j}{\mu}\right)^2 
				& -\nu\cdot \left(1+\dfrac{\alpha_j}{\nu}\right)^2
		\end{array}\right]
		\cdot
		\left[
			\begin{array}{c}
				f_j^2\\
				g_j^2
			\end{array}
		\right]
		=
		\left[\begin{array}{c}
			\mu-\nu\\
			\mu-\nu
		\end{array}\right].  
	\end{align*}
	If the corresponding matrix is invertible, then after  substituting the claimed formulas for $f_j^2,g_j^2$ and simplifying, it follows that they are the unique solutions.  
	To verify that $F_1(\alpha_j)$ and $\alpha_j+2\nu$ are negative, it is sufficient to inspect the formulas for $f_j$ and $g_j$, noting that $\nu$ is negative and both $\mu$ and $\alpha_j+2\mu$ are positive.  
	\\
	
	Suppose the matrix is not invertible. By assumption $\mu, \nu \ne 0$, and so 
	$$\bigg(	1+\frac{\alpha_j}{\mu}\bigg)^2 = \bigg(	1+\frac{\alpha_j}{\nu}\bigg)^2.$$ 
	But, since $i\in \{1,2,5\}$ and $j\in \{3,4,6,7\}$, 
	\begin{align*}
		1 > f_j^2g_i^2 
		= 
			f_i^2g_i^2\cdot \left(
				1+\dfrac{\alpha_j}{\mu}
			\right)^2 
		= 
			f_i^2g_i^2\cdot \left(
				1+\dfrac{\alpha_j}{\nu}
			\right)^2
		= 
			f_i^2g_j^2 > 1, 
	\end{align*}
	a contradiction.  
\end{proof}

\begin{proposition}\label{prop: fg24 assume 2N4}
	Let $i\in \{1,2,5\} \cap S$ and $j\in \{3,4,6,7\} \cap S$ be such that $N_i \cap S = (N_j\cap S)\dot{\cup}\{i\}$. Then
		\begin{align*}
		f_i^2 
			&= \dfrac{(\alpha_i-2\nu)\mu}
			{-F_1(-\alpha_i)},
		\quad \quad 
		g_i^2 
			= \dfrac{(\alpha_i-2\mu)\nu}
			{-F_1(-\alpha_i)},
	\end{align*}
	and
			\begin{align*}
		f_j 
			&= \left( 1- \frac{\alpha_i}{\mu} \right) f_i,
		\quad \quad 
		g_j 
			= \left( 1- \frac{\alpha_i}{\nu} \right) g_i.
	\end{align*}
	Moreover, $-F_1(-\alpha_i)$ is positive and $\alpha_i-2\mu$ is negative.  
\end{proposition}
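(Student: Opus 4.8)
\textbf{Proof plan for Proposition \ref{prop: fg24 assume 2N4}.}

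The plan is to mirror the argument of Proposition \ref{prop: fg23 assume 23}, but now with the roles of $i$ and $j$ swapped in the neighborhood relation: here it is $i$ (the ``clique-side'' index in $\{1,2,5\}$) whose step is the one being dropped when passing from $N_i\cap S$ to $N_j\cap S$, rather than $j$. First I would write down the two elliptical constraints from Lemma \ref{lem: local eigenfunction equation}, namely $\mu f_i^2-\nu g_i^2=\mu-\nu$ and $\mu f_j^2-\nu g_j^2=\mu-\nu$. Then I would subtract the eigenvector equation \eqref{eq: program eigen f} for $f_j$ from that for $f_i$: since $N_i\cap S=(N_j\cap S)\,\dot\cup\,\{i\}$, the only extra term on the $i$-side is $\alpha_i f_i$, giving $\alpha_i f_i=\mu(f_i-f_j)$, hence $f_j=(1-\alpha_i/\mu)f_i$; the analogous manipulation with $g$ gives $g_j=(1-\alpha_i/\nu)g_i$. (One should double-check the orientation of the subtraction: because it is $N_i$ that contains the extra element, the equation reads $\mu f_i - \alpha_i f_i = \mu f_j$ — equivalently $\mu f_j = \sum_{k\in N_j\cap S}\alpha_k f_k$ — which rearranges to the stated relation.)

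Next I would substitute these expressions for $f_j,g_j$ into the second elliptical constraint, producing the linear system
\begin{align*}
    \left[\begin{array}{cc}
        \mu & -\nu\\
        \mu\left(1-\dfrac{\alpha_i}{\mu}\right)^2 & -\nu\left(1-\dfrac{\alpha_i}{\nu}\right)^2
    \end{array}\right]
    \left[\begin{array}{c} f_i^2\\ g_i^2\end{array}\right]
    =
    \left[\begin{array}{c}\mu-\nu\\ \mu-\nu\end{array}\right].
\end{align*}
Solving this $2\times 2$ system by Cramer's rule and simplifying should yield $f_i^2=(\alpha_i-2\nu)\mu/(-F_1(-\alpha_i))$ and $g_i^2=(\alpha_i-2\mu)\nu/(-F_1(-\alpha_i))$, where $F_1(x)=(\mu+\nu)x+2\mu\nu$; the key algebraic identity to verify is that the determinant of the coefficient matrix equals $-F_1(-\alpha_i)$ up to the appropriate factor, which is a short symbolic computation I would relegate to a one-line check rather than spell out.

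For the invertibility of the matrix, I would argue exactly as before: if it were singular then $(1-\alpha_i/\mu)^2=(1-\alpha_i/\nu)^2$ (using $\mu,\nu\neq 0$), and then chaining $f_i^2 g_j^2 = f_i^2 g_i^2 (1-\alpha_i/\nu)^2 = f_i^2 g_i^2 (1-\alpha_i/\mu)^2 = f_j^2 g_i^2$ would force a contradiction between $f_i^2 g_i^2>1$ (since $i\in\{1,2,5\}$ so $f_i^2>1>g_i^2$... wait — I need $f_i g_i$, and here the relevant product uses that $i$ is a ``loop'' vertex while $j$ is not, so $f_i^2 g_j^2$ and $f_j^2 g_i^2$ straddle $1$ in opposite directions). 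I would restate this sign bookkeeping carefully using Figure \ref{fig: f g interval}. Finally, the sign claims — that $-F_1(-\alpha_i)>0$ and $\alpha_i-2\mu<0$ — should fall out by inspecting the derived formula for $f_i^2$: the numerator $(\alpha_i-2\nu)\mu$ is positive because $\nu<0<\mu$, so the denominator $-F_1(-\alpha_i)$ must be positive; and then reading the $g_i^2$ formula, since $g_i^2\geq 0$ and the denominator is positive, the numerator $(\alpha_i-2\mu)\nu$ is nonnegative, forcing $\alpha_i-2\mu\leq 0$ (strict, since $\alpha_i\le 1<2\mu$ as $\mu$ is close to $(1+\sqrt3)/3$).

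The main obstacle I anticipate is simply getting the orientation of the two subtractions and the subsequent sign of $F_1(-\alpha_i)$ versus $-F_1(-\alpha_i)$ exactly right — the $-\alpha_i$ argument (rather than $+\alpha_j$) is precisely the bookkeeping difference from Proposition \ref{prop: fg23 assume 23}, and it is easy to drop a sign. The algebra itself is routine and I would present only the final simplified identities, noting that they are verified by direct substitution.
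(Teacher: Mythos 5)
Your proposal is correct and is exactly the paper's argument: the paper proves this proposition by saying only that the proof of Proposition \ref{prop: fg23 assume 23} "slightly modified" gives the result, and your write-up supplies precisely that modification (the extra term in the eigenvector equation is now $\alpha_i f_i$ rather than $\alpha_j f_j$, leading to $f_j=(1-\alpha_i/\mu)f_i$, the $2\times 2$ system with $-\alpha_i$ in place of $\alpha_j$, and the same singularity/sign bookkeeping). The algebra you defer to a one-line check does indeed produce the determinant $\alpha_i(\mu-\nu)F_1(-\alpha_i)/(\mu\nu)$ and the stated formulas, and your derivation of the two sign claims from $f_i^2>0$, $g_i^2\ge 0$, and $2\mu\ge\mu-\nu>1\ge\alpha_i$ is sound.
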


\begin{proof}
    The proof of Proposition \ref{prop: fg23 assume 23}, slightly modified, gives the desired result.
\end{proof}

\begin{proposition}\label{prop: a2 assume 234}
	Suppose $i,j,k\in S$ where $(i,j,k)$ is either $(2,3,4)$ or $(5,6,7)$. Then 
	\begin{align*}
        f_k &= \dfrac{ \mu f_j-\alpha_if_i }{\mu}, \quad \quad 
        g_k = \dfrac{ \nu g_j-\alpha_ig_i }{\nu},  
    \end{align*}
    and
	$$\alpha_i = \frac{2 \mu^2 \nu^2 \alpha_j}{ F_2(\alpha_j)}.$$
\end{proposition}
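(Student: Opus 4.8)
The plan is to derive the three formulas in Proposition~\ref{prop: a2 assume 234} by the same strategy used in the proofs of Propositions~\ref{prop: fg23 assume 23} and \ref{prop: fg24 assume 2N4}: write down the eigenvalue equations and the ellipse equation for the three indices $i,j,k$, eliminate variables, and solve a small linear system. Concretely, suppose $(i,j,k)=(2,3,4)$ (the case $(5,6,7)$ is identical after flipping the sign of $g$, which is already accounted for by the setup in Section~\ref{appendix 17 cases}). From the neighborhood structure $N_2\cap S = (N_3\cap S)\dot\cup\{3\}$ and $N_3\cap S = (N_4\cap S)\dot\cup\{2\}$, Proposition~\ref{prop: fg23 assume 23} already supplies $f_3^2, g_3^2$ and the relations $f_2=(1+\alpha_3/\mu)f_3$, $g_2=(1+\alpha_3/\nu)g_3$. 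Taking the difference of the $\mu$-eigenvector equations at indices $3$ and $4$ (whose neighborhoods within $S$ differ exactly by the index $2$) gives $\alpha_2 f_2 = \mu(f_3 - f_4)$, hence $f_4 = (\mu f_3 - \alpha_2 f_2)/\mu$, and similarly $g_4 = (\nu g_3 - \alpha_2 g_2)/\nu$. This establishes the first two claimed identities immediately.

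For the formula $\alpha_2 = 2\mu^2\nu^2\alpha_3 / F_2(\alpha_3)$, the idea is to impose the ellipse equation $\mu f_4^2 - \nu g_4^2 = \mu-\nu$ at index $4$ and substitute everything in terms of $\alpha_2,\alpha_3,\mu,\nu$. First I would rewrite $f_4 = (1+\alpha_3/\mu)f_3 - (\alpha_2/\mu)f_2 = f_2 - (\alpha_2/\mu) f_2 = (1-\alpha_2/\mu) f_2$... wait, more carefully: $f_4 = f_3 - (\alpha_2/\mu)f_2$ and $f_2 = (1+\alpha_3/\mu)f_3$, so $f_4 = \left(1 - (\alpha_2/\mu)(1+\alpha_3/\mu)\right) f_3$, and likewise $g_4 = \left(1 - (\alpha_2/\nu)(1+\alpha_3/\nu)\right) g_3$. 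Now plug these, together with the known expressions $f_3^2 = (\alpha_3+2\nu)\mu/F_1(\alpha_3)$ and $g_3^2 = (\alpha_3+2\mu)\nu/F_1(\alpha_3)$ from Proposition~\ref{prop: fg23 assume 23}, into $\mu f_4^2 - \nu g_4^2 = \mu - \nu$. Clearing the denominator $F_1(\alpha_3)$ and expanding yields a polynomial identity in $\alpha_2$; since $f_4^2, g_4^2$ are quadratic in $\alpha_2$, one should check that the $\alpha_2^2$ terms cancel (they must, for the stated linear-in-$\alpha_2$ answer to hold), leaving a linear equation $A\alpha_2 = B$ whose solution simplifies to $2\mu^2\nu^2\alpha_3/F_2(\alpha_3)$ after recognizing $F_2(\alpha_3) = 2(\mu\nu+(\mu+\nu)\alpha_3)^2 + (\mu+\nu)\alpha_3^3$ in the resulting expression.

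The main obstacle I anticipate is purely computational bookkeeping: verifying the cancellation of the quadratic-in-$\alpha_2$ terms and then matching the leftover rational function to the compact closed form involving $F_1$ and $F_2$. This is the kind of step the authors would carry out with a computer algebra system, and I would do the same rather than expand by hand; the key structural point that makes it work is that $f_2^2$ and $g_2^2$ are themselves expressible via $F_1(\alpha_3)$ (by Proposition~\ref{prop: fg23 assume 23}, $f_2 = (1+\alpha_3/\mu)f_3$ etc.), so every quantity entering the index-$4$ ellipse equation is a rational function of $\alpha_2,\alpha_3,\mu,\nu$ with a common denominator $\mu^2\nu^2 F_1(\alpha_3)$. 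Once the dust settles, one reads off $\alpha_2$. I would also remark, as in the earlier propositions, that the relevant denominators ($F_1(\alpha_3)$, $F_2(\alpha_3)$) are nonzero in the feasible regime, using $\nu<0<\mu$ and the sign constraints $\alpha_3+2\mu>0$, $\alpha_3+2\nu<0$ already recorded in Proposition~\ref{prop: fg23 assume 23}, so that the division is justified; this guarantees the claimed formula is the unique solution rather than one branch of several.
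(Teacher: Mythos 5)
Your proposal is correct in substance and follows the same route as the paper: derive $f_4,g_4$ by differencing the eigenvector equations at indices $3$ and $4$, impose the ellipse equation $\mu f_4^2-\nu g_4^2=\mu-\nu$ at index $4$, substitute the closed forms from Proposition \ref{prop: fg23 assume 23}, and simplify with a computer algebra system. One small correction to your anticipated bookkeeping: the $\alpha_2^2$ terms do \emph{not} cancel; rather, the constant term of the resulting quadratic in $\alpha_2$ vanishes because the ellipse equation already holds at index $3$, so the equation takes the form $\alpha_2\,(c_2\alpha_2+c_1)=0$, and one divides by $\alpha_2>0$ (legitimate since $2\in S$) to obtain the linear equation whose solution is $2\mu^2\nu^2\alpha_3/F_2(\alpha_3)$ --- exactly the factorization the paper exhibits. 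This does not affect the validity of your plan, since carrying out the expansion would reveal where the cancellation actually occurs.
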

\begin{proof}
    
    Using the eigenfunction equations for $f_j,f_k$ and for $g_j,g_k$, it follows that 
    \begin{align*}
        f_k &= \dfrac{ \mu f_j-\alpha_if_i }{\mu}, \quad \quad 
        g_k = \dfrac{ \nu g_j-\alpha_ig_i }{\nu}.  
    \end{align*}
    Combined with Lemma \ref{lem: local eigenfunction equation}, it follows that 
    \begin{align*}
        0 &= \mu f_k^2 - \nu g_k^2 - (\mu-\nu) \\
        &=
            \mu\left(
               \dfrac{ \mu f_j-\alpha_if_i }{\mu}
            \right)^2
            -\nu\left(
                 \dfrac{ \nu g_j-\alpha_ig_i }{\nu}
            \right)^2
            - (\mu-\nu).  
    \end{align*}
    After expanding, we note that the right-hand side can be expressed purely in terms of $\mu, \nu, \alpha_i, f_i^2, f_if_j, f_j^2, g_i^2, g_ig_j,$ and $g_j^2$.  
    Note that Proposition \ref{prop: fg23 assume 23} gives explicit formulas for $f_i^2, f_if_j$, and $f_j^2$, as well as $g_i^2, g_ig_j$, and $g_j^2$, purely in terms of $\mu, \nu$, and $\alpha_j$.  
    With the help of a computer algebra system, we make these substitutions and factor the right-hand side as: 
    \begin{align*}
        0 &= 
            (\mu-\nu)\cdot \alpha_i
            \cdot \dfrac{
                2\mu^2\nu^2\cdot \alpha_j
                - F_2(\alpha_j)\cdot \alpha_i
            }
            {
                \mu^2\nu^2 \cdot F_1(\alpha_i)
            }
        .  
    \end{align*}
    Since $\alpha_i, (\mu-\nu) \neq 0$, the desired claim holds.  
\end{proof}

\begin{proposition}\label{prop: a4 assume 1234}
	Suppose $1,i,j,k \in S$ where $(i,j,k)$ is either $(2,3,4)$ or $(5,6,7)$.  
	Then 
	\begin{align*}
        f_1 &= \dfrac{\mu f_i + \alpha_k f_k}{\mu}, \quad\quad g_1 = \dfrac{\nu g_i + \alpha_k g_k}{\nu}, 
    \end{align*}
    and
	\begin{align*}
	    \alpha_k 
	    &= 
	        \dfrac{
	            \alpha_j\cdot F_2(\alpha_j)^2
	        }
	        {
	            F_3(\alpha_j)
	        }.
	\end{align*}
\end{proposition}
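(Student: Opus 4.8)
Proposition \ref{prop: a4 assume 1234} is the natural successor of Propositions \ref{prop: fg23 assume 23}--\ref{prop: a2 assume 234}, and the plan is to prove it by exactly the same bootstrapping strategy: use the eigenvector equations to express $f_1,g_1$ (and hence the quantity $\mu f_1^2-\nu g_1^2$) purely in terms of the already-solved quantities, then impose the elliptical constraint \eqref{eq: program ellipse} for the index $1$ and solve the resulting scalar equation for $\alpha_k$.

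First I would recover the formulas for $f_1$ and $g_1$. Since $1\in S$ and $N_1\cap S = S$, while $N_i\cap S = (N_k\cap S)\dot\cup\{i\}$ only differs from $N_1\cap S$ by omitting the index $k$ (this is the content of the identities $N_1 = N_5\cup\{7\}$, $N_2 = N_1\setminus\{4\}$, etc., recorded right after Figure \ref{fig: 7-vertex loop graph}: in the $(2,3,4)$ case $N_1 = N_2\cup\{4\}$ and in the $(5,6,7)$ case $N_1 = N_5\cup\{7\}$), subtracting the eigenvector equation \eqref{eq: program eigen f} for index $i$ from that for index $1$ gives $\mu f_1 - \mu f_i = \alpha_k f_k$, i.e. $f_1 = (\mu f_i + \alpha_k f_k)/\mu$, and likewise $g_1 = (\nu g_i + \alpha_k g_k)/\nu$ from \eqref{eq: program eigen g}. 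This is the first displayed claim.

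Next I would substitute into $\mu f_1^2 - \nu g_1^2 = \mu-\nu$. Expanding, the left-hand side becomes a polynomial in $\mu,\nu,\alpha_k$ and the quantities $f_i^2, f_if_k, f_k^2, g_i^2, g_ig_k, g_k^2$. Now I invoke the earlier propositions to eliminate all of these: Proposition \ref{prop: fg23 assume 23} (applied with the pair $(i,j)$ there being our $(i,k)$) gives $f_i, g_i, f_k, g_k$ — hence all six bilinear combinations — as explicit rational functions of $\mu,\nu,\alpha_k$; and Proposition \ref{prop: a2 assume 234} gives $f_k$ in the form $(\mu f_j - \alpha_i f_i)/\mu$ together with the relation $\alpha_i = 2\mu^2\nu^2\alpha_j/F_2(\alpha_j)$, so that everything can ultimately be written in terms of $\mu,\nu,\alpha_j$ and the unknown $\alpha_k$. (Here I should be careful about which of the two chains — via $\alpha_j$ as in \ref{prop: a2 assume 234}, or directly via $\alpha_k$ as in \ref{prop: fg23 assume 23} — is cleanest; I expect the computation is organized so that $\alpha_i,\alpha_j,\alpha_k$ are all expressed through $\alpha_j$, matching the statement's right-hand side $\alpha_k = \alpha_j F_2(\alpha_j)^2/F_3(\alpha_j)$.) After clearing denominators, the ellipse equation becomes, with the help of a computer algebra system, a factored identity of the shape $0 = (\mu-\nu)\cdot(\text{nonzero factor})\cdot\big(\alpha_j F_2(\alpha_j)^2 - F_3(\alpha_j)\,\alpha_k\big)/(\text{denominator})$, exactly as in the proof of Proposition \ref{prop: a2 assume 234}; since $\mu\neq\nu$ and the spurious factors are nonzero (one must check the denominators $F_1(\cdot)$, $F_2(\alpha_j)$ do not vanish, using the sign information already established in Propositions \ref{prop: fg23 assume 23} and \ref{prop: fg24 assume 2N4}), the claimed formula for $\alpha_k$ follows.

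The main obstacle is purely computational bookkeeping: verifying that after all substitutions the polynomial identity factors with $\alpha_j F_2(\alpha_j)^2 - F_3(\alpha_j)\alpha_k$ as a factor, and that the cofactor is genuinely nonvanishing on the feasible region rather than introducing extraneous solutions. This is not conceptually hard — it mirrors line-for-line the argument already carried out for Proposition \ref{prop: a2 assume 234} — but it requires care to track the two cases $(i,j,k)\in\{(2,3,4),(5,6,7)\}$ uniformly (the neighborhood structure is symmetric under the permutation $\tau$ swapping $\{2,3,4\}\leftrightarrow\{5,6,7\}$ and $g\mapsto -g$ used in Lemma \ref{lem: 19 cases}, so a single computation suffices), and to confirm, via the sign assertions in the preceding propositions, that no denominator encountered along the way is zero. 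Since the details are a routine symbolic computation, I would simply record the factorization and cite the accompanying notebook at \cite{2021riasanovsky-spread}, exactly as done for the companion formulas.
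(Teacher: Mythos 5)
Your proposal follows the paper's proof essentially line for line: the formulas for $f_1,g_1$ come from differencing the eigenvector equations for indices $1$ and $i$, and the formula for $\alpha_k$ comes from substituting into the ellipse equation $\mu f_1^2-\nu g_1^2=\mu-\nu$, eliminating $f_i,f_j,g_i,g_j$ via Proposition \ref{prop: fg23 assume 23} and $f_k,g_k,\alpha_i$ via Proposition \ref{prop: a2 assume 234}, and extracting the factor $\alpha_j\cdot F_2(\alpha_j)^2-\alpha_k\cdot F_3(\alpha_j)$ with a computer algebra system. The only slip is your parenthetical suggesting Proposition \ref{prop: fg23 assume 23} is applied to the pair $(i,k)$ --- its neighborhood hypothesis holds for $(i,j)$, not $(i,k)$, when $j\in S$ --- but you correct this in the same paragraph by routing $f_k,g_k$ through Proposition \ref{prop: a2 assume 234} and expressing everything in terms of $\alpha_j$, which is exactly what the paper does.
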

\begin{proof}
    Using the eigenfunction equations for $f_1, f_i, f_j, f_k$ and for $g_1, g_i, g_j, g_k$, it follows that 
    \begin{align*}
        f_1 &= \dfrac{\mu f_i + \alpha_k f_k}{\mu}, \quad\quad g_1 = \dfrac{\nu g_i + \alpha_k g_k}{\nu}, 
    \end{align*}
    and 
    \begin{align*}
        f_k &= \dfrac{ \mu f_j-\alpha_if_i }{\mu}, \quad \quad 
        g_k = \dfrac{ \nu g_j-\alpha_ig_i }{\nu}.  
    \end{align*}
    Altogether, 
    \begin{align*}
        f_1 &= 
            \dfrac{\mu^2 f_i + \alpha_k( \mu f_j - \alpha_if_i )}{\mu^2}, 
        \quad\quad 
        g_1 &= 
            \dfrac{\nu^2 g_i + \alpha_k( \nu g_j - \alpha_ig_i )}{\nu^2}
    \end{align*}
    Combined with Lemma 3.6, it follows that 
    \begin{align*}
        0 &= \mu f_1^2-\nu g_1^2 - (\mu-\nu)
        \\
        &=
            \mu\left( \dfrac{\mu^2 f_i + \alpha_k( \mu f_j - \alpha_if_i )}{\mu^2} \right)^2
            -\nu\left( \dfrac{\nu^2 g_i + \alpha_k( \nu g_j - \alpha_ig_i )}{\nu^2} \right)^2
            -(\mu-\nu).  
    \end{align*}
    After expanding, we note that the right-hand side can be expressed purely in terms of $\mu,\nu, f_i^2, f_if_j, f_j^2, g_i^2, g_ig_j,$ and $\alpha_i$.  
    Note that Proposition \ref{prop: fg23 assume 23} gives explicit formulas for $f_i^2, f_if_j, f_j^2, g_i^2, g_ig_j,$ and $g_j^2$ purely in terms of $\mu, \nu$, and $\alpha_j$.  
	With the help of a computer algebra system, we make these substitutions and factor the right-hand side as: 
	\begin{align*}
	    0 
	    &= 
	        2\alpha_k\cdot (\mu-\nu)\cdot
	        \dfrac{
	            \alpha_j\cdot F_2(\alpha_j)^2 - \alpha_k\cdot F_3(\alpha_j)
	        }
	        {
	            F_1(\alpha_j)\cdot F_2(\alpha_j)^2
	        }
        .
 	\end{align*}
 	So the desired claim holds.  
\end{proof}

\begin{proposition}\label{prop: a4 assume 12N4}
	Suppose $1,i,k\in S$ and $j\notin S$ where $(i,j,k)$ is either $(2,3,4)$ or $(5,6,7)$.  
	Then, 
	\begin{align*}
        f_1 &= \dfrac{\mu f_i + \alpha_k f_k}{\mu}, \quad\quad g_1 = \dfrac{\nu g_i + \alpha_k g_k}{\nu}, 
    \end{align*}
    and
	\begin{align*}
		\alpha_k
			&= 
			\dfrac{2\alpha_i \mu^2\nu^2}
			{
				F_2(-\alpha_i)}
	\end{align*}
\end{proposition}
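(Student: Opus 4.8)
Proposition \ref{prop: a4 assume 12N4} is the analogue of Proposition \ref{prop: a4 assume 1234} in the case where the middle block $j$ (either $3$ or $6$) has weight zero, so that the relevant neighborhood relation is the one handled by Proposition \ref{prop: fg24 assume 2N4} rather than Proposition \ref{prop: fg23 assume 23}. The plan is to follow exactly the same three-step template: (1) use the linear eigenvector equations \eqref{eq: program eigen f} and \eqref{eq: program eigen g} to express $f_1, g_1$ and $f_k, g_k$ in terms of $f_i, g_i$; (2) substitute these into the elliptical constraint \eqref{eq: program ellipse} for index $k$; (3) invoke Proposition \ref{prop: fg24 assume 2N4} to replace $f_i^2$, $f_if_j$ (here with $j\notin S$, so the role of "$f_j$" is played by the value $(1-\alpha_i/\mu)f_i$), $f_j^2$, and the corresponding $g$-products by explicit rational functions of $\mu,\nu,\alpha_i$, and then factor.

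First I would set up step (1). Since $1\in S$ and $N_1\cap S = S$, the eigenfunction equation at block $1$ reads $\mu f_1 = \sum_{\ell\in S}\alpha_\ell f_\ell$; subtracting the eigenfunction equation at block $i$ (whose neighborhood within $S$ is $N_i\cap S = (N_j\cap S)\dot\cup\{i\}$, i.e. $S\setminus\{k\}$ once $j\notin S$ — one must check the indexing carefully against the definitions of $N_2,\dots,N_7$) yields $\mu f_1 - \mu f_i = \alpha_k f_k$, hence $f_1 = (\mu f_i + \alpha_k f_k)/\mu$, and identically $g_1 = (\nu g_i + \alpha_k g_k)/\nu$. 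This is precisely the first displayed conclusion, and the same computation with $\nu$ in place of $\mu$ gives the $g$-version. I also need $f_k, g_k$: the eigenfunction equation at $k$ together with Proposition \ref{prop: fg24 assume 2N4} (which already gives $f_j = (1-\alpha_i/\mu)f_i$ as the "would-be" value, and more importantly gives $f_i^2, g_i^2$ explicitly) lets me write everything as functions of $\mu,\nu,\alpha_i,\alpha_k$ and the single scalar degree of freedom. Then step (2): plug into $\mu f_k^2 - \nu g_k^2 = \mu - \nu$ — wait, it is cleaner to plug into $\mu f_1^2 - \nu g_1^2 = \mu-\nu$ since that is the constraint that newly involves $\alpha_k$ — expand, and substitute the Proposition \ref{prop: fg24 assume 2N4} formulas for $f_i^2, g_i^2$ (and the induced values of the cross terms). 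The result is a rational expression in $\alpha_k$ which, after clearing denominators, is linear in $\alpha_k$ with coefficients that are polynomials in $\mu,\nu,\alpha_i$; solving gives $\alpha_k = 2\alpha_i\mu^2\nu^2 / F_2(-\alpha_i)$, where the appearance of $F_2(-\alpha_i)$ (as opposed to $F_2(\alpha_i)$) is the signature of having used Proposition \ref{prop: fg24 assume 2N4}'s $-F_1(-\alpha_i)$ denominators.

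The main obstacle, as in the companion propositions, is the algebra: one must verify that after the substitutions the messy expression factors as claimed, and in particular that the overall factor $\alpha_k(\mu-\nu)$ can be divided out (using $\alpha_k>0$ since $k\in S$ and $\mu>\nu$), leaving the clean linear equation whose solution is the stated formula. This is a routine but lengthy symbolic computation, of exactly the type the paper delegates to a computer algebra system; I would carry it out the same way, and the only genuine care needed is (a) getting the neighborhood bookkeeping right so that the correct blocks are summed in each eigenvector equation, and (b) confirming that $F_2(-\alpha_i)\neq 0$ on the feasible domain so the division is legitimate — this follows from the positivity statement in Proposition \ref{prop: fg24 assume 2N4} that $-F_1(-\alpha_i)>0$ together with inspection of the definition $F_2(x) = 2(\mu\nu+(\mu+\nu)x)^2 + (\mu+\nu)x^3$ at $x=-\alpha_i$. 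Thus the proof will read, in full, essentially: "Using the eigenfunction equations for $f_1,f_i,f_k$ and $g_1,g_i,g_k$ one obtains the first two displayed identities; substituting into \eqref{eq: program ellipse} for index $1$ and applying Proposition \ref{prop: fg24 assume 2N4} and a computer algebra system to factor, the claim follows."
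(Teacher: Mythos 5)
Your proposal matches the paper's proof: derive $f_1,g_1$ from the eigenvector equations at blocks $1$ and $i$, express $f_k,g_k$ via $f_k=(1-\alpha_i/\mu)f_i$ and $g_k=(1-\alpha_i/\nu)g_i$, substitute into the ellipse equation at index $1$ together with the explicit formulas for $f_i^2,g_i^2$, and factor out $\alpha_k(\mu-\nu)$ with a computer algebra system. Your bookkeeping is in fact cleaner than the paper's written version, which (evidently by copy-paste from the preceding proposition) cites Proposition \ref{prop: fg23 assume 23} and reproduces the factored identity from Proposition \ref{prop: a4 assume 1234}, whereas the correct substitution is the one you name, Proposition \ref{prop: fg24 assume 2N4}, which is what produces the $F_2(-\alpha_i)$ denominator in the stated conclusion.
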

\begin{proof}
	Using the eigenfunction equations for $f_1, f_i, f_j, f_k$ and for $g_1, g_i, g_j, g_k$, it follows that 
    \begin{align*}
        f_1 &= \dfrac{\mu f_i + \alpha_k f_k}{\mu}, \quad\quad g_1 = \dfrac{\nu g_i + \alpha_k g_k}{\nu}, 
    \end{align*}
    and 
    \begin{align*}
        f_k &= \dfrac{ \mu f_i-\alpha_if_i }{\mu}, \quad \quad 
        g_k = \dfrac{ \nu g_i-\alpha_ig_i }{\nu}.  
    \end{align*}
    Altogether, 
    \begin{align*}
        f_1 &= 
            \dfrac{\mu^2 f_i + \alpha_k( \mu f_i - \alpha_if_i )}{\mu^2}, 
        \quad\quad 
        g_1 &= 
            \dfrac{\nu^2 g_i + \alpha_k( \nu g_i - \alpha_ig_i )}{\nu^2}
    \end{align*}
    Combined with Lemma 3.6, it follows that 
    \begin{align*}
        0 &= \mu f_1^2-\nu g_1^2 - (\mu-\nu)
        \\
        &=
            \mu\left( \dfrac{\mu^2 f_i + \alpha_k( \mu f_i - \alpha_if_i )}{\mu^2} \right)^2
            -\nu\left( \dfrac{\nu^2 g_i + \alpha_k( \nu g_i - \alpha_if_i )}{\nu^2} \right)^2
            -(\mu-\nu).  
    \end{align*}
    After expanding, we note that the right-hand side can be expressed purely in terms of $\mu,\nu, f_i^2, f_if_j, f_j^2, g_i^2, g_ig_j,$ and $\alpha_i$.  
    Note that Proposition \ref{prop: fg23 assume 23} gives explicit formulas for $f_i^2, f_if_j, f_j^2, g_i^2, g_ig_j,$ and $g_j^2$ purely in terms of $\mu, \nu$, and $\alpha_j$.  
	With the help of a computer algebra system, we make these substitutions and factor the right-hand side as: 
	\begin{align*}
	    0 
	    &= 
	        2\alpha_k\cdot (\mu-\nu)\cdot
	        \dfrac{
	            \alpha_j\cdot F_2(\alpha_j)^2 - \alpha_k\cdot F_3(\alpha_j)
	        }
	        {
	            F_1(\alpha_j)\cdot F_2(\alpha_j)^2
	        }
        .
 	\end{align*}
 	So the desired claim holds.  
\end{proof}

\begin{proposition}\label{prop: a4 assume N2347}
	Suppose $1\notin S$ and $i,j,k,\ell\in S$ where $(i,j,k,\ell)$ is either $(2,3,4,7)$ or $(5,6,7,4)$.  
	Then 
	\begin{align*}
	    \alpha_k &= \dfrac{F_4(x)}{F_3(x)}.  
	\end{align*}
\end{proposition}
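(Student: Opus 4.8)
The statement of Proposition \ref{prop: a4 assume N2347} follows the same template as Propositions \ref{prop: fg23 assume 23} through \ref{prop: a4 assume 12N4}, so the plan is to reuse that machinery. The hypothesis is that $1\notin S$ while $i,j,k,\ell\in S$, where $(i,j,k,\ell)$ is either the "top" chain $(2,3,4,7)$ or the mirrored "bottom" chain $(5,6,7,4)$; in either case $i\in\{1,2,5\}$, the triple $(i,j,k)$ is one of the two chains appearing in Proposition \ref{prop: a2 assume 234}, and $\ell$ is the extra vertex on the opposite side (vertex $7$ adjacent only to $\{1,2,3,4\}$, or vertex $4$ adjacent only to $\{1,5,6,7\}$), so that in the absence of vertex $1$ the neighborhood structure $N_\ell\cap S$ picks out exactly $\{i,j,k\}$ (respectively the mirror). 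The first step is to write down the eigenvector equations \eqref{eq: program eigen f} and \eqref{eq: program eigen g} for the four indices $i,j,k,\ell$ together with the ellipse equation \eqref{eq: program ellipse} for index $\ell$.

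Next I would eliminate variables exactly as in the proof of Proposition \ref{prop: a2 assume 234} and Proposition \ref{prop: a4 assume 1234}. Concretely: the eigenvector equations for $f_j,f_k$ (and $g_j,g_k$) give $f_k = (\mu f_j - \alpha_i f_i)/\mu$ and $g_k=(\nu g_j-\alpha_i g_i)/\nu$ as in Proposition \ref{prop: a2 assume 234}; the eigenvector equation for $f_\ell$ expresses $\mu f_\ell = \alpha_i f_i + \alpha_j f_j + \alpha_k f_k$ (the sum over $N_\ell\cap S=\{i,j,k\}$), and similarly for $g_\ell$; and Proposition \ref{prop: fg23 assume 23}, applied with the pair $(i,j)$ (valid since $N_i\cap S=(N_j\cap S)\dot\cup\{j\}$ once vertex $1$ is absent — this is exactly the hypothesis there), supplies closed forms for $f_i^2,f_if_j,f_j^2,g_i^2,g_ig_j,g_j^2$ purely in terms of $\mu,\nu,\alpha_j$. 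Substituting all of this into the ellipse equation $\mu f_\ell^2 - \nu g_\ell^2 = \mu-\nu$ for index $\ell$ and clearing denominators yields a polynomial identity. With a computer algebra system one factors the result; the nontrivial factor will be linear in $\alpha_k$, of the form $\alpha_j F_4(\alpha_j) - \alpha_k F_3(\alpha_j)$ up to a nonzero rational function of $\mu,\nu$, after renaming $x=\alpha_j$. Solving for $\alpha_k$ gives $\alpha_k = F_4(x)/F_3(x)$ with $x=\alpha_j$, which is the claimed formula (the $F_3,F_4$ here being precisely the polynomials defined just before the proposition).

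The one genuine subtlety — and the step I expect to be the main obstacle — is bookkeeping the mirror case $(i,j,k,\ell)=(5,6,7,4)$ so that it genuinely reduces to the same polynomial. For that case one replaces $g$ by $-g$ (as in the $\tau$-symmetry used in the proof of Lemma \ref{lem: 19 cases}); under this sign flip the roles of $\mu$ and the magnitudes behave symmetrically, the eigenvector equations for $(5,6,7)$ map onto those for $(2,3,4)$, and vertex $4$'s neighborhood $\{1,5,6,7\}\cap S=\{5,6,7\}=\{i,j,k\}$ plays the role vertex $7$ played. I would verify that the polynomial $F_3,F_4$ are invariant under the relevant $(\mu,\nu)$-symmetry so that both cases collapse to one identity; this is a routine but error-prone check best delegated to the computer algebra notebook already referenced at \cite{2021riasanovsky-spread}. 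Apart from that, there is no conceptual difficulty: the proof is a direct elimination argument identical in spirit to Propositions \ref{prop: a2 assume 234}, \ref{prop: a4 assume 1234}, and \ref{prop: a4 assume 12N4}, and the only thing to be careful about is that $F_3(x)\neq 0$ at the relevant value of $x$, which follows from the same feasibility constraints ($F_1(\alpha_j),F_2(\alpha_j)$ nonzero) already used in those earlier propositions.
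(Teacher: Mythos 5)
Your proposal follows exactly the paper's route: express $f_\ell,g_\ell$ via the eigenvector equations over $N_\ell\cap S=\{i,j,k\}$, eliminate $f_k,g_k$ as in Proposition \ref{prop: a2 assume 234}, substitute the closed forms for $f_i^2,f_if_j,f_j^2,g_i^2,g_ig_j,g_j^2$ from Proposition \ref{prop: fg23 assume 23}, and feed the result into the ellipse equation $\mu f_\ell^2-\nu g_\ell^2=\mu-\nu$, which a computer algebra system factors as $2(\mu-\nu)\alpha_k\bigl(F_4(\alpha_j)-\alpha_k F_3(\alpha_j)\bigr)/\bigl(F_1(\alpha_j)F_2(\alpha_j)^2\bigr)$. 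One small internal inconsistency: you write the linear factor as $\alpha_j F_4(\alpha_j)-\alpha_k F_3(\alpha_j)$ (apparently by analogy with Proposition \ref{prop: a4 assume 1234}) yet then solve to the correct $\alpha_k=F_4(\alpha_j)/F_3(\alpha_j)$ — the factor has no extra $\alpha_j$, as the CAS computation confirms.
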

\begin{proof}
    Using the eigenfunction equations for $f_\ell, f_i, f_j, f_k$ and for $g_\ell, g_i, g_j, g_k$, it follows that 
    \begin{align*}
        f_\ell 
        &= \dfrac{\alpha_if_i + \alpha_jf_j + \alpha_k f_k}{\mu}, 
        \quad\quad 
        g_1 = \dfrac{\alpha_ig_i + \alpha_jg_j + \alpha_k g_k}{\nu}, 
    \end{align*}
    and 
    \begin{align*}
        f_k &= \dfrac{ \mu f_j-\alpha_if_i }{\mu}, \quad \quad 
        g_k = \dfrac{ \nu g_j-\alpha_ig_i }{\nu}.  
    \end{align*}
    Altogether, 
    \begin{align*}
        f_\ell 
        &= 
            \dfrac{ \mu \alpha_if_i + \alpha_jf_j + \alpha_k (\mu f_j-\alpha_if_i) }{\mu^2}, 
        \quad\quad 
        g_\ell 
        &= 
            \dfrac{ \nu \alpha_ig_i + \alpha_jg_j + \alpha_k (\nu g_j-\alpha_ig_i) }{\nu^2}
    \end{align*}
    Combined with Lemma \ref{lem: local eigenfunction equation}, it follows that 
    \begin{align*}
        0 &= \mu f_\ell^2 - \nu g_\ell^2 - (\mu-\nu)\\
        &= 
            \mu\left(
                \dfrac{
                    \mu \alpha_if_i + \alpha_jf_j + \alpha_k (\mu f_j-\alpha_if_i)
                }{\mu^2}
            \right)^2-\nu\left(
                \dfrac{
                    \nu \alpha_ig_i + \alpha_jg_j + \alpha_k (\nu g_j-\alpha_ig_i)
                }{\nu^2}
            \right)^2\\
            &\quad -(\mu-\nu)
    \end{align*}
    
    After expanding, we note that the right-hand side can be expressed purely in terms of $\mu,\nu, f_i^2, f_if_j, f_j^2, g_i^2, g_ig_j,$ and $\alpha_i$.  
    Note that Proposition \ref{prop: fg23 assume 23} gives explicit formulas for $f_i^2, f_if_j, f_j^2, g_i^2, g_ig_j,g_j^2,\alpha_i,\alpha_j,\alpha_k$ purely in terms of $\mu, \nu$, and $\alpha_j$.  
	With the help of a computer algebra system, we make these substitutions and factor the right-hand side as: 
    
    \begin{align*}
        0
        &= 
            2(\mu-\nu)\cdot\alpha_k\cdot 
            \dfrac{
                F_4(\alpha_j)-\alpha_k\cdot F_3(\alpha_j)
            }
            {F_1(\alpha_j)\cdot F_2(\alpha_j)^2}
    \end{align*}

\end{proof}

\begin{proposition}\label{prop: a47 assume 2457}
	Suppose $2,4,5,7\in S$ and let $\alpha_{\ne 4,7}:=\sum_{\substack{i\in S,\\ i \ne 4,7}} \alpha_i$. Then 
	\begin{align*}
	    \alpha_4 &= \frac{ (1-\alpha_{\ne 4,7}) f_7 - \mu (f_2 - f_7)}{f_4 +f_7}, \\
	    \alpha_7 &= \frac{ (1-\alpha_{\ne 4,7}) f_4 - \mu (f_5 - f_2)}{f_4 +f_7},
	\end{align*}
	and
		\begin{align*}
	    \alpha_4 &= \frac{ ((1-\alpha_{\ne 4,7}) g_7 - \nu (g_2 - g_7)}{g_4 +g_7}, \\
	    \alpha_7 &= \frac{ (1-\alpha_{\ne 4,7}) g_4 - \nu (g_5 - g_2)}{g_4 +g_7}.
	\end{align*}
\end{proposition}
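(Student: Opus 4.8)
\textbf{Proof proposal for Proposition \ref{prop: a47 assume 2457}.}

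The plan is to extract the stated formulas directly from the eigenfunction equations \eqref{eq: program eigen f} and \eqref{eq: program eigen g} for the graph $G^*$, restricted to the index set $S\supseteq\{2,4,5,7\}$. Recall that for the case $2457$ (or more generally any $S$ containing $2,4,5,7$), the neighborhoods in $G^*$ give, for vertex $4$, that $N_4\cap S$ excludes $2,3$ but includes $5$ and $7$ among the ``negative'' block; for vertex $7$, $N_7\cap S$ consists of the indices in $\{1,2,3,4\}\cap S$. The key observation is that the eigenvector equations $\mu f_4 = \sum_{j\in N_4\cap S}\alpha_j f_j$ and $\mu f_7 = \sum_{j\in N_7\cap S}\alpha_j f_j$ are each linear in the two unknowns $\alpha_4,\alpha_7$ once we separate out the contribution $\alpha_{\ne 4,7} := \sum_{i\in S,\, i\ne 4,7}\alpha_i$ of the remaining blocks and use the normalization $\sum_{i\in S}\alpha_i = 1$ from \eqref{eq: program norms}, so that $\alpha_4+\alpha_7 = 1-\alpha_{\ne 4,7}$.

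Concretely, first I would write down the two equations $\mu f_4 = (\text{terms not involving }\alpha_4,\alpha_7) + \alpha_5 f_5 + \alpha_7 f_7$ and $\mu f_7 = (\text{terms not involving }\alpha_4,\alpha_7) + \alpha_4 f_4$, using the explicit neighborhoods $N_4, N_7$ of $G^*$; here the ``$\mu f_5$-type'' equation for vertex $5$ and the substitution $\alpha_4 = (1-\alpha_{\ne4,7})-\alpha_7$ let one rewrite everything so the right-hand sides involve only $f_2, f_4, f_5, f_7$ and the fixed quantity $\alpha_{\ne 4,7}$. Solving this $2\times 2$ linear system in $\alpha_4,\alpha_7$ — whose coefficient matrix has determinant proportional to $f_4+f_7$, which is nonzero since $f_4,f_7>0$ by the domain constraints $f_i\in F_i$ — yields the two claimed formulas
\begin{align*}
    \alpha_4 &= \frac{(1-\alpha_{\ne 4,7}) f_7 - \mu(f_2-f_7)}{f_4+f_7}, \qquad
    \alpha_7 = \frac{(1-\alpha_{\ne 4,7}) f_4 - \mu(f_5-f_2)}{f_4+f_7}.
\end{align*}
Repeating the identical manipulation with $f$ replaced by $g$ and $\mu$ replaced by $\nu$ (the neighborhoods are the same; only the eigenvalue changes) gives the companion pair with $g_2,g_4,g_5,g_7,\nu$. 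Since $g_4,g_7$ both lie in $G_4=\vv,\ G_7=-\vv$ respectively, $g_4+g_7$ could in principle vanish, so I would note that $g_4>1>0$ and $|g_7|>1$, hence $g_4+g_7$ is controlled — but more robustly, these are presented as \emph{derived identities valid for any solution}, and the programs in Appendix \ref{sec: appendix} use them only on boxes where the relevant denominators are bounded away from zero via interval arithmetic, so the division is justified whenever it is invoked.

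I do not expect a genuine obstacle here: the statement is purely a bookkeeping lemma recording a linear solve, and the only points requiring care are (i) correctly reading off $N_4\cap S$ and $N_7\cap S$ from the definition of $G^*$ so the right blocks appear, and (ii) using $\sum_{i\in S}\alpha_i=1$ to eliminate one variable and reduce to a genuinely $2\times 2$ (rather than under-determined) system. The mild subtlety — and the closest thing to a ``hard part'' — is making sure the vertex-$5$ equation $\mu f_5 = \sum_{j\in N_5\cap S}\alpha_j f_j$ is used to absorb the $\alpha_5 f_5$ term, so that the final right-hand sides contain only the differences $f_2-f_7$ and $f_5-f_2$ as written, rather than an explicit $\alpha_5$; this is a short substitution but is where an error in the combinatorics of $G^*$ would surface. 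Once that is checked, the proof is a one-line invocation of Cramer's rule on each of the two $2\times 2$ systems.
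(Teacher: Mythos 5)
Your overall target is right---the proposition is a linear solve paired with $\alpha_4+\alpha_7=1-\alpha_{\ne 4,7}$---but the pair of eigenvector equations you start from does not close up into the claimed formulas, and this is a genuine gap rather than a bookkeeping detail. The equations at vertices $4$ and $7$ read
\[
\mu f_4=\alpha_1f_1+\alpha_5f_5+\alpha_6f_6+\alpha_7f_7,\qquad
\mu f_7=\alpha_1f_1+\alpha_2f_2+\alpha_3f_3+\alpha_4f_4,
\]
so the ``terms not involving $\alpha_4,\alpha_7$'' contain $\alpha_1f_1$, $\alpha_2f_2$, $\alpha_3f_3$, $\alpha_6f_6$ individually; these cannot be rewritten in terms of the aggregate $\alpha_{\ne 4,7}$ together with $f_2,f_4,f_5,f_7$, which is all the statement permits. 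Invoking the vertex-$5$ equation to ``absorb $\alpha_5f_5$'' does not remove the other stray terms, and the coefficient matrix of your system in $(\alpha_4,\alpha_7)$ is $\bigl(\begin{smallmatrix}0&f_7\\ f_4&0\end{smallmatrix}\bigr)$, whose determinant is $-f_4f_7$, not something proportional to $f_4+f_7$. The denominator $f_4+f_7$ in the statement is already a signal that the correct $2\times 2$ system is a different one.

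The identity you actually need is $\alpha_7f_7-\alpha_4f_4=\mu(f_2-f_5)$, and the way to get it is to subtract the eigenvector equation at vertex $5$ from the one at vertex $2$: since $N_2=\{1,2,3,5,6,7\}$ and $N_5=\{1,2,3,4,5,6\}$ differ exactly in $\{4,7\}$, every term $\alpha_if_i$ with $i\ne 4,7$ cancels in the difference. This is precisely what the paper does. Paired with $\alpha_4+\alpha_7=1-\alpha_{\ne 4,7}$, this is a $2\times 2$ system with determinant $\pm(f_4+f_7)$, and Cramer's rule gives the displayed formulas (the first one should read $f_2-f_5$ rather than $f_2-f_7$; the $\alpha_7$ formula with $f_5-f_2$ is consistent with this). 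The $g$-versions follow identically with $\nu$ in place of $\mu$. Your remarks on the nonvanishing of $f_4+f_7$ and on handling $g_4+g_7$ via interval arithmetic are fine, but they are secondary to fixing the choice of equations.
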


\begin{proof}
Taking the difference of the eigenvector equations for $f_2$ and $f_5$, and for $g_2$ and $g_5$, we have
$$\alpha_7 f_7 -\alpha_4 f_4 = \mu(f_2 - f_5), \qquad \alpha_7 g_7 -\alpha_4 g_4 = \nu(g_2 - g_5).$$
Combining these equalities with the equation $\alpha_4 + \alpha_7 = 1-\alpha_{\ne 4,7}$ completes the proof.
\end{proof}

\subsection{Algorithm}

In this subsection, we briefly detail how the computer-assisted proof of Lemma \ref{lem: 2 feasible sets} works. This proof is via interval arithmetic, and, at a high level, consists largely of iteratively decomposing the domain of feasible choices of $(\alpha_3,\alpha_6,\mu,\nu)$ for a given $S$ into smaller subregions (boxes) until all subregions violate some required equality or inequality. We provide two similar, but slightly different computer assisted proofs of this fact, and both of which can be found at the spread\_numeric GitHub repository \cite{2021riasanovsky-spread}. The first, found in folder interval$1$, is a shorter and simpler version, containing slightly fewer formulas, albeit at the cost of overall computation and run time. The second, found in the folder interval$2$, contains slightly more formulas and makes a greater attempt to optimize computation and run time. Below, we further detail the exact output and run time of both versions (exact output can be found in \cite{2021riasanovsky-spread}), but for now, we focus on the main aspects of both proofs, and consider both together, saving a more detailed discussion of the differences for later.

These algorithms are implemented in Python using the PyInterval package. The algorithms consists of two parts: a main file containing useful formulas and subroutines and $17$ different files used to rule out each of the $17$ cases for $S$. The main file, casework\_helper, contains functions with the formulas of Appendix Subsection \ref{sub-sec: formulas} (suitably modified to limit error growth), and functions used to check that certain equalities and inequalities are satisfied. In particular,  casework\_helper contains formulas for 
\begin{itemize}
	\item $\alpha_2$, assuming $\{2,3,4\} \subset S$ (using Proposition \ref{prop: a2 assume 234})
	\item $\alpha_4$, assuming $\{1,2,3,4\} \subset S$ (using Proposition \ref{prop: a4 assume 1234})
	\item $\alpha_4$, assuming $\{2,3,4,7\} \subset S$, $1 \not \in S$ (using Proposition \ref{prop: a4 assume N2347})
	\item $\alpha_4$, assuming $\{1,2,4\}\subset S$, $3 \not \in S$ (using Proposition \ref{prop: a4 assume 12N4})
	\item $f_3$ and $g_3$, assuming $\{2,3\}\subset S$ (using Proposition \ref{prop: fg23 assume 23})
	\item $f_2$ and $g_2$, assuming $\{2,3\} \subset S$ (using Proposition \ref{prop: fg23 assume 23})
	\item $f_4$ and $g_4$, assuming $\{2,3,4\} \subset S$ (using Proposition \ref{prop: a2 assume 234})
	\item $f_1$ and $g_1$, assuming $\{1,2,4\} \subset S$ (using Propositions \ref{prop: a4 assume 1234} and \ref{prop: a4 assume 12N4})
	\item $f_2$ and $g_2$, assuming $\{2,4\} \subset S$, $3 \not\in S$ (using Proposition \ref{prop: fg24 assume 2N4})
	\item $f_4$ and $g_4$, assuming $\{2,4\} \subset S$, $3 \not\in S$ (using Proposition \ref{prop: fg24 assume 2N4})
\end{itemize}
as a function of $\alpha_3$, $\mu$, and $\nu$ (and $\alpha_2$ and $\alpha_4$, which can be computed as functions of $\alpha_3$, $\mu$, and $\nu$). Some of the formulas are slightly modified compared to their counterparts in this Appendix, for the purpose of minimizing accumulated error. Each formula is performed using interval arithmetic, while restricting the resulting interval solution to the correct range. In addition, we recall that we have the inequalities
\begin{itemize}
    \item $\alpha_i \in [0,1]$, for $i \in S$
    \item $|g_2|,|f_3|\le 1$, $|f_2|,|g_3| \ge 1$, for $\{2,3\}\subset S$
    \item $|f_4|\le 1$, $|g_4|\ge 1$, for $4 \in S$
    \item $|f_1|\ge 1$, $|g_1|\le 1$, for $\{1,2,4\}\in S$
    \item $|f_4|,|g_2| \le 1$, $|f_2|,|g_4|\ge 1$, for $\{2,4\} \in S$, $3 \not \in S$
    \item $\alpha_3 + 2 \nu \le 0$, for $\{2,3\} \in S$ (using Proposition \ref{prop: fg23 assume 23})
    \item $\alpha_2 - 2 \mu \le 0$, for $\{2,4\} \in S$, $3 \not \in S$ (using Proposition \ref{prop: fg24 assume 2N4}).
\end{itemize}
These inequalities are also used at various points in the algorithms. This completes a brief overview of the casework\_helper file. Next, we consider the different files used to test feasibility for a specific choice of $S \subset \{1,...,7\}$, each denoted by case$\{\text{elements of S}\}$, i.e., for $S = \{1,4,5,7\}$, the associated file is case$1457$. For each specific case, there are a number of different properties which can be checked, including eigenvector equations, bounds on edge density, norm equations for the eigenvectors, and the ellipse equations. Each of these properties has an associated function which returns FALSE, if the property cannot be satisfied, given the intervals for each variable, and returns TRUE otherwise. The implementation of each of these properties is rather intuitive, and we refer the reader to the programs themselves (which contain comments) for exact details \cite{2021riasanovsky-spread}. Each feasibility file consists of two parts. The first part is a function is\_feasible(mu,nu,a3,a6) that, given bounding intervals for $\mu$, $\nu$, $\alpha_3$, $\alpha_6$, computes intervals for all other variables (using interval arithmetic) and checks feasibility using the functions in the casework\_helper file. If any checked equation or inequality in the file is proven to be unsatisfiable (i.e., see Example \ref{ex: infeasible box}), then this function outputs `FALSE', otherwise the function outputs `TRUE' by default. The second part is a divide and conquer algorithm that breaks the hypercube
$$ (\mu, \nu, \alpha_3,\alpha_6) \in [.65,1] \times [-.5,-.15] \times [0,1] \times [0,1]$$
into sub-boxes of size $1/20$ by $1/20$ by $1/10$ by $1/10$, checks feasibility in each box using is\_feasible, and subdivides any box that does not rule out feasibility (i.e., subdivides any box that returns `TRUE'). This subdivision breaks a single box into two boxes of equal size, by subdividing along one of the four variables. The variable used for this subdivision is chosen iteratively, in the order $\alpha_3,\alpha_6,\mu, \nu, \alpha_3,...$. The entire divide and conquer algorithm terminates after all sub-boxes, and therefore, the entire domain
$$ (\mu, \nu, \alpha_3,\alpha_6) \in [.65,1] \times [-.5,-.15] \times [0,1] \times [0,1],$$
has been shown to be infeasible, at which point the algorithm prints `infeasible'. Alternatively, if the number of subdivisions reaches some threshold, then the algorithm terminates and outputs `feasible'.

Next, we briefly detail the output of the algorithms casework\_helper/intervals$1$ and casework\_helper/intervals$2$. Both algorithms ruled out 15 of the 17 choices for $S$ using a maximum depth of 26, and failed to rule out cases $S = \{4,5,7\}$ and $S = \{1,7\}$ up to depth 51. For the remaining 15 cases, intervals$1$ considered a total of 5.5 million boxes, was run serially on a personal computer, and terminated in slightly over twelve hours. For these same 15 cases, intervals$2$ considered a total of 1.3 million boxes, was run in parallel using the Penn State math department's `mathcalc' computer, and terminated in under 140 minutes. The exact output for both versions of the spread\_numeric algorithm can be found at \cite{2021riasanovsky-spread}.












    
    
    
    
    
    
    
    

    


    
    


    
    
    


    
    

    
    
    

    
    


    
    
    


    
    





    

    

    
    

    

    

    

    



    
    



    
    
    
    
    



    
    







    
    
    
    
    
    
    
    
    
    
    
    
    
    
    
    
    
    
    
    
    
    
    
    
    
    
    
    
    
    
    
    
    









        
    
    
    
        
        
        
        


\end{document}